\title{Spatial models for boolean actions in the infinite measure-preserving setup}
\author{Fabien Hoareau and François Le Maître}
\newcommand{\abs}[1]{\left\lvert #1\right\rvert}
\newcommand{\norm}[1]{\left\lVert #1\right\rVert}
\newcommand{\inv}{^{-1}}
\renewcommand{\leq}{\leqslant}
\renewcommand{\geq}{\geqslant}
\newcommand{\C}{\mathbb{C}}
\newcommand{\Z}{\mathbb{Z}}
\newcommand{\Q}{\mathbb{Q}}
\newcommand{\R}{\mathbb{R}}
\newcommand{\N}{\mathbb{N}}
\newcommand{\LL}{\mathrm{L}}
\newcommand{\PP}{\mathbb{P}}
\newcommand{\Sinf}{\mathfrak{S}_{\infty}}
\newcommand{\LON}{\mathrm{LO}(\N)}
\newcommand{\Aut}{\mathrm{Aut}}
\newcommand{\MAlg}{\mathrm{MAlg}}
\newcommand{\Leb}{\mathrm{Leb}}
\newcommand{\id}{\mathrm{id}}
\DeclareMathOperator{\supp}{\mathrm{supp}}
\DeclareMathOperator{\spec}{\mathrm{Sp}}
\newcommand{\Poisson}[1]{\mathrm{Pois}^{#1}}
\newcommand{\PPP}[2]{\mathbb P^{#1}_{#2}}
\newtheorem{thm}{Theorem}[section]
\newtheorem{cor}[thm]{Corollary}
\newtheorem{lem}[thm]{Lemma}
\newtheorem{prop}[thm]{Proposition}
\newtheorem{thmi}{Theorem}				
\theoremstyle{definition}
\newtheorem{defi}[thm]{Definition}
\newtheorem{rem}[thm]{Remark}
\newtheorem{question}{Question}
\newtheorem*{ack}{Acknowledgements}
\begin{document}

\maketitle

\begin{abstract}
   We show that up to a null set, every infinite measure-preserving action of a locally compact Polish group
   can be turned into a continuous measure-preserving action on a locally compact Polish space where the 
   underlying measure is Radon.
   We also investigate the distinction between spatial and boolean actions in the infinite measure-preserving setup. 
   In particular, we extend Kwiatkowska and Solecki's Point Realization Theorem
   to the infinite measure setup. 
   We finally obtain a streamlined proof of a recent result of Avraham-Re'em and Roy: Lévy groups cannot admit nontrivial continuous measure-preserving actions on Polish spaces when the measure is locally finite.
\end{abstract}

	{
		\small	
		\noindent\textbf{{MSC-classification:}}	
		 	\textbf{Primary}: 22F10; 37A15; 37A40. \textbf{Secondary}: 22D12; 60G55.
	}
\tableofcontents

\section{Introduction}

The notion of flow is a convenient framework for understanding global solutions of differential equations. In full generality, a \textbf{flow} on a set $X$ is an action
$\alpha:\R\times X\to X$ of the real line $\R$ on $X$. The connection with differential equations (on say $\R^n$ for simplicity) is simply the following: a \emph{global solution} of the differential equation associated to the vector field $V:\R^n\to\R^n$ is a flow $\alpha:\R\times \R^n\to \R^n$ such that for all $x\in\R^n$,
\[
\frac d{dt} \left[\alpha(t,x)\right]_{\restriction t=0} =V(x).
\] 
In particular, for all $x\in\R^n$, the map $t\mapsto \alpha(t,x)$ is everywhere differentiable. 
Provided a global solution exists and is unique, the regularity of the corresponding flow usually reflects that of the vector field. 
In particular, if $V$ is continuous, we cannot expect the flow to be something better than continuous in general.

Many flows arising as solutions of a differential equation preserve a natural $\sigma$-finite measure which comes from the geometry of the ambient manifold, thus allowing one to use ergodic-theoretic methods. 
A notable example is given by Hopf's study of the geodesic flow on hyperbolic surfaces \cite{hopfErgodicTheoryGeodesic1971}. From the ergodic point of view, flows are identified up to \textbf{conjugacy}, and make sense even when the flow is only assumed to be a measurable map. 

To be more precise, recall that a \textbf{standard Borel space} is an uncountable measurable space $X$ endowed with a $\sigma$-algebra $\mathcal B(X)$ of Borel subsets coming from some Polish topology on $X$.
All such spaces are isomorphic, see \cite[Thm.~15.6]{kechrisClassicalDescriptiveSet1995}, 
justifying the terminology. 
A \textbf{Borel flow} on $X$ is then an action $\alpha:\R\times X\to X$ 
which is a Borel map $\R\times X\to X$, meaning that it is measurable if we endow 
$\R\times X$ with the usual product $\sigma$-algebra of the Borel 
$\sigma$-algebra of $\R$ and that of $X$. 
Given a Borel $\sigma$-finite measure $\lambda$ on $X$, 
a \textbf{measure-preserving flow} on $(X,\lambda)$ is simply a Borel flow 
such that for all $t\in\R$, the (automatically Borel) bijection $\alpha(t,\cdot)$ 
satisfies $\lambda(A)=\lambda(\alpha(t,A))$ for all $A\subseteq X$ Borel. 
Finally, two flows $\alpha,\beta$ on respective $\sigma$-finite measured standard Borel spaces 
$(X,\lambda)$ and $(Y,\eta)$ are called  \textbf{spatially isomorphic} 
when there is a measure-preserving Borel bijection $\Phi:X_0\to Y_0$ between two 
invariant conull Borel subsets $X_0\subseteq X$ and $Y_0\subseteq Y$ 
such that for all $x\in X_0$ and all $t\in\R$, we have 
\[
\Phi(\alpha(t,x))=\beta(t,\Phi(x)).
\]
Given that certain measure-preserving flows come from differential equations, 
it is natural to wonder whether conversely every measure-preserving flows 
is spatially isomorphic to a flow coming from a differential equation. 
Our main result shows that as far as the \emph{topology}  is concerned, 
every measure-preserving flow is spatially isomorphic to a  flow 
where the topology and the measure play nicely with each other, 
just as they do when they come from a differential equation. 
The key point is that the flow is continuous while the measure is \emph{locally finite} 
(which implies that it is Radon since the ambient space is locally compact Polish, 
see Section \ref{sec: locally finite and Radon}). 
We state our result below in its most general form, 
replacing $\R$ by an arbitrary locally compact second-countable group, see Section \ref{sec: prelim action} for relevant definitions.

\begin{thmi}[{see Cor.~\ref{cor: radon model for lcsc via Mackey}}]{\label{thm intro: spatial conjugacy to nice action}}
	Let $(X,\lambda)$ be a standard $\sigma$-finite space, 
	let $G$ be a locally compact Polish group 
	and let $\alpha: G \times X \to X$ be a measure-preserving $G$-action on $X$. 
	Then $\alpha$ is spatially isomorphic to a continuous measure-preserving $G$-action 
	on a locally compact Polish space $Y$ endowed with a Radon  measure $\eta$. 
\end{thmi}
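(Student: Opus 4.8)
The plan is to manufacture the model $(Y,\eta)$ as the Gelfand spectrum of a carefully chosen commutative C*-algebra of bounded functions on $X$. First I would record the continuity that makes everything run: since $G$ is locally compact Polish, hence second countable, and $\alpha$ is a measurable measure-preserving action, the induced Koopman action on $\LL^2(X,\lambda)$ (and on $\MAlg(X,\lambda)$) is strongly continuous, so for $f\in\LL^2(X,\lambda)$ the orbit map $g\mapsto g\cdot f$ is continuous. Crucially, the target $Y$ should be genuinely locally compact rather than compact precisely because $\lambda$ is infinite, so the algebra I build must be \emph{non-unital} and live inside $\LL^1(X,\lambda)\cap\LL^\infty(X,\lambda)$ (the constant function $1$ is not integrable).

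Concretely, I would choose a countable family $(f_n)$ in $\LL^1(X,\lambda)\cap\LL^\infty(X,\lambda)$ generating the measure algebra $\MAlg(X,\lambda)$, possible since $(X,\lambda)$ is standard $\sigma$-finite, so $\MAlg(X,\lambda)$ is separable. To force the $G$-action to become norm-continuous, I would smooth the generators by convolving against a countable approximate identity $(\phi_k)$ of compactly supported continuous functions on $G$, setting $f_{n,k}=\int_G\phi_k(g)\,(g\cdot f_n)\,dg$. Each $f_{n,k}$ still lies in $\LL^1\cap\LL^\infty$ with controlled norms, the convolution regularizes the action so that $g\mapsto g\cdot f_{n,k}$ is continuous in sup-norm (via continuity of translation in $\LL^1(G)$), and $f_{n,k}\to f_n$ in $\LL^1$, so the $f_{n,k}$ together with their $G$-translates still generate $\MAlg(X,\lambda)$. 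Let $\mathcal{A}$ be the smallest conjugation-closed, $G$-invariant, sup-norm-closed subalgebra they generate, \emph{without} adjoining a unit; second countability of $G$ and norm-continuity on the generators make $\mathcal{A}$ separable with $G$ acting by norm-continuous automorphisms.

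By Gelfand duality for separable commutative C*-algebras, $\mathcal{A}\cong C_0(Y)$ with $Y$ its spectrum, a locally compact Polish space, and the norm-continuous $G$-action dualizes to a continuous action $G\times Y\to Y$ by homeomorphisms. The positive $G$-invariant functional $a\mapsto\int_X a\,d\lambda$, defined on the dense subalgebra of genuinely integrable elements of $\mathcal{A}$, is represented by a $G$-invariant Radon measure $\eta$ on $Y$, necessarily infinite since $\lambda$ is. Because $\mathcal{A}$ generates $\MAlg(X,\lambda)$ and separates its points, the inclusion $\mathcal{A}\hookrightarrow\LL^\infty(X,\lambda)$ yields a $G$-equivariant isomorphism $\MAlg(Y,\eta)\cong\MAlg(X,\lambda)$; choosing Borel representatives for a countable dense subset of $\mathcal{A}$ produces an almost-everywhere-defined, almost-everywhere-injective, measure-preserving Borel map $X\to Y$, which I would upgrade to a genuine spatial isomorphism on $G$-invariant conull Borel sets using the infinite-measure point realization machinery developed earlier (the Mackey-type argument referenced by the target corollary).

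I expect the main obstacle to be the tension between the two norms in the construction above. The C*-structure, and hence Gelfand duality, demands closing in sup-norm, whereas the Radon measure demands control of the $\LL^1$-integral; since a sup-norm limit of $\LL^1$-functions need not be integrable, one cannot simply define $\eta$ as the pushforward of $\lambda$ on all of $\mathcal{A}$. The resolution is to retain the integrable elements as a dense subalgebra playing the role of $C_c(Y)$ and to invoke the Riesz representation theorem for positive functionals there, which is exactly what delivers a possibly-infinite Radon measure. Arranging the non-unitality so that $Y$ is genuinely locally compact and non-compact, and $\eta$ locally finite but infinite — rather than accidentally reproducing a compact, finite-measure model — is the delicate point that separates the $\sigma$-finite case from the classical probability-preserving construction.
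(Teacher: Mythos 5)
Your proposal follows essentially the same route as the paper: convolution against compactly supported approximate identities to produce a dense algebra of $G$-continuous integrable functions, the Gelfand spectrum of a separable non-unital $C^*$-subalgebra together with the Riesz--Markov--Kakutani theorem (applied on the image of the ideal of integrable elements, which contains $C_c(Y)$) to build the locally compact Polish model $(Y,\eta)$ with $\eta$ Radon, and finally Mackey's point realization theorem to upgrade the resulting boolean isomorphism to a spatial one. The delicate points you flag (non-unitality forcing $Y$ to be genuinely locally compact, and the sup-norm versus $\LL^1$ tension resolved by restricting the integration functional to the integrable ideal) are exactly the ones the paper's argument handles, so your sketch is correct in both substance and structure.
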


This result is new only when the measure $\lambda$ is infinite. In order to explain why, we first recall the Becker-Kechris theorem, which holds in the even wider setup of Polish groups actions and does not require one to throw out null sets. Note that for locally compact Polish groups, item \eqref{item:universalcompactGspace} below was first proven by Varadarajan, see \cite[Thm.~3.2]{varadarajanGroupsAutomorphismsBorel1963}.

\begin{thm}[{\cite[Thm.~2.2.6 and Thm.~5.2.1]{beckerDescriptiveSetTheory1996}}]\label{thm: bk spatial model} 
Let $G$ be a Polish group and let $\alpha: G\times X\to X$ be a Borel $G$-action on a standard Borel space $X$. 
    \begin{enumerate}[(1)]
    \item  \label{item:universalcompactGspace} There is a compact Polish space $K$ endowed with a continuous $G$-action $\beta:G\times K\to K$, and a Borel injection $\Phi:X\to K$ which is $G$-equivariant: for all $x\in X$ and all $g\in G$, 
    \[
    \Phi(\alpha(g,x))=\beta(g,\Phi(x)).
    \]
    Moreover, $\beta$ and $K$ can be chosen so as not to depend on $\alpha$.
    \item  \label{item:everyactionhasaspatialmodel} There exists a Polish topology on $X$ inducing its Borel structure such that $\alpha$ is continuous. 
    \end{enumerate}
\end{thm}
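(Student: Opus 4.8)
The engine of the whole argument is the Vaught transform. For a Borel set $A\subseteq X$ and a nonempty open set $U\subseteq G$, set
\[
A^{*U}=\{x\in X: \{g\in U:\alpha(g\inv,x)\in A\}\text{ is nonmeager in }U\},
\]
and dually $A^{\triangle U}$ with ``comeager'' in place of ``nonmeager''. The plan is to use these transforms both to refine the topology of $X$ (giving part \eqref{item:everyactionhasaspatialmodel}) and to manufacture equivariant Borel maps into a fixed compact model (giving part \eqref{item:universalcompactGspace}). First I would record the basic calculus of these transforms. The key facts are: (i) if $A$ is Borel then $A^{*U}$ is Borel, which is the crux and rests on the Montgomery--Novikov theorem that category quantifiers preserve Borelness, combined with the Kuratowski--Ulam theorem applied to the Borel set $\{(x,g):\alpha(g\inv,x)\in A\}$; and (ii) the transforms are compatible with the action and with the lattice operations, e.g.\ $\alpha(g,A^{*U})=A^{*gU}$ up to the relevant conventions, and $A^{*U}=\bigcup_n A^{*U_n}$ whenever $\{U_n\}$ is a basis refining $U$. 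Fact (i) is the step I expect to be most delicate to get exactly right.

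For part \eqref{item:everyactionhasaspatialmodel} I would start from a countable Boolean algebra $\mathcal A_0$ of Borel subsets of $X$ generating $\mathcal B(X)$, together with a countable basis $\{U_n\}$ of $G$. Closing $\mathcal A_0$ off under the operations $A\mapsto A^{*U_n}$ and under complementation and finite intersection yields a countable family $\mathcal A$ still consisting of Borel sets, by fact (i). Let $\tau$ be the topology generated by $\mathcal A$ as a subbasis. I would then show that $\tau$ is second countable and generates $\mathcal B(X)$, and that $\alpha$ is continuous for $\tau$: this is exactly where the localization in fact (ii) is used, since a point $x$ with $\alpha(g_0,x)\in A^{*U}$ can be surrounded, via $A^{*U}=\bigcup_n A^{*U_n}$, by a basic $\tau$-neighborhood whose image under a small perturbation of $g_0$ stays inside $A^{*U}$. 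Finally, to upgrade $\tau$ to a genuinely \emph{Polish} topology while preserving continuity of $\alpha$, I would invoke the standard change-of-topology theorem (Borel sets can be made relatively clopen inside a finer Polish topology), applied so that one only ever refines along further Vaught transforms, ensuring continuity is not destroyed in the process.

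For part \eqref{item:universalcompactGspace} I would first build the universal compact model $K$ independently of $\alpha$. Let $G$ act on itself by left translation, inducing an action on the space of bounded left-uniformly continuous functions $G\to[0,1]$; choosing a countable subfamily that is invariant up to closure and separates points from closed sets, let $K$ be the Gelfand spectrum of the separable $C^*$-subalgebra it generates. Then $K$ is compact metrizable, carries a continuous $G$-action $\beta$, and depends only on $G$. To embed $X$, observe that for each bounded Borel $f:X\to[0,1]$ the assignment $x\mapsto (g\mapsto f(\alpha(g\inv,x)))$, after the category-regularization supplied by the Vaught transform, yields a $G$-equivariant Borel map $X\to K$; bundling countably many such maps, obtained from a family of $f$'s that separates the points of $X$, into $K^{\N}\cong K$ produces the desired equivariant Borel injection $\Phi$, which satisfies $\Phi(\alpha(g,x))=\beta(g,\Phi(x))$ by fact (ii).

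The two parts are two faces of the same construction: the image $\Phi(X)$ is Borel by Lusin--Souslin, and pulling back a countable subbasis of $K$ along $\Phi$ recovers a subbasis generating the topology $\tau$ of part \eqref{item:everyactionhasaspatialmodel}. Consequently the main obstacle throughout is the Borelness and equivariance calculus of the Vaught transforms in fact (i)--(ii); once these are in hand, the continuity of $\alpha$ for $\tau$ and the Polishness upgrade follow from routine descriptive-set-theoretic refinement arguments.
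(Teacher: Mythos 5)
The first thing to say is that the paper does not prove this statement at all: it is quoted verbatim as background, with the proof outsourced to Becker--Kechris (their Thm.~2.2.6 and Thm.~5.2.1). So there is no internal proof to compare against, and your proposal can only be measured against the Becker--Kechris argument itself. Your outline does identify their strategy correctly: Vaught transforms, Borelness of category quantifiers (Montgomery--Novikov), the calculus $g\cdot A^{*U}=A^{*gU}$ and $A^{*U}=\bigcup_n A^{*U_n}$ (the latter via the Banach category theorem), the continuity of the action for the topology generated by the transforms, and a universal compact model built from functions on $G$. The continuity argument you sketch for $\tau$ is essentially the right one.

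There are, however, two genuine gaps, located at exactly the two hardest points. First, the construction of $K$. You take $K$ to be the Gelfand spectrum of a separable $G$-invariant $C^*$-subalgebra of the left-uniformly continuous functions on $G$, i.e.\ a metrizable compactification of $G$. But points of such a $K$ are characters of that algebra, and there is no way to read your regularized coefficient $g\mapsto f(\alpha(g^{-1},x))$, which is a bounded uniformly continuous \emph{function} on $G$, as a \emph{point} of $K$. Worse, the family of all such regularized functions (as $x$ ranges over $X$, and as $X$ ranges over all Borel $G$-spaces) is in general not separable in the sup norm --- already for $G=\mathbb{R}$ the $1$-Lipschitz functions are sup-norm inseparable --- so no fixed separable subalgebra can contain them, and your $K$ cannot receive the embedding. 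Becker--Kechris instead take $K$ to be the set of \emph{all} $1$-Lipschitz functions $G\to[0,1]$ (for a fixed compatible left-invariant metric) with the topology of \emph{pointwise} convergence: this is compact metrizable by equicontinuity together with separability of $G$, not by Gelfand duality; $G$ acts on it continuously by translation; and the category-regularized coefficients are $1$-Lipschitz, hence literally points of $K$ (one then passes to $K^{\mathbb{N}}$). In addition, injectivity of $\Phi$ is not automatic and you do not address it: the regularization loses information, and one must choose the countable separating family of Borel sets closed under the transforms and verify that the transforms still separate points.

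Second, for part (2) you propose to ``invoke the standard change-of-topology theorem \dots applied so that one only ever refines along further Vaught transforms, ensuring continuity is not destroyed.'' As written this is circular: the standard change-of-topology theorem says nothing about group actions, and its equivariant version \emph{is} the statement being proven. The real content of Becker--Kechris Thm.~5.2.1 is precisely to show that the topology generated by the sets $A^{*U}$ is Polish --- via regularity plus the strong Choquet property and Choquet's characterization of Polish spaces --- or, alternatively, to prove an equivariant refinement theorem for \emph{continuous} actions and apply it to the invariant Borel set $\Phi(X)\subseteq K$. Your closing remark that one can pull back a countable subbasis of $K$ along $\Phi$ runs into the same problem: that only yields the subspace topology of $\Phi(X)$, which is Polish only when $\Phi(X)$ is $G_\delta$ in $K$, and a Borel image has no reason to be. So the Polishness upgrade, which is where the theorem's difficulty actually lives, is left unproved in your sketch.
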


From the first item, we deduce that if $\lambda$ is any Borel finite measure on $X$ and $\alpha: G\curvearrowright (X,\lambda)$ is  any measure-preserving  action of a Polish group $G$, then the map $\Phi: X\to K$ is a conjugacy of $\alpha$ with the continuous measure-preserving action $\beta:G\curvearrowright (K,\Phi_*\lambda)$, where $K$ is endowed with the pushforward measure $\Phi_*\lambda$. Since finite Borel measures on compact Polish spaces are automatically Radon, \emph{when $\lambda$ is finite, Theorem \ref{thm intro:  spatial conjugacy to nice action} holds for any measure-preserving action of any Polish group $G$.} 
So our Theorem \ref{thm intro: spatial conjugacy to nice action} 
says something new only for infinite measures 
(in the above argument the pushforward measure $\Phi_*\lambda$ fails to be locally finite, 
being an infinite measure over a compact space). 
Actually, we have two ways of proving this result. 
The first uses the associated \emph{boolean action} (more on that later). 
The second one is the following measured version of the first item of the Becker-Kechris 
theorem.

\begin{thmi}[{see Thm.~\ref{thm: lc polish embed into radon}}]
\label{thmi: locally compact spatical into continuous Radon}   
    Let $(X,\lambda)$ be a standard $\sigma$-finite space, 
	let $G$ be a locally compact Polish group 
	and let $\alpha: G \times X \to X$ be a measure-preserving $G$-action on $X$. 
    Then there is a  continuous measure-preserving $G$-action 
	on a locally compact Polish space $Y$ and a Borel $G$-equivariant injection $\Phi: X\to Y$ such that the pushforward measure $\Phi_*\lambda$ is a Radon measure on $Y$. 
\end{thmi}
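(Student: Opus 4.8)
The plan is to combine the Becker--Kechris embedding with a one-dimensional ``unfolding'' coming from $\sigma$-finiteness. First I would apply Theorem~\ref{thm: bk spatial model}\eqref{item:universalcompactGspace} to obtain a compact Polish $G$-space $(K,\beta)$ together with a Borel $G$-equivariant injection $\Phi_0\colon X\to K$. Setting $\mu:=(\Phi_0)_*\lambda$, equivariance of $\Phi_0$ and invariance of $\lambda$ make $\mu$ a $G$-invariant, $\sigma$-finite Borel measure on $K$. The only thing that can go wrong is local finiteness: since $K$ is compact, an infinite $\mu$ is never Radon, so the whole difficulty is to create ``room at infinity'' into which the mass can spread while keeping the action continuous and equivariant.

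To create this room, I would use that $\lambda$ is $\sigma$-finite to fix a Borel $f\colon X\to(0,\infty)$ with $\int_X f\,d\lambda=1$, and embed
\[
\Phi\colon X\to K\times\R,\qquad \Phi(x)=\bigl(\Phi_0(x),\,-\log f(x)\bigr).
\]
This map is Borel and injective (because $\Phi_0$ is), and $K\times\R$ is locally compact Polish. The key point is a direct local-finiteness estimate: for any open $U\subseteq K$ and any $a<b$, Markov's inequality gives
\[
\Phi_*\lambda\bigl(U\times(a,b)\bigr)\leq\lambda\bigl(\{x: f(x)>e^{-b}\}\bigr)\leq e^{b}\int_X f\,d\lambda=e^{b}<\infty,
\]
so every point of $K\times\R$ has a neighbourhood of finite $\Phi_*\lambda$-measure. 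Hence $\Phi_*\lambda$ is locally finite, and therefore Radon by the discussion in Section~\ref{sec: locally finite and Radon}. Intuitively, wherever $\mu$ would concentrate the density $f$ is small, so $-\log f$ is large and the image escapes to infinity in the $\R$-direction exactly where it needs to.

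It remains to equip $Y:=K\times\R$ (or a suitable $G$-invariant locally compact subspace) with a continuous measure-preserving $G$-action making $\Phi$ equivariant; once $\Phi$ is equivariant, invariance of $\lambda$ automatically forces the target action to preserve $\Phi_*\lambda$. Equivariance forces the second coordinate to transform by the increment $\sigma(g,x)=\log f(gx)-\log f(x)$, i.e.\ the action must be the skew product $g\cdot(k,t)=(\beta(g,k),\,t+\omega(g,k))$ for a cocycle $\omega\colon G\times K\to\R$ whose pullback along $\Phi_0$ equals $-\sigma$. I expect this to be the main obstacle. The naive candidate, the coboundary of $-\log f$, is continuous only where the log-density stays finite; but $f$ must tend to $0$ somewhere on the compact space $K$ (an infinite measure cannot be integrated against a continuous strictly positive function on a compact space), so this coboundary genuinely fails to extend continuously to all of $K$, and its boundary of definition is moreover not $G$-invariant. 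The increments $\sigma(g,\cdot)$ are nevertheless tame compared with $\log f$ itself, and the plan is to realise them as a genuinely continuous cocycle over the compact model---enlarging $K$ to record the relevant data, and passing to a $G$-invariant locally compact locus on which $\omega$ is finite and continuous. This is exactly the step where local compactness of $G$ should enter, via Mackey's theory of cocycles for locally compact second-countable groups, as the reference to Corollary~\ref{cor: radon model for lcsc via Mackey} suggests; alternatively one can route the whole argument through the associated boolean action, as the text indicates.
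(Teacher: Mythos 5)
Your local-finiteness reduction is correct and genuinely clever: the Markov estimate does show that $\Phi_*\lambda$ is locally finite on $K\times\R$, so the measure-theoretic half of the problem is solved by the $-\log f$ unfolding. But the proof has a genuine gap exactly where you flag ``the main obstacle'': you never construct the continuous $G$-action on $K\times\R$ (or on any locally compact invariant locus), and this is not a technical remainder --- it is the entire content of the theorem. Neither of the tools you point to can close it. Mackey's cocycle theory is measure-theoretic: it produces and rigidifies cocycles only up to null sets, so at best it yields a continuous model after discarding a null set, which is precisely what the statement forbids ($\Phi$ must be defined and equivariant on \emph{all} of $X$). Indeed, the route through Mackey's Theorem~\ref{thm: mackey} is exactly how the paper proves the weaker Theorem~\ref{thm intro: spatial conjugacy to nice action} (Corollary~\ref{cor: radon model for lcsc via Mackey}), and the whole point of Theorem~\ref{thm: lc polish embed into radon} is to avoid throwing away null sets. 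Choosing $f$ better does not help either: even if you take $f$ to be $G$-continuous (say $f=\delta*f_0$ as in Lemma~\ref{lem:convoGcont curly L}, which is everywhere positive and integrable), $G$-continuity gives the \emph{additive} bound $\norm{f\circ\alpha(g\inv,\cdot)-f}_\infty\to 0$, whereas continuity of your skew product requires uniform control of the \emph{ratio} $f(\alpha(g,x))/f(x)$. Since $f$ is integrable against an infinite measure, $\lambda(\{f>c\})<\infty$ for every $c>0$ while $\lambda$ is infinite, so $f$ accumulates at $0$; near that set an additive perturbation of size $\varepsilon$ moves $\log f$ by arbitrarily large amounts, and the cocycle $\sigma(g,\cdot)$ has no continuous extension. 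Finally, the locus where the coboundary is finite and well-behaved is merely Borel, with no reason to be locally compact or invariant, so ``passing to a good locus'' just reproduces the original problem.

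For comparison, the paper's proof shares your first step (Becker--Kechris, so one may assume $X$ compact and $\alpha$ continuous) but then sidesteps the cocycle issue entirely: rather than retrofitting a continuous action onto $K\times\R$, it builds the target space out of the functions themselves. Convolving indicators of balls $B(x_n,2\varepsilon)$, cut into finite-measure pieces $X_k$, with compactly supported approximate identities produces a countable family of $G$-continuous functions in $\mathcal L^\infty(X)\cap\mathcal L^1(X,\lambda)$ separating points, and Theorem~\ref{thm: admits model is equivalent to functions separate points} (Gelfand spectrum plus Riesz representation) then yields $Y$, the Radon measure and the continuous action simultaneously --- continuity is automatic because the topology of $Y$ is, by construction, the one generated by $G$-continuous functions. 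The lesson for your approach is that the $\R$-coordinate would itself have to be such a $G$-continuous datum, i.e.\ you would need multiplicative rather than additive $G$-continuity of $f$, and producing such an $f$ is essentially as hard as the theorem itself.
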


The above result is a strengthening of Theorem 
\ref{thm intro: spatial conjugacy to nice action}: 
instead of just a full measure subset of $X$, 
the whole set $X$ is taken into a continuous action with Radon measure.
The main idea of its proof is to adapt Glasner, Tsirelson and Weiss' 
notion of $G$-continuous function (see \cite[Def.~2.1]{glasnerAutomorphismGroupGaussian2005}) to the Borel setup, and to show that there is a countable set of integrable $G$-continuous bounded Borel functions that separates points. 
In order to prove the latter result, we use convolution, which is 
also crucial in our proof of Theorem \ref{thm intro: spatial conjugacy to nice action}
via boolean actions. 

Given Theorem \ref{thm intro: spatial conjugacy to nice action}, the following question is very natural.

\begin{question}
Let $G$ be a non locally compact Polish group.
   Let $\alpha:G\curvearrowright (X,\lambda)$ be a measure-preserving action on an infinite $\sigma$-finite standard measured space $(X,\lambda)$, is $\alpha$ spatially isomorphic to a measure-preserving continuous $G$-action $\beta$ on $(Y,\eta)$ with $Y$ locally compact Polish and $\eta$ Radon?
\end{question}

One can be more conservative and only require $Y$ to be Polish and $\eta$ to be locally finite.
There is however one obstruction to a positive answer: it may happen that for every $x\in X$ and every neighborhood of the identity $V\subseteq G$, the measurable set $\alpha(V,x)$ always has infinite measure (see Section \ref{sec: non radonable actions} for an example when $G=\mathfrak S_\infty$ is the permutation group of the integers). Other than that, we do not see any obstruction and the question is wide open.

Motivated by item \eqref{item:everyactionhasaspatialmodel} 
from Theorem \ref{thm: bk spatial model}, 
we ask another question.
\begin{question}\label{qu: loc fin Polish real}
    Let $\alpha$ be a measure-preserving action of a Polish group $G$ on an infinite $\sigma$-finite standard measured space $(X,\lambda)$.
    When can one endow $X$ itself with a Polish topology inducing its standard Borel structure so that the action $\alpha$ becomes continuous and the measure $\lambda$ becomes locally finite?
\end{question}

In the first version of this paper, we mentioned that we did not know the answer to this question even in the case 
where the acting group $G$ is locally compact Polish. However, Nachi Avraham Re'em kindly pointed out to us that 
Theorem \ref{thmi: locally compact spatical into continuous Radon} can be combined with a stronger version of the 
Becker-Kechris theorem so as to obtain a positive answer when $G$ is locally compact. 
We discuss this in Section \ref{sec: loc compact Radon vs loca fin real}, where we also 
explain why, even when $G$ is discrete, one cannot in general endow $X$ with a \emph{locally compact} Polish topology 
so that the $G$-action becomes continuous (see Proposition \ref{prop: no loc compact real}).\\

We now move to the topic of boolean measure-preserving actions. These can be defined in many equivalent ways (see Section \ref{sec: prelim}), but for this introduction the easiest is probably to describe them as continuous group homomorphisms $G\to \Aut(X,\lambda)$, where $\Aut(X,\lambda)$ is the group of all measure-preserving bijections of $(X,\lambda)$ \emph{identified up to measure zero}. 
A fundamental question is: which Boolean actions lift to genuine spatial actions? 

In order to answer it, Glasner, Tsirelson and Weiss developped in the setup of a probability measure-preserving boolean action $\alpha$ the following notion which we already briefly mentioned: a function $f\in\LL^\infty(X,\lambda)$ is \textbf{$G$-continuous} if $\norm{f-f\circ \alpha(g,\cdot)}_\infty\to 0$ as $g\to e_G$. 
Here is a natural extension of the conclusion of \cite[Thm.\ 1.7]{glasnerSpatialNonspatialActions2005} to the case where $\lambda$ is possibly infinite (but still $\sigma$-finite). 

\begin{thmi}[see Thm.\ \ref{thm: chara existence spatial model}]
\label{thmi: chara existence spatial model}
Let $G$ be a Polish group, and let $\alpha:G\to \Aut(X,\lambda)$ be a boolean measure-preserving $G$-action on a standard $\sigma$-finite space $(X,\lambda)$. The following are equivalent:
\begin{enumerate}[(1)]
\item \label{itemi: dense G continuous L1}the algebra $\mathcal{G}$ of $G$-continuous functions satisfies  
\[
\overline{\mathcal G\cap \LL^1(X,\lambda)}^{\norm\cdot_1}=\LL^1(X,\lambda) ;
\]
\item \label{itemi: dense G continuous L2}the algebra $\mathcal{G}$ of $G$-continuous functions satisfies  
\[
\overline{\mathcal G\cap \LL^2(X,\lambda)}^{\norm\cdot_2}=\LL^2(X,\lambda) ;
\]
\item \label{itemi: admit continuous model}
one can endow $X$ with a locally compact Polish topology inducing its standard Borel space structure so that $\lambda$ is Radon and $\alpha$ lifts to a continuous measure-preserving $G$-action on $X$.
\end{enumerate}
\end{thmi}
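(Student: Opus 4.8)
The plan is to prove the functional-analytic equivalence \eqref{itemi: dense G continuous L1}$\Leftrightarrow$\eqref{itemi: dense G continuous L2}, the soft implication \eqref{itemi: admit continuous model}$\Rightarrow$\eqref{itemi: dense G continuous L1}, and the substantial implication \eqref{itemi: dense G continuous L1}$\Rightarrow$\eqref{itemi: admit continuous model} by a non-unital Gelfand duality argument. Throughout I would write $\mathcal A := \mathcal{G}\cap\LL^1(X,\lambda)$; since $G$-continuous functions are bounded, one has $\mathcal A=\mathcal{G}\cap \LL^1\cap\LL^\infty\subseteq \LL^2$, and $\mathcal A$ is a conjugation-closed, $G$-invariant subalgebra of $\LL^\infty$ on which $g\mapsto f\circ\alpha(g,\cdot)$ is norm-continuous. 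For \eqref{itemi: dense G continuous L1}$\Rightarrow$\eqref{itemi: dense G continuous L2} I would use a truncation argument: it suffices to approximate $\mathds{1}_B$ for $B$ of finite measure. Given $\LL^1$-approximants $f_k\in\mathcal A$ of $\mathds{1}_B$, the contraction $T(t):=\mathrm{sign}(t)\min(\abs{t},1)$ is $1$-Lipschitz and commutes with precomposition by the measure-preserving maps $\alpha(g,\cdot)$, so $T\circ f_k$ is still $G$-continuous, still lies in $\mathcal A$, takes values in $[-1,1]$, and satisfies $\norm{T\circ f_k-\mathds{1}_B}_2^2\le 2\norm{T\circ f_k-\mathds{1}_B}_1\le 2\norm{f_k-\mathds{1}_B}_1\to0$; as finitely-supported simple functions are $\LL^2$-dense, this gives \eqref{itemi: dense G continuous L2}. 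For the converse I would exploit that $\mathcal A$ contains all products $fg$ with $f,g\in\mathcal{G}\cap\LL^2$: if $\psi\in\LL^\infty$ annihilates every such product then, fixing $g$, the function $g\psi\in\LL^2$ is orthogonal to the $\LL^2$-dense set $\mathcal{G}\cap\LL^2$, hence vanishes; letting $g$ range over a family whose supports cover $X$ forces $\psi=0$, so these products, and a fortiori $\mathcal A$, are $\LL^1$-dense.

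The implication \eqref{itemi: admit continuous model}$\Rightarrow$\eqref{itemi: dense G continuous L1} is the easy direction. Endowing $X$ with the given locally compact Polish topology for which $\alpha$ is continuous and $\lambda$ Radon, I would check that every $f\in C_c(X)$ is $G$-continuous — this is the standard sup-norm strong continuity of the $G$-action on $C_0(X)$, using joint continuity of $\alpha$ together with compact support — and integrable, since a Radon measure is finite on compact sets. Thus $C_c(X)\subseteq\mathcal A$, and as $C_c(X)$ is $\norm\cdot_1$-dense in $\LL^1(X,\lambda)$ for a Radon measure, \eqref{itemi: dense G continuous L1} follows.

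The heart of the proof is \eqref{itemi: dense G continuous L1}$\Rightarrow$\eqref{itemi: admit continuous model}. The essential point, and what distinguishes the infinite-measure statement from the classical compact picture of Glasner--Tsirelson--Weiss, is that one must build a commutative $C^*$-algebra \emph{without unit} (the constants, though $G$-continuous, are excluded because they are not integrable), so that its spectrum is only locally compact and the resulting measure is Radon rather than an infinite measure over a compact space. Concretely, I would choose a countable $\LL^1$-dense family $\{f_n\}\subseteq\mathcal A$ and a countable dense $\{g_k\}\subseteq G$; by norm-continuity of $g\mapsto f_n\circ\alpha(g,\cdot)$ and density of $\{g_k\}$, the sup-norm-closed $*$-algebra $\mathcal C$ generated by $\{g_k\cdot f_n\}$ is separable, $G$-invariant, and carries a strongly continuous $G$-action by $*$-automorphisms, while remaining $\LL^1$-dense. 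Gelfand duality then identifies $\mathcal C\cong C_0(Y)$ for a second-countable, hence Polish, locally compact space $Y$, and strong continuity of the $G$-action on $C_0(Y)$ translates into a continuous action $\beta\colon G\times Y\to Y$. The functional $f\mapsto\int_X f\,d\lambda$ on $\mathcal C\cap\LL^1$ is positive, so Riesz representation furnishes a Radon measure $\eta$ on $Y$ with $\int_Y\hat f\,d\eta=\int_X f\,d\lambda$; since the Gelfand transform is a $*$-isomorphism it intertwines $\abs f$ with $\abs{\hat f}$, whence $f\mapsto\hat f$ is an $\LL^1$-isometry, and $\LL^1$-density on both sides promotes it to a $G$-equivariant isomorphism of the measure algebras of $(X,\lambda)$ and $(Y,\eta)$.

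Finally I would transfer this structure back to $X$. The measure-algebra isomorphism, together with the fact that $\beta$ is a genuine spatial action, exhibits a spatial model of the boolean action $\alpha$ on $(Y,\eta)$; invoking the point realization theorem in its $\sigma$-finite form established earlier in the paper yields a measure-preserving Borel isomorphism, defined modulo null sets and intertwining $\alpha$ with $\beta$. Extending it to an honest Borel isomorphism $X\to Y$ and pulling back the topology of $Y$ endows $X$ with a locally compact Polish topology in which $\lambda$ becomes Radon (being the pullback of $\eta$) and $\alpha$ lifts to the continuous action transported from $\beta$, which is \eqref{itemi: admit continuous model}. I expect the main obstacle to lie precisely in this construction: securing simultaneously the separability and $G$-invariance of the non-unital algebra $\mathcal C$, verifying \emph{joint} (not merely separate) continuity of the induced action on the spectrum $Y$, and checking that the null-set bookkeeping in the last step really delivers a locally compact topology on all of $X$, rather than merely on a conull and possibly non-locally-compact subset.
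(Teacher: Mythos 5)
Your overall strategy is the same as the paper's: prove the two density conditions equivalent, get the easy implication from $C_c\subseteq\mathcal G\cap\LL^1$, and for the hard direction run non-unital Gelfand duality on a separable $G$-invariant subalgebra of integrable $G$-continuous functions, produce a Radon measure on the spectrum via Riesz representation, extend the Gelfand isomorphism to the level of measure algebras, and pull everything back to $X$. Your proofs of \eqref{itemi: dense G continuous L1}$\Leftrightarrow$\eqref{itemi: dense G continuous L2} (truncation, and $\LL^\infty$-duality against products of $\LL^2$ functions) differ from the paper's Proposition~\ref{prop: density for subcstar} but are correct, as is \eqref{itemi: admit continuous model}$\Rightarrow$\eqref{itemi: dense G continuous L1}. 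The genuine gap is in the Riesz step. You apply Theorem~\ref{thm: Riesz} to the positive functional $f\mapsto\int_X f\,d\lambda$ ``on $\mathcal C\cap\LL^1$'', but Riesz--Markov--Kakutani requires a functional defined on all of $C_c(Y)$, and when $\lambda$ is infinite this functional is \emph{unbounded} for $\norm{\cdot}_\infty$, so it cannot be extended from a dense subspace by continuity. What is actually needed is the inclusion $C_c(Y)\subseteq\{\hat f\colon f\in\mathcal C\cap\LL^1(X,\lambda)\}$. This holds because $\mathcal C\cap\LL^1$ is a $\norm{\cdot}_\infty$-dense ideal of $\mathcal C$ (dense since it contains the algebraic $*$-algebra generated by the integrable generators; an ideal since $\LL^\infty\cdot\LL^1\subseteq\LL^1$) combined with the fact that a dense ideal of $C_0(Y)$ contains $C_c(Y)$ --- exactly Proposition~\ref{prop:FaitGeorges}, which the paper singles out (crediting Skandalis) as the crucial, not-obviously-citable ingredient. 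Your write-up never establishes, or even mentions, that the functional is defined on $C_c(Y)$.

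A second, smaller gap: the clause ``$\LL^1$-density on both sides promotes it to a $G$-equivariant isomorphism of the measure algebras'' compresses what is the longest part of the paper's proof (Step~3 of Theorem~\ref{thm: chara existence spatial model}). Two things are missing there. First, the integral identity $\int_Y\hat f\,d\eta=\int_X f\,d\lambda$ is only provided by Riesz for those $f$ whose transform lies in $C_c(Y)$; extending it to all of $\mathcal C\cap\LL^1$ (needed even to know the transform is an $\LL^1$-isometry, since you apply it to $\abs f$) requires a truncation/monotone-convergence argument. Second, an isometric bijection of $\LL^1$ spaces that is multiplicative on dense subalgebras does not by itself come from a measure-algebra isomorphism: you would need Lamperti-type structure theory, or the paper's route --- the ideal preserves inner products, hence induces a unitary $\LL^2(X,\lambda)\to\LL^2(Y,\eta)$; conjugation by it carries multiplication operators to multiplication operators, and the strong closures (Proposition~\ref{prop: density for subcstar} and Lemma~\ref{lem: dense C0 in L2}) identify $\LL^\infty(X,\lambda)$ with $\LL^\infty(Y,\eta)$ integral-preservingly, whence a boolean isomorphism by Propositions~\ref{prop: aut as integral preserving auto} and~\ref{prop: chara boolean iso of action}. (Your final pull-back step is then fine via Proposition~\ref{prop: boolean iso stronger}; note it does not use Mackey's theorem, which is unavailable for general Polish $G$.) By contrast, the obstacles you flag at the end --- separability and $G$-invariance of $\mathcal C$, joint continuity on the spectrum, and the null-set bookkeeping --- are the comparatively routine parts, handled by the paper's Step~1, Step~4, and Proposition~\ref{prop: boolean iso stronger} respectively.
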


\begin{rem}
    We don't know if condition \eqref{itemi: admit continuous model} can be changed so as to only require that $X$ is Polish and $\lambda$ is locally finite.
\end{rem}

We use the above theorem to obtain natural infinite measure-preserving version of results 
of Kwiatkowska-Solecki, namely \cite[Thm.~1.1]{kwiatkowskaSpatialModelsBoolean2011}. 
Recall that given a metric space $(X,d)$, its isometry group is naturally endowed
with the topology of pointwise convergence, which makes
it a Polish group as soon as $(X,d)$ is separable and complete.
While every Polish group arises as the isometry group of a 
separable complete metric space, the class of isometry groups 
of locally compact separable metric spaces is smaller, 
but it contains all locally compact Polish groups, and all Polish non-archimedean groups
(see \cite{kwiatkowskaSpatialModelsBoolean2011} for details). 
We can now state our generalization of 
\cite[Thm.~1.1]{kwiatkowskaSpatialModelsBoolean2011}, which itself extends
\cite[Thm.~1.4(a) and Thm.~2.3]{glasnerSpatialNonspatialActions2005}.

\begin{thmi}[{see Thm.~\ref{thm: iso lc spatial model}}]
\label{thmi: iso lc spatial model}
Let $G$ be the isometry group of a locally compact separable metric space.
Then every boolean measure-preserving $G$-action on a standard $\sigma$-finite space 
$(X,\lambda)$ can be lifted to a continuous measure-preserving action on $X$ where 
$X$ is endowed with a locally compact Polish topology inducing its 
standard Borel structure and $\lambda$ is a Radon measure.
\end{thmi}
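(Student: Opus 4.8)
The plan is to deduce the statement directly from Theorem~\ref{thmi: chara existence spatial model}. Indeed, condition \eqref{itemi: admit continuous model} there is, word for word, the conclusion we are after, so it suffices to verify condition \eqref{itemi: dense G continuous L2}: for $G=\mathrm{Isom}(M)$ with $(M,d)$ locally compact separable (complete, so that $G$ is Polish), and for \emph{every} boolean measure-preserving action $\alpha$ of $G$ on a standard $\sigma$-finite space $(X,\lambda)$, the algebra $\mathcal G$ of $G$-continuous functions is dense in $\LL^2(X,\lambda)$. This is precisely the infinite-measure form of Kwiatkowska--Solecki's Point Realization Theorem: in the probability case this density, for such $G$, is exactly what underlies \cite[Thm.~1.1]{kwiatkowskaSpatialModelsBoolean2011}, and the whole task is to re-run that density argument while never using $\lambda(X)<\infty$.

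It is clarifying to first record the two extreme cases the argument must subsume. If $G$ is locally compact, fix a left Haar measure $m$ and, for $f\in\LL^1\cap\LL^2\cap\LL^\infty$ of finite-measure support and $\phi\in C_c(G)$ with $\phi\ge 0$ and $\int\phi\,dm=1$, form the Koopman average $\tilde f=\int_G\phi(g)\,\alpha(g)f\,dm(g)$. From $\norm{\tilde f-\alpha(h)\tilde f}_\infty\le\norm{\phi-{}_h\phi}_1\norm f_\infty$ (where $({}_h\phi)(g)=\phi(h\inv g)$) and continuity of left translation on $\LL^1(G,m)$ one gets that $\tilde f\in\mathcal G$, while $\tilde f\to f$ as $\phi$ concentrates at $e_G$; since each $\alpha(g)$ is measure-preserving, $\tilde f$ stays in $\LL^1\cap\LL^2$ with support of controlled measure, so infinite measure is harmless here. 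If instead $G$ is non-archimedean, every bounded function invariant under some open subgroup $U\le G$ is automatically $G$-continuous, and continuity of $\alpha$ together with $\bigcap_U U=\{e_G\}$ makes such functions dense in $\LL^2$. The common mechanism I would isolate is an \emph{averaging operator assembled from the Koopman operators $\alpha(g)$} that both smooths $f$ into $\mathcal G$ and, being a Koopman-average, is simultaneously a contraction on $\LL^1$, $\LL^2$ and $\LL^\infty$.

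The heart of the matter is to build such an operator for a general $G=\mathrm{Isom}(M)$, which is in general neither locally compact nor non-archimedean, so that neither Haar convolution nor open-subgroup invariance is at hand. Here I would follow Kwiatkowska--Solecki, working with the concrete neighborhood basis $V_{K,\varepsilon}=\{g\in G:\sup_{x\in K}d(gx,x)<\varepsilon\}$ indexed by compact $K\subseteq M$ and $\varepsilon>0$, and exploiting the local compactness of $M$ to carry out a genuine \emph{local} averaging over subsets of $G$ that approximate the identity: the precompactness supplied by compact subsets of $M$ is what plays the role of a Haar measure. The delicate requirement, beyond \cite{kwiatkowskaSpatialModelsBoolean2011}, is that this averaging be realized against a genuine probability measure carried by a suitable compact subset of $G$, so that it remains an average of the operators $\alpha(g)$ and therefore a contraction on each $\LL^p$ that sends finite-measure-support functions to finite-measure-support functions.

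I expect this compatibility to be the main obstacle. I would carry the whole argument on the dense family of $f\in\LL^\infty(X,\lambda)$ with $\lambda(\supp f)<\infty$ --- this family takes over the role played in the finite-measure setting by bounded functions and by the constant $1\in\LL^2$, which are exactly the objects whose use one must avoid. Granting the averaging operator above, one checks that the averaged functions lie in $\LL^1\cap\LL^2\cap\LL^\infty$, belong to $\mathcal G$, and approximate $f$ in $\LL^2$, the only potential appearances of finiteness of $\lambda$ (normalizations by $\lambda(X)$ and the presence of $1$ in $\LL^2$) having been removed by this finite-support reduction. Once \eqref{itemi: dense G continuous L2} holds, Theorem~\ref{thmi: chara existence spatial model} furnishes the locally compact Polish topology with $\lambda$ Radon and the continuous lift, which is the desired conclusion.
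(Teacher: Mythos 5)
Your overall reduction is the one the paper uses: verify $\LL^2$-density of the $G$-continuous functions and invoke Theorem~\ref{thm: chara existence spatial model}; and your convolution treatment of the locally compact case is essentially the paper's Theorem~\ref{thm: continuous radon for lcsc}. But there is a genuine gap exactly where the theorem's difficulty lies. For a general isometry group $G$ of a locally compact separable metric space you never construct the averaging operator --- your argument explicitly proceeds ``granting the averaging operator above'' --- and the mechanism you sketch cannot supply it. A Koopman average $\tilde f=\int_G \alpha(g)f\,d\nu(g)$ against a probability measure $\nu$ carried by a compact subset of $G$ only controls the $\norm{\cdot}_\infty$ modulus through $\norm{\tilde f-\alpha(h)\tilde f}_\infty\leq \norm{f}_\infty\,\norm{\nu-h_*\nu}_{\mathrm{TV}}$, so $G$-continuity of $\tilde f$ requires $\nu$ to be asymptotically translation-invariant in total variation; this is precisely what Haar measure provides in the locally compact case, and no substitute is available on a general Polish $G$. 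Concretely, the sets one must ``average over'' in this setting are the closed subgroups $H$ produced by Kwiatkowska--Solecki's characterization \cite[Thm.~1.2]{kwiatkowskaSpatialModelsBoolean2011} (for every identity neighborhood $U$ there is a closed $H\subseteq U$ with $N(H)$ open and $G/H$ locally compact), and these $H$ are typically non-compact --- already for $G=\Sinf$ they are pointwise stabilizers of finite sets --- hence carry no invariant probability measure whatsoever. Your non-archimedean ``model case'' quietly hides the same issue: producing $U$-invariant functions $\LL^2$-close to $f$ is itself the point, and it is not done by integration.

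What the paper does instead (Proposition~\ref{prop: dense Gcont for iso of lc}) is to replace integration over $H$ by a Hilbert-space fixed-point argument: the element of minimal norm in the $\norm{\cdot}_2$-closed convex hull of the $H$-orbit of $f$ is $H$-invariant (since $H$ acts by unitaries) and $\varepsilon$-close to $f$; a truncation makes it bounded with finite-measure support while keeping $H$-invariance. One then forms the von Neumann algebra generated by its $N(H)$-translates, identifies it with some $\LL^\infty(Y,\eta)$ via the Gelfand-theoretic machinery of Theorem~\ref{thm: chara existence spatial model}, and observes that the resulting boolean $N(H)$-action is trivial on $H$, hence descends to the \emph{locally compact} Polish quotient $N(H)/H$; only there does the convolution argument of Theorem~\ref{thm: continuous radon for lcsc} apply, and openness of $N(H)$ in $G$ finally upgrades $N(H)$-continuity to $G$-continuity. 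These two ingredients --- the convex-hull fixed point replacing any measure on $H$, and the descent to the locally compact quotient $N(H)/H$ --- constitute the missing core of your proposal.
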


When $G$ is locally compact, this strengthens a result of Mackey which provides a purely Borel realization (seee Theorem \ref{thm: mackey} and \cite{mackeyPointRealizationsTransformation1962} for the more general non-singular version). 
Furthermore, if the measure is in addition finite, our theorem follows by combining Mackey's result with \cite[Thm.~3.2]{varadarajanGroupsAutomorphismsBorel1963}, but our proof strategy is very different.
Indeed, we first treat the locally compact case by  
relying on convolution, 
which is very natural in the framework of $G$-continuous functions (see Theorem \ref{thm: continuous radon for lcsc}). 
As we briefly mentioned earlier, Theorem \ref{thmi: iso lc spatial model}
can be used to give another proof 
of Theorem \ref{thm intro: spatial conjugacy to nice action} 
using a result of Mackey (see Corollary \ref{cor: radon model for lcsc via Mackey}).

In the general case, our proof is similar to Kwiatkowska and Solecki's, except that 
we begin with a simpler fixed point argument in $\LL^2$ relying on closed convex hulls, 
and then 
we work directly at the level 
of subalgebras of $\LL^\infty(X,\lambda)$ 
(see the proof of Proposition~\ref{prop: dense Gcont for iso of lc}).

We also adapt the Glasner-Tsirelson-Weiss results on whirly actions (see \cite[Sec.~3]{glasnerAutomorphismGroupGaussian2005}) to the infinite measure-presering setup,
showing in Section \ref{sec: whirly} that the tautological boolean $\Aut(X,\lambda)$-action cannot be lifted to a Borel spatial action. \\

Finally, we obtain the following result on Lévy groups, analogous to the Glasner-Tsirelson-Weiss result that says that all their continuous probability measure-preserving actions on compact metric spaces are trivial \cite[Thm.~1.1]{glasnerAutomorphismGroupGaussian2005}.

\begin{thmi}[see Thm.~\ref{thm: Lévy infinite}]
\label{thmi: Lévy infinite}
    Every continuous measure-preserving spatial action of a Lévy group $G$ on a Polish space $X$ endowed with a $\sigma$-finite atomless locally finite measure $\lambda$ is trivial, \textit{i.e.}\ the set of fixed points $\{ x \in X \mid \forall g \in G , g \cdot x =  x \}$ is conull.
\end{thmi}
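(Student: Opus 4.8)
The plan is to prove that the Koopman representation is trivial and then upgrade this to a conull set of common fixed points. Write $\pi$ for the Koopman representation of $G$ on $\LL^2(X,\lambda)$; it is strongly continuous because $\alpha$ is continuous and $\lambda$ is locally finite. Let $\mathcal G\subseteq \LL^\infty(X,\lambda)$ be the algebra of $G$-continuous functions. The first pillar is that under our hypotheses $\mathcal G$ is \emph{dense} in $\LL^2(X,\lambda)$: a continuous spatial action with $\lambda$ locally finite is exactly the kind of continuous model whose existence, via (the relevant direction of) Theorem~\ref{thmi: chara existence spatial model}, forces $\overline{\mathcal G\cap\LL^2(X,\lambda)}^{\norm{\cdot}_2}=\LL^2(X,\lambda)$; concretely, bounded continuous functions with finite-measure support are $G$-continuous and separate points, and local finiteness is exactly what makes $g\mapsto f\circ g^{-1}$ continuous in $\norm{\cdot}_\infty$ near the identity. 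Granting the second pillar—that every $G$-continuous function is $G$-invariant—it follows that $\LL^2(X,\lambda)$ is spanned by $G$-invariant vectors, so $\pi$ is trivial; equivalently, each $g\in G$ fixes every element of $\MAlg(X,\lambda)$, that is $\lambda(\{x:gx\neq x\})=0$ for every $g$.

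The heart of the proof is the second pillar: for a Lévy group $G$, every $G$-continuous function is $G$-invariant. This is where concentration of measure enters. Since each $\pi(g)$ is a $\norm{\cdot}_\infty$-isometric algebra automorphism, the orbit map $g\mapsto\pi(g)f$ of a $G$-continuous $f$ is uniformly continuous from $G$, equipped with a fixed compatible left-invariant metric $d$, into $(\LL^\infty,\norm{\cdot}_\infty)$, with a modulus that does not depend on the level $n$ of the Lévy filtration $(K_n,\mu_n)$. Passing to the abelian $C^\ast$-algebra generated by $\mathcal G$, the action becomes norm-continuous, so on its Gelfand spectrum $Z$ the group $G$ acts by \emph{isometries} for a compatible metric and preserves the pushforward $\nu$ of $\lambda$, a $\sigma$-finite locally finite measure. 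Writing $P_n=\int_{K_n}\pi(g)\,d\mu_n(g)$ for the projection onto $K_n$-invariant vectors, one has $P_n\to P_G$ strongly onto the $G$-invariants. The plan is to use the Lévy property to show that for $F\in C_c(Z)$ the orbit $\{\pi(g)F:g\in K_n\}$ concentrates in $\LL^2(\nu)$ around its mean $P_nF$, so that $\norm{P_nF}_2\to\norm F_2$; combined with $P_nF\to P_GF$ this forces $F=P_GF$, i.e.\ $F$ is invariant, whence $\mathcal G$ consists of invariant functions.

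I expect the main obstacle to be precisely this concentration step. The Lévy property gives directly only that each real $1$-Lipschitz observable $g\mapsto\Lambda(\pi(g)F)$ concentrates around its median, and upgrading this to concentration of the orbit at a \emph{single} point of $\LL^2(\nu)$ is genuinely false for general unitary representations: the standard representation of the Lévy group $U(\ell^2)$ on $\ell^2$ does not concentrate at a point, since the orbit of a unit vector spreads over an entire sphere. The two features that must be exploited to preclude this are that the factor action on $Z$ is \emph{isometric}, so the orbit maps are honestly uniformly equicontinuous with respect to $d$, and that the invariant measure $\nu$ is \emph{locally finite and atomless}; the latter is what prevents the orbit from escaping to infinity in $Z$, behaviour that is perfectly possible for the non-Lévy translation action of $\R$ on itself with invariant Lebesgue measure. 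Controlling the orbit through the invariant locally finite measure so that per-observable concentration upgrades to concentration at a fixed point—thereby using both that $G$ is Lévy and that $\lambda$ is locally finite, neither of which suffices alone—is the crux of the argument.

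Finally, I would upgrade boolean triviality to the spatial statement by a soft argument. Having established $\lambda(\{x:gx\neq x\})=0$ for every $g\in G$, choose a countable dense subgroup $D\subseteq G$; then the Borel set $\bigcap_{g\in D}\{x:gx=x\}$ is conull, and for every $x$ in it the map $g\mapsto gx$ is continuous and fixes the dense set $D$, hence fixes all of $G$. Therefore the set of common fixed points is conull, which is the desired conclusion.
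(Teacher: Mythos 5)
Your proof has two genuine gaps, one in each ``pillar'', and the second one is fatal since you yourself leave it open. Concerning the first pillar: you invoke the implication ``continuous model $\Rightarrow$ density of $\mathcal G\cap\LL^2$'' from Theorem~\ref{thm: chara existence spatial model}, but that implication is proved there only when the model is \emph{locally compact} Polish with a \emph{Radon} measure; your hypotheses give only a Polish space with a locally finite measure, and the paper explicitly records (in the remark following Theorem~\ref{thmi: chara existence spatial model}) that it is not known whether the equivalence survives this weakening. Your concrete justification does not repair this: $G$-continuity is a $\norm{\cdot}_\infty$ condition, and continuity of the action only gives \emph{pointwise} convergence $f(\alpha(g\inv,x))\to f(x)$; the uniformity in Lemma~\ref{lem: G continuous from continuous action on lc} comes from local compactness (a finite subcover of the support), not from local finiteness of the measure, which is a statement about $\lambda$ and has no bearing on sup-norm continuity of $g\mapsto f\circ\alpha(g\inv,\cdot)$. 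So a bounded continuous function with finite-measure support on a non-locally-compact Polish space need not be $G$-continuous, and the density you need is unproven. Concerning the second pillar: you correctly diagnose that per-observable Lévy concentration does not upgrade to concentration of the orbit at a single point of $\LL^2$ (the $U(\ell^2)$ example), but you then leave precisely this upgrade --- ``the crux of the argument'' in your own words --- unproved. Since this is the step where both the Lévy property and local finiteness must enter, what you have is a plan, not a proof.

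The paper takes a different route that sidesteps both difficulties: it constructs the Poisson point process $\PPP{X}{\lambda}$ on the Effros space $\mathcal F(X)$ (Theorem~\ref{thm: PPP}), observes that the induced $G$-action on $(\mathcal F(X),\PPP{X}{\lambda})$ is a continuous \emph{probability}-preserving spatial action, and applies the Glasner--Tsirelson--Weiss theorem (Theorem~\ref{LevyProba}) as a black box to conclude that this action is trivial. Non-triviality of the original action is then transferred upward: continuity and local finiteness produce an open $U$ with $0<\lambda(U)<\infty$ and $g\in G$ with $U\cap g\cdot U=\emptyset$, and the void-probability identity
\[
\PPP{X}{\lambda}\left(\mathcal C_U\cap g\cdot \mathcal C_U\right)=e^{-\lambda(U\sqcup g\inv\cdot U)}=\PPP{X}{\lambda}(\mathcal C_U)^2<\PPP{X}{\lambda}(\mathcal C_U)
\]
shows the suspension would be non-trivial, a contradiction. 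In other words, all the concentration-of-measure content is quarantined inside the already-known probability case, which is exactly what your direct approach fails to do. Your final step (upgrading $\lambda(\{x:gx\neq x\})=0$ for all $g$ to a conull set of common fixed points via a countable dense subgroup and continuity of orbit maps) is correct, but it is the only part of the argument that is complete.
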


While finishing the first version of our paper, we were informed by Emmanuel Roy 
that Nachi Avraham-Re'em and him had obtained the exact same result as above and 
put it on arXiv quite recently 
(see \cite[Thm.~5]{avraham-reemPoissonianActionsPolish2024}). 
Our proof actually follows the same two steps as theirs.  
First, we construct a Poisson point process without using counting measures, 
by directly working in the Effros standard Borel space (see Theorem \ref{thm: PPP}, 
which is a minimalistic version of Theorem 3 in their paper). 
And then we conclude the proof using the Glasner-Tsirelson-Weiss theorem: 
the natural action of the group on the space of closed subsets preserves the 
Poisson point process probability measure, so it has to be trivial, 
which implies the triviality of the action we started with.

Since our construction of the Poisson point process appears to be more direct
than Avraham-Re'em and Roy's, we decided to keep it in our paper, 
but the reader should definitely consult their work: 
they obtain a more precise description of the Poisson point process 
along with very nice applications of the above result to diffeomorphism groups, 
using the Maharam extension. 
They also have a complete description of the boolean probability measure-preserving actions 
one can obtain through this Poisson point process construction 
when starting from an infinite measure-preserving boolean action.

Finally, we don't know whether the conclusion of Theorem \ref{thmi: Lévy infinite} 
holds for general infinite measure-preserving Borel actions, 
but recall that in general there are measure-preserving actions     
which cannot be spatially isomorphic to actions satisfying the hypothesis 
of the above theorem (see Section \ref{sec: non radonable actions}). 
Some related and intriguing open questions may be found in the last section 
of \cite{avraham-reemPoissonianActionsPolish2024}.

\begin{ack}
    We would like to thank Matthieu Joseph and Sam Mellick for enlightening conversations around these topics.
    We are also grateful to Todor Tsankov for allowing us to include an example of his (Proposition \ref{prop: todor example}), 
    to Georges Skandalis for pointing out the crucial Proposition \ref{prop:FaitGeorges}
    and to Nachi Avraham-Re'em for allowing us to include Corollary \ref{cor: loc fin model for lc groups}
    in the second version of our paper. 
    We are thankful to the first referee for their numerous remarks
    and suggestions which helped 
    improve the paper in many ways, 
    and to the second referee for their comments and 
    for asking us
    whether Theorem~\ref{thmi: iso lc spatial model} could be extended to the class 
    of isometry groups of locally compact separable metric spaces.
    Finally, we thank Emmanuel Roy for reaching out to us about our shared mathematical interests.  
\end{ack}

\section{Preliminaries}\label{sec: prelim}

\subsection{Spatial and boolean actions on standard \texorpdfstring{$\sigma$}{sigma}-finite spaces} \label{sec: prelim action}

Recall that a \textbf{standard Borel space} is an uncountable measurable space $X$ endowed with a $\sigma$-algebra $\mathcal B(X)$ of Borel subsets coming from some Polish topology on $X$.
All such spaces are isomorphic, see \cite[Thm.~15.6]{kechrisClassicalDescriptiveSet1995}, justifying the terminology.

Throughout the paper, $(X,\lambda)$ will denote a standard Borel  space $X$ endowed with a nontrivial atomless $\sigma$-finite measure $\lambda$.
Since rescaling the measure does not change the dynamics that we are interested in, there are only two fundamentally different cases:
\begin{itemize}
	\item $\lambda(X)<+\infty$, in which case $(X,\lambda)$ is isomorphic to $([0,\lambda(X)), \Leb_{\restriction})$ where $\Leb$ is the Lebesgue measure (this is a direct consequence of \cite[Thm.~17.41]{kechrisClassicalDescriptiveSet1995}); 
	\item $\lambda(X)=+\infty$, in which case $(X,\lambda)$ is isomorphic to $(\R,\Leb)$ (to see this, observe that by $\sigma$-finiteness, $X$ can be cut into a countable partition $(X_n)$ such that $\lambda(X_n)<+\infty$ for all $n$, then cut $\R$ into a countable partition consisting of  half-open intervals $I_n$ of length $\Leb(X_n)$ and apply the previous case).
\end{itemize}

Given a standard $\sigma$-finite space $(X,\lambda)$, a \textbf{measure-preserving bijection} of $(X,\lambda)$ is a Borel bijection $T:X\to X$ such that $T_*\lambda=\lambda$, that is, for all $A\subseteq X$ Borel, $\lambda(A)=\lambda(T\inv(A))$. 

\begin{defi}
	\label{def: spatial mp action}
	Let $G$ be a Polish group, let $(X,\lambda)$ be a standard $\sigma$-finite space. A \textbf{spatial} measure-preserving  $G$-action on $(X,\lambda)$ is a Borel action map 
	$\alpha:G\times X\to X$ such that for all $g\in G$, the map $\alpha(g,\cdot):X\to X$ defines a measure-preserving bijection of $(X,\lambda)$. 
\end{defi}

Our main goal in this paper is to contrast this notion to that of a boolean action, which to our knowledge first appeared in a paper of Mackey \cite{mackeyPointRealizationsTransformation1962}. We will give in the next section several characterizations of this notion, one of which justifies the terminology and the connection with Mackey's definition. In order to motivate the definition we are giving here, notice that by definition the fact that $\alpha$ is an action map means that for all $g,h\in G$ and for \emph{all} $x\in X$ and, we have 
$$\alpha(gh,x)=\alpha(g, \alpha(h,x)).$$

\begin{defi}
	\label{def: boolean mp or ns action}
	Let $G$ be a Polish group, let $(X,\lambda)$ be a standard $\sigma$-finite space. 
	A \textbf{boolean} measure-preserving $G$-action on $(X,\lambda)$ is a Borel map 
	$\alpha:G\times X\to X$ such that: 
	\begin{enumerate}[(i)]
		\item
		\label{cond: main condition for boolean action}
		for all $g,h\in G$, we have that for $\lambda$-\emph{almost} all $x\in X$, 
		\[
		\alpha(gh,x)=\alpha(g,\alpha(h,x));
		\]
		\item 
		\label{cond: mp bijection for boolean action}
		for all $g\in G$, the map $\alpha(g,\cdot):X\to X$ is measure-preserving  bijection of $(X,\lambda)$. (in particular, using $g=h=e_G$ in condition \eqref{cond: main condition for boolean action}, we deduce that $\alpha(e_G,x)=x$ for $\lambda$-almost all $x\in X$.)
	\end{enumerate}
\end{defi}

As is customary, we often write $\alpha(g)$ for the map $\alpha(g,\cdot): X\to X$ when $\alpha$
is either a boolean or a spatial measure-preserving action.

\begin{rem} 
	By definition, Condition \eqref{cond: main condition for boolean action} means that for all $g,h\in G$ the following subset is conull: 
	\[
	X_{g,h}= \{x\in X\colon \alpha(gh,x)=\alpha(g,\alpha(h,x))\},
	\]
	but $X_{g,h}$ does depend on $g$ and $h$, in particular there does not need to be a conull subset $X_0\subseteq X$ contained in all the $X_{g,h}$.
\end{rem}

By definition, every measure-preserving spatial action is a measure-preserving boolean action. Spatial and boolean actions admit different natural notions of isomorphism.

\begin{defi}\label{def: spatial iso}
	Given measure-preserving spatial actions $\alpha$ on $(X,\lambda)$ 
    and $\beta$ on $(Y,\eta)$ of a Polish group $G$, 
    a \textbf{spatial isomorphism} between them is a measure preserving injection 
	$\Phi : X_0 \to Y$, where $X_0$ is a conull $\alpha(G)$-invariant Borel subset 
    of $X$, such that for all $g\in G$, and all $x$ in $X_0$, we have 
	\[
	\Phi(\alpha(g,x)) = \beta(g,\Phi(x)).
	\]
    When there exists such a spatial isomorphism, 
    we say that $\alpha$ and $\beta$ are \textbf{spatially isomorphic}.
\end{defi}

\begin{rem}
    Since every Borel injection between standard Borel spaces is a Borel
    isomorphism onto its image \cite[Thm.~4.12]{kechrisClassicalDescriptiveSet1995}, 
    in the above definition the set $Y_0\coloneqq \Phi(X_0)$
    is a Borel subset of $Y$ which is conull since $\Phi$ is measure-preserving,
    and we conclude that being spatially isomorphic is a symmetric relation.
    It is not hard to check that it is also transitive, and hence an equivalence relation.
\end{rem}

\begin{defi}
	Given measure-preserving boolean actions $\alpha$ on $(X,\lambda)$ and $\beta$ on 
    $(Y,\eta)$ of a Polish group $G$, a \textbf{boolean isomorphism} between them is 
    a measure preserving Borel injection
	$\Phi : X_0 \to Y$, where $X_0$ is a full measure Borel subset of $X$, 
    such that for any $g$ in $G$, there is a conull Borel subset $X_g\subseteq X_0$ 
    such that for all $x\in X_g$, 
	\[
	\Phi(\alpha(g,x)) = \beta(g,\Phi(x)).
	\]
\end{defi}

When two boolean actions as above admit a boolean isomorphism between them,
they are called \textbf{booleanly isomorphic}, and it is straightforward
to check that this defines an equivalence relation on boolean actions.
Let us point out that the above definition can be reinforced as follows: we can
require $X_0$ to be equal to $X$ and $\Phi$ to be bijective 
(see Prop.~\ref{prop: boolean iso stronger}).

Observe that any spatial isomorphism between spatial actions is also a boolean isomorphism. 
Two natural problems arise from this for a fixed Polish group $G$
\begin{itemize}
	\item the \textbf{realization problem}: given a boolean $G$-action, does it admit a \emph{spatial realization} (or a \emph{spatial model}), i.e.~is it booleanly isomorphic to a spatial $G$-action? 
	\item the \textbf{boolean to spatial isomorphism problem}: given a boolean isomorphism between spatial $G$-actions, is this isomorphism spatial?
\end{itemize}

When the answers to both theses questions are positive, note that the spatial realization of a boolean action is unique up to spatial isomorphism. Using the fact that any countable intersection of conull sets is conull, it is not hard to see that when $G$ is countable discrete, all its boolean actions are booleanly isomorphic to spatial actions, and all boolean isomorphisms of $G$-actions are spatial.
More generally, a fundamental theorem of Mackey (see \cite{mackeyPointRealizationsTransformation1962}) states that this is still true when the acting group $G$ is locally compact Polish. In our restricted context of measure-preserving actions, the statement is the following.
\begin{thm}[{\textup{\cite[Thm.~1 and Thm.~2]{mackeyPointRealizationsTransformation1962}}}{\label{thm: mackey}}]
    Let $G$ be a locally compact Polish group. 
    \begin{enumerate}[(1)]
        \item  \label{item:Mackey1} Let $\alpha : G \times X \to X$ be a boolean measure-preserving $G$-action on a standard $\sigma$-finite space $(X,\lambda)$. There exists a standard $\sigma$-finite space $(Y,\eta)$ and a spatial $G$-action $\beta$ on $(Y,\eta)$ such that $\alpha$ and $\beta$ are booleanly isomorphic.
        \item  \label{item:Mackey2} Let $\alpha$ and $\beta$ be two spatial measure-preserving $G$-actions on standard $\sigma$-finite spaces $(X,\lambda)$ and $(Y,\eta)$ respectively. Any boolean isomorphism $\Phi$ between $\alpha$ and $\beta$ 
        coincides with a spatial isomorphism on a $G$-invariant conull Borel subset.
    \end{enumerate}
\end{thm}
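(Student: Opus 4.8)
The statement has two parts. For part \eqref{item:Mackey1}, the realization result, I would simply invoke the stronger Theorem~\ref{thmi: iso lc spatial model}: a continuous measure-preserving action on a locally compact Polish space is in particular a spatial action in the sense of Definition~\ref{def: spatial mp action}, so a booleanly isomorphic spatial realization exists. This is not circular, since the proof of Theorem~\ref{thmi: iso lc spatial model} proceeds via convolution and $G$-continuous functions rather than through Mackey's theorem. The substance therefore lies in part \eqref{item:Mackey2}, the rigidity statement that a boolean isomorphism between two \emph{genuine} spatial actions already agrees with a spatial isomorphism off a null set, and I will focus my plan there.

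Fix spatial actions $\alpha$ on $(X,\lambda)$ and $\beta$ on $(Y,\eta)$ and a boolean isomorphism $\Phi\colon X_0\to Y$, where for each $g\in G$ there is a conull $X_g\subseteq X_0$ with $\Phi(\alpha(g,x))=\beta(g,\Phi(x))$ for $x\in X_g$. The plan is to promote this ``for each $g$, almost every $x$'' into ``for almost every $x$, every $g$'', using Fubini together with the fact that $\alpha$ and $\beta$ are genuine actions, so that their cocycle identities hold \emph{everywhere}. Let $m$ be a left Haar measure on $G$, which is $\sigma$-finite since $G$ is locally compact second-countable, and consider the Borel set
\[
B=\{(g,x)\in G\times X_0 : \Phi(\alpha(g,x))=\beta(g,\Phi(x))\},
\]
which is Borel because $\alpha$, $\beta$ and $\Phi$ are. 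Each vertical section $\{x:(g,x)\in B\}$ is conull by hypothesis, so by Fubini the complement of $B$ is $(m\times\lambda)$-null; equivalently, for $\lambda$-almost every $x$ the horizontal section $B^x=\{g:(g,x)\in B\}$ is $m$-conull.

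First I would record that the set $X_{**}=\{x\in X_0 : \forall g\in G,\ \Phi(\alpha(g,x))=\beta(g,\Phi(x))\}$ is $G$-invariant: if $x\in X_{**}$ then for all $g_0,g$ one computes $\Phi(\alpha(g,\alpha(g_0,x)))=\Phi(\alpha(gg_0,x))=\beta(gg_0,\Phi(x))=\beta(g,\Phi(\alpha(g_0,x)))$, using the everywhere-valid cocycle identities. The real work is to show $X_{**}$ is conull. Let $X_*=\{x:B^x\text{ is }m\text{-conull}\}$, conull by the above, and let $X_3=\{x\in X_* : \alpha(s,x)\in X_*\text{ for }m\text{-a.e.\ }s\}$, again conull since each $\alpha(s,\cdot)$ preserves $\lambda$ and one applies Fubini. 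I claim $X_3\subseteq X_{**}$. Fix $x\in X_3$ and an arbitrary $g\in G$. For $m$-a.e.\ $s$ one has simultaneously $s\in B^x$, $\alpha(s,x)\in X_*$, and $gs\inv\in B^{\alpha(s,x)}$. For such an $s$, combining $gs\inv\in B^{\alpha(s,x)}$ with the everywhere identity $\alpha(gs\inv,\alpha(s,x))=\alpha(g,x)$ and with $s\in B^x$ gives
\[
\Phi(\alpha(g,x))=\beta(gs\inv,\Phi(\alpha(s,x)))=\beta(gs\inv,\beta(s,\Phi(x)))=\beta(g,\Phi(x)),
\]
so $x\in X_{**}$. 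Finally, from the conull $G$-invariant $X_{**}$ containing the conull Borel set $X_3$ I would extract a conull $G$-invariant \emph{Borel} subset on which $\Phi$ restricts to a spatial isomorphism, by a routine saturation argument, e.g.\ replacing $X_3$ by $\{x : \alpha(g,x)\in X_3\text{ for }m\text{-a.e.\ }g\}$ and using invariance of the $\lambda$-null ideal.

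I expect the main obstacle to be exactly the ``double almost-everywhere'' step: turning the separate statements ``$s\in B^x$ for a.e.\ $s$'' and ``$B^{\alpha(s,x)}$ is conull for a.e.\ $s$'' into the single conclusion that the translated element $gs\inv$ is good for the translated point $\alpha(s,x)$ for a.e.\ $s$. This requires checking the joint Borel measurability of $\{(s,t,x):t\in B^{\alpha(s,x)}\}$ in $G\times G\times X$, analyzing its sections, and using both the left-invariance of $m$ and the fact that inversion preserves Haar-null sets. The genuineness of $\alpha$ and $\beta$ is precisely what makes the cocycle manipulations legitimate pointwise rather than merely almost everywhere, and hence what lets the a.e.\ information accumulate into an everywhere statement on $X_{**}$.
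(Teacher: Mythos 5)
First, a remark on the comparison itself: the paper does not prove this theorem at all — it is quoted from Mackey's 1962 paper — so your proposal can only be judged on its own merits. Your part \eqref{item:Mackey1} is fine and non-circular: Theorem~\ref{thm: continuous radon for lcsc} (the locally compact case, which is the direct reference here, Theorem~\ref{thm: iso lc spatial model} being its generalization) is proved via convolution and Theorem~\ref{thm: chara existence spatial model} with no appeal to Mackey, and a continuous measure-preserving action on a locally compact Polish space with Radon measure is a spatial action in the sense of Definition~\ref{def: spatial mp action}. Part \eqref{item:Mackey2}, however, has a genuine gap, and it is exactly the step you flagged — but it is worse than a missing verification: the inclusion $X_3\subseteq X_{**}$ is \emph{false}, because the statement it would yield (that $\Phi$ itself is equivariant on a conull set, i.e.\ that $X_{**}$ is conull) is false. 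Take $G=\R$ acting on $X=Y=\R$ by translations with Lebesgue measure, and let $\Phi$ be the transposition of the two points $0$ and $1$ (identity elsewhere). This is a measure-preserving Borel bijection and a boolean isomorphism from the translation action to itself (for each $g$ one may take $X_g=\R\setminus\{0,1,-g,1-g\}$), and your sets $X_*$ and $X_3$ are conull; but $X_{**}=\emptyset$, since for $x\notin\{0,1\}$ the choice $g=-x$ gives $\Phi(\alpha(g,x))=\Phi(0)=1$ while $\beta(g,\Phi(x))=0$, and the points $0,1$ are excluded similarly. The reason the Fubini bookkeeping cannot close the gap is structural: knowing that $B^{\alpha(s,x)}$ is $m$-conull for a.e.\ $s$ says nothing about whether the \emph{particular} point $gs^{-1}$ lies in $B^{\alpha(s,x)}$, because as $s$ varies the pairs $(s,gs^{-1})$ run along a null graph in $G\times G$, which a varying family of conull sets can avoid entirely — and in the example it does.

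The repair requires changing the target: one must \emph{modify} $\Phi$ on a null set, not merely restrict it. (This is also how the statement must be read: $\Phi$ agrees almost everywhere with a spatial isomorphism defined on a $G$-invariant conull Borel subset; the coincidence set itself cannot in general be taken invariant, as the example shows.) The standard device is the essential value along the group direction: set $\tilde\Phi(s,x)=\beta(s^{-1},\Phi(\alpha(s,x)))$ wherever defined. Your Fubini argument shows that for a.e.\ $x$, the map $s\mapsto\tilde\Phi(s,x)$ is $m$-a.e.\ equal to $\Phi(x)$; let $X_1$ be the set of $x$ for which $s\mapsto\tilde\Phi(s,x)$ is $m$-a.e.\ constant, and let $\Psi(x)$ be that essential value. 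The everywhere-valid cocycle identities of the genuine actions give the pointwise identity $\tilde\Phi(s,\alpha(g,x))=\beta\bigl(g,\tilde\Phi(sg,x)\bigr)$, and since right translation preserves Haar-null sets, $X_1$ is genuinely $\alpha$-invariant and $\Psi(\alpha(g,x))=\beta(g,\Psi(x))$ for \emph{all} $g\in G$ and $x\in X_1$; injectivity and measure preservation of $\Psi$ on a further invariant conull Borel subset then descend from $\Phi$ via $\Psi=\Phi$ a.e. This averaging step — replacing $\Phi$ by its essential value, which is insensitive to null-set modifications — is the idea missing from your proposal; it is also where Haar measure, hence local compactness, is genuinely used.
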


 However, for non locally compact Polish groups, there might exist spatial actions which are 
 booleanly isomorphic but not spatially. 
 This is the case for $G=\mathfrak S_\infty$, 
 and we will discuss this more thoroughly in Section \ref{sec: two realizations}.

 We now go back to the definition of boolean isomorphism
 between actions, which can be reinforced via the following well-known proposition.
 
 \begin{prop}\label{prop: boolean iso stronger}
    Two measure-preserving boolean actions $\alpha$ on $(X,\lambda)$ and 
    $\beta$ on $(Y,\eta)$ of a Polish group $G$ are booleanly isomorphic if and only if there 
    is a measure preserving bijection 
    $\Phi : X \to Y$ such that for any $g$ in $G$, 
    there is a conull Borel subset $X_g\subseteq X$ such that for all $x\in X_g$, 
    \[
	\Phi(\alpha(g,x)) = \beta(g,\Phi(x)).
	\] 
 \end{prop}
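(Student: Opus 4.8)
The plan is to dispose of one implication immediately and concentrate on the substantive direction, namely upgrading an abstract boolean isomorphism to a genuine bijection $X\to Y$. For the easy direction, if a measure-preserving bijection $\Phi\colon X\to Y$ with the stated almost-everywhere intertwining is given, then taking $X_0=X$ (a full-measure Borel subset) exhibits $\Phi$ as a boolean isomorphism in the sense of the definition, so there is nothing to prove.

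For the converse I start from a boolean isomorphism $\Phi\colon X_0\to Y$, where $X_0$ is a conull Borel subset and $\Phi$ is a measure-preserving Borel injection. By the same argument as in the remark following Definition~\ref{def: spatial iso}, $\Phi$ is a Borel isomorphism onto its image $Y_0\coloneqq\Phi(X_0)$ (by \cite[Thm.~4.12]{kechrisClassicalDescriptiveSet1995}), which is Borel, and $Y_0$ is conull because $\Phi$ is measure-preserving with $X_0$ conull. Thus $\Phi\colon X_0\to Y_0$ is a measure-preserving Borel isomorphism between conull Borel sets, and the only obstruction to extending it to a bijection $X\to Y$ is that the null complements $X\setminus X_0$ and $Y\setminus Y_0$ need not be Borel isomorphic (one could be countable, the other uncountable). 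This cardinality mismatch is the only real difficulty, and it is a mild one.

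To remove it, I would enlarge both complements by null sets of cardinality continuum. Using that $\lambda$ is nontrivial, atomless and $\sigma$-finite, choose a Borel set $C\subseteq X_0$ with $\lambda(C)=0$ and $\abs{C}=2^{\aleph_0}$ (for instance the preimage of a measure-zero Cantor set inside a finite-measure piece of $X_0$), and set $D\coloneqq\Phi(C)$, which is Borel, null and of cardinality continuum. Then $A\coloneqq(X\setminus X_0)\cup C$ and $B\coloneqq(Y\setminus Y_0)\cup D$ are null, uncountable standard Borel spaces, hence Borel isomorphic by the Borel isomorphism theorem \cite[Thm.~15.6]{kechrisClassicalDescriptiveSet1995}; fix a Borel bijection $\psi\colon A\to B$. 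Define $\wt\Phi\colon X\to Y$ to agree with $\Phi$ on $X_0\setminus C$ and with $\psi$ on $A$. Since $X=(X_0\setminus C)\sqcup A$ and $Y=(Y_0\setminus D)\sqcup B$, this is a Borel bijection; it is measure-preserving because it agrees with the measure-preserving $\Phi$ on the conull set $X_0\setminus C$ while sending the null set $A$ into the null set $B$.

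It remains to verify that $\wt\Phi$ still intertwines the actions almost everywhere. For each $g\in G$, let $X_g\subseteq X_0$ be the conull set on which $\Phi(\alpha(g,x))=\beta(g,\Phi(x))$. Since $\alpha(g)$ is measure-preserving and $C$ is null, the set
\[
X_g'\coloneqq X_g\cap(X_0\setminus C)\cap\alpha(g)\inv(X_0\setminus C)
\]
is conull, and for $x\in X_g'$ both $x$ and $\alpha(g,x)$ lie in $X_0\setminus C$, so $\wt\Phi(x)=\Phi(x)$ and $\wt\Phi(\alpha(g,x))=\Phi(\alpha(g,x))=\beta(g,\Phi(x))=\beta(g,\wt\Phi(x))$. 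Hence $\wt\Phi$ is the desired measure-preserving bijection $X\to Y$, completing the proof. The main obstacle is thus purely the possible cardinality discrepancy of the null complements, which the enlargement trick resolves; the intertwining and measure-preservation checks are then routine.
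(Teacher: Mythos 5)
Your proof is correct and follows essentially the same route as the paper: enlarge the (possibly mismatched) null complements $X\setminus X_0$ and $Y\setminus\Phi(X_0)$ by an uncountable Borel null set $C\subseteq X_0$ and its image $\Phi(C)$, apply the Borel isomorphism theorem to these uncountable null Borel sets, and glue the resulting bijection with $\Phi$ restricted to $X_0\setminus C$. Your explicit verification of the almost-everywhere intertwining for $\wt\Phi$ (via the conull sets $X_g\cap(X_0\setminus C)\cap\alpha(g)\inv(X_0\setminus C)$) is a detail the paper leaves implicit, but the argument is the same.
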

 \begin{proof}
    By definition, assuming that $\alpha$ and $\beta$ are 
    booleanly isomorphic, we have a measure-preserving injection 
    $\Phi:X_0\to Y$ where $X_0$ is a full measure Borel subset of $X$
    satisfying  our equivariance condition. 
    Observe that it suffices to show that there is 
    a measure-preserving bijection $\tilde \Phi:X\to Y$ which coincides with $\Phi$
    on a conull Borel subset of $X_0$.
    
    To this end, let $A\subseteq X_0$ be a Borel uncountable set of measure zero
    (such a set exists since $\lambda$ is atomless, e.g. take for $A$ 
    the triadic Cantor subset
    of $(X,\lambda)=(\R,\Leb)$).

    Then $\tilde A\coloneqq (X\setminus X_0) \sqcup A$ and $\tilde B\coloneqq (Y\setminus \Phi(X_0))\sqcup \Phi(A)$
    are uncountable Borel subsets of standard Borel spaces, so by 
    \cite[Cor.~13.4 and Thm.~15.6]{kechrisClassicalDescriptiveSet1995} there exists a 
    Borel bijection $f: \tilde A\to\tilde B$, and we simply let 
    \(
    \tilde \Phi=f\sqcup \Phi_{\restriction X_0\setminus A}
    \).
\end{proof}

The next remark
was suggested by the first referee and requires the previous proposition.
It can be summed up as follows: 
 the realization problem actually asks whether we can 
 turn the given boolean action 
 into a spatial action without changing the underlying space. 

 \begin{rem}
Let $\alpha$ be a measure-preserving boolean action of a Polish group $G$ on $(X, \lambda)$
for which the realization problem has a positive answer.
Then \(\alpha\) is booleanly isomorphic to a spatial G-action $\beta$ on $(Y, \eta)$ 
through a measure-preserving bijection $\Phi : X \to Y$ 
provided by the previous proposition: for any $g\in G$, 
there is a conull Borel subset $X_g\subseteq X$ such that for all $x \in  X_g$ :
$\Phi (\alpha (g, x)) = \beta (g, \Phi (x))$.
If we define $\tilde \alpha$ by
$\tilde \alpha (g, x) = \Phi^{-1}\beta (g, \Phi (x))$,
then $\tilde \alpha$ is a spatial action on $(X, \lambda)$ 
such that for any $g \in G$ and all $x \in X_g$ we have
$\tilde \alpha (g, x) = \alpha (g, x)$.
In particular $\tilde \alpha$ and $\alpha$ are booleanly isomorphic 
(through the identity map). 
This observation allows us to see the problem of spatial realization 
as finding an appropriate modification of $\alpha$ on the same space.
\end{rem}

We finally observe that the definition of spatial isomorphism could naturally 
be weakened by requiring the $\alpha(G)$-invariant set $X_0$ to be Lebesgue measurable. 
This notion still implies boolean isomorphism of the corresponding 
boolean actions, 
so for locally compact Polish groups this does not change the definition of
being spatially isomorphic thanks to Mackey's Theorem \ref{thm: mackey}.
However, we don't know what the situation is for general Polish groups.

\subsection{Characterization of boolean actions } \label{sec: characterization of action}

 Following the introduction of \cite{glasnerAutomorphismGroupGaussian2005}, we now provide other ways of understanding boolean actions, one of which will be used in our proof of Theorem \ref{thm intro: spatial conjugacy to nice action}.
 We first need to describe the Polish group topology of the group of measure-preserving bijections. 

\begin{defi}
	Let $(X,\lambda)$ be a standard $\sigma$-finite space.
    We denote by $\Aut(X,\lambda)$ the group of measure-preserving bijections, where two such bijections are identified if they coincide on a conull Borel subset of $X$.

The group $\Aut(X,\lambda)$ is endowed with \textbf{weak topology}, defined as follows: $T_n \rightarrow T$ if and only if for all $A \subseteq Y$ Borel subset of finite measure, one has $\lambda(T_n(A) \Delta T(A)) \rightarrow 0$.
    \end{defi}
In order to see that $\Aut(X,\lambda)$ is a Polish group, it is useful to recall the following definition.

    \begin{defi}
The \textbf{measure algebra} $\MAlg_f(X,\lambda)$ is comprised of the Borel subsets of $(X,\lambda)$ of finite measure, where two such subsets are identified if the measure of their symmetric difference is equal to zero. It is equipped with the metric $d_\lambda$, defined as follows~:
\[
d_\lambda(A,B) \coloneqq \lambda(A\Delta B).
\]        
    \end{defi}
    
	\begin{prop}[{\textup{{\cite[Prop.~2.2]{lemaitrePolishTopologiesGroups2022}}}}]
    {\label{prop: Aut Polish}}
$\Aut(X,\lambda)$ is equal to the group of isometries of $(\MAlg_f(X,\lambda),d_\lambda)$ which fix $\emptyset$. As such, it is a Polish group when endowed with the weak topology. 
	\end{prop}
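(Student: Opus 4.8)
The plan is to study the natural map $T\mapsto\Phi_T$ from $\Aut(X,\lambda)$ to the group of isometries of $(\MAlg_f(X,\lambda),d_\lambda)$ fixing $\emptyset$, where $\Phi_T(A)\coloneqq T(A)$, to show it is a bijective group homomorphism, and then to transport the Polish group structure along it.

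First I would check that $\Phi_T$ is well defined and lands among the isometries fixing $\emptyset$. Since $T$ is a measure-preserving bijection, $T(A\Delta B)=T(A)\Delta T(B)$ and $\lambda(T(A))=\lambda(A)$, so $\Phi_T$ preserves $d_\lambda$, maps $\MAlg_f(X,\lambda)$ into itself, and fixes $\emptyset$; moreover $T\mapsto\Phi_T$ is visibly a group homomorphism. Its injectivity is the standard fact that a measure-preserving bijection fixing every finite-measure set modulo null must equal the identity modulo null, which one verifies using a countable subalgebra of finite-measure sets separating the points of $X$.

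The crux is surjectivity. Given an isometry $\Phi$ fixing $\emptyset$, it is automatically measure-preserving, because $\lambda(\Phi(A))=d_\lambda(\Phi(A),\Phi(\emptyset))=d_\lambda(A,\emptyset)=\lambda(A)$. I would then recover the Boolean structure metrically: the inclusion $A\subseteq B$ is equivalent to $d_\lambda(A,B)=\lambda(B)-\lambda(A)$, and a disjoint union $A\sqcup B$ is the unique common upper bound $C$ of $A$ and $B$ with $\lambda(C)=\lambda(A)+\lambda(B)$; since $\Phi$ preserves both $d_\lambda$ and $\lambda$, it preserves inclusions and disjoint unions, hence all finitary Boolean operations, so $\Phi$ is a measure-preserving Boolean isomorphism of the finite part of the measure algebra. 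Using $\sigma$-finiteness to pass to increasing unions, I would extend $\Phi$ to a measure-algebra automorphism of all of $\MAlg(X,\lambda)$. The main obstacle is the final step: to produce a point transformation $T\in\Aut(X,\lambda)$ with $\Phi=\Phi_T$, one invokes the point realization of measure-algebra automorphisms of a standard $\sigma$-finite space. In the finite-measure case this is the classical theorem of von Neumann; in the $\sigma$-finite case one reduces to a concrete model such as $(\R,\Leb)$ (cf.\ the discussion at the start of this subsection) and builds $T$ from a refining sequence of finite measurable partitions on which $\Phi$ acts, obtaining a measure-preserving bijection between conull Borel sets that realizes $\Phi$ on a countable generating algebra, hence everywhere by density.

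It then remains to transport the topology. Unwinding the definition of the weak topology, $T_n\to T$ in $\Aut(X,\lambda)$ means $d_\lambda(\Phi_{T_n}(A),\Phi_T(A))\to 0$ for every finite-measure $A$, that is, $\Phi_{T_n}\to\Phi_T$ pointwise; so the identification above is a homeomorphism onto the isometry group equipped with the topology of pointwise convergence. Since $(\MAlg_f(X,\lambda),d_\lambda)$ is separable (a countable algebra generating the Borel structure is $d_\lambda$-dense among finite-measure sets) and complete (a $d_\lambda$-Cauchy sequence has $\LL^1$-convergent indicator functions, whose limit is again the indicator of a finite-measure set), it is a Polish metric space. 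The isometry group of a Polish metric space, endowed with pointwise convergence, is a Polish group, and the stabilizer of $\emptyset$ is a closed subgroup; hence $\Aut(X,\lambda)$ is Polish.
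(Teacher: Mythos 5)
This proposition is not actually proved in the paper: it is imported verbatim from the cited reference \cite{lemaitrePolishTopologiesGroups2022}, so there is no internal argument to compare yours against; your proposal supplies a proof of something the paper treats as a black box. Judged on its own terms, your argument is correct and is surely the intended one. The map $T\mapsto \Phi_T$ is well defined, multiplicative, and injective exactly for the reason you give (a countable algebra of finite-measure sets separating points). For surjectivity, your metric characterization of inclusion, $A\subseteq B$ if and only if $d_\lambda(A,B)=\lambda(B)-\lambda(A)$, is right; the one spot where your write-up is thinnest is the preservation of disjointness/disjoint unions, which is cleanest via the identity $d_\lambda(A,B)=\lambda(A)+\lambda(B)-2\lambda(A\cap B)$: an isometry fixing $\emptyset$ preserves $\lambda$, hence $\lambda(A\cap B)$, hence disjointness, and being invertible it is an order isomorphism of $(\MAlg_f(X,\lambda),\subseteq)$, hence a Boolean-ring isomorphism; your "unique common upper bound of measure $\lambda(A)+\lambda(B)$" characterization also works, but only once one knows the set of common upper bounds is transported bijectively, which again uses invertibility. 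You correctly identify the genuine crux: realizing the resulting measure-algebra automorphism by a point map. Invoking von Neumann's point-realization theorem is legitimate, and your reduction of the $\sigma$-finite case to the finite one (pass to $(\R,\Leb)$ and cut along a partition into measure-one pieces, or your refining-partition construction) is standard; note only that this produces a measure-preserving bijection between conull Borel sets, which suffices since elements of $\Aut(X,\lambda)$ are identified modulo null sets, and can anyway be upgraded to a bijection of all of $X$ as in Proposition \ref{prop: boolean iso stronger}. The topological endgame is also correct: the weak topology is precisely pointwise convergence of the associated isometries, $(\MAlg_f(X,\lambda),d_\lambda)$ is separable and complete, isometry groups of Polish metric spaces are Polish under pointwise convergence, and the stabilizer of $\emptyset$ is closed.
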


The following proposition tells us that boolean $G$-actions on $(X,\lambda)$ and continuous group homomorphisms $G\to\Aut(X,\lambda)$ are basically the same thing. 
This result is proved in the introduction of \cite{glasnerAutomorphismGroupGaussian2005} in the finite measure case, and it extends almost verbatim to our context. However, their \emph{near actions} have a small difference with our boolean actions: we require $\alpha(g,\cdot)$ to be a measure-preserving bijection while they only require that it is a measure-preserving map, and need an additional axiom ensuring that in the end that it is a bijection \emph{up to a null set}. Having a more restrictive definition which requires $\alpha(g,\cdot)$ to be a bijection of the whole space, we will need to work a little harder in order to prove the corresponding lifting property (item \eqref{item: lifting hom} below).

\begin{prop}{\label{prop: boolean action lift}}
Let $G$ be a Polish group, and $(X,\lambda)$ a standard $\sigma$-finite space. The following hold:
\begin{enumerate}[(I)]
	\item \label{item: boolean yields hom}Every boolean measure-preserving $G$-action $\alpha$  on $(X,\lambda)$  yields a continuous group homomorphism $\pi_\alpha:G\to \Aut(X,\lambda)$.
	\item \label{item: boolean with same hom}Given two boolean measure-preserving $G$-actions $\alpha$, $\beta$ on $(X,\lambda)$, we have $\pi_ \alpha=\pi_\beta$ if and only if the identity map is a boolean isomorphism between $\alpha$ and $\beta$.
	\item \label{item: lifting hom}(lifting property) For every continuous group homomorphism $\pi:G\to \Aut(X,\lambda)$, there is a boolean measure-preserving $G$-action $\alpha$ on $(X,\lambda)$ such that $\pi=\pi_\alpha$. Furthermore, such a lift $\alpha$ is unique up to boolean isomorphism.
\end{enumerate}
\end{prop}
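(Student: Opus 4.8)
The plan is to handle the three items in the order they are stated, the first two being routine verifications and the third carrying essentially all the difficulty.

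For \eqref{item: boolean yields hom}, I would set $\pi_\alpha(g)\coloneqq[\alpha(g,\cdot)]$, the class in $\Aut(X,\lambda)$ of the measure-preserving bijection $\alpha(g,\cdot)$ furnished by \eqref{cond: mp bijection for boolean action}. Condition \eqref{cond: main condition for boolean action} says exactly that $\alpha(gh,\cdot)$ and $\alpha(g,\cdot)\circ\alpha(h,\cdot)$ agree almost everywhere, that is $\pi_\alpha(gh)=\pi_\alpha(g)\pi_\alpha(h)$, so $\pi_\alpha$ is a homomorphism. For continuity, the identity $\lambda(T(A)\Delta B)=\lambda(A)+\lambda(B)-2\lambda(T(A)\cap B)$ (valid since $T$ is measure-preserving) shows that the weak topology on $\Aut(X,\lambda)$ is the initial topology generated by the maps $T\mapsto\lambda(T(A)\cap B)$ with $A,B\in\MAlg_f(X,\lambda)$, and countably many such suffice by separability of $\MAlg_f(X,\lambda)$. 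Hence it is enough to check that each $g\mapsto\lambda(\alpha(g)(A)\cap B)$ is Borel, and using measure preservation I would rewrite
\[
\lambda(\alpha(g)(A)\cap B)=\lambda\big(A\cap\alpha(g)\inv(B)\big)=\int_X \mathbf 1_A(x)\,\mathbf 1_B(\alpha(g,x))\,d\lambda(x).
\]
The integrand is Borel on $G\times X$ because $\alpha$ is Borel, and it is dominated by $\mathbf 1_A\in\LL^1(X,\lambda)$, so the measurability clause of Fubini's theorem makes $g\mapsto\lambda(\alpha(g)(A)\cap B)$ Borel. Thus $\pi_\alpha$ is a Borel homomorphism between Polish groups, hence continuous \cite{kechrisClassicalDescriptiveSet1995}.

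For \eqref{item: boolean with same hom}, the equality $\pi_\alpha=\pi_\beta$ means precisely that for every $g$ the bijections $\alpha(g,\cdot)$ and $\beta(g,\cdot)$ coincide on a conull set, which is exactly the statement that the identity map $X\to X$ is a boolean isomorphism between $\alpha$ and $\beta$; both directions are immediate. My strategy for the lifting property \eqref{item: lifting hom} is to factor through a single tautological object. I would first construct a Borel map $\hat\sigma:\Aut(X,\lambda)\times X\to X$ such that for every $T\in\Aut(X,\lambda)$ the map $\hat\sigma(T,\cdot)$ is a genuine everywhere-defined measure-preserving bijection of $X$ representing the class $T$. Granting this, I set $\alpha(g,x)\coloneqq\hat\sigma(\pi(g),x)$: this is Borel as a composite of the Borel map $(g,x)\mapsto(\pi(g),x)$ with $\hat\sigma$; each $\alpha(g,\cdot)$ is a measure-preserving bijection representing $\pi(g)$, which gives \eqref{cond: mp bijection for boolean action} and $\pi_\alpha=\pi$; and the cocycle identity \eqref{cond: main condition for boolean action} holds because $\hat\sigma(\pi(gh),\cdot)$ and $\hat\sigma(\pi(g),\cdot)\circ\hat\sigma(\pi(h),\cdot)$ both represent $\pi(g)\pi(h)$, hence agree almost everywhere. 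Uniqueness up to boolean isomorphism is then free: if $\alpha,\alpha'$ both lift $\pi$ then $\pi_\alpha=\pi=\pi_{\alpha'}$, so the identity is a boolean isomorphism by \eqref{item: boolean with same hom}.

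It remains to build $\hat\sigma$, which I expect to be the main obstacle and the point where one must work harder than in the near-action setting of \cite{glasnerAutomorphismGroupGaussian2005}. I would fix a countable Boolean algebra $\{A_n\}$ of finite-measure Borel sets that generates $\mathcal{B}(X)$ and separates points, so that $\{x\}=\bigcap_{x\in A_n}A_n\cap\bigcap_{x\notin A_n}(X\setminus A_n)$ for every $x$, and then reconstruct the point map by declaring $\hat\sigma(T,x)$ to be the unique $y$ satisfying $\big(x\in A_n\iff y\in\beta_n(T)\big)$ for all $n$, where $\beta_n(T)$ is a Borel representative of the class $T(A_n)$. The genuine difficulty, and the crux of the whole proposition, is to choose the representatives $\beta_n(T)$ simultaneously Borel in $T$ and so that they satisfy \emph{on the nose}, not merely almost everywhere, the same Boolean relations as the $A_n$ while still separating and reconstructing points, so that the intersection defining $\hat\sigma(T,x)$ is a singleton for \emph{every} $x$ and not just almost every $x$. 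This is exactly the passage from an almost-everywhere-defined near-action to an everywhere-defined bijection, performed measurably in the parameter $T$. I would attempt it by making all choices from a fixed countable dense subset of $\MAlg_f(X,\lambda)$, correcting the naive representatives on a Borel-in-$T$ family of null sets, and repairing bijectivity by a measurable-selection argument, finally checking joint Borel measurability of the resulting $\hat\sigma$.
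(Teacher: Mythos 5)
Items (I) and (II) of your proposal are correct and essentially the paper's own argument: the paper likewise verifies that $\pi_\alpha$ is Borel via the Fubini--Tonelli theorem (applied to the symmetric differences $\lambda(\alpha(g)(A)\bigtriangleup A)$ rather than to your functionals $\lambda(\alpha(g)(A)\cap B)$, an immaterial difference) and then invokes automatic continuity of Borel homomorphisms between Polish groups. Your reduction of (III) to the construction of a single jointly Borel ``tautological'' lift $\hat\sigma:\Aut(X,\lambda)\times X\to X$ of the identity homomorphism of $\Aut(X,\lambda)$, followed by composition $\alpha(g,x)=\hat\sigma(\pi(g),x)$ and uniqueness via (II), is also exactly the paper's structure.

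The gap is that $\hat\sigma$ is never constructed: everything after ``It remains to build $\hat\sigma$'' is a plan, and the plan defers precisely the step that constitutes the proof. Moreover, the requirement you place on the representatives $\beta_n(T)$ --- that they satisfy the Boolean relations of the $A_n$ \emph{on the nose} and reconstruct a unique point $y$ for \emph{every} $x$, Borel-uniformly in $T$ --- is essentially a restatement of the goal rather than a reduction of it: given a genuine bijective realization $\tilde T$ one gets such a system by setting $\beta_n(T)=\tilde T(A_n)$, and conversely such a coherent system immediately defines the realization $\tilde T(x)=\hat\sigma(T,x)$; so as stated the sketch is circular, and no mechanism is offered for producing these representatives. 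What the paper does instead sidesteps exact Boolean relations entirely. After identifying $(X,\lambda)$ with an interval of $\R$ with Lebesgue measure, it defines
\[
V(f,x)=\limsup_{\varepsilon \rightarrow 0} \frac{1}{2\varepsilon}\int_{x-\varepsilon}^{x+\varepsilon} f(z)\,d\lambda(z),
\]
which is jointly Borel on $\LL^0(X,\lambda,X)\times X$ and, by the functional Lebesgue density theorem, yields for each $T\in\Aut(X,\lambda)$ a \emph{canonical} everywhere-defined representative $\beta_0(T,\cdot)$ of $T$; canonicity is what makes Borelness in $T$ automatic, and it is the idea missing from your outline. Bijectivity is then repaired concretely, not by an unspecified measurable-selection argument: one intersects the graph of $\beta_0(T,\cdot)$ with the inverse of the graph of $\beta_0(T\inv,\cdot)$ to obtain a partial injection $\beta_1(T)$, restricts to the conull Borel set $\bigcap_{n\in\Z}D_n^T$ on which all powers $\beta_1(T)^n$ are defined (a set invariant under $\beta_1(T)$), and sets $\hat\sigma(T,\cdot)$ equal to $\beta_1(T,\cdot)$ there and to the identity elsewhere. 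Without this density-point construction, or some substitute for it, your proposal does not contain a proof of (III).
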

    \begin{proof}
(I): Given a boolean measure-preserving action $\alpha$ of $G$ on $(X,\lambda)$, we can consider the natural mapping 
\[
\begin{array}{rcc}
      \pi_\alpha: G & \longrightarrow & \Aut(X,\lambda) \\
      g & \longmapsto & \alpha(\,g \, , \, \cdot \,)
\end{array}
\]
which is well-defined by Condition \eqref{cond: mp bijection for boolean action} 
from the definition of boolean action, and is a group homomorphism 
by Condition \eqref{cond: main condition for boolean action}.
Recall that any Borel group homorphism between Polish groups is automatically continuous 
by a result which dates back to Banach 
(see e.g.\  \cite[Thm.~2.2]{rosendalAutomaticContinuityGroup2009}).
We are thus left with showing that $\pi_\alpha$ is a Borel map. 
By definition of the weak topology on $\Aut(X,\mu)$ it is enough to show that 
for every Borel finite measure subset $A$ of $X$ and every $\varepsilon>0$, the set 
\[
\mathcal B\coloneqq\{g\in G:\lambda(\pi_\alpha(g)(A)\bigtriangleup A)<\varepsilon\}
\] 
is Borel. 
By the definition of $\pi_\alpha$, we can rewrite $\mathcal B$ as 
$\mathcal B=\{g\in G:\mu(\alpha(g,A)\bigtriangleup A)<\varepsilon\}$.
Since the boolean action $\alpha$ is a Borel map, 
the subset $\Gamma \coloneqq \{(g,x)\in G\times X:\ x\in \alpha(g,A)\}$ is Borel 
and hence $\Gamma_A \coloneqq \Gamma\bigtriangleup (G\times A)$ is also Borel. 
By the Fubini-Tonelli theorem, this implies that the map 
\[
M:g\longmapsto \lambda\left(\left\lbrace x\in X:\ x\in A\bigtriangleup \alpha(g,A)\right\rbrace\right)
\]
which associates to $g\in G$ the measure of the horizontal section of 
$\Gamma_A$ above $g$ is also Borel. 
So we can conclude that $\mathcal B$ is Borel by noting that $\mathcal B=M^{-1}([0,\varepsilon[)$,
 thus finishing the proof that $\pi_\alpha$ is Borel and hence continuous as desired.

(II): This is a straightforward consequence of the definition of $\Aut(X,\lambda)$, which identifies measure-preserving bijections which coincide on a conull set.

(III): 
We first observe that it suffices to show that the identity group homomorphism
\(\id_{\Aut(X,\lambda)}:\Aut(X,\lambda)\to\Aut(X,\lambda)\) can be lifted to a boolean measure preserving \(\Aut(X,\lambda)\)-action 
\(\beta\) on \((X,\lambda)\), namely there is a Borel map \(\beta:\Aut(X,\lambda)\times X\to X\) such that for all \(T\in \Aut(X,\lambda)\), the map \(\beta(T,\cdot)\) is a Borel measure-preserving bijection of \((X,\lambda)\) whose equivalence class up
to a null set is equal to \(T\) (in other words, \(\id_{\Aut(X,\lambda)}=\pi_{\beta}\)).

Indeed, granted that such a boolean action \(\beta\) exists, and given 
an arbitrary continuous group homomorphism $\pi : G \to \Aut(X,\lambda)$, we simply 
let \(\alpha(g,x)=\beta(\pi(g),x)\). By construction we have 
\(\pi=\pi_\alpha\), and the uniqueness
of \(\alpha\) is then a direct consequence of \eqref{item: boolean with same hom}.

So let us prove (III) in the case $G = \Aut(X,\lambda)$ and 
\(\pi=\id_{\Aut(X,\lambda)}\).
As explained in Section \ref{sec: prelim action} we may as well assume that $X$ is an interval of $\R$ and $\lambda$ is the Lebesgue measure restricted to that interval.
Denote by $\LL^0(X,\lambda,X)$ the set of measurable functions from $X$ to itself, up to equality $\lambda$-a.e. 
Its Borel structure is defined by requiring the functions $f \mapsto \lambda(A \cap f^{-1}(B))$ to be measurable, for any Borel subsets $A$ and $B$ of $X$ with $A$ of finite measure. 
This makes $\LL^0(X,\lambda,X)$ a standard Borel space, and it is straightforward to check that $\Aut(X,\lambda)$ is a Borel subset of $\LL^0(X,\lambda,X)$.

Following Glasner-Tsirelson-Weiss, we can now define a Borel way to  lift any element in $\LL^0(X,\lambda,X)$ to a genuine function $X\to X$. Define $V : \LL^0(X,\lambda,X) \times X \longrightarrow X $ by
\[
V(f,x) = \limsup_{\varepsilon \rightarrow 0} \dfrac{1}{2 \varepsilon} \int_{x-\varepsilon}^{x+\varepsilon} f(z)d\lambda(z).
\]
Applying the functional version of Lebesgue's density theorem (see \cite[Thm.~223A.]{fremlinMeasureTheoryVol2-2003}), for any $f$ in $\LL^0(X,\lambda,X)$, the function $x \mapsto V(f,x)$ is in the same $\lambda$-a.e. equality equivalence class as $f$. 
Restricting \(V\) to \(\Aut(X,\lambda)\subseteq \LL^0(X,\lambda,X)\), 
we obtain a Borel map $\beta_0:\Aut(X,\lambda) \times X \to X$
verifying all the required conditions, except that
for all \(T\in\Aut(X,\lambda)\), the map
$\beta_0(T)=V(T,\cdot):X\to X$ is a measure-preserving bijection \emph{up to a null set}. 

In order to correct this, let us first define $\beta_1(T)$ as the partial Borel map 
whose graph is the intersection of the graph of $\beta_0(T,\cdot)$ 
with the inverse of the graph of  $\beta_0(T\inv,\cdot)$. In other words,
\[
\beta_1(T,x)=\beta_0(T,x)\text{ if }\beta_0(T\inv,\beta_0(T,x))=x,
\]
otherwise $\beta_1(T,x)$ is undefined.

Then $\beta_1(T)$ is a measure-preserving bijection between full measure subsets of $X$, 
and we denote by $D_n^T$ the conull subset where $\beta_1(T)^n$ is defined, for $n\in\Z$. 
We finally define $\beta(T)$ by $\beta(T,x)=\beta_1(T,x)$ on $\bigcap_n D_n^T$, 
and by $\beta(T,x) =x$ elsewhere. By construction $\beta(T)$ is now a bijection of $X$ 
itself, 
and it preserves the measure since it coincides with $\beta_1(T)$ on the conull
Borel set $\bigcap_n D_n^T$ .

The proof that $\beta$ is still a Borel map boils down to the fact that for every $n\in\Z$,
the set of $(T,x)\in \Aut(X,\lambda)\times X$ such that 
$x \in X\setminus D_n^T$ is Borel, which we leave to the reader. 
Since for every $T\in\Aut(X,\lambda)$, the bijection $\beta(T)$ coincides with $\beta_0(T)$ 
on a full measure set, the map $\beta$ now satisfies all the axioms of a boolean action 
and $\id_{\Aut(X,\lambda)}=\pi_\beta$ as desired. 
\end{proof}

By the above proposition, boolean measure-preserving $G$-actions are essentially the same thing as continuous group homomorphisms $G\to \Aut(X,\lambda)$. 
We can now justify the terminology properly: the space $\MAlg_f(X,\lambda)$ of finite measure Borel subsets up to measure zero can be equipped with the usual set theoretic operations $\Delta$ and $\bigcap$ so as to become a (non-unital) boolean ring.
This boolean ring, when endowed additionally with the metric $d_\lambda$, has its automorphism group equal to $\Aut(X,\lambda)$ as a consequence of Proposition \ref{prop: Aut Polish}.

So continuous group homomorphisms $G\to \Aut(X,\lambda)$ are exactly continuous $G$-actions by automorphism on the metric boolean ring $(\MAlg_f(X,\lambda),\bigtriangleup,\cap,d_\lambda)$, and since these homomorphims are the same thing as boolean actions by the above proposition, the terminology is now fully justified. 

As we will see in the next section, we can also view boolean actions as 
 trace preserving actions on the von Neumann algebra $\LL^\infty(X,\lambda)$
 satisfying an additional continuity condition.

\subsection{Actions on function spaces}\label{sec: actions on function spaces}

Given a standard $\sigma$-finite space $(X,\lambda)$, we denote by $\LL^0(X,\lambda)$ the space of measurable functions $X\to \C$, two such functions being identified if they coincide on a conull set. The following subspaces of functions are of importance to us:
\begin{itemize}
	\item  the space $\LL^\infty(X,\lambda)$ of complex-valued bounded measurable functions on $X$, endowed with the essential supremum norm $\norm{\cdot}_\infty$;
	\item  the space $\LL^1(X,\lambda)$ of complex-valued integrable functions on $X$, endowed with the $\LL^1$ norm given by $\norm{f}_1=\int_X \abs f d\lambda$;
	\item  the space $\LL^2(X,\lambda)$of complex-valued square integrable functions on $X$, endowed with the Hilbert space structure given by the scalar product
	$\langle f,g\rangle=\int_X\overline{f} g d\lambda$.
\end{itemize}

We have a natural left action of $\Aut(X,\lambda)$ on $\LL^0(X,\lambda)$ given by
\[
T \cdot f(x) = f(T\inv(x)).
\]
All the subspaces $\LL^\infty(X,\lambda)$, $\LL^2(X,\lambda)$ and $\LL^1(X,\lambda)$ are invariant under this action. Moreover, they are acted upon by isometries (for the last two, this uses the fact that elements of $\Aut(X,\lambda)$ preserves the measure, while for $\LL^\infty(X,\lambda)$ we only need that the measure class is preserved).

However, the $\Aut(X,\lambda)$-action on $\LL^\infty(X,\lambda)$ is not continuous (for the norm $\norm{\cdot}_\infty$), while the $\Aut(X,\lambda)$-action on $\LL^2(X,\lambda)$ and $\LL^1(X,\lambda)$ are both continuous.
Indeed, for any Borel subset $A$ of $(X,\lambda)$ of finite measure, $A$ is sent by an element $T$ of $\Aut(X,\lambda)$ to a subset $T(A)$ which is close to $A$ in $(\MAlg_f(X,\lambda),{d_\lambda})$ whenever $T$ is close to $\id_X$. This suffices to prove the continuity, as both actions are by isometries and linear combinations of finitely many characteristic functions of finite measure subsets are dense in $\LL^{2}(X,\lambda)$ and $\LL^1(X,\lambda)$ for their respective norms.

We denote by $\mathcal{B}(\LL^{2}(X,\lambda))$ the space of bounded operators on $\LL^{2}(X,\lambda)$, that is to say the bounded linear maps from $\LL^{2}(X,\lambda)$ to itself, and by $M_f$ the operator of pointwise multiplication by a element $f$ of $\LL^{\infty}(X,\lambda)$. We recall that the map
\[
\begin{array}{ccccc}
	M & : & \LL^{\infty}(X,\lambda) & \longrightarrow & \mathcal{B}(\LL^{2}(X,\lambda)) \\
	& & f & \longmapsto &  M_f
\end{array}
\]
is an isometric $\ast$-isomorphism between $\LL^{\infty}(X,\lambda)$ (endowed with the algebra structure provided by pointwise multiplication) and a (von Neumann) subalgebra of $\mathcal{B}(\LL^{2}(X,\lambda))$, where $*$ is the involution taking $f\in\LL^\infty$ to its complex conjugate $x\mapsto \overline{f(x)}$.
The map $M$ allows us to endow $\LL^\infty(X,\lambda)$ with the \textbf{strong operator topology}: a net $f_n$ converges to $f$ if and only for all $\xi \in\LL^2(X,\lambda)$, $M_{f_n}\xi\to M_f\xi$ in $\LL^2$ norm.

\begin{rem}
    Suppose $\lambda$ is infinite and $\mu$ is a finite measure equivalent to $\lambda$. Let $f=\frac{d\lambda}{d\mu}$ be the Radon-Nikodym derivative, then the map $g\mapsto \sqrt f g$ induces a surjective isometry $\LL^2(X,\lambda)\to \LL^2(X,\mu)$ which commutes with the action by multiplication of $\LL^\infty(X,\mu)=\LL^\infty(X,\lambda)$. We conclude that strong convergence in $\LL^\infty(X,\mu)$ is an intrinsic notion which does not depend to the choice of a (possibly infinite) $\sigma$-finite measure in the class of $\mu$.
\end{rem}

We finally identify $\Aut(X,\lambda)$ to the following group.

\begin{prop}\label{prop: aut as integral preserving auto}
	Given any standard $\sigma$-finite space $(X,\lambda)$, the group $\Aut(X,\lambda)$ naturally identifies to the group of $*$-automorphisms of $\LL^\infty(X,\lambda)$ which preserve the integral of elements of $\LL^\infty(X,\lambda)\cap\LL^1(X,\lambda)$.
\end{prop}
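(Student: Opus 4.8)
The plan is to show that the assignment $T\mapsto\Theta_T$, where $\Theta_T(f)=f\circ T\inv$, is a group isomorphism from $\Aut(X,\lambda)$ onto the group of integral-preserving $*$-automorphisms of $\LL^\infty(X,\lambda)$. First I would check that it is well defined and lands in the right target: each $\Theta_T$ is a $*$-automorphism of $\LL^\infty(X,\lambda)$ since precomposition by the bijection $T\inv$ respects pointwise operations and complex conjugation, and it preserves the integral of every $f\in\LL^\infty(X,\lambda)\cap\LL^1(X,\lambda)$ by the change of variables formula together with $T_*\lambda=\lambda$. That $T\mapsto\Theta_T$ is a homomorphism is immediate, and injectivity follows by evaluating on indicators: if $\Theta_T=\Theta_{T'}$ then $\mathds{1}_{T(A)}=\mathds{1}_{T'(A)}$ for every finite-measure $A$, so $T$ and $T'$ induce the same element of $\MAlg_f(X,\lambda)$ and hence agree in $\Aut(X,\lambda)$.

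The crux is surjectivity. Given an integral-preserving $*$-automorphism $\Theta$, I would first examine its action on projections: a self-adjoint idempotent of $\LL^\infty(X,\lambda)$ is exactly an indicator $\mathds{1}_A$, and $*$-automorphisms preserve the relations $p=p^*=p^2$, so there is a boolean map $\phi$ with $\Theta(\mathds{1}_A)=\mathds{1}_{\phi(A)}$; multiplicativity and linearity of $\Theta$ make $\phi$ commute with $\cap$ and $\Delta$. For $A$ of finite measure the hypothesis forces $\lambda(\phi(A))=\int\Theta(\mathds{1}_A)\,d\lambda=\int\mathds{1}_A\,d\lambda=\lambda(A)$, so $\phi$ restricts to a measure-preserving boolean self-map of $\MAlg_f(X,\lambda)$ fixing $\emptyset$, and it is an isometry since $d_\lambda(\phi(A),\phi(B))=\lambda(\phi(A)\Delta\phi(B))=\lambda(\phi(A\Delta B))=\lambda(A\Delta B)=d_\lambda(A,B)$. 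By Proposition~\ref{prop: Aut Polish} it then suffices to prove that $\phi$ is onto, i.e.\ that $\Theta\inv$ also sends indicators of finite-measure sets to indicators of finite-measure sets; I expect \emph{this} to be the main obstacle, since the hypothesis controls only $\Theta$ directly. To get around it, for a finite-measure set $E$ I would write $\mathds{1}_D=\Theta\inv(\mathds{1}_E)$ (a projection) and estimate, for any finite-measure $B$,
\[
\int_B\mathds{1}_D\,d\lambda=\int\Theta(\mathds{1}_D\mathds{1}_B)\,d\lambda=\int\mathds{1}_E\,\mathds{1}_{\phi(B)}\,d\lambda\leq\lambda(E),
\]
using integral-preservation on $\mathds{1}_D\mathds{1}_B=\mathds{1}_{D\cap B}\in\LL^\infty(X,\lambda)\cap\LL^1(X,\lambda)$ together with multiplicativity; letting $B$ increase to $X$ along a $\sigma$-finite exhaustion and applying monotone convergence gives $\lambda(D)\leq\lambda(E)<\infty$, whence $\phi(D)=E$ exhibits $E$ in the range of $\phi$. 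Proposition~\ref{prop: Aut Polish} then yields $T\in\Aut(X,\lambda)$ with $\phi(A)=T(A)$ for all finite-measure $A$.

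It remains to upgrade the equality $\Theta=\Theta_T$ from projections to all of $\LL^\infty(X,\lambda)$. For self-adjoint $f$ and any finite-measure $B$ I would compute, using $\mathds{1}_{T(B)}=\mathds{1}_{\phi(B)}=\Theta(\mathds{1}_B)$ and multiplicativity, then integral-preservation, then the change of variables formula,
\[
\int_{T(B)}\Theta(f)\,d\lambda=\int\Theta(f\mathds{1}_B)\,d\lambda=\int_B f\,d\lambda=\int_{T(B)}(f\circ T\inv)\,d\lambda=\int_{T(B)}\Theta_T(f)\,d\lambda.
\]
As $B$ ranges over finite-measure sets the sets $T(B)=\phi(B)$ range over \emph{all} finite-measure sets by surjectivity of $\phi$, and two measurable functions with equal integral over every finite-measure set of a $\sigma$-finite space agree almost everywhere; hence $\Theta(f)=\Theta_T(f)$ for self-adjoint $f$, and therefore for all $f$ by writing $f=\operatorname{Re}f+i\operatorname{Im}f$ and invoking $\C$-linearity. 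This shows $\Theta=\Theta_T$ and completes the identification.
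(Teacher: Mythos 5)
Your proof is correct and follows the same skeleton as the paper's: precomposition by the inverse for the easy direction and, for the converse, the facts that a $*$-automorphism takes projections to projections, that integral-preservation identifies the integrable projections with $\MAlg_f(X,\lambda)$, and that Proposition~\ref{prop: Aut Polish} then produces $T$. The differences lie exactly at the two points where the paper is terse, and both of your choices are sound. First, you prove that the induced map $\phi$ of $\MAlg_f(X,\lambda)$ is surjective (equivalently, that $\Theta\inv$ again takes finite-measure projections to finite-measure projections) via the estimate $\lambda(D\cap B)=\lambda(E\cap\phi(B))\leq\lambda(E)$ and monotone convergence along a $\sigma$-finite exhaustion; the paper asserts without comment that an integral-preserving automorphism ``defines a measure-preserving transformation'', although bijectivity of the induced isometry is exactly what is needed to apply Proposition~\ref{prop: Aut Polish}, and it is not automatic (when $\lambda$ is infinite, $\MAlg_f(X,\lambda)$ does admit non-surjective isometric self-embeddings fixing $\emptyset$). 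Second, to pass from projections to all of $\LL^\infty(X,\lambda)$, the paper invokes the $\norm\cdot_\infty$-isometry property of $\Theta$ together with density of the linear span of projections; but agreement with $\Theta_T$ is at that stage known only on \emph{integrable} projections, and their linear span is not $\norm\cdot_\infty$-dense when $\lambda(X)=\infty$ (all its elements have finite-measure support, so they stay at distance at least $1$ from the constant function $1$), so the paper's closing step tacitly requires a further argument. Your substitute --- equal integrals over every finite-measure set plus $\sigma$-finiteness force a.e.\ equality, first for self-adjoint $f$ and then in general by linearity --- avoids this issue altogether. In short: same strategy, but your write-up supplies the surjectivity step the paper skips and replaces a density argument that is delicate in the infinite-measure setting by a robust integral argument.
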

\begin{proof}
	We have already observed at the beginning of this section that precomposition by the inverse allows to view every measure-preserving bijection $T$ of $(X,\lambda)$ as an integral-preserving automorphism $\alpha_T$ of $\LL^\infty(X,\lambda)$, namely $\alpha_T(f)(x)=f(T\inv(x))$. Observe that for every $A\subseteq X$ Borel, $\alpha_T(\chi_A)=\chi_{T(A)}$. 
	
	Conversely, every integral-preserving $*$-automorphims $\alpha$ of $\LL^\infty(X,\lambda)$
	must take projections to projections. 
	Since the projections which are integrable are naturally identified to characteristic functions of elements of $\MAlg_f(X,\lambda)$, every integral-preserving automorphism $\alpha$ of $\LL^\infty(X,\lambda)$ defines a measure-preserving transformation $T$.
    Moreover, standard results on $C^*$-algebras yield that $\alpha$ is an isometry for $\norm{\cdot}_\infty$ (see e.g.~\cite[Cor.~1.8]{conwayCourseOperatorTheory2000}).
	Using the $\norm{\cdot}_\infty$-density of the linear span of projections, we conclude that $\alpha$ is equal to $\alpha_T$ as wanted.
\end{proof}

Having the above proposition and Proposition \ref{prop: boolean action lift} in mind, 
we can view boolean actions on $(X,\lambda)$ as integral-preserving actions 
by $*$-automorphisms
\footnote{This is not the right point of view if one wants to make sense of continuous 
actions on von Neumann algebras in general. 
In a more general setup, one conveniently uses the fact that any automorphism yields 
an isometry of the predual which completely determines it, 
thus getting a natural Polish topology on the automorphism group of any von Neumann algebra 
with separable predual.} 
on $\LL^\infty(X,\lambda)$ which are \textbf{continuous} in the following sense 
(derived from Proposition \ref{prop: Aut Polish}): 
whenever $g_n\to g$ and $A\in\MAlg_f(X,\lambda)$, we have 
$\int_X\abs{g_n\cdot \chi_A- g\cdot \chi_A} d\lambda \to 0$,
or equivalently whenever $g_n\to e_G$ and $A\in\MAlg_f(X,\lambda)$, we have 
$\int_X\abs{g_n\cdot \chi_A- \chi_A} d\lambda \to 0$. 
Boolean isomorphisms then become the following. 

\begin{prop}\label{prop: chara boolean iso of action}
    Let $\alpha$ and $\beta$ be two boolean $G$-actions on $(X,\lambda)$ and $(Y,\eta)$ respectively, both viewed as  continuous actions by integral-preserving automorphisms of their respective $\LL^\infty$ spaces. 
    Then $\alpha$ and $\beta$ are booleanly isomorphic if and only if there is an integral-preserving $*$-isomorphism $\rho:\LL^\infty(X,\lambda)\to \LL^\infty(Y,\eta)$ such that for all $f\in \LL^\infty(X,\lambda)$, we have
    $\rho(\alpha(g,f))=\beta(g,\rho(f))$.
\end{prop}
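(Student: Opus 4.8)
The plan is to exploit the dictionary, extending Proposition~\ref{prop: aut as integral preserving auto}, between measure-preserving Borel bijections $X\to Y$ (considered up to null sets) and integral-preserving $*$-isomorphisms $\LL^\infty(X,\lambda)\to\LL^\infty(Y,\eta)$. In one direction, a measure-preserving bijection $\Phi:X\to Y$ gives rise to $\rho_\Phi(f)=f\circ\Phi\inv$, which satisfies $\rho_\Phi(\chi_A)=\chi_{\Phi(A)}$ and is integral-preserving because $\Phi_*\lambda=\eta$. In the other direction, the same argument as in Proposition~\ref{prop: aut as integral preserving auto} shows that any integral-preserving $*$-isomorphism $\rho$ sends finite-measure projections to finite-measure projections, hence induces a measure-preserving isomorphism $\MAlg_f(X,\lambda)\to\MAlg_f(Y,\eta)$, which by the classical isomorphism theorem for standard $\sigma$-finite measure algebras is of the form $\rho_\Phi$ for some measure-preserving Borel bijection $\Phi$ between conull subsets. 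The crucial elementary fact used throughout is that two measure-preserving Borel maps $X\to Y$ inducing the same map on $\MAlg_f$ agree $\lambda$-almost everywhere, which follows from the existence of a countable point-separating family of Borel subsets of the standard Borel space $Y$.

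For the forward implication, I would first invoke Proposition~\ref{prop: boolean iso stronger} to upgrade a boolean isomorphism to a genuine measure-preserving bijection $\Phi:X\to Y$ such that for every $g\in G$ there is a conull Borel set $X_g$ on which $\Phi(\alpha(g,x))=\beta(g,\Phi(x))$. Setting $\rho\coloneqq\rho_\Phi$, which is an integral-preserving $*$-isomorphism by the first paragraph, the pointwise identity on $X_g$ reads $\Phi\circ\alpha(g)=\beta(g)\circ\Phi$ $\lambda$-almost everywhere; since the $\LL^\infty$-actions are given by precomposition with the inverse, a direct computation then yields $\rho(\alpha(g,f))=\beta(g,\rho(f))$ for every $f\in\LL^\infty(X,\lambda)$ and every $g\in G$.

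For the reverse implication, given an equivariant integral-preserving $*$-isomorphism $\rho$, the first paragraph produces a measure-preserving Borel bijection $\Phi$ between conull subsets with $\rho=\rho_\Phi$. Evaluating the equivariance $\rho(\alpha(g,f))=\beta(g,\rho(f))$ on characteristic functions $f=\chi_A$ with $A\in\MAlg_f(X,\lambda)$ yields $\Phi(\alpha(g,A))=\beta(g,\Phi(A))$ in $\MAlg_f(Y,\eta)$ for all such $A$; in other words the two measure-preserving maps $\Phi\circ\alpha(g)$ and $\beta(g)\circ\Phi$ induce the same map on the measure algebra. By the elementary fact recalled above they therefore coincide on a conull Borel set $X_g$, which is precisely the pointwise equivariance required of a boolean isomorphism. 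Thus $\Phi$ is a boolean isomorphism between $\alpha$ and $\beta$.

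The routine verifications — that $\rho_\Phi$ is a well-defined isometric $*$-isomorphism which is integral-preserving, and the bookkeeping of the various null sets — are straightforward. I expect the only genuinely substantial point to be the extension of Proposition~\ref{prop: aut as integral preserving auto} from automorphisms of a single $\LL^\infty$ space to $*$-isomorphisms between two distinct spaces, i.e.\ realizing an abstract measure algebra isomorphism by an honest measure-preserving Borel bijection defined and bijective off a genuine null set; this is where the standard theory of standard $\sigma$-finite measure algebras does the real work.
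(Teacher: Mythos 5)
Your proof is correct and takes essentially the same route as the paper's: the paper simply identifies $(X,\lambda)$ with $(Y,\eta)$ (via the classification of standard $\sigma$-finite spaces) so that Proposition~\ref{prop: aut as integral preserving auto} applies verbatim, lifts $\rho$ to a measure-preserving bijection, and concludes by the a.e.-uniqueness of lifts --- precisely the dictionary-plus-uniqueness mechanism you describe, with your two-space generalization of that proposition replaced by the reduction to $X=Y$. The only difference is that you spell out the verifications (Proposition~\ref{prop: boolean iso stronger}, evaluation on characteristic functions, countable separating families) which the paper leaves to the reader as ``straightforward to check''.
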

\begin{proof}
We may as well assume that $X=Y$ and $\lambda=\eta$, so that by the above proposition $\rho$ can be lifted to a measure-preserving bijection $\Phi:X\to X$. Using the uniqueness of lifts of elements of $\Aut(X,\lambda)$ up to measure zero, it is then straightforward to check that $\Phi$ satisfies the required conditions to be a boolean isomorphism between the actions.
\end{proof}

\subsection{Locally finite measures}{\label{sec: locally finite and Radon}}

In this section, we introduce the main property of a Borel measure on a Polish space that we will be interested in, and connect it with the notion of a Radon measure when the ambient space is in addition locally compact.

\begin{defi}
    Let $\lambda$ be a Borel measure on a Polish space $X$. We say that $\lambda$ is \textbf{locally finite} if every $x\in X$ admits an open neighborhood $U$ such that $\lambda(U)<+\infty$.
\end{defi}

Of course this property is only interesting for infinite measures. Let us first observe that it implies $\sigma$-finiteness.

\begin{lem}\label{lem: locally finite Polish implies sigma finite}
    Every locally finite measure $\lambda$ on a Polish space $X$ is $\sigma$-finite.
\end{lem}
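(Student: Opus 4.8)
The plan is to exploit the fact that a Polish space is second-countable, which is what makes local finiteness upgrade to $\sigma$-finiteness. Concretely, since $X$ is separable and metrizable, it admits a countable basis $\{B_n\}_{n\in\N}$ for its topology; this is the only structural input I expect to need, beyond monotonicity of the measure.

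First I would record, for each $x\in X$, an open neighborhood $U_x\ni x$ with $\lambda(U_x)<+\infty$ supplied by local finiteness. Then, using that $\{B_n\}$ is a basis, I would pick for each such $x$ an index $n_x$ with $x\in B_{n_x}\subseteq U_x$. By monotonicity of $\lambda$ this forces $\lambda(B_{n_x})\leq \lambda(U_x)<+\infty$, so each selected basic open set has finite measure.

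To conclude, I would observe that the collection of selected indices $\{n_x : x\in X\}$ is a subset of $\N$, hence countable, while the corresponding basic open sets $\{B_{n_x}\}$ cover $X$ since each $x$ lies in its own $B_{n_x}$. This exhibits $X$ as a countable union of Borel sets of finite measure, which is precisely the definition of $\sigma$-finiteness.

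I do not anticipate a genuine obstacle here: the argument is essentially immediate once the countable basis is in play. The only point worth flagging is that the proof rests squarely on second-countability (equivalently, the Lindel\"of property) of Polish spaces — it is this countability that turns the pointwise hypothesis of local finiteness into a countable covering — rather than on any special feature of completeness or of the measure beyond monotonicity.
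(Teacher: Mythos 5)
Your proof is correct and follows essentially the same route as the paper: the paper covers $X$ by finite-measure open sets and invokes Lindel\"of's lemma to extract a countable subcover, whereas you simply unpack the standard proof of Lindel\"of's lemma (refining through a countable basis), so the two arguments coincide in substance.
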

\begin{proof}
    By assumption, $X$ is covered by finite measure open sets. Lindelöf's lemma grants us a countable subcover which witnesses the fact that $\lambda$ is $\sigma$-finite.
\end{proof}
    \begin{rem}
A $\sigma$-finite measure on a standard Borel space needs not be locally finite. For instance, the counting measure on $\Q$ extends to a non locally finite $\sigma$-finite measure on $\R$. More interesting examples will be given later on.
    \end{rem}

We then recall a definition of Radon measures, following \cite[Sec.~7.2]{cohnMeasureTheorySecond2013}.  

    \begin{defi}\label{def: Radon measure}
A measure $\lambda$ on the Borel $\sigma$-algebra of a Hausdorff topological space $X$ is called \textbf{Radon} when it verifies the following:
\begin{enumerate}
    \item $\lambda(K) < + \infty$ for any compact subset $K \subseteq X$,
    \item for each open subset $U$ of $X$, we have $\lambda(U) = \sup\{ \lambda(K) \mid K \subseteq U ,  \mbox{ and $K$ is compact} \}$ (inner regularity on open sets).
    \item for each Borel subset $A$ of $X$, we have $\lambda(A) = \inf\{ \lambda(U) \mid A \subseteq U ,  \mbox{ and $U$ is open} \}$ (outer regularity on Borel sets).
\end{enumerate}
    \end{defi}

 Recall that a locally compact space is Polish if and only if it is second-countable (see \cite[Thm.~5.3]{kechrisClassicalDescriptiveSet1995}). In such spaces, local finiteness behaves particularly nicely.

\begin{prop}{\label{prop: Radon equivalent to locally finite on locally compact}}
    Let $X$ be a locally compact Polish space, let $\lambda$ be a Borel measure on $X$. 
    Then $\lambda$ is Radon if and only if it is locally finite. 
\end{prop}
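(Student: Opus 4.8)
The plan is to prove the two implications separately, the forward one being essentially immediate and the reverse one carrying all the topological content.

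For the forward implication, suppose $\lambda$ is Radon and fix $x\in X$. Since $X$ is locally compact Hausdorff, $x$ admits an open neighborhood $U$ whose closure $\overline U$ is compact. By the first condition in Definition~\ref{def: Radon measure} we have $\lambda(\overline U)<+\infty$, and since $U\subseteq\overline U$ this yields $\lambda(U)\leq\lambda(\overline U)<+\infty$. Thus every point has a finite-measure open neighborhood, so $\lambda$ is locally finite.

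For the reverse implication, assume $\lambda$ is locally finite; I must then verify the three conditions of Definition~\ref{def: Radon measure}. Condition~(1) is a direct compactness argument: given a compact $K\subseteq X$, local finiteness provides for each $x\in K$ an open neighborhood $U_x$ with $\lambda(U_x)<+\infty$, and extracting a finite subcover $U_{x_1},\dots,U_{x_n}$ of $K$ gives $\lambda(K)\leq\sum_{i=1}^n\lambda(U_{x_i})<+\infty$. So $\lambda$ is finite on compact sets.

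The heart of the matter is conditions~(2) and~(3), namely inner regularity on open sets and outer regularity on Borel sets. Rather than establishing these by hand, I would deduce them from the general regularity theory for locally compact Hausdorff spaces all of whose open subsets are $\sigma$-compact (see the results in \cite[Sec.~7.2]{cohnMeasureTheorySecond2013}): on such a space, any Borel measure finite on compact sets is automatically Radon. To apply this, the only remaining point is to check that every open subset $U\subseteq X$ is $\sigma$-compact. This holds because $U$, being an open subspace of a locally compact Polish space, is itself locally compact and second-countable, hence Lindelöf; covering $U$ by open sets whose closures are compact and contained in $U$, then extracting a countable subcover, we exhaust $U$ by countably many compact sets. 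Together with condition~(1) already verified, the cited regularity theory then shows that $\lambda$ is Radon. I expect the main obstacle to be not any single delicate estimate but correctly isolating this topological input — the $\sigma$-compactness of open subsets — that feeds the general regularity statement; proving that statement from scratch would reduce to the usual compact-exhaustion argument, which is precisely why I prefer to invoke it directly.
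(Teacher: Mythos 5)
Your proof is correct and follows essentially the same route as the paper: the forward direction is the same immediate argument, and for the converse the paper likewise checks finiteness on compact sets by a finite subcover and then invokes \cite[Prop.~7.2.3]{cohnMeasureTheorySecond2013}, which is exactly the regularity statement you describe. The only difference is that you explicitly verify the $\sigma$-compactness of open subsets needed to apply that result, a hypothesis the paper's proof leaves implicit.
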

\begin{proof}
    Suppose $\lambda$ is Radon, then since it is finite on compact sets and $X$ is locally compact, we conclude that $\lambda$ is locally finite.

    Conversely, suppose $\lambda$ is locally finite. Then by compactness every compact subset can be covered by finitely many finite measure open subsets, and hence has finite measure.
    The conclusion now follows from \cite[Prop.~7.2.3]{cohnMeasureTheorySecond2013}.
\end{proof}

    \begin{rem}{\label{innerregonallBorel}}
As noted in \cite[Prop. 7.2.6]{cohnMeasureTheorySecond2013}, $\sigma$-finiteness implies inner regularity on all Borel sets, not only open sets. Putting together the previous proposition and Lemma \ref{lem: locally finite Polish implies sigma finite}, we see that locally finite Borel measures on locally compact Polish spaces are always inner regular on all Borel sets.
    \end{rem}

We finally quote the Riesz-Markov-Kakutani representation theorem, which will allow us to build Radon  measures on new spaces.

\begin{thm}[{Riesz-Markov-Kakutani, see e.g.~\cite[Thm.~2.14]{rudinRealComplexAnalysis1987}}]{\label{thm: Riesz}}
Let $Y$ be a Polish locally compact space, and let $\Psi$ be a positive linear functional on the space of complex-valued compactly supported continuous functions $C_c(Y)$. Then there exists a unique 
Borel measure $\eta$ on $Y$ such that
\[
\Psi(f) = \int_Y f d\eta
\]
holds for each $f$ in $C_c(Y)$. Moreover, the measure $\eta$ is Radon.
\end{thm}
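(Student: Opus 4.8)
The space $Y$ is locally compact Hausdorff and second countable, hence $\sigma$-compact, which will make the regularity bookkeeping painless. The plan is to run the classical construction of the representing measure from $\Psi$ and then read off the three defining properties of a Radon measure. Throughout, the only thing I use about $\Psi$ beyond linearity is that positivity makes it monotone (if $f\leq g$ pointwise with $f,g$ real, then $\Psi(f)\leq\Psi(g)$), together with Urysohn's lemma for locally compact Hausdorff spaces: for every compact $K$ inside an open set $U$ there is $f\in C_c(Y)$ with $\chi_K\leq f\leq\chi_U$ and $\supp f\subseteq U$. I write $f\prec U$ to mean $f\in C_c(Y)$, $0\leq f\leq 1$ and $\supp f\subseteq U$.

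First I would define a set function on open sets by $\eta_0(U)=\sup\{\Psi(f):f\prec U\}$ and then pass to an outer measure $\eta^*(E)=\inf\{\eta_0(U):E\subseteq U\text{ open}\}$ on arbitrary subsets. The routine verifications are that $\eta^*$ is an outer measure---countable subadditivity on open sets comes from the fact that any $f\prec\bigcup_n U_n$ has compact support covered by finitely many of the $U_n$, to which one applies a Urysohn partition of unity to split $\Psi(f)$---and that, by Carathéodory's criterion, $\eta^*$ restricts to a genuine measure $\eta$ on a $\sigma$-algebra $\mathcal M$. The one step that needs care here is showing that every open set lies in $\mathcal M$, after which $\mathcal M$ contains all Borel sets. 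Granting this, outer regularity on Borel sets and inner regularity on open sets are immediate from the two defining formulas, and finiteness on compact sets follows because every compact set is contained in a finite-measure open set (here local compactness enters).

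The technical heart, and what I expect to be the main obstacle, is the exact identity $\Psi(f)=\int_Y f\,d\eta$ for $f\in C_c(Y)$. By splitting into real and imaginary parts, and shifting and scaling, it suffices to treat a real $f$ with values in $[0,1]$ and compact support $K$. One partitions $[0,1]$ into small intervals, lets $E_i$ be the corresponding level sets of $f$, chooses open sets $V_i\supseteq E_i$ on which $f$ is nearly constant and whose $\eta$-measures barely exceed $\eta(E_i)$, and takes a Urysohn partition of unity $h_i\prec V_i$ with $\sum_i h_i=1$ on $K$. Writing $f=\sum_i f h_i$ and estimating each $\Psi(f h_i)$ from above and below against the level-set measures, one sandwiches both $\Psi(f)$ and $\int_Y f\,d\eta$ between the same lower and upper Riemann-type sums; letting the mesh tend to $0$ forces equality. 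The delicate point is arranging the $V_i$ and the partition of unity so the two sides telescope to the same quantity with controlled error.

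Finally, for uniqueness, if $\eta'$ is another Radon measure with $\int_Y f\,d\eta'=\Psi(f)$ for all $f\in C_c(Y)$, then inner regularity together with Urysohn's lemma gives $\eta'(U)=\sup\{\int_Y f\,d\eta':f\prec U\}=\eta_0(U)=\eta(U)$ for every open $U$; since both measures are outer regular on Borel sets and agree on open sets, they agree everywhere. The $\sigma$-compactness of $Y$ guarantees that the inner and outer regularity requirements of Definition \ref{def: Radon measure} are genuinely met by the constructed $\eta$, completing the argument.
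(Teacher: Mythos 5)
Your construction of $\eta$ and the verification that it is Radon and represents $\Psi$ follow the classical Riesz argument and are fine in outline (the paper itself does not reprove this part; it simply cites Rudin). The genuine gap is in your uniqueness step. The theorem asserts that $\eta$ is the unique \emph{Borel} measure representing $\Psi$ --- Radon-ness is stated as an additional property of that unique measure, not as a hypothesis restricting the class in which uniqueness holds. Your argument begins ``if $\eta'$ is another \emph{Radon} measure with $\int_Y f\,d\eta'=\Psi(f)$\dots'', so you have only proved uniqueness within the class of Radon measures. This is not a pedantic distinction: the paper relies on the stronger form. In the proof of Theorem \ref{thm: admits model is equivalent to functions separate points}, the measure compared with $\eta$ is a pushforward $\Phi_*\lambda$, which is not known in advance to be Radon, and the paper inserts a remark immediately after the theorem stressing that it is crucial that the uniqueness of $\eta$ does not require the competing measure to be Radon.

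The missing step is fortunately short, and it is exactly what the paper's remark supplies. Let $m$ be any Borel measure on $Y$ with $\Psi(f)=\int_Y f\,dm$ for all $f\in C_c(Y)$. For every compact $K\subseteq Y$, Urysohn's lemma provides $f\in C_c(Y)$ with $\chi_K\leq f$, whence $m(K)\leq \int_Y f\,dm=\Psi(f)<+\infty$; since every point of $Y$ has an open neighborhood with compact closure, $m$ is locally finite. By Proposition \ref{prop: Radon equivalent to locally finite on locally compact} (this is where local compactness together with the Polish hypothesis, i.e.\ second countability, enters), $m$ is therefore Radon, and only now does your uniqueness-among-Radon-measures argument apply to $m$ and close the gap. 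With this paragraph added, your self-contained proof is complete; note that this reduction is essentially the only content the paper adds beyond citing Rudin, so your route (a full construction) is more detailed than the paper's citation-plus-remark, but both hinge on the same final observation.
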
 

\begin{rem}
    In the proof of Theorem \ref{thm: admits model is equivalent to functions separate points},
    it is crucial that the uniqueness of 
    \(\eta\) does not require it to be Radon. 
    As observed in the beginnning of the proof of \cite[Thm.~2.14]{rudinRealComplexAnalysis1987} 
    any Borel measure \(m\) such that 
    \(
\Psi(f) = \int_Y f dm
\)
holds for each $f$ in $C_c(Y)$ must be finite on compact sets,
hence locally finite, which implies that it is Radon by 
Proposition \ref{prop: Radon equivalent to locally finite on locally compact}, so the above statement does follow from 
\cite[Thm.~2.14]{rudinRealComplexAnalysis1987} (see also \cite[Thm.~2.18]{rudinRealComplexAnalysis1987}).
\end{rem}

\subsection{Gelfand spaces in the non unital case}

In order to build spatial actions, we will as in \cite{glasnerAutomorphismGroupGaussian2005} crucially use separable $C^*$ algebras, 
but in our setup these will be non-unital. 
We thus start by recalling the Gelfand-Naimark theorem
in the case of a non-unital commutative $C^*$-algebra.

	\begin{defi}
The \textbf{spectrum} of a commutative $C^\ast$-algebra $\mathcal{A}$ is the space of \textbf{characters} on $\mathcal{A}$, that is to say the space of non-zero homomorphisms $\mathcal{A} \longrightarrow \mathbb{C}$. We denote by $\spec(\mathcal A)$ the spectrum of $\mathcal{A}$. It is locally compact and Hausdorff when equipped with the topology of pointwise convergence, and if $\mathcal{A}$ is unital, it is a compact space.
	\end{defi}

	\begin{prop}
		\label{prop: separable implies second-countable spectrum}
If a commutative $C^\ast$-algebra $\mathcal{A}$ is separable, then $\spec(\mathcal A)$ is a locally compact Polish space.
	\end{prop}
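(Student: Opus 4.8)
The plan is to reduce everything to showing that $\spec(\mathcal A)$ is second-countable. Indeed, a locally compact Hausdorff space is Polish precisely when it is second-countable (the criterion recorded as \cite[Thm.~5.3]{kechrisClassicalDescriptiveSet1995}), and the definition of the spectrum already grants that $\spec(\mathcal A)$ is locally compact and Hausdorff for the topology of pointwise convergence. So the entire content of the proposition is to extract a countable base for this topology from the separability of $\mathcal A$.

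To do this, I would realize $\spec(\mathcal A)$ as a subspace of the closed unit ball of the dual. Every character $\chi:\mathcal A\to\C$ is a nonzero multiplicative linear functional, hence automatically continuous with $\norm{\chi}\leq 1$ (a standard fact for commutative Banach algebras). Thus $\spec(\mathcal A)$ is contained in the closed unit ball $B$ of $\mathcal A^*$, and, directly from the definitions, the topology of pointwise convergence on characters is exactly the restriction to $\spec(\mathcal A)$ of the weak-$*$ topology on $B$.

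Now I would invoke two classical facts about $B$. First, by the Banach--Alaoglu theorem, $B$ is weak-$*$ compact. Second, because $\mathcal A$ is separable, $B$ is weak-$*$ metrizable: fixing a countable dense subset $(a_n)$ of the unit ball of $\mathcal A$, the formula $d(\varphi,\psi)=\sum_n 2^{-n}\abs{\varphi(a_n)-\psi(a_n)}$ defines a metric on $B$ inducing the weak-$*$ topology. A compact metric space is second-countable, and second-countability passes to arbitrary subspaces, so $\spec(\mathcal A)$ is second-countable. Combined with local compactness and Hausdorffness, the cited characterization then yields that $\spec(\mathcal A)$ is a locally compact Polish space, as desired.

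The only point genuinely requiring care is the verification that the metric $d$ above induces the weak-$*$ topology on $B$ rather than a strictly finer one; this is where the density of $(a_n)$ together with the uniform bound $\norm{\varphi}\leq 1$ valid on all of $B$ is essential, since it lets one approximate the pairing against an arbitrary $a\in\mathcal A$ uniformly over $B$. Everything else is a direct appeal to standard functional-analytic theorems, so I expect this metrizability step to be the main (though still routine) obstacle.
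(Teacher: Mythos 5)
Your proof is correct and follows essentially the same route as the paper: both arguments reduce the proposition to showing that $\spec(\mathcal A)$ is second-countable and then invoke the fact that a locally compact second-countable Hausdorff space is Polish (\cite[Thm.~5.3]{kechrisClassicalDescriptiveSet1995}). The only difference is that the paper outsources second-countability to a citation (\cite[Rem.~4.4.1]{murphyAlgebrasOperatorTheory1990}), whereas you prove it directly via the standard Banach--Alaoglu plus weak-$*$ metrizability argument for the dual ball of a separable Banach algebra, which is precisely the content behind that citation.
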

    \begin{proof}
        By \cite[Rem.~4.4.1]{murphyAlgebrasOperatorTheory1990}, $\spec(\mathcal A)$ is second-countable. 
        Since every locally compact second-countable Hausdorff space is Polish \cite[Thm.~5.3]{kechrisClassicalDescriptiveSet1995}, the conclusion follows.
    \end{proof}

	\begin{thm}[{Gelfand-Naimark, see e.g.~\cite[Thm.~2.1.10]{murphyAlgebrasOperatorTheory1990}}]\label{thm: Gelfand}
Let $\mathcal{A}$ be a non-zero commutative $C^\ast$-algebra. The Gelfand representation 
\[
\begin{array}{ccc}
\mathcal{A} & \longrightarrow & C_{0}(\spec(\mathcal A) )\\
f & \longmapsto & \widehat{f}
\end{array}
\]
where $\widehat{f}(z) = z(f)$, between $\mathcal{A}$ and the space of continuous functions that vanish at infinity on $\spec(\mathcal A)$ is an isometric $\ast$-isomorphism. 
	\end{thm}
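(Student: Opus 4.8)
The plan is to reduce the non-unital case to the classical unital Gelfand--Naimark theorem via unitization; in the unital case the statement is exactly the classical theorem, since then $\spec(\mathcal A)$ is compact and $C_0(\spec(\mathcal A)) = C(\spec(\mathcal A))$. So assume $\mathcal A$ is non-unital. First I would adjoin a unit: form the unital commutative $C^*$-algebra $\wt{\mathcal A} = \mathcal A \oplus \C$ with product $(a,\lambda)(b,\mu) = (ab + \lambda b + \mu a, \lambda\mu)$, involution $(a,\lambda)^* = (a^*, \bar\lambda)$, and the unique $C^*$-norm extending that of $\mathcal A$. Then $\mathcal A$ sits inside $\wt{\mathcal A}$ as a closed (maximal) ideal.

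Next I would relate the two spectra. Every character $z$ of $\mathcal A$ extends uniquely to a character $\wt z(a,\lambda) = z(a) + \lambda$ of $\wt{\mathcal A}$, and there is exactly one further character $z_\infty(a,\lambda) = \lambda$, namely the one vanishing on the ideal $\mathcal A$. A short argument shows that this identifies $\spec(\wt{\mathcal A})$ with the one-point compactification $\spec(\mathcal A) \cup \{z_\infty\}$, both as sets and topologically: $\spec(\wt{\mathcal A})$ is compact for the topology of pointwise convergence because $\wt{\mathcal A}$ is unital, and the subspace topology it induces on $\spec(\mathcal A)$ coincides with the original one.

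I would then invoke the unital Gelfand--Naimark theorem for $\wt{\mathcal A}$: the Gelfand transform $\wt{\mathcal A} \to C(\spec(\wt{\mathcal A}))$ is an isometric $*$-isomorphism. Restricting to $\mathcal A$, the transform of an element $(a,0)$ vanishes at $z_\infty$, so after restriction to $\spec(\mathcal A)$ it lies in $C_0(\spec(\mathcal A))$; conversely, every function in $C_0(\spec(\mathcal A))$ extends continuously by the value $0$ at $z_\infty$ and hence arises this way. This exhibits the Gelfand transform as an isometric $*$-isomorphism $\mathcal A \to C_0(\spec(\mathcal A))$ with $\widehat a(z) = z(a)$, as claimed.

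The real substance lies in the unital theorem, which I would regard as the main obstacle. There one must show: that the transform is a $*$-homomorphism, which rests on self-adjoint elements having real spectrum; that it is isometric, i.e.\ $\norm{\widehat a}_\infty = \norm a$, which follows from the $C^*$-identity together with the spectral radius formula $r(a) = \norm a$ valid for normal elements (and in a commutative algebra every element is normal); and that it is surjective onto $C(\spec(\wt{\mathcal A}))$, which follows from Stone--Weierstrass since the image is a closed, self-adjoint, unital subalgebra that separates points (characters separate points by construction).
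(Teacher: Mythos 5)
Your proposal is correct. Note, however, that the paper offers no proof of this statement at all: it is quoted as a standard result with a citation to Murphy's book (Thm.~2.1.10), and your argument --- unitization $\wt{\mathcal A}=\mathcal A\oplus\C$, identification of $\spec(\wt{\mathcal A})$ with the one-point compactification of $\spec(\mathcal A)$, the unital theorem via real spectra of self-adjoint elements, the spectral radius formula and Stone--Weierstrass --- is essentially the standard textbook proof found in that very reference, so there is nothing to contrast.
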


We now give the proof of the following fact, which is probably well-known, but for which we have found no reference in the literature. 
Combined with Theorem \ref{thm: Riesz}, this will allow us to define a Radon measure on $\spec(\mathcal A)$. 
We are grateful to Georges Skandalis for pointing this fact out to us.

	\begin{prop}{\label{prop:FaitGeorges}}
Let $\mathcal{A} = C_0(X)$, where $X$ is a locally compact Polish space. Let also $\mathcal{I}$ be an ideal of $\mathcal{A}$, dense in $\mathcal{A}$ with regards to the sup norm $\norm{\cdot}_{\infty}$. Then, $C_c(X) \subseteq \mathcal{I}$, where $C_c(X)$ denotes the space of compactly supported continuous functions on $Y$.
	\end{prop}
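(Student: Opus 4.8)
The plan is to show that every $f\in C_c(X)$ belongs to $\mathcal I$ by a bootstrapping argument. Density of $\mathcal I$ in $\mathcal A$ by itself only places $f$ in the closure of $\mathcal I$, which we already know, so the real work is to promote an approximant of a cutoff function to an \emph{exact} factorization of $f$ through $\mathcal I$. The key idea is that multiplying an approximant by its complex conjugate lands back inside $\mathcal I$ (using that $\mathcal I$ absorbs multiplication by all of $\mathcal A$) while producing a nonnegative function that is bounded away from $0$ on $\supp f$, which we can then legitimately divide out.

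First I would fix $f\in C_c(X)$ and set $K=\supp f$, a compact subset of $X$. Since $X$ is locally compact Hausdorff, Urysohn's lemma provides $g\in C_c(X)\subseteq \mathcal A$ with $0\le g\le 1$ and $g\equiv 1$ on $K$. By density of $\mathcal I$ in $\mathcal A$ for $\norm{\cdot}_\infty$, I then pick $h\in\mathcal I$ with $\norm{h-g}_\infty<\tfrac12$.

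Next, set $p=h\overline h=\abs h^2$. Since $\overline h\in\mathcal A$ and $\mathcal I$ is an ideal, we have $p\in\mathcal I$; moreover $p\ge 0$, and because $g\equiv 1$ on $K$ forces $\abs h>\tfrac12$ there, we get $p>\tfrac14$ on $K$. The set $V=\{x\in X:p(x)>\tfrac14\}$ is thus an open neighborhood of $K$. I now define $q\colon X\to\C$ by $q=f/p$ on $V$ and $q=0$ on $X\setminus\supp f$. These two prescriptions agree on the overlap (where $f$ vanishes), the two open sets cover $X$ (as $\supp f=K\subseteq V$), and each piece is continuous, so $q$ is continuous on $X$; since $q$ vanishes off the compact set $K$, in fact $q\in C_c(X)\subseteq\mathcal A$. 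By construction $pq=f$ everywhere on $X$.

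Finally, since $p\in\mathcal I$, $q\in\mathcal A$ and $\mathcal I$ is an ideal, we conclude $f=qp\in\mathcal I$, which gives $C_c(X)\subseteq\mathcal I$. The main obstacle is conceptual rather than computational: density only yields $f\in\overline{\mathcal I}$, and one must exploit the ideal structure together with the compact support of $f$ and the lower bound on $p$ over $K$ in order to carry out the division $f/p$ \emph{inside} $\mathcal A$ and thereby land exactly in $\mathcal I$. The one genuinely delicate point is checking that the glued function $q$ is continuous and compactly supported, which is precisely what the covering-and-overlap argument above is designed to handle.
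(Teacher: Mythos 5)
Your proof is correct and follows essentially the same approach as the paper's: approximate a cutoff function equal to $1$ on $\supp f$ by an element $h$ of the ideal, so that $\abs h > \frac12$ on $\supp f$, then divide it out of $f$ and invoke the ideal property to land back in $\mathcal I$. The only difference is mechanical: the paper applies the Tietze extension theorem a second time to produce $j \in C_0(X)$ with $j = 1/h$ on the support (so that $f = h\,(jf) \in \mathcal I$), whereas you form $p = h\overline h \in \mathcal I$ and build the quotient $q = f/p$ by an explicit gluing argument, which accomplishes the same division.
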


	\begin{proof}
Let us fix a non-zero function $f$ in $C_c(X)$, supported in a compact subset $K$. 

The Tietze extension theorem grants us a function $g$ in $C_0(X)$ such that $g = 1$ on $K$. 
By density, we have a  function $h\in\mathcal{I}$ such that $\norm{g-h}_\infty <\frac 12$. In particular $$\supp{f} \subseteq K \subseteq \left \{x\in X\colon \abs{h(x)} > \frac 12 \right\}.$$ 
Applying the Tietze extension theorem again, we find $j \in C_0(Y)$ such that $j = \frac 1 h$ on $K$. 
We conclude the proof by noting that  $h j f = f$ is in $\mathcal{I}$, as $\mathcal I$ is an ideal.
	\end{proof}

\subsection{Density for \texorpdfstring{$C^*$}{C*} subalgebras of \texorpdfstring{$\LL^\infty$}{Linfinity}}

In what follows, given a set of functions $\mathcal F\subseteq\LL^{\infty}(X,\lambda)$, we denote by $(\mathcal F)_1$ the intersection of $\mathcal F$ with the unit ball of $\LL^\infty(X,\mu)$ for the norm $\norm{\cdot}_\infty$.

\begin{prop}{\label{prop: density for subcstar}}
    Let $\lambda$ be a $\sigma$-finite measure on a standard Borel space $X$.
    Let $\mathcal G$ be a unital $C^*$-subalgebra of $\LL^\infty(X,\lambda)$, i.e. a $\norm\cdot_\infty$-closed unital $*$-subalgebra of $\LL^\infty(X,\lambda)$.
    The following four conditions are equivalent
    \begin{enumerate}[(1)]
    \item \label{item:L2 dense ball}$(\mathcal G)_1\cap\LL^2(X,\lambda)$ is $\norm\cdot_2$-dense in $(\LL^\infty(X,\lambda))_1\cap\LL^2(X,\lambda)$;
    \item \label{item:L2 dense} $\mathcal G\cap\LL^2(X,\lambda)$ is $\norm\cdot_2$-dense in $\LL^2(X,\lambda)$;
    \item \label{item:finite support dense}the set of finite measure supported elements of $\mathcal G$ is $\norm\cdot_2$-dense in $\LL^2(X,\lambda)$;
    \item \label{item:L1 dense} $\mathcal G\cap\LL^1(X,\lambda)$ is $\norm\cdot_1$-dense in $\LL^1(X,\lambda)$.
\end{enumerate}
Moreover, these equivalent conditions imply:
\begin{enumerate}[(1)]\setcounter{enumi}{4}
\item\label{item: strong dense}$\mathcal G$ is strongly dense in $\LL^\infty(X,\lambda)$. 
\end{enumerate}
Finally, if $\lambda$ is finite, \eqref{item: strong dense} implies the four above conditions.
\end{prop}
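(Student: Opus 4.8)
The plan is to build the whole proof on two operations that keep us inside the $C^*$-algebra $\mathcal G$: continuous functional calculus and factorization of bounded finite-support functions. First I would record two cutoff facts. Fix continuous maps $\phi_\epsilon\colon[0,\infty)\to[0,1]$ with $\phi_\epsilon\equiv0$ on $[0,\epsilon]$ and $\phi_\epsilon\equiv1$ on $[2\epsilon,\infty)$, and let $\psi_M\colon\C\to\C$ be the radial retraction onto the disk of radius $M$ (which fixes $0$). For $g\in\mathcal G$, functional calculus gives $\phi_\epsilon(\abs g^2)\in\mathcal G$ and $\psi_M(g)\in\mathcal G$ (here $\abs g^2=gg^*\in\mathcal G$), computed pointwise a.e.; consequently $g\,\phi_\epsilon(\abs g^2)\in\mathcal G$ is supported in $\{\abs g\geq\sqrt\epsilon\}$, and $\psi_M(g)$ satisfies $\norm{\psi_M(g)}_\infty\leq M$, $\abs{\psi_M(g)}\leq\abs g$ and $\abs{\psi_M(g)-\psi_M(h)}\leq\abs{g-h}$ pointwise. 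In particular, for $g\in\mathcal G\cap\LL^p$ ($p\in\{1,2\}$) the cutoff $g\,\phi_\epsilon(\abs g^2)$ has finite-measure support and converges to $g$ in $\LL^p$ by dominated convergence.

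With these tools the equivalences \eqref{item:L2 dense ball}$\Leftrightarrow$\eqref{item:L2 dense}$\Leftrightarrow$\eqref{item:finite support dense} are routine. Here \eqref{item:finite support dense}$\Rightarrow$\eqref{item:L2 dense} is immediate, and \eqref{item:L2 dense}$\Rightarrow$\eqref{item:finite support dense} follows by applying the $\LL^2$ cutoff above. For \eqref{item:L2 dense}$\Rightarrow$\eqref{item:L2 dense ball} I would truncate any $\LL^2$-approximant of a given $f\in(\LL^\infty)_1\cap\LL^2$ by $\psi_1$, using $\psi_1(f)=f$ and the pointwise contraction property; and \eqref{item:L2 dense ball}$\Rightarrow$\eqref{item:L2 dense} is mere rescaling, since bounded finite-support functions are $\LL^2$-dense and each such $f$ is $\norm f_\infty$ times an element of $(\LL^\infty)_1\cap\LL^2$.

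The substantive point is the equivalence of the $\LL^2$-statement \eqref{item:finite support dense} with the $\LL^1$-statement \eqref{item:L1 dense}, where the two norms genuinely diverge; I expect this to be the main obstacle. For \eqref{item:finite support dense}$\Rightarrow$\eqref{item:L1 dense}, given $f\in\LL^1$ I would reduce to $f$ bounded with finite-measure support $A$ (such functions being $\LL^1$-dense) and factor $f=gh$ with $g,h\in\LL^2$ bounded and finite-support (for instance $g=f$, $h=\chi_A$); approximating $g,h$ in $\LL^2$ by finite-support elements $g',h'\in\mathcal G$ via \eqref{item:finite support dense}, the product $g'h'\in\mathcal G$ again has finite-measure support, lies in $\mathcal G\cap\LL^1$, and Cauchy--Schwarz yields $\norm{f-g'h'}_1\leq\norm{g-g'}_2\norm h_2+\norm{g'}_2\norm{h-h'}_2\to0$. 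Conversely, for \eqref{item:L1 dense}$\Rightarrow$\eqref{item:finite support dense}, given $f\in\LL^2$ I would reduce to $f$ bounded by some $M$ with finite support, pick $h\in\mathcal G\cap\LL^1$ with $\norm{f-h}_1$ small by \eqref{item:L1 dense}, and then apply $\psi_M$ followed by a cutoff to obtain a finite-support $\tilde h\in\mathcal G$ with $\norm{\tilde h}_\infty\leq M$ and $\norm{f-\tilde h}_1$ still small; since $\abs{f-\tilde h}\leq2M$ pointwise, $\norm{f-\tilde h}_2^2\leq2M\norm{f-\tilde h}_1\to0$. The whole difficulty is that $\LL^1$- and $\LL^2$-convergence are incomparable under an infinite measure, and it is resolved only by exploiting the algebra structure to factor and to truncate while remaining in $\mathcal G$.

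It remains to connect these with strong density \eqref{item: strong dense}; since \eqref{item:L2 dense ball}--\eqref{item:L1 dense} are equivalent it suffices to prove \eqref{item:L2 dense}$\Rightarrow$\eqref{item: strong dense}. Given $f\in\LL^\infty$ and a basic strong neighborhood determined by $\xi_1,\dots,\xi_k\in\LL^2$ and $\epsilon>0$, the key observation is that I may first replace each $\xi_i$ by a bounded finite-support $\xi_i'$ close in $\LL^2$ and set $A=\bigcup_i\supp(\xi_i')$, of finite measure, so that a single $\LL^2$-approximation handles all test vectors at once: applying \eqref{item:L2 dense} to $f\chi_A\in\LL^\infty\cap\LL^2$ and truncating by $\psi_C$ with $C=\norm f_\infty$ produces $g\in\mathcal G$ with $\norm g_\infty\leq C$ and $\norm{g-f\chi_A}_2$ arbitrarily small; as the $\xi_i'$ are supported in $A$ and bounded, $\norm{(g-f)\xi_i'}_2\leq\norm{\xi_i'}_\infty\norm{g-f\chi_A}_2$, while the error from $\xi_i-\xi_i'$ is controlled by $\norm{g-f}_\infty\leq2C$, placing $g$ in the prescribed neighborhood. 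Finally, when $\lambda$ is finite the constant $1$ lies in $\LL^2$, so strong density applied to the single vector $\xi=1$ shows every $f\in\LL^\infty$ is an $\LL^2$-limit of elements of $\mathcal G$; as $\LL^\infty$ is $\LL^2$-dense in $\LL^2$ and $\mathcal G\subseteq\LL^\infty\subseteq\LL^2$, this is exactly \eqref{item:L2 dense}, closing the loop.
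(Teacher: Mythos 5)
Your proof is correct, and on the core step it takes a genuinely different route from the paper. The equivalences \eqref{item:L2 dense ball}$\Leftrightarrow$\eqref{item:L2 dense}$\Leftrightarrow$\eqref{item:finite support dense}, the implication to strong density, and the finite-measure converse all match the paper's arguments in substance (same cutoff-by-functional-calculus and radial-retraction ideas, same use of a uniform $\norm{\cdot}_\infty$ bound to pass from $\LL^2$-approximation on finite-measure sets to strong approximation, same test vector $\mathds{1}_X$ when $\lambda$ is finite). The divergence is in the $\LL^1$--$\LL^2$ bridge \eqref{item:finite support dense}$\Leftrightarrow$\eqref{item:L1 dense}. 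The paper transports approximants through the homeomorphisms $r(z)=\sqrt{\abs z}e^{i\theta_z}$ and $s(z)=\abs z^2e^{i\theta_z}$ of $\C$ (which exchange $\LL^1$ and $\LL^2$ and preserve $\mathcal G$ by functional calculus), and then needs a Riesz--Fischer subsequence extraction with an $\LL^1$ dominating function to convert $\LL^1$-convergence of the $f_n$ into $\LL^2$-convergence of the $r\circ f_{n}$; the argument is symmetric in the two directions. You instead exploit the multiplicative structure directly: for \eqref{item:finite support dense}$\Rightarrow$\eqref{item:L1 dense} you factor a bounded finite-support $\LL^1$ function as a product $gh$ of two $\LL^2$ functions, approximate each factor inside $\mathcal G$, and control the product error by Cauchy--Schwarz (using that $\mathcal G$ is an algebra so $g'h'\in\mathcal G$); for the converse you truncate the $\LL^1$-approximant by the metric projection $\psi_M$ (staying in $\mathcal G$, with the pointwise contraction $\abs{\psi_M(h)-f}\leq\abs{h-f}$ valid because $\abs f\leq M$) and then upgrade $\LL^1$-smallness to $\LL^2$-smallness via the interpolation bound $\norm u_2^2\leq\norm u_\infty\norm u_1$. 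What your route buys is the elimination of the measure-theoretic subsequence/domination step: everything reduces to algebra operations plus elementary inequalities, and the two directions each get a short self-contained argument. What the paper's route buys is symmetry (one mechanism, applied twice with $r$ and $s$ swapped) at the cost of the Riesz--Fischer detour. Both hinge on the same structural fact that continuous functional calculus keeps approximants inside the $C^*$-algebra $\mathcal G$.
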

\begin{proof}
    We clearly have that \eqref{item:finite support dense} implies \eqref{item:L2 dense}. To see the converse, it suffices to show that finite measure supported elements of $\mathcal G$ are $\norm\cdot_2$-dense in $\mathcal G\cap \LL^2(X,\lambda)$.

    To this end, let $f\in \mathcal G\cap \LL^2(X,\lambda)$. For every $\delta>0$ and $z\in\C$, let 
    \[
    p_\delta(z)=
    \left\{
\begin{array}{ll}
0 & \mbox{if } \abs z \leqslant \delta,\\
z \times \frac{\abs z - \delta}{\abs z} & \mbox{otherwise.}\\\end{array}
\right.
    \]
    Since $\mathcal G$ is a $C^*$-algebra and $p_\delta$ is continuous, we have $p_\delta\circ f\in\mathcal G$. Moreover, $p_\delta\circ f$ has finite measure support because $f\in\LL^2(X,\lambda)$, and the Lebesgue dominated convergence theorem ensures us that $$\lim_{\delta\to 0}p_\delta\circ f=f$$ in the $\LL^2$-norm, so we conclude that the set of finite measure supported elements of $\mathcal G$ is $\norm\cdot_2$-dense in $\mathcal G\cap \LL^2(X,\lambda)$, which shows that \eqref{item:L2 dense} implies \eqref{item:finite support dense} as wanted.

    Now since bounded elements are dense in $\LL^2(X,\lambda)$, it is clear that \eqref{item:L2 dense ball} implies \eqref{item:L2 dense}. 
    Conversely, if $\mathcal G\cap \LL^2(X,\lambda)$ is $\norm\cdot_2$ dense in $\LL^2(X,\lambda)$, let $f\in (\LL^\infty(X,\lambda))_1 \cap \LL^2(X,\lambda)$. Let $f_n\to f$ in $\norm\cdot _2$ with $f_n\in\mathcal G$. Define the continuous function $q:\C\to\C$ by
    \[
    q(z)=
    \left\{
\begin{array}{ll}
z & \mbox{if } \abs z \leqslant 1,\\
\frac{z}{\abs z}  & \mbox{otherwise.}\\\end{array}
\right.
    \]
    Since $\mathcal G$ is a $C^*$-algebra, for all $n\in\N$ we have $q\circ f_n\in\mathcal (\mathcal G)_1$.
    Moreover for all $x\in X$ we have $\abs {q\circ f_n(x)-f(x)}\leq \abs{f_n(x)-f(x)}$, so we also have $qf_n\to f$ in $\norm\cdot_2$, thus finishing the proof that \eqref{item:L2 dense} implies \eqref{item:L2 dense ball}. So conditions \eqref{item:L2 dense ball}, \eqref{item:L2 dense} and \eqref{item:finite support dense} are all equivalent.

    We now prove that \eqref{item:L1 dense} and \eqref{item:L2 dense} are equivalent using the square and root functions on complex moduli:
    \[
    \begin{array}{l}
         r(z) = r(\abs{z}e^{i\theta_z} ) \coloneqq \sqrt{\abs{z}}e^{i\theta_z}  \\
         s(z) = s(\abs{z}e^{i\theta_z} ) \coloneqq \abs{z}^2e^{i\theta_z} .
    \end{array}
    \]
    Note that $r$ and $s$ are homeomorphisms of $\C$, inverse of each other. Moreover given $f\in\LL^{1}(X,\lambda)$ the function $s\circ f$ is in $\LL^2(X,\lambda)$ and vice-versa. Furthermore, if $f$ is in $\mathcal{G}$, $r \circ f$ and $s \circ f$ remain in $\mathcal{G}$ as it is a $C^*$ algebra. We prove that \eqref{item:L1 dense} implies \eqref{item:L2 dense}.
    
    Let then $g\in\LL^{2}(X,\lambda)$, and let $f=s\circ g$ so that $g= r \circ f$ with $f \in \LL^{1}(X,\lambda)$. By assumption, we have a sequence $(f_n)$  converging to $f$ for the $\LL^1$ norm, with $f\in\mathcal G$.
    By a classical theorem attributed to Riesz–Fischer (see e.g.~\cite[IV \S3 Thm. 3]{BourbakiIntegration}), we can extract a subsequence $(f_{n_k})$ converging pointwise almost everywhere to $f$ so that there exists $h \in \LL^{1}(X,\lambda)$ verifying $\abs{f_{n_k}} \leqslant h$ almost everywhere for any $k$. Thus, the sequence $(r \circ f_{n_k})$ is in $\LL^{2}$ and $r \circ f_{n_k} \to r \circ f$ pointwise almost everywhere. Moreover, we have $\abs{r \circ f_{n_k}} \leqslant r \circ h\in \LL^2(X,\lambda)$ for any $k$. By the $\LL^2$ version of the Lebesgue dominated convergence theorem, we have $r\circ f_{n_k}\to r\circ f$ in $\LL^2$ norm.
    In other words $g=r \circ f$ can be approximated for $\norm{\cdot}_2$ by functions in $\mathcal G\cap\LL^2(X,\lambda)$ as wanted, thus showing that \eqref{item:L1 dense} implies \eqref{item:L2 dense}. The symmetric argument gives the reverse implication, so conditions \eqref{item:L2 dense ball}, \eqref{item:L2 dense},   \eqref{item:finite support dense} and \eqref{item:L1 dense} are all equivalent.
    
     We now connect them to \eqref{item: strong dense} by showing first that \eqref{item:L2 dense ball} implies strong density of $\mathcal G$ in $\LL^\infty(X,\lambda)$. 
    To this end, first note that $\LL^\infty(X,\lambda)\cap\LL^2(X,\lambda)$ is strongly dense in $\LL^\infty(X,\lambda)$: if $f\in\LL^\infty(X,\lambda)$ and $(X_n)$ is an increasing sequence of subsets of finite measure subsets of $X$ such that $X=\bigcup_{n\in\N}X_n$, then for all $\xi\in\LL^2(X,\lambda)$
    \[
    \norm{f\xi-f\mathds{1}_{X_n}\xi}_2^2=\int_{X\setminus X_n}\abs{f(x)\xi(x)}^2d\lambda(x) \longrightarrow 0
    \]
    by the Lebesgue dominated convergence theorem, so $f\mathds{1}_{X_n}\to f$ strongly.
    Towards showing the desired implication, assume \eqref{item:L2 dense ball}, 
    it now suffices to strongly approximate  any $f\in\LL^\infty(X,\lambda)\cap\LL^2(X,\lambda)$ by a sequence of elements of $\mathcal G$.
    Replacing $f$ by $f/\norm{f}_\infty$ if need be, we may as well assume $f\in (\LL^\infty(X,\lambda))_1$. We then have $a_n\to f$ in $\norm\cdot_2$ with $a_n\in (\mathcal{G})_1$. By density of step functions and the fact that $\norm{a_n}_\infty \leq 1$, it suffices to show that if $B$ is a finite measure subset, then $a_n\mathds{1}_B\to f\mathds{1}_B$ for $\norm{\cdot}_2$, which is an immediate consequence of the fact that 
    \[
    \norm{a_n\mathds{1}_B-f\mathds{1}_B}_2^2=\int_B\abs{a_n(x)-f_n(x)}^2d\lambda(x) \leq\norm{a_n-f}_2^2\longrightarrow 0.
    \]

    Finally, if $\lambda$ is finite, let us show \eqref{item: strong dense} implies \eqref{item:L2 dense}. Take any generalized sequence $(f_i)$ in $\mathcal G$ converging strongly to $f\in\LL^\infty(X,\lambda)$, then $f_i \mathds{1}_X\to f\mathds{1}_X$, which means exactly that $\norm{f_i-f}_2\to 0$. This concludes the proof, as the finiteness of $\lambda$ yields that $\LL^\infty(X,\lambda)$ is $\norm{\cdot}_2$-dense in $\LL^2(X,\lambda)$.
\end{proof}
\begin{rem}
    Similar arguments show that we can add to the list of the first 4 equivalent conditions the $\LL^1$ versions of condition \eqref{item:L2 dense ball} and \eqref{item:finite support dense}, namely
\begin{itemize}
    \item[(1')] $(\mathcal G)_1\cap\LL^1(X,\lambda)$ is $\norm\cdot_1$-dense in $(\LL^\infty(X,\lambda))_1\cap\LL^1(X,\lambda)$;
    \item[(3')] the set of finite measure supported elements of $\mathcal G$ is $\norm\cdot_1$-dense in $\LL^1(X,\lambda)$.
\end{itemize}
    
     We did not include them in the statement for brevity.
\end{rem}
As a concrete example of the above situation, we have the following well-known lemma, which will be useful in our characterization of spatial realizations of Boolean actions via Gelfand's theorem.

\begin{lem}[{see \cite[Prop.~7.4.3]{cohnMeasureTheorySecond2013}}]
\label{lem: dense C0 in L2}
Let $Y$ be a locally compact Hausdorff space equipped with a Radon measure $\eta$, and consider the $C^*$-algebra $C_{0}(Y)$ of continuous functions that vanish at infinity on $Y$. 
Then $C_{0}(Y)\cap \LL^2(Y,\eta)$ is $\norm\cdot_2$-dense in $\LL^{2}(Y,\eta)$. 
In particular, the strong closure of $C_{0}(Y)$ in $\mathcal B(\LL^2(Y,\eta))$ is equal to $\LL^{\infty}(Y,\eta)$.\qed
\end{lem}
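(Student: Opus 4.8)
The statement has two parts, and the plan is to first establish the $\norm\cdot_2$-density, which is the substantial content, and then to deduce the claim about the strong closure from Proposition~\ref{prop: density for subcstar}. Throughout I would regard $C_0(Y)$ as a $*$-subalgebra of $\LL^\infty(Y,\eta)$ via pointwise multiplication.

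For the density of $C_0(Y)\cap\LL^2(Y,\eta)$ in $\LL^2(Y,\eta)$, I would reduce to approximating a single indicator $\mathds 1_A$ with $\eta(A)<+\infty$, since finite linear combinations of such indicators are $\norm\cdot_2$-dense in $\LL^2(Y,\eta)$. Given $\varepsilon>0$, outer regularity of $\eta$ (item~(3) of Definition~\ref{def: Radon measure}) provides an open set $U\supseteq A$ with $\eta(U)<+\infty$ and $\eta(U\setminus A)<\varepsilon$, and inner regularity on the open set $U$ (item~(2) of Definition~\ref{def: Radon measure}, which is meaningful precisely because $\eta(U)<+\infty$) then provides a compact $K\subseteq U$ with $\eta(U\setminus K)<\varepsilon$. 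Urysohn's lemma for locally compact Hausdorff spaces (see \cite[Thm.~2.12]{rudinRealComplexAnalysis1987}) yields $h\in C_c(Y)$ with $\mathds 1_K\leq h\leq \mathds 1_U$. Now $h$ and $\mathds 1_A$ agree on $K\cap A$ and both vanish outside $U$, so they differ only on $(U\setminus K)\cup(U\setminus A)$, a set of measure less than $2\varepsilon$; since $\abs{h-\mathds 1_A}\leq 1$ everywhere, this gives $\norm{h-\mathds 1_A}_2^2<2\varepsilon$. As $h\in C_c(Y)\subseteq C_0(Y)\cap\LL^2(Y,\eta)$, the density follows (this is in essence \cite[Prop.~7.4.3]{cohnMeasureTheorySecond2013}).

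To obtain the strong closure statement, I would apply Proposition~\ref{prop: density for subcstar}. The image of $C_0(Y)$ in $\LL^\infty(Y,\eta)$ is a $\norm\cdot_\infty$-closed $*$-subalgebra (being the range of a $*$-homomorphism of $C^*$-algebras, hence complete and therefore closed), and it is stable under continuous functional calculus by functions vanishing at the origin, since $\phi\circ f\in C_0(Y)$ whenever $\phi$ is continuous with $\phi(0)=0$. The density just proved is exactly condition~\eqref{item:L2 dense} of Proposition~\ref{prop: density for subcstar}, and I would conclude that condition~\eqref{item: strong dense} holds, i.e. that $C_0(Y)$ is strongly dense in $\LL^\infty(Y,\eta)$; as $\LL^\infty(Y,\eta)$ is strongly closed, its strong closure is all of $\LL^\infty(Y,\eta)$.

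The one point requiring care, and the main (minor) obstacle, is that $C_0(Y)$ is non-unital when $Y$ is non-compact, so Proposition~\ref{prop: density for subcstar} does not apply verbatim. However, the implication from condition~\eqref{item:L2 dense} to condition~\eqref{item: strong dense} in its proof uses only the functional calculus by the truncation function $q$ (which satisfies $q(0)=0$, hence maps $C_0(Y)$ into itself) together with the strong density of $\LL^\infty\cap\LL^2$ in $\LL^\infty$, and never invokes the unit. I would therefore simply note that this half of the argument carries over unchanged to the non-unital algebra $C_0(Y)$. Adjoining a unit is not a viable shortcut here, since the constant function need not lie in $\LL^2(Y,\eta)$ when $\eta(Y)=+\infty$, which is exactly why the unitality-free reading of the relevant implication is the right thing to isolate.
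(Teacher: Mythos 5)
The paper gives no proof of this lemma at all --- it is stated with a \qed{} and the citation to \cite[Prop.~7.4.3]{cohnMeasureTheorySecond2013} --- so your proposal cannot be compared against a written argument; it can only be judged on its own terms, and it is correct. Your first half is precisely the standard proof of Cohn's proposition: reduce to indicators of finite-measure sets, then combine outer regularity, inner regularity on open sets, and Urysohn's lemma to produce $h\in C_c(Y)$ with $\norm{h-\mathds{1}_A}_2^2<2\varepsilon$. Your second half takes the route the paper clearly intends, namely deducing strong density from Proposition~\ref{prop: density for subcstar}, and your non-unitality catch there is genuine and necessary: $C_0(Y)$ is not a unital subalgebra of $\LL^\infty(Y,\eta)$ when $Y$ is non-compact, so Proposition~\ref{prop: density for subcstar} does not apply verbatim, and you correctly verify that the chain \eqref{item:L2 dense}$\Rightarrow$\eqref{item:L2 dense ball}$\Rightarrow$\eqref{item: strong dense} only ever uses functional calculus by continuous functions vanishing at $0$ (such as the truncation $q$) together with exhaustion by finite-measure sets, never the unit. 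You are also right that adjoining a unit is not an available shortcut, since constants fail to be square-integrable when $\eta(Y)=+\infty$. This is a point the paper silently glosses over, both here and when it applies the same reasoning to $C_0(Y)$ inside the proof of Theorem~\ref{thm: chara existence spatial model}.

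The only residual mismatch is one of scope: Proposition~\ref{prop: density for subcstar} is stated for $\sigma$-finite measures on standard Borel spaces, whereas the lemma allows an arbitrary locally compact Hausdorff $Y$ with a Radon measure $\eta$, which need be neither second countable nor $\sigma$-finite. This is harmless for the paper's purposes, since the lemma is only invoked with $Y$ locally compact Polish, where a Radon measure is locally finite (Proposition~\ref{prop: Radon equivalent to locally finite on locally compact}), hence $\sigma$-finite (Lemma~\ref{lem: locally finite Polish implies sigma finite}), and $Y$ is standard Borel. If you wanted the strong-closure claim in the lemma's full stated generality, you would need to remark that the implication you isolated still goes through because strong convergence is only ever tested against finitely many vectors of $\LL^2(Y,\eta)$, whose supports are $\sigma$-finite, and that $\LL^\infty(Y,\eta)$ remains strongly closed in $\mathcal B(\LL^2(Y,\eta))$ for Radon $\eta$; alternatively, a one-sentence caveat restricting the second assertion to the $\sigma$-finite case would suffice.
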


\section{\texorpdfstring{$G$}{G}-continuity and spatial models}{\label{sec: G continuity and char of existence spatial model}}

\subsection{\texorpdfstring{$G$}{G}-continuity}

We recall the definition of $G$-continuity, which has notably been used in \cite{kwiatkowskaSpatialModelsBoolean2011} 
and in \cite{glasnerSpatialNonspatialActions2005} to discuss the existence of spatial models for boolean actions of Polish groups.

\begin{defi}
Let $G$ be a Polish group, and let $\alpha$ be a measure-preserving boolean $G$-action on a standard $\sigma$-finite space $(X,\lambda)$. We say that $f$ in $\LL^{\infty}(X,\lambda)$ is \textbf{$G$-continuous} if $\norm{f-f\circ\alpha(g_n\inv,\cdot)}_\infty\to 0$ whenever $g_n \to e_G$.
The space of $G$-continuous functions will thereafter be denoted by $\mathcal{G}$.
\end{defi}

  \begin{rem}{\label{Gcontnonbounded}}
Although $G$-continuity is defined for functions in $\LL^{\infty}(X,\lambda)$, the previous definition extends to non-essentially bounded functions. Indeed, only the difference of $f\circ\alpha(g_n\inv , \cdot)$ and $f$ needs to be in $\LL^{\infty}(X,\lambda)$, when $g$ is close enough to $e_G$.
This remark will be useful in the proof of Proposition \ref{prop: dense Gcont for iso of lc}.
    \end{rem}

The set $\mathcal{G}$ of $G$-continuous functions is easily seen to be a unital $\ast$-subalgebra of $\LL^{\infty}(X,\lambda)$. 
We will now see that it is actually a $C^*$-algebra, i.e. that it is $\norm{\cdot}_\infty$-closed. 
This will be a direct consequence of the following well-known proposition.

\begin{prop}
    Let $(M,d)$ be a metric space on which a topological group $G$ acts by isometries. Denote by $M_G$ the set of $x\in M$ such that  $d(g_n\cdot x,x)\to 0$ whenever $g_n \to e_G$. Then $M_G$ is a closed subset of $M$, and it is the largest subset of $M$ onto which $G$ acts continuously.
\end{prop}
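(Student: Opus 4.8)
The plan is to verify four things in turn: that $M_G$ is $G$-invariant, that the restricted action $G\times M_G\to M_G$ is continuous, that $M_G$ is closed in $M$, and finally that any $G$-invariant subset on which $G$ acts continuously is contained in $M_G$ (maximality). The only facts I expect to use, repeatedly, are that each $g\in G$ acts as an isometry, so that $d(g\cdot x,g\cdot y)=d(x,y)$ for all $x,y\in M$, and that multiplication and conjugation are continuous in $G$. The guiding observation is that $x\in M_G$ says exactly that the orbit map $g\mapsto g\cdot x$ is continuous at $e_G$; and by the isometry relation this already forces continuity everywhere, since for $g_n\to g$ one has $d(g_n\cdot x,g\cdot x)=d(g^{-1}g_n\cdot x,x)$ with $g^{-1}g_n\to e_G$.

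First I would prove $G$-invariance. Fix $x\in M_G$ and $h\in G$; for $g_n\to e_G$, continuity of conjugation gives $h^{-1}g_nh\to e_G$, and writing $g_nh=h\,(h^{-1}g_nh)$ together with the isometry property yields
\[
d(g_n\cdot(h\cdot x),\,h\cdot x)=d\bigl(h\cdot(h^{-1}g_nh\cdot x),\,h\cdot x\bigr)=d(h^{-1}g_nh\cdot x,\,x)\to 0,
\]
so $h\cdot x\in M_G$. Next, for continuity of the restricted action at $(g,x)\in G\times M_G$, I would decompose, for $(g_n,x_n)\to(g,x)$ with $x_n\in M_G$,
\[
d(g_n\cdot x_n,\,g\cdot x)\le d(g_n\cdot x_n,\,g_n\cdot x)+d(g_n\cdot x,\,g\cdot x)=d(x_n,x)+d(g^{-1}g_n\cdot x,\,x),
\]
where the first summand tends to $0$ because $x_n\to x$ and the second because $g^{-1}g_n\to e_G$ and $x\in M_G$; invariance guarantees the image lies in $M_G$.

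For closedness I would take $x_k\in M_G$ with $x_k\to x$ and show $x\in M_G$ via a uniform three-term estimate: given $\varepsilon>0$, fix $k$ with $d(x_k,x)<\varepsilon/3$; then for every $n$,
\[
d(g_n\cdot x,\,x)\le d(g_n\cdot x,\,g_n\cdot x_k)+d(g_n\cdot x_k,\,x_k)+d(x_k,x)=2\,d(x_k,x)+d(g_n\cdot x_k,\,x_k),
\]
and since $x_k\in M_G$ the last term is $<\varepsilon/3$ for all large $n$, giving $d(g_n\cdot x,x)<\varepsilon$ eventually, hence $x\in M_G$; as $M$ is metric, sequential closedness is closedness. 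For maximality, if $N\subseteq M$ is $G$-invariant with $G$ acting continuously on $N$, then for each $x\in N$ the orbit map $g\mapsto g\cdot x$ is continuous, in particular at $e_G$, so $g_n\to e_G$ forces $d(g_n\cdot x,x)\to 0$ and thus $x\in M_G$; hence $N\subseteq M_G$, while the previous steps show $M_G$ is itself such a subset, so it is the largest.

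I expect no serious obstacle: all computations are elementary manipulations of the isometry identity. The only points requiring attention are the use of continuity of conjugation in the invariance step, and the fact that for a general, not first-countable, topological group one ought to read the defining condition of $M_G$ and the continuity assertions with nets in place of sequences. In the intended application $G$ is Polish, hence metrizable and first-countable, so sequences suffice and the argument above needs no modification.
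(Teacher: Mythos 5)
Your proof is correct and follows essentially the same route as the paper: the same three-term triangle-inequality estimate (using that each $g_n$ acts as an isometry) for closedness, and the same decomposition $d(g_n\cdot x_n,\,g\cdot x)\le d(x_n,x)+d(g^{-1}g_n\cdot x,\,x)$ for continuity of the action on $M_G$. The only additions are your explicit conjugation argument for $G$-invariance and the nets-versus-sequences caveat, both of which the paper leaves implicit.
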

\begin{proof}
    Let $(x_k)$ be a sequence of elements of $M_G$ converging to $x\in M$. Fix $\varepsilon > 0$. Towards showing $x\in M_G$, take a sequence $(g_n)$ tending to $e_G$. By the triangle inequality, we have the following for all $k,n\in\N$:
    \[
    d(x,g_n \cdot x) \leqslant d(x,x_k) + d(x_k,g_n \cdot x_k) + d(g_n \cdot x_k, g_n \cdot x).
    \]
    Fix $k$ large enough so that $d(x,x_k)<\varepsilon$.
    Since $g_n$ is an isometry, we have $d(g_n \cdot x_k, g_n \cdot x)= d(x,x_k)$, and so both the first and third terms in the above sum are less than $\varepsilon$. For the second term, since $x_k$ is a point of continuity for the $G$-action, it is smaller than $\varepsilon$ for $n$ big enough, which concludes the proof that $M_G$ is closed. 
    
 	Now by the definition of $M_G$, the restriction of the action of $G$ on a set which intersects the complement of $M_G$ cannot be continuous. We thus only need to show that the action on $M_G$ is continuous. To this end, let $x_n\to x$ with $x_n,x\in M_G$ and let $g_n\to g$.
 	Then 
 \begin{align*}
 	d(g_n\cdot x_n,g\cdot x) 
 	& \leq d(g_n\cdot x_n,g_n\cdot x)+d(g_n\cdot x, g\cdot x)\\
 	& = d(x_n,x)+d(g\inv g_n\cdot x,x).
 \end{align*}
Since $g\inv g_n\to e_G$ and $x\in M_G$, we have $d(g\inv g_n\cdot x,x)\to 0$, and since $x_n\to x$, we also have $d(x_n,x)\to 0$. So $d(g_n\cdot x_n,g\cdot x)\to 0$ as wanted, which finishes the proof.
\end{proof}
\begin{cor}\label{cor: curly G is a Cstar algebra}
	Let $\pi: G\to \Aut(X,\lambda)$ be a boolean action. 
	Then the space $\mathcal G$ of $G$-continuous functions is actually a unital $C^*$-subalgebra of $\LL^\infty(X,\lambda)$ onto which $G$ acts continuously.
\end{cor}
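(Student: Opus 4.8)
The plan is to obtain this corollary as an essentially immediate application of the preceding proposition, once the right metric space and group action are identified. First I would recall from Section~\ref{sec: actions on function spaces} that $\Aut(X,\lambda)$ acts on $\LL^\infty(X,\lambda)$ via $T\cdot f = f\circ T\inv$, and that this action is by \emph{isometries} for $\norm\cdot_\infty$ (this uses only that each $T$ preserves the measure class, not the measure itself). Composing this isometric action with the continuous homomorphism $\pi:G\to\Aut(X,\lambda)$ corresponding to the given boolean action yields a $G$-action on $\LL^\infty(X,\lambda)$ by $\norm\cdot_\infty$-isometries, given explicitly by $g\cdot f = f\circ\pi(g)\inv = f\circ\alpha(g\inv,\cdot)$ for any spatial lift $\alpha$ of $\pi$ (the choice of lift being irrelevant, since $\norm\cdot_\infty$ is insensitive to null sets).

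Next I would apply the previous proposition to the metric space $M=\LL^\infty(X,\lambda)$ equipped with $d(f,h)=\norm{f-h}_\infty$. Unwinding the definitions, the associated set $M_G$ of points $f$ satisfying $d(g_n\cdot f,f)\to 0$ whenever $g_n\to e_G$ is \emph{exactly} the set $\mathcal G$ of $G$-continuous functions, since $d(g_n\cdot f,f)=\norm{f\circ\alpha(g_n\inv,\cdot)-f}_\infty$. The proposition then gives two things for free: that $\mathcal G=M_G$ is $\norm\cdot_\infty$-closed in $\LL^\infty(X,\lambda)$, and that $G$ acts continuously on it.

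Finally I would combine this with the fact (noted just before the corollary as ``easily seen'') that $\mathcal G$ is a unital $\ast$-subalgebra of $\LL^\infty(X,\lambda)$: it contains the constant function $1$, and is stable under the pointwise $\ast$-algebra operations, which are compatible with the isometric action. Being a $\norm\cdot_\infty$-closed unital $\ast$-subalgebra is precisely the definition of a unital $C^*$-subalgebra, so the corollary follows. There is no genuine obstacle here; the only points that require a moment's care are verifying that the $G$-action on $\LL^\infty$ is isometric (which rests on preservation of the measure class rather than of the measure), and checking that the definitional translation $M_G=\mathcal G$ is faithful, including the left-action convention $g\cdot f=f\circ\alpha(g\inv,\cdot)$.
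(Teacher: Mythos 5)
Your proposal is correct and follows exactly the paper's own route: the paper also applies the preceding proposition to $(M,d)=(\LL^\infty(X,\lambda),\norm{\cdot}_\infty)$ with the isometric action $g\cdot f=f\circ\pi(g)\inv$ observed in Section~\ref{sec: actions on function spaces}, identifies $M_G=\mathcal G$, and combines the resulting closedness and continuity with the previously noted fact that $\mathcal G$ is a unital $\ast$-subalgebra. Your write-up merely spells out the definitional translations that the paper leaves implicit.
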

\begin{proof}
	We have already observed that the $G$-action on $(M,d)=(\LL^\infty(X,\lambda),\norm{\cdot}_\infty)$ associated to $\pi$ (given by $g\cdot f=f\circ \pi(g)\inv$) is by isometries, and by definition $\mathcal G=M_G$ so the conclusion directly follows from the previous proposition.
\end{proof}

We finally observe that we have the following natural source of $G$-continuous functions.

\begin{lem}\label{lem: G continuous from continuous action on lc}
    Let $X$ be a locally compact Polish space, let $\alpha: G\times X\to X$ be a continuous $G$-action, consider the action on $C_0(X)$ by precomposition by the inverse, then every element of $C_0(X)$ is $G$-continuous, namely $\norm{f-f\circ\alpha(g\inv,\cdot)}_\infty\to 0$ when $g\to e_G$, where $\norm{\cdot}_\infty$ is the supremum norm. 
\end{lem}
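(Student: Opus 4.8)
The plan is to fix $f\in C_0(X)$ and $\varepsilon>0$, and to produce a neighborhood $V$ of $e_G$ such that $\norm{f-f\circ\alpha(g\inv,\cdot)}_\infty\le 2\varepsilon$ for all $g\in V$; letting $\varepsilon\to 0$ then gives the claim. The only genuine difficulty is that $X$ need not be compact, so global uniform continuity of $f$ is unavailable; the vanishing-at-infinity condition is exactly what compensates for this. Accordingly, I would first use $f\in C_0(X)$ to choose a compact set $K\subseteq X$ with $\abs{f(x)}<\varepsilon$ for every $x\notin K$.

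The key elementary tool is a tube-lemma estimate: if $\phi\colon G\times X\to\C$ is continuous and vanishes on $\{e_G\}\times K$ with $K$ compact, then for every $\varepsilon>0$ there is a neighborhood $V$ of $e_G$ such that $\abs{\phi(g,x)}<\varepsilon$ for all $g\in V$ and all $x\in K$. This follows by covering $K$ with the $X$-slices of boxes $V_x\times U_x$ on which $\abs{\phi}<\varepsilon$ (such boxes exist by continuity of $\phi$ and $\phi(e_G,x)=0$), extracting a finite subcover of $K$, and intersecting the finitely many $V_x$. I would apply this to the two continuous functions $\phi_1(g,x)=f(\alpha(g\inv,x))-f(x)$ and $\phi_2(g,x)=f(\alpha(g,x))-f(x)$ --- both continuous because $\alpha$ is continuous, inversion in $G$ is continuous, and $f$ is continuous, and both vanishing on $\{e_G\}\times K$. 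This yields neighborhoods $V_1,V_2$ of $e_G$, and I set $V=V_1\cap V_2$.

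Then I would split $X$ according to the position of $x$ and of its image $\alpha(g\inv,x)$ relative to $K$. If both $x\notin K$ and $\alpha(g\inv,x)\notin K$, the triangle inequality together with the choice of $K$ gives $\abs{f(\alpha(g\inv,x))-f(x)}<2\varepsilon$, uniformly and regardless of $g$. If $x\in K$, the estimate for $\phi_1$ gives $\abs{\phi_1(g,x)}<\varepsilon$ for $g\in V_1$. If instead $\alpha(g\inv,x)\in K$, I set $y=\alpha(g\inv,x)$, so that $x=\alpha(g,y)$ with $y\in K$, and the estimate for $\phi_2$ at $(g,y)$ gives $\abs{f(x)-f(y)}=\abs{\phi_2(g,y)}<\varepsilon$, that is $\abs{f(\alpha(g\inv,x))-f(x)}<\varepsilon$, for $g\in V_2$. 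Taking the supremum over $x\in X$ gives $\norm{f-f\circ\alpha(g\inv,\cdot)}_\infty\le 2\varepsilon$ for all $g\in V$, as desired.

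I expect the main subtlety to be precisely the handling of points that the group element pushes across the boundary of $K$: a naive uniform-continuity argument on $K$ alone controls only the $x\in K$, but not those points whose image under $\alpha(g\inv,\cdot)$ lands in $K$. Applying the tube lemma simultaneously to $\alpha(g\inv,\cdot)$ (via $\phi_1$) and to $\alpha(g,\cdot)$ (via $\phi_2$) is what resolves this. Everything else is routine once the compact set $K$ absorbing the mass of $f$ near infinity has been fixed.
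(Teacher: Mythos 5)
Your proof is correct and follows essentially the same route as the paper: fix a compact set $K$ absorbing $f$ outside an $\varepsilon$-band, get a uniform estimate over $K$ by a finite-subcover compactness argument, and split into the same three cases ($x\in K$; image in $K$; both outside $K$), handling the cross-boundary case by controlling the actions of both $g$ and $g\inv$. The only difference is packaging: the paper achieves your two tube-lemma applications in one stroke by taking the neighborhoods of $e_G$ to be symmetric, which is just your $V_1\cap V_2$ in disguise.
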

\begin{proof}   
    Let $f\in C_0(X)$, let $\varepsilon>0$. Let $K$ be a compact set such that for all $x\notin K$ we have $\abs{f(x)}<\varepsilon$. 
    By continuity of the action and of $f$, for every $x\in K$ there is a neighborhood $U_x$ of $x$ and a symmetric neighborhood $\mathcal{N}_x$ of $e_G$ such that $\abs{f(\alpha(g,x'))-f(x)}<\varepsilon$ for all $g\in \mathcal{N}_x$ and all $x'\in U_x$. 
    Take a finite subcover $(U_{x_i})_{i=1}^n$ of $K$, let $\mathcal{N}=\bigcap_{i=1}^n \mathcal{N}_{x_i}$, then if $x\in U_{x_i}$, the triangle inequality yields
    \begin{align*} 
    \abs{f(\alpha(g,x))-f(x)}\leq \abs{f(\alpha(g,x))-f(x_i)}+\abs{f(x_i)-f(x)}<2\varepsilon.
    \end{align*}
    Now let $x\in X$ be arbitrary and take $g\in \mathcal{N}$, we have three cases to consider:
    \begin{itemize}
    \item if $x\in K$ then  $\abs{f(\alpha(g,x)))-f(x)}<2\varepsilon$ by what we just did;
    \item if $x\notin K$ and $\alpha(g,x)\in K$ by symmetry of $\mathcal{N}$ $\abs{f(\alpha(g\inv,\alpha(g,x))-f(\alpha(g,x))}<2\varepsilon$ so $\abs{f(x)-f(\alpha(g,x))}<2\varepsilon$;
    \item if $x\notin K$ and $\alpha(g,x)\notin K$, then we both have $\abs{f(x)} <\epsilon$ and $\abs{f(\alpha(g,x))}<\varepsilon$, so again $\abs{f(x)-f(\alpha(g,x))}<2\varepsilon$.
    \end{itemize}
    We conclude that $\norm{ f-f\circ\alpha(g,\cdot)}_\infty<2\varepsilon$ for all $g\in \mathcal{N}$, which finishes the proof that $f$ is $G$-continuous.
\end{proof}

\subsection{Continuous Radon models for infinite measure-preserving boolean actions}

Following the terminology of Glasner-Tsirelson-Weiss, a \textbf{continuous spatial model}
for a boolean measure-preserving action $\alpha$ of a Polish group $G$ on $(X,\lambda)$ 
is a continuous  $G$-action on a Polish measured space $(Y,\eta)$ 
which is booleanly isomorphic to $\alpha$. 
Moreover, if $Y$ is a locally compact Polish space and $\eta$ is Radon, 
we call the $G$-action on $(Y,\eta)$ a \textbf{continuous Radon model} for \(\alpha\).
We can now state and prove our version of the Glasner-Tsirelson-Weiss result in the context of possibly infinite measures, namely Theorem~\ref{thmi: chara existence spatial model}.

\begin{thm}{\label{thm: chara existence spatial model}}
Let $G$ be a Polish group, and let $\alpha$ be a boolean measure-preserving $G$-action on a standard $\sigma$-finite space $(X,\lambda)$. The following are equivalent:
\begin{enumerate}[(1)]
    \item \label{item: dense G continuous L1}the algebra $\mathcal{G}$ of $G$-continuous functions satisfies  
    \[
    \overline{\mathcal G\cap \LL^1(X,\lambda)}^{\norm\cdot_1}=\LL^1(X,\lambda),
    \]
    \item \label{item: dense G continuous}the algebra $\mathcal{G}$ of $G$-continuous functions satisfies  
\[
\overline{\mathcal G\cap \LL^2(X,\lambda)}^{\norm\cdot_2}=\LL^2(X,\lambda),
\]
\item \label{item: admit continuous model}the action $\alpha$ admits a continuous Radon model.
\end{enumerate}
\end{thm}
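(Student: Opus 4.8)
The plan is to get the equivalence of \eqref{item: dense G continuous L1} and \eqref{item: dense G continuous} for free and then establish the two implications $\eqref{item: admit continuous model}\Rightarrow\eqref{item: dense G continuous}$ and $\eqref{item: dense G continuous}\Rightarrow\eqref{item: admit continuous model}$, the latter being the substantial direction. For \eqref{item: dense G continuous L1}$\Leftrightarrow$\eqref{item: dense G continuous}, recall that by Corollary \ref{cor: curly G is a Cstar algebra} the set $\mathcal G$ is a unital $C^*$-subalgebra of $\LL^\infty(X,\lambda)$, so Proposition \ref{prop: density for subcstar} applies verbatim and its conditions \eqref{item:L1 dense} and \eqref{item:L2 dense} are exactly \eqref{item: dense G continuous L1} and \eqref{item: dense G continuous}. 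For $\eqref{item: admit continuous model}\Rightarrow\eqref{item: dense G continuous}$, a continuous Radon model on $(Y,\eta)$ gives, via Proposition \ref{prop: chara boolean iso of action}, an integral-preserving $G$-equivariant $*$-isomorphism $\rho\colon\LL^\infty(Y,\eta)\to\LL^\infty(X,\lambda)$; being a $*$-isomorphism it is $\norm\cdot_\infty$-isometric and sends $G$-continuous functions to $G$-continuous functions, and being integral-preserving it extends to a $G$-equivariant unitary $\LL^2(Y,\eta)\to\LL^2(X,\lambda)$. Since $C_0(Y)$ consists of $G$-continuous functions by Lemma \ref{lem: G continuous from continuous action on lc} and $C_0(Y)\cap\LL^2(Y,\eta)$ is $\norm\cdot_2$-dense by Lemma \ref{lem: dense C0 in L2}, pushing this density through $\rho$ yields \eqref{item: dense G continuous}.

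The heart is $\eqref{item: dense G continuous}\Rightarrow\eqref{item: admit continuous model}$. First I would extract a \emph{separable} $G$-invariant $C^*$-subalgebra $\mathcal A\subseteq\mathcal G$ with $\mathcal A\cap\LL^2$ still $\norm\cdot_2$-dense. Starting from a countable $\norm\cdot_2$-dense subset of $\LL^2(X,\lambda)$, I approximate its elements by functions in $\mathcal G\cap\LL^2$ and apply the truncation $p_\delta$ from the proof of Proposition \ref{prop: density for subcstar} to make these generators \emph{finite-measure-supported} while keeping them in $\mathcal G$ and $\norm\cdot_2$-dense. Then I let $\mathcal A$ be the $C^*$-algebra generated by the $g_m$-translates of these generators along a countable dense $\{g_m\}\subseteq G$. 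Continuity of the $G$-action on $\mathcal G$ (Corollary \ref{cor: curly G is a Cstar algebra}) makes $\mathcal A$ $G$-invariant; its dense $*$-subalgebra $\mathcal A_0$ of finite-measure-supported functions lies in $\mathcal A\cap\LL^1$, and $\mathcal A\cap\LL^2$ remains $\norm\cdot_2$-dense.

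Next I set $Y\coloneqq\spec(\mathcal A)$, which is locally compact Polish by Proposition \ref{prop: separable implies second-countable spectrum}, with Gelfand isomorphism $\Gamma\colon\mathcal A\to C_0(Y)$ from Theorem \ref{thm: Gelfand}. Dualizing the $G$-action on $\mathcal A$ gives a $G$-action on $Y$ that is continuous because the action on $\mathcal A$ is $\norm\cdot_\infty$-continuous and characters are contractive. Since $\mathcal A_0$ is a $\norm\cdot_\infty$-dense ideal of $\mathcal A\cong C_0(Y)$, Proposition \ref{prop:FaitGeorges} gives $C_c(Y)\subseteq\Gamma(\mathcal A\cap\LL^1)$; hence $\Psi(\phi)\coloneqq\int_X\Gamma^{-1}(\phi)\,d\lambda$ is a well-defined positive linear functional on $C_c(Y)$, and Theorem \ref{thm: Riesz} produces a Radon measure $\eta$ on $Y$ satisfying $\int_Y\hat f\,d\eta=\int_X f\,d\lambda$ whenever $\hat f\in C_c(Y)$. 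Measure-preservation of the $G$-action on $(Y,\eta)$ then follows from the fact that $\alpha$ preserves $\int_X\cdot\,d\lambda$.

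The main obstacle is upgrading this to the full identity $(\star)\colon\int_Y\hat f\,d\eta=\int_X f\,d\lambda$ for \emph{all} $f\in\mathcal A\cap\LL^1$. I would take $f\ge 0$, choose an increasing approximate unit $\psi_n\in C_c(Y)$ with $\psi_n\uparrow 1$, and set $k_n\coloneqq\Gamma^{-1}(\psi_n)$; the delicate point is that $k_n\uparrow 1$ $\lambda$-almost everywhere, which I would obtain from the approximate-unit property $\norm{k_n a-a}_\infty\to 0$ for $a\in\mathcal A\cap\LL^2$: this forces the strong limit $k_\infty=\sup_n k_n$ to act as the identity on the $\norm\cdot_2$-dense set $\mathcal A\cap\LL^2$, hence $k_\infty=1$. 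Monotone convergence on both sides then gives $(\star)$. Applying $(\star)$ to $\abs a^2$ shows $a\mapsto\hat a$ is $\norm\cdot_2$-isometric from $\mathcal A\cap\LL^2$ onto the $\norm\cdot_2$-dense set $C_0(Y)\cap\LL^2(Y,\eta)$ (Lemma \ref{lem: dense C0 in L2}), so it extends to a unitary $U\colon\LL^2(X,\lambda)\to\LL^2(Y,\eta)$ intertwining the two multiplication representations. Conjugation by $U$ yields an integral-preserving $G$-equivariant $*$-isomorphism $\LL^\infty(X,\lambda)\cong\LL^\infty(Y,\eta)$, and Proposition \ref{prop: chara boolean iso of action} identifies the continuous measure-preserving $G$-action on $(Y,\eta)$ as a continuous Radon model for $\alpha$, completing the proof.
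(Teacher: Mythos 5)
Your proof is correct and follows essentially the same route as the paper's: a separable $G$-invariant $C^*$-subalgebra of finite-measure-supported $G$-continuous functions, its Gelfand spectrum $Y$, Proposition~\ref{prop:FaitGeorges} combined with Theorem~\ref{thm: Riesz} to produce the Radon measure $\eta$, an $\LL^2$-unitary implementing the extension of the Gelfand isomorphism to $\LL^\infty(X,\lambda)\cong\LL^\infty(Y,\eta)$, and the dual action on $Y$, with the equivalence of the first two conditions and the easy implication handled exactly as in the paper via Proposition~\ref{prop: density for subcstar}, Corollary~\ref{cor: curly G is a Cstar algebra}, Lemma~\ref{lem: G continuous from continuous action on lc} and Lemma~\ref{lem: dense C0 in L2}. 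The only substantive difference is in your favor: your approximate-unit and monotone-convergence argument explicitly establishes $\int_Y \rho(f)\,d\eta=\int_X f\,d\lambda$ for \emph{all} integrable $f$ in the subalgebra, a step the paper's Step 3 asserts as holding ``by definition of $\eta$'' even though Theorem~\ref{thm: Riesz} only provides this identity for those $f$ mapped into $C_c(Y)$.
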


\begin{proof}
    In the whole argument, we systematically view boolean $G$-actions as continuous actions by integral preserving $*$-automorphisms on $\LL^\infty$, as per Proposition \ref{prop: boolean action lift} and \ref{prop: aut as integral preserving auto}. We also recall Proposition \ref{prop: chara boolean iso of action}, which allows us to understand boolean isomorphism of action at the level of $\LL^\infty$.\\

    By Proposition \ref{prop: density for subcstar}, \eqref{item: dense G continuous} and \eqref{item: dense G continuous L1} are equivalent, since Corollary \ref{cor: curly G is a Cstar algebra} ensures that $\mathcal{G}$ is a unital $C^*$-subalgebra of $\LL^{\infty}(X,\lambda)$.\\
    
    The implication \eqref{item: admit continuous model}$\Rightarrow$\eqref{item: dense G continuous} is a consequence of the fact that if $\beta$ is the action on the continuous Radon model $(Y,\eta)$ as in \eqref{item: admit continuous model}, then viewed as an action on $\LL^\infty(Y,\eta)$, the restriction of $\beta$ to $C_0(Y)$ is continuous. Since $C_0(Y)$ satisfies $\overline{C_0(Y)\cap \LL^2(Y,\eta)}^{\norm\cdot_2} \subseteq \LL^\infty(Y,\eta)$, we obtain through the boolean isomorphism between $\alpha$ and $\beta$ that $\mathcal G$ satisfies the desired density condition $\overline{\mathcal G\cap \LL^2(X,\lambda)}^{\norm\cdot_2}=\LL^2(X,\lambda)$. \\

    The converse \eqref{item: dense G continuous}$\Rightarrow$\eqref{item: admit continuous model} requires more work, as in \cite{glasnerAutomorphismGroupGaussian2005}. We fix a boolean measure-preserving $G$-action $\alpha$, viewed as a continuous $G$-action by $*$-automorphisms on $\LL^\infty(X,\lambda)$ which preserves the integral. We assume that its space $\mathcal G$ of $G$-continuous functions (which is a $C^*$-algebra by Corollary \ref{cor: curly G is a Cstar algebra}) satisfies condition \eqref{item: dense G continuous}. 

    \paragraph{Step 1. Choosing a suitable separable $\alpha$-invariant $C^*$-subalgebra $\mathcal A\subseteq\mathcal G$.}

    By Proposition \ref{prop: density for subcstar}, the set of finite measure-supported elements of $\mathcal G$ is $\norm{\cdot}_2$-dense in $\mathcal G$.
    Since $\LL^2(X,\lambda)$ is separable for the $\LL^2$ norm, we may
    fix a countable  subset $\mathcal D \subseteq\mathcal G$ consisting of functions whose supports have finite measure, such that $\mathcal D$ is $\norm{\cdot}_2$-dense in $\LL^2(X,\lambda)$.

    Let $\mathcal E=\alpha(G)\mathcal D$, then we claim that $\mathcal E$ is $\norm{\cdot}_\infty$-separable: if $\Gamma$ is a countable dense subset of $G$ then by $G$-continuity the countable set $\alpha(\Gamma)\mathcal D$ is dense in $\mathcal E$.
    If we finally let $\mathcal A$ denote the $C^*$-algebra generated by $\mathcal E$, then $\mathcal A$ is still separable: the countable set of finite $\Q[i]$-linear combinations of finite products of elements of $\alpha(\Gamma)\mathcal D\cup (\alpha(\Gamma)\mathcal D)^*$ is dense therein. 
    By construction $\mathcal A\subseteq\mathcal G$ and $\mathcal A$ is $\norm{\cdot}_\infty$-separable. Moreover, it contains a $\norm{\cdot}_2$-dense subset $\mathcal D$ of $\LL^2(X,\lambda)$ consisting of functions whose supports have finite measure. 

    \paragraph{Step 2. Building the space and the measure.}
    By the theorem of Gelfand-Naimark (Theorem \ref{thm: Gelfand}), the map
    \[
    \begin{array}{ccccc}
    \rho & : & \mathcal{A} & \longrightarrow & C_{0}(\spec(\mathcal A))
    \end{array}
    \]
    given by $\rho(a)(\chi) = \chi(a)$ for any $\chi$ in $\spec(\mathcal{A})$, is an isometric $\ast$-isomorphism. Let $Y\coloneqq \spec \mathcal A$, then since $\mathcal A$ is separable we obtain from Proposition \ref{prop: separable implies second-countable spectrum} that $Y$ is locally compact Polish.

    Now let
    $$\mathcal{I} \coloneqq \left\{ a \in \mathcal{A} \colon \lambda(\supp{a}) < + \infty \right\}.$$ 
    Then $\mathcal I$ is and ideal of $\mathcal{A}$ which is $\norm\cdot_2$-dense in $\LL^2(X,\lambda)$ since it contains $\mathcal D$.

    Functions in $\mathcal{I}$ are essentially bounded and have supports of finite measure, therefore they are integrable, so the measure $\lambda$ defines a positive linear functional 
    \[
    \begin{array}{ccccc}
    \Psi_\lambda & : & \mathcal{I} & \longrightarrow & \C\\
    & & f & \longmapsto & \int_X fd\lambda.
    \end{array}
    \]
    The linear functional $\Psi_\lambda$ can be transported through the Gelfand isomorphism, yielding the positive linear functional $\Psi_{\lambda}\circ\rho\inv : \rho(\mathcal I)\to\C$. 
    Proposition \ref{prop:FaitGeorges} ensures that $C_c(Y) \subseteq \rho(\mathcal{I})$, where $C_c(Y)$ denotes the space of compactly supported continuous functions on $Y$.
    We then restrict $\Psi_{\lambda}\circ\rho\inv$ to $C_c(Y)$, and by Theorem \ref{thm: Riesz} we obtain a unique Radon measure $\eta$ on $Y$
    such that for all $f\in C_c(Y)$,
    \[
    \int_Y fd\eta=\int_X \rho\inv(f)d\lambda.
    \]
    
    \paragraph{Step 3. Extending $\rho$ to the whole $\LL^{\infty}(X,\lambda)$.}

    We will now extend the definition of $\rho$ to functions in $\LL^{\infty}(X,\lambda)$. 
    We are essentially reformulating the uniqueness of the GNS construction
    for weights
    in our restricted setup.
    While the construction is natural, it is quite long to set up and we thus 
    encourage the reader to take this step for granted at first reading.
    The following diagram summarises the situation and the notations that we will use throughout this step.

    \[
    \begin{tikzcd}
	    \LL^{2}(X,\lambda)   \arrow[r, "\displaystyle{\widetilde{\rho}}"] &  \LL^{2}(Y,\eta)    \\
        \mathcal{I} \arrow[hookrightarrow]{d}{\norm{\cdot}_{\infty}\textnormal{-cls}} \arrow[r, "\displaystyle{\rho_{| \mathcal{I}}}"] \arrow[hookrightarrow, swap]{u}{\norm{\cdot}_{2}\textnormal{-cls}}  & \rho(\mathcal{I}) \arrow[hookrightarrow, swap]{d}{\norm{\cdot}_{\infty}\textnormal{-cls}} \arrow[hookrightarrow]{u}{{\norm\cdot}_{2}\textnormal{-cls}}  \\
        \mathcal{A} \arrow[hookrightarrow]{d}{\textnormal{str-cls}} \arrow[loop left, distance=2.7em, start anchor={[yshift=-1ex]west}, end anchor={[yshift=1ex]west},"\alpha"] \arrow[r, "\displaystyle{\rho}"] & C_{0}(Y)  \arrow[hookrightarrow, swap]{d}{\textnormal{str-cls}} \arrow[loop right, distance=2.5em, start anchor={[yshift=1ex]east}, end anchor={[yshift=-1ex]east},"\beta"]{}  \\
        \LL^{\infty}(X,\lambda)  \arrow[hookrightarrow]{d}{M} \arrow[r, "\displaystyle{\overline{\rho}}"] & \LL^{\infty}(Y,\eta)  \arrow[hookrightarrow, swap]{d}{M}  \\
        \mathcal{B}(\LL^{2}(X,\lambda))  \arrow{r}{\displaystyle{\widehat{\rho}}}  &  \mathcal{B}(\LL^{2}(Y,\eta)) 
    \end{tikzcd}
    \]

    \vspace{0.3cm}

     By definition of $\eta$, for any $f$ in $\mathcal{I}$ we have $\int_X fd\lambda = \int_Y \rho(f)d\eta$. In other words, $\rho_{\restriction \mathcal{I}}$ preserves integrals, hence it takes the inner product of $\LL^{2}(X,\lambda)$ to that of $\LL^{2}(Y,\eta)$. Therefore $\rho$ induces a surjective isometry $\widetilde{\rho}$ between the $\norm{\cdot}_2$-closures of $\mathcal{I}$ and $\rho(\mathcal{I})$, which are $\LL^{2}(X,\lambda)$ and $\LL^{2}(Y,\eta)$ respectively, by density of $\mathcal I$ (this is where the fact that $\mathcal D\subseteq\mathcal I$ is crucially used) and by Lemma \ref{lem: dense C0 in L2}. 
     
     We then have a natural $*$-isomorphism $\widehat{\rho}:\mathcal{B}(\LL^{2}(X,\lambda))\to \mathcal B(\LL^2(Y,\eta))$ given by the conjugacy by $\widetilde \rho$, namely for all $f\in \mathcal B(\LL^2(X,\lambda))$
    \[
    \widehat{\rho}(f) = \widetilde{\rho} f\widetilde{\rho}\inv.
    \]
    Being the conjugation by a surjective isometry, $\widehat{\rho}$ takes the strong topology on $\mathcal{B}(\LL^{2}(X,\lambda))$ to the strong topology on $\mathcal B(\LL^2(Y,\eta))$.
    
    Recall that we defined $M$ as the multiplication embedding $f \in \LL^{\infty}(X,\lambda) \mapsto   M_f \in\mathcal{B}(\LL^{2}(X,\lambda))$. We use the same notation for the multiplication embedding $M:\LL^{\infty}(Y,\eta) \rightarrow   \mathcal{B}(\LL^{2}(Y,\eta))$. For any functions $a$ in $\mathcal{A}$ and $\xi$ in $\mathcal{I}$, we have
    \[
    \rho(M_a\xi) = \rho(a \xi) = \rho(a) \rho(\xi) = M_{\rho(a)}\rho(\xi).
    \]
    By density of $\mathcal{I}$ and by continuity of the multiplication, this still holds if we take $\xi$ to be in $\LL^{2}(X,\lambda)$, \textit{i.e.} for any $a$ in $\mathcal{A}$ and any $\xi$ in $\LL^{2}(X,\lambda)$, we have 
    \[
    \widetilde{\rho}(M_a \xi) = M_{\rho(a)} \widetilde{\rho}(\xi).
    \]
    This is equivalent to saying that $\widetilde{\rho} M_a \widetilde{\rho}^{-1} = M_{\rho(a)}$, for any $a$ in $\mathcal{A}$, which can be reformulated as $\widehat{\rho}(M_a) = M_{\rho(a)}$ by definition of $\widehat{\rho}$. Since $\mathcal A\cap \LL^2(X,\lambda)$ is dense in $\mathcal \LL^2(X,\lambda)$, by Proposition \ref{prop: density for subcstar}
    the strong-closure of $M(\mathcal{A})$ in $\mathcal B(\LL^2(X,\lambda))$ is equal to $M(\LL^{\infty}(X,\lambda))$. Moreover, by Lemma \ref{lem: dense C0 in L2} the strong closure of $M(\rho(\mathcal A))=M(C_0(Y))$ in $\mathcal B(\LL^2(Y,\eta))$ is equal to $M(\LL^{\infty}(Y,\eta))$.

    Therefore the isomorphism $\widehat{\rho}$ restricts to an isomorphism of von Neumann algebras between $M(\LL^{\infty}(X,\lambda))$ and $M(\LL^{\infty}(Y,\eta))$ sending the measure $\lambda$ to $\eta$. As the homomorphisms $M$ are isomorphisms on their images, defining $\overline{\rho}$ by 
    \[
    \overline{\rho}(f) = M^{-1}\widehat{\rho}M(f)
    \]
    for any $f$ in $\LL^{\infty}(X,\lambda)$ allows us to extend $\rho$ to $\overline \rho: \LL^{\infty}(X,\lambda)\to \LL^{\infty}(Y,\eta)$.

    \paragraph{Step 4: Defining $\beta$ and showing that it is the action we want.}
    We now show that $G$ acts on $(Y,\eta)$ in a spatial manner, and that this action is booleanly isomorphic to the original boolean $G$-action on $(X,\lambda)$. 

    Since $\overline{\rho}$ is an isomorphism $\LL^\infty(X,\lambda)\to \LL^\infty(Y,\eta)$, we already have our candidate boolean action $\beta$ on $\LL^\infty(Y,\eta)$, uniquely defined by letting, for all $f\in\LL^\infty(X,\lambda)$,
    \[
    \beta(g,\overline{\rho}(f))=\overline{\rho}(\alpha(g,f)).
    \]
    Now $G$ acts on $\mathcal A$ via $\alpha$, yielding a spatial action $\hat\alpha$ on $Y=\spec \mathcal A$ defined by: for any $\chi\in Y$ and any $a\in\mathcal{A}$,
    $\hat\alpha(g, \chi)(a)=\chi( \alpha(g\inv, a))$.
    
    Since $\overline{\rho}$ extends $\rho$, we have in particular that for every $a\in\mathcal A$, 
    $\beta(g,\rho(a))=\rho (\alpha(g,a))$.
    So $\beta(g,\rho(a))(\chi)=\chi( \alpha(g,a))$ for any $\chi$ in $Y$. We can rewrite this as: 
    \begin{equation*}\label{eq: alpha is as wanted on A}
       \beta(g,\rho(a))(\chi)=\chi(\alpha(g,a))=\hat \alpha( g\inv, \chi)(a)  = \rho(a)(\hat \alpha( g\inv, \chi))
    \end{equation*}
    In other words, when restricted to $\rho(\mathcal A)=C_0(Y)$, the action $\beta$ coincides with the precomposition by the inverse associated to the spatial action $\hat \alpha$.
    By strong density of $M(C_0(Y))$ in $M(\LL^\infty(Y,\eta))$ and strong continuity of $\overline{\rho}$, the same conclusion holds on $\LL^\infty(Y,\eta)$, namely $\beta$ coincides with the precomposition by the inverse associated to the spatial action $\hat \alpha$. 
    
    In other words  $\beta$ is the boolean action associated to $\hat \alpha$, and since $\beta$ was integral preserving we have that $\hat\alpha$ is measure-preserving (this could also be checked directly using the Riesz-Markov-Kakutani theorem).
    
    We finally need to check is that $\hat\alpha$ is continuous, using the fact that  $\mathcal{A}$ consists solely of $G$-continuous functions. The topology on $\spec(\mathcal{A})$ is the topology of pointwise convergence, so we fix $a$ in $\mathcal{A}$.
    Let $(g_n)$ be a sequence of elements of $G$ converging to $e_G$ and 
    let $(\chi_n)$ be converging to $\chi$ in $Y=\spec \mathcal A$. We have 
    \begin{align*}
    \abs{\hat\alpha(g_n, \chi_n)(a) - \chi(a)} & = \abs{ \widehat{a}(\hat\alpha(g_n, \chi_n))- \chi(a) }\\
    & \leqslant \abs{ \widehat{a}(\hat\alpha(g_n, \chi_n)) - \widehat{a}(\chi_n) } + \abs{   \widehat{a}(\chi_n) - \widehat{a}(\chi)           }\\
    & \leqslant \norm{ \widehat{\alpha(g_n\inv,a)} - \widehat{a} }_{C_0(Y)} + \abs{ \chi_n(a) - \chi(a) }\\
    & \leqslant \norm{ \alpha(g_n\inv, a) - a }_{\infty} + \abs{ \chi_n(a) - \chi(a) }.
    \end{align*}
    By convergence of $(\chi_n)$ the second term tends to zero as $n$ grows, and the first term tends to zero as $n$ grows by $G$-continuity of $a$.
    Therefore the $G$-action on $Y$ is continuous as wanted. 
\end{proof}

\section{Applications and examples}

\subsection{Construction of \texorpdfstring{$G$}{G}-continuous functions when \texorpdfstring{$G$}{G} is locally compact}
\label{sec: convolution}
In the whole section, we work with a fixed locally compact Polish group $G$, and denote by $m$ its Haar measure.
We start by recalling the definition of convolution between an essentially bounded function on a measure space and a function that is integrable with regards to the Haar measure of the group acting on said space. 

	\begin{defi}
Consider a boolean measure-preserving $G$-action $\alpha$ on $(X,\lambda)$ as in Definition \ref{def: boolean mp or ns action}. Let $f\in \LL^\infty(X,\lambda)$ and $\delta\in\LL^{1}(G,m)$.
For any $x$ in $X$, the \textbf{convolution product} $\delta*f: X\to \C$ is defined by: for all $x\in X,$
\[
\delta * f(x) = \int_G \delta(g)f(\alpha(g^{-1},x))dm(g).
\]
	\end{defi}

Using for $\delta$ an approximation of the identity, we will see that  $G$-continuous functions are always dense for boolean measure-preserving actions. We start by proving the following well-known lemma.

\begin{lem}
\label{lem:convoGcont}
Consider a locally compact Polish group $G$, and a boolean measure-preserving action $\alpha$ of $G$ on a standard $\sigma$-finite space $(X,\lambda)$.
Let $\delta : G \rightarrow \mathbb{R}_+$ be continuous of integral $1$ and compactly supported. Then, for any function $f$ in $\LL^{\infty}(X,\lambda) \cap \LL^{1}(X,\lambda)$, $\delta * f$ is in $\LL^{\infty}(X,\lambda) \cap \LL^{1}(X,\lambda)$ with $\norm{\delta*f}_1\leq\norm{f}_1$, and is $G$-continuous.
\end{lem}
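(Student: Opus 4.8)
The plan is to prove the three claims in order: membership of $\delta*f$ in $\LL^\infty(X,\lambda)$, membership in $\LL^1(X,\lambda)$ with the norm bound, and $G$-continuity, which is the real content.

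For the first two I would first note that $\delta*f$ is everywhere defined and Borel: for each $x$ the integrand $g\mapsto\delta(g)f(\alpha(g\inv,x))$ is dominated by $\norm f_\infty\,\delta$ and hence $m$-integrable, while jointly $(g,x)\mapsto\delta(g)f(\alpha(g\inv,x))$ is Borel since $\alpha$ is, so Fubini--Tonelli makes $x\mapsto\delta*f(x)$ measurable. The same domination yields $\abs{\delta*f(x)}\leq\norm f_\infty\int_G\delta\,dm=\norm f_\infty$ pointwise, so $\norm{\delta*f}_\infty\leq\norm f_\infty$. For the $\LL^1$ bound I would apply Fubini--Tonelli to the nonnegative integrand and then use that each $\alpha(g\inv,\cdot)$ is measure-preserving:
\[
\norm{\delta*f}_1\leq\int_G\delta(g)\left(\int_X\abs{f(\alpha(g\inv,x))}\,d\lambda(x)\right)dm(g)=\norm f_1\int_G\delta\,dm=\norm f_1 .
\]
In particular $\delta*f\in\LL^\infty\cap\LL^1$, and the bound $\norm{\phi*f}_\infty\leq\norm\phi_1\norm f_\infty$ holds for any $\phi\in\LL^1(G,m)$ by the same computation.

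The crux is the \emph{translation identity}
\[
(\delta*f)\circ\alpha(h\inv,\cdot)=\delta_h*f \quad\text{in }\LL^\infty(X,\lambda),\qquad\text{where }\delta_h(g)\coloneqq\delta(h\inv g).
\]
Granting it, $G$-continuity follows at once: by linearity of $\phi\mapsto\phi*f$ and the bound above,
\[
\norm{(\delta*f)\circ\alpha(h\inv,\cdot)-\delta*f}_\infty=\norm{(\delta_h-\delta)*f}_\infty\leq\norm{\delta_h-\delta}_1\,\norm f_\infty ,
\]
and the right-hand side tends to $0$ as $h\to e_G$ by continuity of left translation in $\LL^1(G,m)$. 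To obtain the identity I would write $(\delta*f)(\alpha(h\inv,x))=\int_G\delta(g)f(\alpha(g\inv,\alpha(h\inv,x)))\,dm(g)$, replace the inner point $\alpha(g\inv,\alpha(h\inv,x))$ by $\alpha(g\inv h\inv,x)$, and then apply left-invariance of the Haar measure $m$ (substituting $g\mapsto h\inv g$), which turns $\alpha(g\inv h\inv,x)$ into $\alpha(g\inv,x)$ and $\delta(g)$ into $\delta(h\inv g)$, giving exactly $(\delta_h*f)(x)$.

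The main obstacle is the replacement of $\alpha(g\inv,\alpha(h\inv,x))$ by $\alpha(g\inv h\inv,x)$ inside the integral: by Definition~\ref{def: boolean mp or ns action}\eqref{cond: main condition for boolean action} (applied to $g\inv$ and $h\inv$) this equality holds, for each fixed $g$, only on a conull set of $x$ that depends on $g$, so it cannot be used pointwise under the integral sign. To handle this I would fix $h$ and consider the Borel set
\[
N\coloneqq\{(g,x)\in G\times X:\ f(\alpha(g\inv,\alpha(h\inv,x)))\neq f(\alpha(g\inv h\inv,x))\}.
\]
For each fixed $g$ the section $\{x:(g,x)\in N\}$ is $\lambda$-null by the boolean action axiom, so Fubini--Tonelli gives $(m\times\lambda)(N)=0$, and hence for $\lambda$-almost every $x$ the section $\{g:(g,x)\in N\}$ is $m$-null. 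For each such $x$ the two integrands coincide for $m$-almost every $g$, so their $\delta$-weighted integrals agree, yielding $(\delta*f)(\alpha(h\inv,x))=\int_G\delta(g)f(\alpha(g\inv h\inv,x))\,dm(g)$ for $\lambda$-almost every $x$; the change of variables then completes the identity and hence the proof.
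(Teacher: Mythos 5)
Your proof is correct and follows essentially the same route as the paper: Fubini--Tonelli plus measure-preservation for the $\LL^1$ bound, then the translation identity $(\delta*f)\circ\alpha(h\inv,\cdot)=\delta_h*f$ together with the estimate $\norm{(\delta_h-\delta)*f}_\infty\leq\norm{\delta_h-\delta}_1\norm{f}_\infty$, where the paper obtains the needed convergence $\norm{\delta_h-\delta}_1\to 0$ inline (uniform continuity of $\delta$ under translation via its Lemma~\ref{lem: G continuous from continuous action on lc}, plus a support bound of $2m(K)$) rather than citing continuity of translation in $\LL^1(G,m)$. Your explicit Fubini argument for the fact that the identity $\alpha(g\inv,\alpha(h\inv,x))=\alpha(g\inv h\inv,x)$ holds only $\lambda$-almost everywhere for each fixed pair $(g,h)$ is an extra point of care that the paper's computation passes over silently.
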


	\begin{proof}
We first check that $\delta * f$ is in $\LL^1(X,\lambda)$, as it is clear that it is in $\LL^\infty(X,\lambda)$. Using Fubini's theorem and the fact that $\delta$ takes nonnegative values we have
\begin{align*}
\int_X \abs{ \delta * f(x) } d\lambda(x) 
& = \int_X \abs{ \int_G \delta(g)f(\alpha(g\inv,x))dm(g) } d\lambda(x)\\
& \leqslant \int_X \int_G \abs{ \delta(g) f(\alpha(g\inv,x)) } dm(g) d\lambda(x) \\
& = \int_G \delta(g) \int_X  \abs{ f(\alpha(g\inv,x)) } d\lambda(x) dm(g)\\
& = \norm{f}_1,
\end{align*}
the last equality being a consequence of the fact that $\int_G \delta(g)dm(g)=1$ and $g$ preserves the measure. In particular we have $\delta* f\in \LL^1(X,\lambda)$ and moreover $\norm{\delta*f}_1\leq \norm{f}_1$.


We finally prove that it is $G$-continuous. Let us take $\varepsilon>0$.  For any $h$ in $G$ we have
\[
\delta * f(\alpha(h,x)) = \int_G \delta(g)f(\alpha(g\inv h,x))dm(g) = \int_G \delta(hg)f(\alpha(g\inv,x))dm(g).
\]
Therefore, we have
\begin{equation}\label{eq: towards G continuity}
\abs{\delta * f(x) - \delta * f(\alpha(h,x))} \leqslant \int_G \abs{ \delta(g) - \delta(hg) }\abs{ f(\alpha(g\inv,x)) } dm(g).
\end{equation}
By Lemma \ref{lem: G continuous from continuous action on lc} for the $G$-action on itself by left translation, we can fix a neighborhood $\mathcal{N}$ of $e_G$ such that for all $h\in \mathcal{N}$ and $g\in G$, $\abs{\delta(g)-\delta(hg)}<\varepsilon$. 
Also, note that if $K=\supp \delta$, then $\delta(g)-\delta(hg)=0$ for all $g\notin K\cup h\inv K$, which has measure at most $2m(K)$ since $m$ is a left Haar measure.
It follows that
\begin{align*}
\int_G \abs{\delta(g) - \delta(hg)} dm(g) &< 2\varepsilon \times m(K)
\end{align*}
Therefore we can control the left hand side of Equation \eqref{eq: towards G continuity} by the $\LL^{\infty}$ norm of $f$:
\[
\abs{\delta * f(x) - \delta * f(\alpha(h,x))} < 2 \varepsilon \times m(K) \times \norm{f}_{\infty}
\]
which concludes the proof.
	\end{proof}

We can now prove Theorem~\ref{thmi: iso lc spatial model}
in the special case that the acting group is locally compact.

\begin{thm}{\label{thm: continuous radon for lcsc}}
    Let $G$ be a locally compact Polish group, and consider a boolean measure-preserving $G$-action on a standard $\sigma$-finite space $(X,\lambda)$. Then the action admits a continuous spatial model on a locally compact Polish space $(Y,\eta)$, where $\eta$ is a Radon measure.
\end{thm}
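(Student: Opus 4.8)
The plan is to reduce everything to the characterization provided by Theorem \ref{thm: chara existence spatial model}. That theorem tells us that the action admits a continuous Radon model precisely when the algebra $\mathcal G$ of $G$-continuous functions is $\norm\cdot_1$-dense in $\LL^1(X,\lambda)$, so it suffices to establish this density. Here is where local compactness enters decisively: we exploit convolution on $G$, which is exactly what Lemma \ref{lem:convoGcont} was designed for. Since $\LL^\infty(X,\lambda)\cap\LL^1(X,\lambda)$ is $\norm\cdot_1$-dense in $\LL^1(X,\lambda)$, I would fix an arbitrary $f\in\LL^\infty(X,\lambda)\cap\LL^1(X,\lambda)$ and approximate it by $G$-continuous functions.

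The idea is to convolve $f$ with an approximate identity on $G$: a sequence $(\delta_n)$ of continuous, nonnegative, compactly supported functions of integral $1$ whose supports shrink to $\{e_G\}$. Such functions exist on any locally compact Hausdorff group by applying Urysohn's lemma to a decreasing basis of compact neighborhoods of $e_G$ and normalizing. By Lemma \ref{lem:convoGcont}, each $\delta_n * f$ lies in $\mathcal G\cap\LL^1(X,\lambda)$, so it only remains to show that $\delta_n * f \to f$ in $\norm\cdot_1$.

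The key computation, which I expect to be the main (though routine) step, goes as follows. Since $\int_G \delta_n\,dm = 1$, we may write
\[
\delta_n * f(x) - f(x) = \int_G \delta_n(g)\bigl(f(\alpha(g\inv,x)) - f(x)\bigr)\,dm(g),
\]
whence, by the triangle inequality and the Fubini-Tonelli theorem,
\[
\norm{\delta_n * f - f}_1 \leq \int_G \delta_n(g)\,\norm{f\circ\alpha(g\inv,\cdot) - f}_1\,dm(g).
\]
Now $g\mapsto f\circ\alpha(g\inv,\cdot)$ is exactly the orbit map of $f$ under the continuous $\LL^1$-action induced by the continuous homomorphism $\pi_\alpha : G\to\Aut(X,\lambda)$, so $\norm{f\circ\alpha(g\inv,\cdot) - f}_1 \to 0$ as $g\to e_G$. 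Given $\varepsilon>0$, choosing $n$ large enough that $\supp\delta_n$ lies inside a neighborhood of $e_G$ on which this quantity stays below $\varepsilon$ forces $\norm{\delta_n * f - f}_1 < \varepsilon$. This establishes the density of $\mathcal G\cap\LL^1(X,\lambda)$ in $\LL^1(X,\lambda)$.

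The only genuine subtlety is the $\LL^1$-continuity of the orbit map used above: this does \emph{not} rely on local compactness but rather on the continuity of $\pi_\alpha$ together with the continuity of the $\Aut(X,\lambda)$-action on $\LL^1(X,\lambda)$, both recorded in Section \ref{sec: actions on function spaces}. Local compactness instead provides the approximate identity $(\delta_n)$ and makes the convolution lemma applicable. With the density in hand, condition \eqref{item: dense G continuous L1} of Theorem \ref{thm: chara existence spatial model} holds, and the theorem then supplies condition \eqref{item: admit continuous model}, namely a continuous Radon model for $\alpha$ on a locally compact Polish space $(Y,\eta)$ with $\eta$ Radon, which is exactly the desired conclusion.
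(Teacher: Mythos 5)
Your proof is correct and follows essentially the same route as the paper: both verify condition \eqref{item: dense G continuous L1} of Theorem \ref{thm: chara existence spatial model} by convolving a bounded integrable $f$ with a compactly supported approximate identity on $G$ (Lemma \ref{lem:convoGcont}), running the same Fubini estimate, and invoking the continuity of the $G$-action on $\LL^1(X,\lambda)$. The only cosmetic difference is that you phrase the approximation with a shrinking sequence $(\delta_n)$, whereas the paper fixes a single $\delta$ supported in a suitable neighborhood of $e_G$.
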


\begin{proof}
    We want to apply condition \eqref{item: dense G continuous L1} from Theorem \ref{thm: chara existence spatial model} in order to conclude. 
    
    Since bounded functions are dense in $\LL^1(X,\lambda)$, 
    we fix some $f\in\LL^\infty(X,\lambda)\cap\LL^1(X,\lambda)$, some $\varepsilon>0$, and our aim is to find a $G$-continuous function which is $\varepsilon$-close to $f$ for $\norm{\cdot}_1$.
     
    For any $\delta\in\LL^{1}(G,m)$ compactly supported of integral $1$ taking nonnegative values, we know from Lemma \ref{lem:convoGcont} that $\delta * f$ is in $\LL^{\infty}(X,\lambda) \cap \LL^{1}(X,\lambda)$ and is $G$-continuous. 
    By Fubini's Theorem, we also have

    \begin{align*}
        \| f - \delta * f \|_{1} & = \int_X \abs{f(x) - \int_G \delta(g)f(\alpha(g\inv,x))dm(g) } d\lambda(x)\\
        & = \int_X \abs{ \int_G \delta(g)f(x)dm(g) - \int_G \delta(g)f(\alpha(g\inv,x))dm(g) } d\lambda(x)\\
        & \leqslant \int_X \int_G \delta(g) \abs{f(x) - f(\alpha(g\inv,x)) } dm(g)d\lambda(x) \\
        & = \int_G \delta(g) \int_X \abs{f(x) - f(\alpha(g\inv,x))} d\lambda(x) dm(g)\\
        & = \int_G \delta(g) \| f - f \circ \alpha(g\inv,\cdot) \|_{1} dm(g)
    \end{align*}
    
    But as noted in Section \ref{sec: actions on function spaces}, the $G$-action on $\LL^{1}(X,\lambda)$ is continuous. Therefore there exists a neighborhood $\mathcal{N}_\varepsilon$ of $e_G$ such that $\| f - f \circ \alpha(g\inv,\cdot) \|_{1} < \varepsilon$ for any $g$ in $\mathcal{N}_\varepsilon$. Taking now $\delta$ as above whose support is moreover contained in $\mathcal{N}_\varepsilon$, we obtain $\norm{f-\delta*f}_1 < \varepsilon$, which concludes the proof.
\end{proof}

Combining Mackey's theorem (Theorem \ref{thm: mackey}), and Theorem \ref{thm: continuous radon for lcsc}, we obtain Theorem \ref{thm intro: spatial conjugacy to nice action}
as a corollary.
\begin{cor}{\label{cor: radon model for lcsc via Mackey}}
    Let $G$ be a locally compact Polish group and $(X,\lambda)$ be a standard $\sigma$-finite space. Let also $\alpha$ be a measure-preserving $G$-action on $(X,\lambda)$. Then $\alpha$ is spatially isomorphic to a continuous measure-preserving $G$-action on a locally compact Polish space $Y$ endowed with a Radon measure $\eta$.
\end{cor}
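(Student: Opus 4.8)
The plan is to deduce this corollary by combining the continuous Radon model constructed in Theorem \ref{thm: continuous radon for lcsc} with the second part of Mackey's theorem (Theorem \ref{thm: mackey}), which upgrades a boolean isomorphism between spatial actions to a genuine spatial isomorphism. The key observation is that a \emph{spatial} measure-preserving action is in particular a \emph{boolean} one, so that the machinery developed for boolean actions applies directly to $\alpha$.

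First I would regard the given spatial action $\alpha$ on $(X,\lambda)$ as a boolean measure-preserving $G$-action, which is legitimate since every spatial action is boolean by definition. Applying Theorem \ref{thm: continuous radon for lcsc} to this boolean action yields a continuous measure-preserving $G$-action $\beta$ on a locally compact Polish space $Y$ equipped with a Radon measure $\eta$, together with a boolean isomorphism between $\alpha$ and $\beta$. At this point the crucial remark is that $\beta$ is itself a spatial action in the sense of Definition \ref{def: spatial mp action}: being continuous, the action map $G\times Y\to Y$ is Borel, and by hypothesis each $\beta(g,\cdot)$ is a measure-preserving bijection of $(Y,\eta)$, so all the requirements are met.

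We are thus in the situation of two spatial measure-preserving $G$-actions, $\alpha$ and $\beta$, that are booleanly isomorphic. Since $G$ is locally compact Polish, item \eqref{item:Mackey2} of Theorem \ref{thm: mackey} applies and guarantees that this boolean isomorphism coincides with a spatial isomorphism on a $G$-invariant conull Borel subset. This exhibits $\alpha$ and $\beta$ as spatially isomorphic, and since $\beta$ is a continuous measure-preserving action on the locally compact Polish space $Y$ with Radon measure $\eta$, the corollary follows.

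I do not expect any genuine obstacle here, as the substantial analytic work has already been carried out in Theorem \ref{thm: continuous radon for lcsc} (via convolution and $G$-continuous functions) and the descriptive set-theoretic work in Mackey's theorem. The only points requiring a moment of care are verifying that the model $\beta$ qualifies as spatial so that item \eqref{item:Mackey2} is applicable, and keeping track of the fact that $(Y,\eta)$ is a legitimate standard $\sigma$-finite space: indeed $\eta$ is $\sigma$-finite by Lemma \ref{lem: locally finite Polish implies sigma finite} together with Proposition \ref{prop: Radon equivalent to locally finite on locally compact}, and it is atomless since the boolean isomorphism transports the atomless measure algebra of $(X,\lambda)$ to that of $(Y,\eta)$.
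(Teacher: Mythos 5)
Your proposal is correct and follows essentially the same route as the paper: apply Theorem \ref{thm: continuous radon for lcsc} to the boolean action associated with $\alpha$ to get a continuous Radon model $\beta$ on $(Y,\eta)$, then invoke item \eqref{item:Mackey2} of Mackey's Theorem \ref{thm: mackey} to upgrade the boolean isomorphism to a spatial one. Your extra verifications (that $\beta$ is indeed spatial, and that $(Y,\eta)$ is a standard $\sigma$-finite space so that Mackey's theorem applies) are points the paper leaves implicit, and they are handled correctly.
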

\begin{proof}
    By Theorem \ref{thm: continuous radon for lcsc} the boolean action associated with $\alpha$ admits a Polish model $\beta$ on $(Y,\eta)$, where $Y$ is locally compact and Polish, and $\eta$ is a Radon measure. By Mackey's theorem (item \eqref{item:Mackey2} of Theorem \ref{thm: mackey}), since $\alpha$ and $\beta$ are booleanly isomorphic, they are in fact spatially isomorphic.
\end{proof}

In section \ref{sec: spatial Radon model lcsc} we give another way to obtain Corollary \ref{cor: radon model for lcsc via Mackey} without relying on the use of Mackey's theorem. By working directly with measurable bounded functions and the genuine supremum norm instead of equivalence classes of functions up to measure zero, we explicitely construct the desired spatial isomorphism.

\subsection{Actions of isometry groups of locally compact separable spaces}

In this section, building upon Theorem \ref{thm: continuous radon for lcsc} and Theorem
\ref{thm: chara existence spatial model},
we extend the result of Kwiatkowska-Solecki 
on the existence of continuous spatial models 
for isometry groups of locally compact separable spaces to our $\sigma$-finite setup.
As in their paper, we rely on the following characterization of this class of 
isometry groups (endowed with the topology of pointwise convergence). 

\begin{thm}[{Kwiatkowska-Solecki, see \cite[Thm.~1.2]{kwiatkowskaSpatialModelsBoolean2011}}]
Let $G$ be a Polish group. Then $G$ is topologically isomorphic 
to the group of isometries of
a locally compact separable metric space if and only 
if it satisfies the following property:
\begin{enumerate}
    \item[($*$)]\phantomsection
    \label{eq: chara iso lcsc}
for every open neighborhood of the identity $U \subseteq G$ there is a closed
subgroup $H < G$ such that \(H \subseteq U\), \(N (H)\) is open, and 
the Polish space \(G/H\) is
locally compact.
\end{enumerate}
\end{thm}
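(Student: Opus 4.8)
\emph{The statement being an equivalence, I would prove the two implications separately, and I expect the construction in the backward implication to be where the genuine difficulty lies.} Since a separable metric space is second-countable and a locally compact second-countable Hausdorff space is Polish (as recalled in the excerpt), any locally compact separable metric space $M$ is automatically Polish, so $\mathrm{Iso}(M)$ carries its usual Polish group topology, the topology of pointwise convergence, a basis of identity neighborhoods being the sets $U_{F,\varepsilon}=\{g\in\mathrm{Iso}(M):\ d(gp,p)<\varepsilon\text{ for all }p\in F\}$ with $F\subseteq M$ finite and $\varepsilon>0$. This is the description of the topology I would use throughout.

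\emph{Forward implication.} Suppose $G=\mathrm{Iso}(M)$ and fix a neighborhood $U\supseteq U_{F,\varepsilon}$ of the identity. The natural candidates for $H$ are the pointwise stabilizers $\mathrm{Stab}(F)$ of finite sets: these are closed subgroups contained in $U$, and the orbit map $g\,\mathrm{Stab}(F)\mapsto(gp)_{p\in F}$ identifies $G/\mathrm{Stab}(F)$ with the $G$-orbit of the tuple $(p)_{p\in F}$ inside the locally compact space $M^{F}$, which I would show is a homeomorphism onto a locally compact subset using the equicontinuity of isometries together with the local compactness of $M$. The delicate condition is that $N(H)$ be open. This holds automatically when the pointwise stabilizers happen to be \emph{open} (e.g.\ $M$ uniformly discrete), so that $N(\mathrm{Stab}(F))\supseteq\mathrm{Stab}(F)$ is open; at the opposite extreme, when $G$ has no small subgroups one instead takes $H=\{e\}$, whose normalizer is all of $G$ and whose quotient is $G$ itself, locally compact precisely when $\mathrm{Iso}(M)$ is. In general I would adapt the choice of $H$ to the local structure of $M$ near the chosen points so as to force $N(H)$ open; establishing this uniformly is the one genuinely subtle point of this direction.

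\emph{Backward implication.} Assume $(*)$. Using that $G$ is first-countable, fix a decreasing neighborhood basis $(U_n)$ of the identity and, for each $n$, a closed subgroup $H_n\subseteq U_n$ with $N_n\coloneqq N(H_n)$ open and $G/H_n$ locally compact; note $\bigcap_nH_n=\{e\}$. The key observation is that the open-normalizer hypothesis gives $G/H_n$ a group-like structure: the image of the open subgroup $N_n$ is an open subspace $N_n/H_n\subseteq G/H_n$ which is itself a locally compact Polish group (as $H_n\trianglelefteq N_n$), and since an open subgroup of a second-countable group has countable index, $G/H_n$ is a countable disjoint union of clopen translates of $N_n/H_n$. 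Equipping the metrizable group $N_n/H_n$ with a compatible left-invariant metric (Birkhoff--Kakutani) truncated to take values in $[0,1]$, transporting it to each clopen translate by a coset representative, and declaring points of distinct translates to be at distance $1$, I obtain a $G$-invariant, compatible, separable metric $\rho_n$ on $G/H_n$ (locally compact, since the induced topology coincides with the given one) for which left translation acts by isometries. I would then assemble the target space $M\coloneqq\bigsqcup_n(G/H_n,\rho_n)$, with the pieces decorated so as to be pairwise non-isometric and mutually separated, on which $G$ acts diagonally by isometries via left translation; since $\bigcap_nH_n=\{e\}$ and the $H_n$ form a neighborhood basis, this yields a topological group embedding $G\hookrightarrow\mathrm{Iso}(M)$.

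\emph{Main obstacle.} The crux is the reverse inclusion $\mathrm{Iso}(M)\subseteq G$, i.e.\ showing that \emph{every} isometry of $M$ is left translation by some element of $G$. An isometry preserves each (non-isometric) piece and hence restricts to a self-isometry of each $(G/H_n,\rho_n)$, but the isometry group of such a homogeneous space is a priori strictly larger than the left translations: it also contains symmetries fixing the base point $eH_n$ (for instance the flip $x\mapsto -x$ when $N_n/H_n=\R$), so a plain disjoint union does \emph{not} have isometry group equal to $G$. Resolving this forces one to decorate $M$ with enough rigidly-placed additional structure to break all such spurious symmetries, while keeping the space locally compact and separable and keeping the $G$-action transitive on the decorated configuration; the open-normalizer condition is exactly what guarantees that, after this rigidification, the base-point stabilizer in $\mathrm{Iso}(M)$ collapses to $\bigcap_nH_n=\{e\}$ and surjectivity goes through. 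I expect this rigidity argument, rather than the construction of the metric, to be the main technical burden.
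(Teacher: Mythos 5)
First, note that the paper does not prove this statement at all: it is quoted as an external result of Kwiatkowska--Solecki (their Theorem~1.2) and used as a black box, so your attempt can only be judged on its own completeness --- and it is not complete. Both of your implications bottom out, by your own admission, exactly where the actual content of the theorem lies.

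For the forward implication, your candidate $H=\mathrm{Stab}(F)$ fails concretely, and not only in exotic situations. Take $M=\bigsqcup_{n}C_{n}$, where $C_{n}$ is a circle of circumference $1+1/n$ and distinct circles are placed at mutual distance $10$. Then $M$ is locally compact and separable, and $G=\mathrm{Iso}(M)=\prod_{n}O(2)$, since no isometry can exchange circles of different circumferences. For a point $x\in C_{1}$ one has $\mathrm{Stab}(x)=\langle r_{x}\rangle\times\prod_{n\geq 2}O(2)$, where $r_{x}$ is the reflection fixing $x$, and its normalizer is $N_{O(2)}(\langle r_{x}\rangle)\times\prod_{n\geq2}O(2)$, whose first factor is a finite subgroup of $O(2)$; hence $N(\mathrm{Stab}(x))$ is \emph{not} open. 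Moreover this $G$ has small subgroups and its point stabilizers are not open, so neither of the two special cases you mention applies, and the ``adaptation of $H$ to the local structure of $M$'' that you defer to is precisely the hard part of this direction (in the example the correct choice is a tail subgroup $\{e\}\times\cdots\times\{e\}\times\prod_{n\geq N}O(2)$, which is normal --- a choice of an entirely different nature from a stabilizer).

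For the backward implication, the gap is equally fundamental. Your space $M=\bigsqcup_{n}(G/H_{n},\rho_{n})$, with pieces mutually separated, has isometry group containing the full product $\prod_{n}\mathrm{Iso}(G/H_{n},\rho_{n})$: even after decorating the pieces so that no isometry can permute them, an isometry of $M$ may still act on the different pieces by \emph{unrelated} elements, so $\mathrm{Iso}(M)$ strictly contains the diagonal copy of $G$ (it even contains, on each single piece, stabilizer symmetries such as the flip you mention). Killing these spurious isometries cannot be done piece by piece; one must metrically couple the pieces and rigidify each homogeneous space so that every isometry of the whole is a single left translation, and this construction --- the heart of Kwiatkowska and Solecki's argument --- is exactly what your proposal leaves unspecified. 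As it stands, what you have proved is only that $G$ embeds as a closed subgroup of the isometry group of a locally compact separable metric space, which is a strictly weaker statement.
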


Let us remark that in \hyperref[eq: chara iso lcsc]{($*$)}, the 
normalizer subgroup $N(H)$ is open, hence closed, in particular 
$N(H)/H$ is a closed subset of the locally compact space $G/H$.
Since it is moreover a topological group, we conclude that 
$N(H)/H$ is a locally compact Polish group.

\begin{prop}\label{prop: dense Gcont for iso of lc}
    Let $G$ be the isometry
    group of a locally compact separable metric space, 
    and consider $\alpha$ a boolean measure-preserving $G$-action 
    on a standard $\sigma$-finite space $(X,\lambda)$. 
    The set of square integrable $G$-continuous functions with regards to this action 
    is dense in $\LL^2(X,\lambda)$.
\end{prop}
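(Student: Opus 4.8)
The plan is to combine a Hilbert-space fixed point argument, which replaces $f$ by a nearby $H$-invariant function for a suitably small closed subgroup $H$, with the convolution technique of Lemma~\ref{lem:convoGcont} carried out over the \emph{locally compact} quotient $N(H)/H$ furnished by property \hyperref[eq: chara iso lcsc]{($*$)}. First, by density I would reduce to the case where $f\in\LL^2(X,\lambda)\cap\LL^\infty(X,\lambda)$ has finite measure support, and fix $\varepsilon>0$. Since the $G$-action on $\LL^2(X,\lambda)$ is continuous (see Section~\ref{sec: actions on function spaces}), there is an open neighbourhood $U$ of $e_G$ with $\norm{f-f\circ\alpha(g\inv,\cdot)}_2<\varepsilon/3$ for all $g\in U$. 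Property \hyperref[eq: chara iso lcsc]{($*$)} then provides a closed subgroup $H\subseteq U$ such that $N(H)$ is open and $L\coloneqq N(H)/H$ is a locally compact Polish group, as noted right after the statement of \hyperref[eq: chara iso lcsc]{($*$)}.

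Next I would produce an $H$-invariant approximation of $f$. Let $K$ be the $\norm\cdot_2$-closed convex hull of the orbit $\{f\circ\alpha(h\inv,\cdot)\colon h\in H\}$. As $H$ acts on $\LL^2(X,\lambda)$ by isometries and permutes the orbit, it preserves $K$; being a closed convex subset of a Hilbert space, $K$ contains a unique element $f_0$ of minimal norm, and this uniqueness forces $f_0$ to be $H$-invariant. Since every point of the orbit lies in the closed ball of radius $\varepsilon/3$ about $f$, so does all of $K$, whence $\norm{f-f_0}_2\leq\varepsilon/3$; moreover $\norm{f_0}_\infty\leq\norm{f}_\infty$. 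Composing $f_0$ with the truncation functions from the proof of Proposition~\ref{prop: density for subcstar} then yields an $H$-invariant $f_1\in\LL^\infty(X,\lambda)\cap\LL^1(X,\lambda)$ with $\norm{f_0-f_1}_2<\varepsilon/3$ (composition with a Borel map preserves $H$-invariance, and $f_0\in\LL^2(X,\lambda)$ makes the resulting support of finite measure).

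Finally I would convolve $f_1$ over $L$. Fixing a Borel section $s\colon L\to N(H)$, the $H$-invariance of $f_1$ makes $\bar g\mapsto f_1\circ\alpha(s(\bar g)\inv,\cdot)$ independent of the chosen lift, so for $\delta\in C_c(L)$ with $\delta\geq0$ and $\int_L\delta\,dm_L=1$ I can set
\[
\delta*_L f_1(x)=\int_L \delta(\bar g)\,f_1\bigl(\alpha(s(\bar g)\inv,x)\bigr)\,dm_L(\bar g).
\]
Running the computation of Lemma~\ref{lem:convoGcont} with $m_L$ in place of the Haar measure of $G$ shows that $\delta*_L f_1\in\LL^\infty(X,\lambda)\cap\LL^1(X,\lambda)$ and is $L$-continuous, i.e.\ $\norm{\delta*_L f_1-\bar h\cdot(\delta*_L f_1)}_\infty\to0$ as $\bar h\to e_L$, where $\bar h$ acts via any lift; a short conjugation computation using $s(\bar g)\inv H s(\bar g)=H$ shows $\delta*_L f_1$ is again $H$-invariant. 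Because $N(H)$ is open, for $g\to e_G$ one eventually has $g\in N(H)$, and then $H$-invariance together with the fact that $g$ lifts $gH$ gives $(\delta*_L f_1)\circ\alpha(g\inv,\cdot)=(gH)\cdot(\delta*_L f_1)$; since $gH\to e_L$, $L$-continuity upgrades to $G$-continuity. Choosing $\supp\delta$ close enough to $e_L$ makes $\norm{f_1-\delta*_L f_1}_2<\varepsilon/3$ by the integral Minkowski inequality and continuity of the $L$-action on $\LL^2(X,\lambda)$, exactly as in the proof of Theorem~\ref{thm: continuous radon for lcsc}, so $\delta*_L f_1$ is a square integrable $G$-continuous function within $\varepsilon$ of $f$.

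The main obstacle is the last paragraph: one must carefully justify that the $L$-convolution is well defined and Borel (this is precisely where the Borel section $s$ and the $H$-invariance of $f_1$ are used), that convolution preserves $H$-invariance, and that $L$-continuity of the result is genuinely equivalent to its $G$-continuity, the latter hinging on the openness of $N(H)$. Everything else is a routine assembly of the continuity of the $\LL^2$-action, the convex-hull projection onto $H$-invariant vectors, and the truncation step.
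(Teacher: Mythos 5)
Your proof is correct, and while its first half (reduction via the continuity of the $\LL^2$-action, the choice of $H\subseteq U$ from property ($*$), the minimal-norm vector in the closed convex hull of the $H$-orbit, and the truncation to get a bounded $H$-invariant function of finite measure support) coincides with the paper's argument, your final step is genuinely different. The paper never convolves over $N(H)/H$ on $X$ itself: instead it forms the von Neumann algebra $\mathcal M$ generated by the $N(H)$-translates of the $H$-invariant approximant, realizes $\mathcal M$ as $\LL^\infty(Y,\eta)$ for an auxiliary standard $\sigma$-finite space via the Gelfand machinery of Theorem~\ref{thm: chara existence spatial model}, observes that the boolean $N(H)$-action on $(Y,\eta)$ descends to the locally compact group $N(H)/H$, and then quotes Theorem~\ref{thm: continuous radon for lcsc} together with Theorem~\ref{thm: chara existence spatial model} to produce, abstractly by density, an $N(H)/H$-continuous function close to the approximant. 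You bypass this auxiliary-space construction entirely by defining the convolution $\delta *_L f_1$ directly on $X$ through a Borel section $s\colon N(H)/H \to N(H)$ (which exists since closed subgroups of Polish groups admit Borel transversals, \cite[Thm.~12.17]{kechrisClassicalDescriptiveSet1995}), and your verifications do go through: $H$-invariance of $f_1$ makes the integrand independent of the lift up to null sets, normality of $H$ in $N(H)$ shows the convolution stays $H$-invariant, left-invariance of the Haar measure of $L$ gives the $\norm{\cdot}_\infty$-estimate exactly as in Lemma~\ref{lem:convoGcont}, openness of $N(H)$ upgrades $L$-continuity to $G$-continuity, and the $\LL^2$-estimate follows from Minkowski's integral inequality plus continuity of the factored orbit map $L\to\LL^2(X,\lambda)$ (continuous because the quotient map $N(H)\to L$ is open). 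What your route buys is a more elementary and self-contained proof in which the approximant is exhibited explicitly, avoiding a second pass through Gelfand spectra; what the paper's route buys is that all section-based bookkeeping (measurability, lift-independence, a.e.\ identities of the boolean action) is absorbed once and for all into the already-proved Theorems~\ref{thm: chara existence spatial model} and \ref{thm: continuous radon for lcsc}, keeping the argument entirely inside the boolean-action formalism.
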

\begin{proof}
    Consider an element $f$ of $\LL^2(X,\lambda)$, and fix $\varepsilon > 0$. 
    As $G$ acts continuously on $\LL^2(X,\lambda)$ 
    there exists an open  $U$ such that for any $h$ in $U$ we have 
    $\norm{f - f \circ \alpha(h\inv, \cdot)}_{2} <  \varepsilon$. 
    As $G$ satisfies 
    Condition \hyperref[eq: chara iso lcsc]{($*$)}, 
    we can fix a closed subgroup $H < G$ such that
    $H \subseteq U$, the quotient space $G/H$ is locally compact and the normalizer
    $N(H)$ is open.
    
    Consider now 
    \[
    \mathcal{C} = \overline{\mbox{Conv}\{(f \circ \alpha(h\inv, \cdot)) \mid h \in H\}}^{\norm\cdot_2},
    \]
    the $\LL^2$-closure of the convex hull of the orbit of $f$ under the $H$-action. 
    Note that $\mathcal{C}$  is contained it the closed ball centered in $f$ 
    and of radius $\varepsilon$.
    
    By the projection onto a closed convex set theorem 
    (see e.g.~\cite[Thm.~4.10.]{rudinRealComplexAnalysis1987}), 
    there is a unique element of minimal norm $\widehat{\xi}$ in $\mathcal{C}$, 
    satisfying in particular $\norm{f - \widehat{\xi}}_2 \leq \varepsilon$.
    Since the group $H$ acts by unitaries on $\mathcal{C}$, the vector
    $\widehat{\xi}$ is fixed by the $H$-action. 
    In particular, $\widehat{\xi}$ is $H$-continuous, although it might be unbounded
    and have support of infinite measure. 
    To conclude the first part of the proof 
    we approximate $\widehat{\xi}$ by functions in $\LL^{\infty}(X,\lambda) \cap \LL^{2}(X,\lambda)$
    whose support has finite measure 
    while retaining the $H$-invariance. 
    Let \[
    \xi_n \coloneqq \mathds{1}_{\left\{ \abs{\widehat{\xi}} \leqslant n \right\}}\mathds{1}_{\left\{ \abs{\widehat{\xi}} \geqslant \frac 1n \right\}} \widehat{\xi}.
    \] 
    For any $n$ the function $\xi_n$ is in 
    $\LL^{\infty}(X,\lambda) \cap \LL^{2}(X,\lambda)$, has support of finite measure, 
    and is $H$-invariant.
    As $\xi_n \to \widehat{\xi}$ in $\norm{\cdot}_{2}$, we can fix, 
    for some $n$ large enough, a function $\xi\coloneqq \xi_n$ that is $\varepsilon$-close to $\widehat{\xi}$,
    and hence $2\varepsilon$-close to $f$, while being $H$-invariant and having
    a support of finite measure.\\

    Let now $\mathcal M$ be the von Neumann algebra generated by the $N(H)$-translates
    of $\xi$, i.e. the strong closure in $\LL^\infty(X,\lambda)$
    of the linear span of the $N(H)$-translates of 
    $\xi$ and of the constant function $1$. We claim that any element of $\mathcal M$ is fixed by $H$. Indeed, any $N(H)$-translates of $\xi$ is easily checked to be $H$-invariant, 
    so the linear span of the $N(H)$-translates of $\xi$ consists
    of $H$-invariant vectors. Finally, since each $h\in H$ acts 
    continuously on \(\LL^\infty(Y,\lambda)\) for the strong operator topology, its set of 
    fixed points is closed, so \(\mathcal M\) consists of $H$-invariant functions.

    We now argue, using a simpler version of the arguments
    in the proof of Theorem \ref{thm: chara existence spatial model}, that
    the $N(H)$-action on $(\mathcal M,\lambda)$ can be seen as a boolean action on a 
    $\sigma$-finite 
    space, possibly with atoms. It will then be easy to see that 
    it descends to a boolean action of the locally compact Polish group 
    $N(H)/H$, allowing us to use Theorem \ref{thm: continuous radon for lcsc}.\\

    Denote by $\mathcal J$ the ideal in $\LL^\infty(X,\lambda)$ of functions 
    whose support has finite measure.
    First observe that since $\xi$ belongs to $\mathcal J$ and the $N(H)$-action preserves
    the measure $\lambda$,
    we have that $\mathcal J\cap \mathcal M$ is strongly dense in $\mathcal M$.
    In a similar fashion to the first step of the proof of Theorem \ref{thm: chara existence spatial model}, we let $\mathcal D$ be a countable dense subset of 
    $\mathcal J\cap \mathcal M\cap \LL^2(X,\lambda)$ for the $\LL^2$ norm, and let $\mathcal A$ be  
    the $\norm\cdot_\infty$-closure of the linear span of 
    $\mathcal D$.
    Then $\mathcal A$ is a separable $C^*$-subalgebra 
    of $\mathcal M$ which is strongly dense in $\mathcal M$, and such that the ideal
    $\mathcal I \coloneqq \mathcal A\cap \mathcal J$ is $\norm{\cdot}_\infty$-dense 
    in $\mathcal A$.
    Using now the Gelfand theorem exactly as in the second step of the proof of
    Theorem \ref{thm: chara existence spatial model}, we have a locally 
    compact Polish space $Y$ endowed with a Radon measure $\eta$
    so that after identification through the Gelfand isomorphism, 
    $\mathcal A=C_0(Y)$ and hence by strong density $\mathcal M=\LL^\infty(Y,\eta)$.
    By construction $\eta$ and $\lambda$ coincide on $\mathcal I$,
    in particular they define the same measure on $Y$.
    So the $N(H)$-action on $\mathcal M=\LL^\infty(Y,\eta)$ is continuous 
    in the sense that we gave right before Proposition \ref{prop: chara boolean iso of action},
    namely for every $A\in\MAlg_f(Y,\eta)$, and every $g_n\to e_G$ in $N(H)$, 
    \[
    \int_Y\abs{g_n\cdot \chi_A- \chi_A} d\eta \to 0. 
    \]
    
    We thus now have a Boolean action of $N(H)$ on $(Y,\eta)$, where $(Y,\eta)$ 
    is a standard $\sigma$-finite space, possibly with atoms, 
    and moreover $\mathcal M=\LL^\infty(Y,\eta)$.
    But since every element of $\mathcal M$ is fixed by $H$, the restriction
    of this boolean action to $H$ is trivial and the boolean action
    thus descends to a boolean action of the quotient group $N(H)/H$, which is a locally compact Polish group. 
    Applying Theorem \ref{thm: continuous radon for lcsc} along with Theorem \ref{thm: chara existence spatial model}, we now find 
    $\xi'\in\LL^2(Y,\eta)\subseteq\LL^2(X,\lambda)$ which is 
    $\varepsilon$-close to $\xi$ and $N(H)/H$-continuous, and hence
    $N(H)$-continuous when we view again this $N(H)/H$-action as an $N(H)$-action.
    
    Since $N(H)$ is open in $G$, we have in particular that $\xi'$,
    seen as an element of $\LL^2(X,\lambda)$, 
    is $G$-continuous, and by the triangle inequality it is $3\varepsilon$-close
    to the function $f$ we started with, which finishes the proof.
\end{proof}

We can now directly prove Theorem~\ref{thmi: iso lc spatial model}.

\begin{thm}\label{thm: iso lc spatial model}
Let $G$ be the isometry group of a separable locally compact space.
Then every  boolean measure-preserving $G$-action 
on a standard $\sigma$-finite space $(X,\lambda)$ 
admits a continuous Radon model: 
\end{thm}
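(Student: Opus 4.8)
The plan is to deduce the theorem directly from the characterization established in Theorem~\ref{thm: chara existence spatial model}. That theorem tells us that a boolean measure-preserving $G$-action admits a continuous Radon model precisely when its algebra $\mathcal{G}$ of $G$-continuous functions satisfies the density condition
\[
\overline{\mathcal G\cap \LL^2(X,\lambda)}^{\norm\cdot_2}=\LL^2(X,\lambda).
\]
So the entire task reduces to verifying this single density condition when $G$ is the isometry group of a locally compact separable metric space.

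But this is exactly the conclusion of Proposition~\ref{prop: dense Gcont for iso of lc}, which asserts that for precisely this class of acting groups, the square-integrable $G$-continuous functions are $\norm{\cdot}_2$-dense in $\LL^2(X,\lambda)$. The hypotheses therefore match up perfectly, and the theorem follows immediately: Proposition~\ref{prop: dense Gcont for iso of lc} supplies condition \eqref{item: dense G continuous} of Theorem~\ref{thm: chara existence spatial model}, which in turn grants condition \eqref{item: admit continuous model}, namely the existence of a continuous Radon model. Thus the proof is a one-line combination of the two previously established results.

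Given this structure, there is no real obstacle left at the level of the present statement: all the difficulty has been front-loaded into Proposition~\ref{prop: dense Gcont for iso of lc}, where the Kwiatkowska--Solecki condition \hyperref[eq: chara iso lcsc]{($*$)} is used to find a small closed subgroup $H$ with $N(H)$ open and $G/H$ locally compact, a projection onto a closed convex hull produces an $H$-invariant approximant, truncation places it in $\LL^\infty(X,\lambda)\cap\LL^2(X,\lambda)$ with finite-measure support, and finally the descended $N(H)/H$-action is handled by the locally compact case (Theorem~\ref{thm: continuous radon for lcsc}), with openness of $N(H)$ in $G$ promoting $N(H)$-continuity to $G$-continuity. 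The present theorem merely harvests that density to apply the general characterization.
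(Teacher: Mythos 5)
Your proposal is correct and coincides exactly with the paper's own proof: Proposition~\ref{prop: dense Gcont for iso of lc} supplies the $\LL^2$-density of $G$-continuous functions, and Theorem~\ref{thm: chara existence spatial model} then yields the continuous Radon model. Nothing further is needed.
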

\begin{proof}
By Proposition \ref{prop: dense Gcont for iso of lc}, 
the $G$-continuous functions are dense in $\LL^{2}(X,\lambda)$, 
and we can then apply Theorem \ref{thm: chara existence spatial model} 
to conclude the proof.
\end{proof}

\begin{rem}
By the above theorem, it is always possible to find a continuous Radon model for 
any boolean measure-preserving action of 
the non-archimedean Polish group $\Sinf$. 
However, as we explain in Section~\ref{sec: non radonable actions}, 
it is not always possible to turn  boolean isomorphisms between $\Sinf$ 
spatial actions
into  spatial isomorphisms, as opposed to
what happens locally compact groups (see the
second item of Theorem~\ref{thm: mackey}). 
\end{rem}
\begin{rem}
    It is a result of Nessonov that for every boolean \emph{non-singular} $\Sinf$-action $\alpha$ on $(X,\lambda)$, there is an equivalent $\sigma$-finite measure $\lambda'\in[\lambda]$ such that $\alpha$ becomes measure-preserving \cite[Thm.~1.1]{nessonovNonsingularAction2020}. So our result yields that every boolean  non-singular $\mathfrak S_\infty$-action admits a continuous Radon model.
\end{rem}

\subsection{An ergodic boolean action with two distincts spatial models}{\label{sec: two realizations}}

In this section we present an example of an ergodic boolean action with two distincts spatial models on probability spaces, which was explained to us by Todor Tsankov. The action is that of $\Sinf$ on the countable product of the unit interval endowed with the Lebesgue measure $\left( \left[ 0 , 1 \right]^{\N}, \lambda^{\otimes \N} \right)$. The first spatial action $\alpha$ is the natural action of $\Sinf$ by permutation of the coordinates. For the second action $\beta$, we first define the space $\LON$ of linear orderings of $\N$:
\[
\LON = \left\{ x \subseteq \N^2 \mid x \mbox{ is a linear order on } \N \right\},
\]
and for any element $x$ (that we will denote in a more usual way by $n <_x m \Longleftrightarrow (n,m) \in x$) of $\LON$, the $\Sinf$-action $\beta$ is defined by
\[
(n,m) \in  \beta(\sigma,x)  \Longleftrightarrow n (\sigma \cdot <_x ) m \Longleftrightarrow \sigma^{-1}(n) <_x \sigma^{-1}(m)
\]
for any $\sigma$ in $\Sinf$. We define on $\LON$ the uniform measure $\mu_u$ by
\[
\mu_u \left( \left[ k_1 <_x k_2 <_x \ldots <_x k_n \right] \right)  = \dfrac{1}{n!}
\]
for any distinct $k_1,\ldots,k_n$ in $\N$, where 
\[
\left[ k_1 <_x k_2 <_x \ldots <_x k_n \right] = \left\{ x \in \LON \mid  k_1 <_x k_2 <_x \ldots <_x k_n  \right\}.
\]
This probability measure is invariant under the $\Sinf$-action $\beta$, and do note that it is the only such measure by construction. Do recall also that, as $\LON$ is a compact space as a closed subset of $\left\{ 0,1 \right\}^{\N^2}$, $\Sinf$ is not extremely amenable. Indeed, $\Sinf$ contains transpositions, and in particular it cannot preserve a linear ordering of $\N$, hence the previous assertion.

The correspondence between the two actions is defined as follows:
\[
\begin{array}{ccccc}
    \Phi & : & \left[ 0 , 1 \right]^{\N} & \longrightarrow & \LON \\
     & & (x_i)_{i \in \N} & \longmapsto & <_{x},
\end{array}
\]
where $<_x$ is defined by $n <_x m \Longleftrightarrow x_n < x_m$ (the order $<$ on the right-hand side being the natural order on $\left[ 0 , 1 \right]$). Note that for $\lambda^{\otimes \N}$-almost all $x$ in $\left[ 0 , 1 \right]^{\N}$, the $x_n$ are distinct, so $\Phi$ is well defined. We then show that the images by $\Phi$ of two distinct elements are isomorphic as orderings of $\N$.
    \begin{defi}
An order $<$ on a set $A$ is \textbf{dense on $A$} if for any two elements $n,m$ in $A$ such that $n<m$, there exists a third element $k$ in $A$ such that $n<k<m$.
    \end{defi}
    \begin{defi}
An order $<$ on a set $A$ is \textbf{unbounded on $A$} if for any element $k$ in $A$, there exists two elements $n$ and $m$ in $A$ such that $n<k<m$.
    \end{defi}
    \begin{prop}
An order $<$ in $\LON$ is dense and unbounded on $\N$, $\mu_u$-a.e.
    \end{prop}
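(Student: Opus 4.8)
The plan is to establish density and unboundedness as two separate almost-sure statements, each obtained by a union bound over $\N$ together with a single finite-dimensional computation exploiting the defining property of $\mu_u$: for any distinct $k_1,\dots,k_j\in\N$, the $j!$ possible relative orderings of $\{k_1,\dots,k_j\}$ each have measure $1/j!$, so the relative order of any finite subset of $\N$ is uniformly distributed.

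First I would reduce to individual events. Density fails for an order $x\in\LON$ precisely when some pair $n<_x m$ of distinct integers is \emph{adjacent}, meaning no $k$ satisfies $n<_x k<_x m$; so it suffices to show that for each fixed unordered pair $\{n,m\}$ the set $A_{n,m}$ of orders in which $n$ and $m$ are adjacent is $\mu_u$-null, and then take the countable union over all pairs. Likewise, unboundedness fails for $x$ precisely when some $k$ is a minimum or a maximum for $<_x$, so it suffices to show that for each $k$ the sets of orders in which $k$ is minimal, respectively maximal, are $\mu_u$-null, and then take the countable union over all $k$.

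The key step is the finite computation. Fix distinct $n,m$ and a finite set $F\subseteq\N$ with $n,m\in F$ and $\abs F=N+2$. By the marginal property above, the relative order of $F$ is uniform among its $(N+2)!$ orderings, and the number of these in which $n$ and $m$ are adjacent equals $2\,(N+1)!$ (glue $n,m$ into a single block admitting two internal orders, then order the $N+1$ resulting objects). Hence the probability that $n$ and $m$ are adjacent within $F$ equals $2/(N+2)$. Since $A_{n,m}$ is contained in this event for every such $F$ and $N$ may be taken arbitrarily large, we conclude $\mu_u(A_{n,m})=0$. The same scheme handles extremality: for $F$ with $\abs F=N+1$ and $k\in F$, exactly $N!$ of the $(N+1)!$ orderings of $F$ put $k$ at the bottom, so the probability that $k$ is minimal in $F$ is $1/(N+1)$, and likewise for maximal; the global minimality (resp.\ maximality) of $k$ is contained in this event for all $N$, hence is null.

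Combining, the union of all the $A_{n,m}$ together with all the extremality events is a countable union of $\mu_u$-null sets, hence $\mu_u$-null; on its complement every order is both dense and unbounded on $\N$, which is the claim. I do not expect a genuine obstacle here beyond the lack of independence between the events ``$p$ lies strictly between $n$ and $m$'' for varying $p$; this is sidestepped entirely by working with the joint order on a single growing finite set $F$, where the uniform marginal of $\mu_u$ delivers the exact probabilities $2/(N+2)$ and $1/(N+1)$ used above.
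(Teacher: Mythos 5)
Your proof is correct, but it takes a genuinely different route from the paper's. The paper's argument is very short: it computes the conditional probability $\mathbb{P}_{\mu_u}\left(\left[n<_x k<_x m\right]\mid\left[n<_x m\right]\right)=\tfrac13$ and then invokes the Borel--Cantelli lemma to conclude that, as the outer indices vary, the event ``$n<_xk<_xm$'' occurs infinitely often, giving unboundedness (and, with $k$ varying, density). You instead bound the complementary \emph{bad} events directly: adjacency of a fixed pair $\{n,m\}$ is contained, for every finite $F\supseteq\{n,m\}$ with $\abs{F}=N+2$, in the event that $n,m$ are adjacent inside $F$, which has probability exactly $2/(N+2)$ by the uniform finite-dimensional marginals of $\mu_u$; letting $N\to\infty$ shows this event is null, extremality of a fixed $k$ is null by the same scheme with bound $1/(N+1)$, and a countable union finishes. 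The comparison favors your approach on rigor: under $\mu_u$ the events $\left[n<_xk<_xm\right]$, $k$ varying, conditioned on $\left[n<_xm\right]$, are \emph{not} independent --- they are positively correlated (the conditional probability of two of them is $1/6$, which exceeds $1/9$) --- so the second Borel--Cantelli lemma, which is what the conclusion ``happens infinitely often'' requires, does not apply as stated; patching the paper's proof needs an extra idea, e.g.\ realizing $\mu_u$ by i.i.d.\ uniform random variables and using conditional independence given the values attached to $n$ and $m$, or an exchangeability zero-one law. Your monotone bound over a single growing finite set sidesteps dependence entirely, at the modest cost of length; the paper's argument, once patched, yields the slightly stronger conclusion that each pair a.s.\ has infinitely many points between them, but for the proposition as stated your proof is complete and self-contained.
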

    \begin{proof}
Fix $n,m$ and $k$ in $\N$. We have 
\[
\mathbb{P}_{\mu_u}\left (\left[n<k<m\right] \mid \left[ n<m \right] \right) = \dfrac{1}{3},
\]
therefore by Borel-Cantelli's Lemma, as $n$ and $m$ vary, $n<k<m$ happens infinitely often, and thus $<$ is unbounded $\mu_u$-a.e. The same argument used with $k$ varying instead also ensures that $<$ is dense $\mu_u$-a.e. which concludes the proof.
    \end{proof}
    The following back-and-forth argument is well-known and classical, but we give it for the sake of completeness.
    \begin{prop}{\label{Orderisom}}
Any two dense and unbounded orders on infinite countable sets are isomorphic, \textit{i.e.} there exists an order preserving bijection between the respective spaces.
    \end{prop}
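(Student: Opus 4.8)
The plan is to use Cantor's classical back-and-forth argument. Write $(A,<)$ and $(B,\prec)$ for the two dense unbounded countable linear orders, and fix enumerations $A=\{a_0,a_1,\dots\}$ and $B=\{b_0,b_1,\dots\}$. I would construct the desired isomorphism as the union of an increasing chain of \emph{finite partial isomorphisms}, that is, finite order-preserving bijections $p_k$ from a finite subset of $A$ onto a finite subset of $B$, arranged so that every $a_n$ eventually enters the domain and every $b_n$ eventually enters the range.

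The heart of the argument is a single extension lemma: given a finite partial isomorphism $p$ whose domain is $\{c_1<\dots<c_r\}$, and a point $a\in A$ not in that domain, one can extend $p$ by sending $a$ to a suitable point of $B$. Here there are three cases according to the position of $a$ relative to the $c_i$. If $a<c_1$, then unboundedness of $\prec$ below $p(c_1)$ provides an admissible image; if $c_r<a$, unboundedness above $p(c_r)$ does the job; and if $c_i<a<c_{i+1}$ for some $i$, then density of $\prec$ furnishes a point strictly $\prec$-between $p(c_i)$ and $p(c_{i+1})$. In every case the chosen image preserves the order relative to all the existing points, so $p$ extends to a finite partial isomorphism containing $a$ in its domain. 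The symmetric statement, extending $p$ so as to place a prescribed $b\in B$ into the range, follows by the same reasoning with the roles of $A$ and $B$ interchanged.

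With the lemma in hand, I would build the chain recursively, starting from $p_0=\emptyset$. At even steps I go \emph{forth}: let $n$ be least with $a_n\notin\operatorname{dom}(p_k)$ and apply the lemma to extend $p_k$ so that $a_n$ lies in the domain. At odd steps I go \emph{back}: let $n$ be least with $b_n\notin\operatorname{ran}(p_k)$ and extend so that $b_n$ lies in the range. Each $p_k$ is then a finite order isomorphism with $p_k\subseteq p_{k+1}$.

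Finally I would set $f=\bigcup_k p_k$. The forth steps guarantee $\operatorname{dom}(f)=A$ and the back steps guarantee $\operatorname{ran}(f)=B$, so $f$ is a bijection; it is order preserving because each $p_k$ is, and order preservation of any two points is already witnessed at the finite stage where both enter. The only genuinely delicate point is the case analysis in the extension lemma, and even there the sole thing to verify is that density and unboundedness supply a target point in each configuration — which is exactly the content of the hypotheses, so no completeness or additional structure is needed.
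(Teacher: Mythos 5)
Your proof is correct and follows essentially the same route as the paper: both are the classical Cantor back-and-forth argument, alternately extending a finite partial isomorphism to absorb the next $a_n$ into the domain and the next $b_n$ into the range, with density and unboundedness supplying the target point in each of the three positional cases. Your write-up of the extension lemma is if anything slightly more explicit than the paper's, but there is no substantive difference in method.
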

    \begin{proof}
Let $A = \left\{ a_i \mid i\in \N \right\}$ and $B = \left\{ b_i \mid i\in \N \right\}$ be two countable infinite sets, and let $<_A$ and $<_B$ be two dense, unbounded linear orderings on $A$ and $B$ respectively. We inductively construct an embedding $\varphi : (A,<_A) \hookrightarrow (B,<_B)$.\\
We arbitrarily send $a_0$ to $b_0$. We then send $a_1$ to $\varphi(a_1)$ such that $(b_0,\varphi(a_1))$ is ordered like $(a_0,a_1)$. At rank $n$, $a_n$ is sent to $\varphi(a_n)$, an element that respects the order of $(a_0,\ldots,a_n)$, which necessarily exists because $<_B$ is dense and unbounded.

In other words, for any finite subset $F$ of $(A,<_A)$, and for any $a$ in $A\setminus F$, it is possible at any rank to extend the existing embedding $F \hookrightarrow (B,<_B)$ into $F\cup \left\{ a \right\} \hookrightarrow (B,<_B)$.\\
By doing the same construction with $(B,<_B)$, and given that $<_B$ is also dense and unbounded, it is possible to define by induction 
\[
\varphi_n : F_n \subseteq (A,<_A) \hookrightarrow (B,<_B)
\]
such that $F_n$ is a finite subset of $A$, and such that for any $n$ we have
\begin{align*}
\left\lbrace
\begin{array}{l}
\left\{ a_0 , \ldots , a_n \right\} \subseteq F_{2n}\\
\left\{ b_0 , \ldots , b_n \right\} \subseteq \varphi_{2n+1}(F_{2n+1})
\end{array}
\right.
\end{align*}
The limit function of the $\varphi_n$ is the order-preserving bijection we sought.
    \end{proof}
Proposition \ref{Orderisom} ensures that $\mu_u$-a.e. there is only one $\Sinf$-orbit on $\LON$. On the other hand, the natural $\Sinf$-action on  $\left( \left[ 0 , 1 \right]^{\N}, \lambda^{\otimes \N} \right)$ only admits orbits of measure $0$. We then have the following proposition.

\begin{prop}\label{prop: todor example}
    The application $\Phi : \left( \left[ 0 , 1 \right]^{\N}, \lambda^{\otimes \N} \right) \longrightarrow (\LON, \mu_u)$, 
    sending $(x_i)$ to $<_x$, where $<_x$ is defined by $n <_x m \Longleftrightarrow x_n < x_m$, is $\Sinf$-equivariant, measure-preserving and $\lambda^{\otimes \N}$-a.e. injective.
\end{prop}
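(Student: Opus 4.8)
The plan is to verify the three asserted properties in turn, the equivariance and the measure-preservation being routine and the almost everywhere injectivity being the crux.

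For equivariance, I would first fix the convention that $\Sinf$ acts on $[0,1]^{\N}$ by the standard left action $\alpha(\sigma,x)_i = x_{\sigma\inv(i)}$. Writing $y=\alpha(\sigma,x)$, one computes directly that $n <_y m \iff x_{\sigma\inv(n)} < x_{\sigma\inv(m)} \iff \sigma\inv(n) <_x \sigma\inv(m)$, which is exactly the condition defining $\beta(\sigma,<_x)$. Hence $\Phi(\alpha(\sigma,x)) = \beta(\sigma,\Phi(x))$ for every $\sigma$ and every $x$ with distinct coordinates, giving equivariance on the conull set where $\Phi$ is defined.

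For measure-preservation, it suffices to check that $\Phi_*\lambda^{\otimes\N}$ and $\mu_u$ agree on the cylinders $[k_1 <_x \cdots <_x k_n]$ generating the Borel $\sigma$-algebra of $\LON$, since finite unions of such cylinders form an algebra and two probability measures agreeing on a generating algebra coincide. The preimage $\Phi\inv([k_1 <_x \cdots <_x k_n])$ is precisely $\{x \in [0,1]^\N : x_{k_1} < \cdots < x_{k_n}\}$; as $x_{k_1},\dots,x_{k_n}$ are i.i.d.\ uniform, each of the $n!$ strict orderings is equally likely and ties are null, so this set has measure $\tfrac{1}{n!} = \mu_u([k_1 <_x \cdots <_x k_n])$. (Alternatively, equivariance together with the permutation-invariance of $\lambda^{\otimes\N}$ shows that $\Phi_*\lambda^{\otimes\N}$ is a $\beta$-invariant probability measure, whence it equals $\mu_u$ by the uniqueness noted above.)

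The heart of the matter is the almost everywhere injectivity, for which I would reconstruct each coordinate from the order alone. The key observation is that for every $n$ the order datum $\tfrac1N\#\{k\le N : k <_x n\} = \tfrac1N\#\{k \le N : x_k < x_n\}$ converges to $x_n$ for $\lambda^{\otimes\N}$-almost every $x$: conditionally on $x_n = a$, the indicators $\mathds 1[x_k < a]$ for $k \neq n$ are i.i.d.\ Bernoulli$(a)$, so the strong law of large numbers yields the limit $a = x_n$ (the single term $k=n$ being negligible), and integrating over $a$ gives the claim. Let $X_0$ be the set of $x$ with distinct coordinates for which this limit exists and equals $x_n$ for every $n$; being a countable intersection of conull Borel sets, $X_0$ is conull and Borel. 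On $X_0$ the map $\Phi$ is injective: if $x, x' \in X_0$ satisfy $<_x = <_{x'}$, then for each $n$ we recover $x_n = \lim_N \tfrac1N\#\{k\le N : k <_x n\} = \lim_N \tfrac1N\#\{k \le N : k <_{x'} n\} = x'_n$, so $x = x'$.

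The main obstacle is precisely this reconstruction step: a priori many sequences induce the same order (for instance $x$ and $\tfrac12 x$), so injectivity can only hold after discarding a null set, and the content is that the correct set to keep is the Glivenko--Cantelli-type set $X_0$ on which the empirical frequency of coordinates below $x_n$ converges to $x_n$ itself. Everything else reduces to bookkeeping with the action formulas and with the generation of the Borel $\sigma$-algebra of $\LON$ by finite-order cylinders.
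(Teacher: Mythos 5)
Your proof is correct and follows essentially the same route as the paper: both arguments reconstruct each coordinate $x_n$ from the order alone as the almost-sure limit of the empirical proportion $\frac{1}{N}\abs{\{k\le N : k <_x n\}}$, which is precisely the paper's law-of-large-numbers argument. The only differences are technical: the paper obtains the conull set by applying the strong law at rational thresholds $q$ and then invoking a density argument, whereas you condition on the value $x_n$ and integrate via Fubini; you also write out the measure-preservation computation on chain cylinders, which the paper dismisses as straightforward.
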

\begin{proof}
Checking the $\Sinf$-equivariance is straightforward, and so is checking the fact that $\Phi$ is measure-preserving. For the $\lambda^{\otimes \N}$-a.e. injectivity we first fix $q$ in $\Q \cap \left[ 0 , 1 \right]$. The Law of Large Numbers ensures that for $\lambda^{\otimes \N}$-almost any sequence $(x_i)$ we have
\begin{equation}
	\label{eq: cv law large numbers}
	\lim_{n\to+\infty}\frac{ \abs{\left\{ i \in \{ 0,\dots,n-1 \} \colon x_i < q  \right\}} }{n} = q.
\end{equation}

Taking a countable intersection of conull sets, we can therefore find a conull subset $X_0$ of $\left[ 0 , 1 \right]^{\N}$ such that whenever $(x_i)$ is in $X_0$, Equation \eqref{eq: cv law large numbers} holds for any $q$ in $\Q \cap \left[ 0 , 1 \right]$. A straightforward density argument now shows that Equation \eqref{eq: cv law large numbers} holds for any $q$ in $\left[ 0 , 1 \right]$ and any $(x_i)\in X_0$. 

Let us conclude by showing that $\Phi_{\restriction X_0}$ is injective: take distinct elements $x=(x_i)$ and $y=(y_i)$ in $X_0$, there is an index $k$ such that $x_k \neq y_k$. 
Then taking $q = x_k$ and $q = y_k$ for $(x_i)$ and $(y_i)$ respectively in \eqref{eq: cv law large numbers}, we obtain that
\[
\lim_{n\to+\infty}\frac{ \abs{\left\{ i \in \{ 0,\dots,n-1 \} \colon x_i < x_k  \right\}} }{n} \neq \lim_{n\to+\infty}\frac{ \abs{\left\{ i \in \{ 0,\dots,n-1 \} \colon y_i < y_k  \right\}} }{n}.
\]
This implies that $<_x \neq <_y$, as their respective proportions of integers smaller than the integer $k$ are different.
\end{proof}

The previous proposition gives an example of a negative answer to the boolean to spatial isomorphism problem for the case of the non-locally compact group $\Sinf$, for actions on probability spaces:
\begin{prop}
    Let $G=\Sinf$. The $G$-action $\alpha$ on $\left( \left[ 0 , 1 \right]^{\N}, \lambda^{\otimes \N} \right)$ and the $G$-action $\beta$ on $(\LON, \mu_u)$ are booleanly isomorphic, but not spatially isomorphic.
\end{prop}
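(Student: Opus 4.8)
The plan is to get the boolean isomorphism essentially for free from Proposition~\ref{prop: todor example}, and then to rule out a spatial isomorphism by contrasting the orbit structures of the two actions. For the first half, Proposition~\ref{prop: todor example} provides a conull Borel set $X_0\subseteq[0,1]^{\N}$ on which $\Phi$ is a measure-preserving, $\Sinf$-equivariant Borel injection, the equivariance $\Phi(\alpha(\sigma,x))=\beta(\sigma,\Phi(x))$ holding for every $\sigma\in\Sinf$ simultaneously. This witnesses directly that $\alpha$ and $\beta$ are booleanly isomorphic: one simply takes $X_\sigma=X_0$ for every $\sigma$ in the definition of boolean isomorphism, and the image $\Phi(X_0)$ is automatically a conull Borel subset of $\LON$ by the remark following Definition~\ref{def: spatial iso}.

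For the second half, I would argue by contradiction. Suppose $\Psi\colon X_0'\to Y_0'$ is a spatial isomorphism, with $X_0'\subseteq[0,1]^{\N}$ a conull $\alpha(\Sinf)$-invariant Borel set and $Y_0'=\Psi(X_0')$ the corresponding conull $\beta(\Sinf)$-invariant Borel subset of $\LON$. The crucial input is that the two actions already differ at the level of orbits: by Proposition~\ref{Orderisom} together with the fact that $\mu_u$-almost every order is dense and unbounded, the set $D\subseteq\LON$ of dense unbounded orders is a \emph{single} $\beta$-orbit which is Borel and conull, whereas the $\alpha$-action on $([0,1]^{\N},\lambda^{\otimes\N})$ has all its orbits of measure zero. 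Since $\Psi$ conjugates the two actions pointwise, it carries $\alpha$-orbits bijectively onto $\beta$-orbits. As $Y_0'$ is $\beta$-invariant and conull while $D$ is a single conull orbit, the invariance dichotomy (either $D\subseteq Y_0'$ or $D\cap Y_0'=\varnothing$) forces $D\subseteq Y_0'$. Consequently $\Psi^{-1}(D)$ is a single $\alpha$-orbit, and it is conull because $\Psi$ is measure-preserving; this contradicts the fact that every $\alpha$-orbit is null.

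The step I expect to require the most care — and the main obstacle to a clean measure-theoretic comparison — is ensuring that the conull orbit on the $\LON$ side is genuinely Borel. This is why I would work with the Borel conull set $D$ of dense unbounded orders and verify that $D$ coincides with a single $\beta$-orbit: density and unboundedness are $\beta$-invariant, so the orbit of any element of $D$ stays inside $D$, while any two elements of $D$ are isomorphic and hence in the same orbit by Proposition~\ref{Orderisom}. Everything else is routine, since the equivariance of $\Psi$ turns the two orbit equivalence relations into one another and its measure-preservation transports the full measure of $D$ back to a full-measure single $\alpha$-orbit, which is exactly the desired contradiction.
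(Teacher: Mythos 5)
Your proposal follows exactly the paper's route: the boolean isomorphism comes from the map $\Phi$ of Proposition~\ref{prop: todor example}, and the failure of spatial isomorphism comes from contrasting orbit structures (all $\alpha$-orbits are null, while the dense unbounded orders form a single conull $\beta$-orbit). Your second half is correct and is simply a fleshed-out version of the paper's one-line argument: the invariance dichotomy forcing $D\subseteq Y_0'$, the transport of $D$ by $\Psi\inv$ into a single $\alpha$-orbit, and the measure-preservation giving a conull $\alpha$-orbit are all sound.

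The one slip is in the first half: you cannot take $X_\sigma = X_0$ for every $\sigma$. The conull set $X_0$ on which $\Phi$ is injective is, in the proof of Proposition~\ref{prop: todor example}, the set of sequences satisfying the law-of-large-numbers condition \eqref{eq: cv law large numbers} for every $q$, and this set is \emph{not} $\Sinf$-invariant: a permutation of $\N$ can change asymptotic densities (for $x\in X_0$ it can carry the density-$\frac12$ set $\{i\in\N \colon x_i<\frac12\}$ onto a set of density $\frac13$, so that $\alpha(\sigma,x)\notin X_0$). Consequently, for some $x\in X_0$ the left-hand side $\Phi(\alpha(\sigma,x))$ is not defined as a value of the injection $\Phi\colon X_0\to\LON$, so the definition of boolean isomorphism is not verified by the choice $X_\sigma=X_0$. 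The equivariance identity does hold pointwise on the larger invariant set of injective sequences, where the formula defining $\Phi$ makes sense but where $\Phi$ is no longer injective; this is why the paper takes $X_\sigma = X_0\cap \sigma\cdot X_0$ (equivalently $X_0\cap\alpha(\sigma)\inv(X_0)$ after the harmless substitution $\sigma\mapsto\sigma\inv$), which is conull because the action preserves the measure, and on which both sides of the identity are defined and equal. With this one-line correction your argument is complete and coincides with the paper's proof.
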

\begin{proof}
    We define $\Phi$ as in Proposition \ref{prop: todor example}. This application is (well-defined and) injective on a conull subset $X_0 \subseteq \left[ 0 , 1 \right]^{\N}$. As it is also $G$-equivariant, setting $X_g = X_0 \cap g \cdot X_0$ for any $g$ in $G$ defines a boolean isomorphism between $\alpha$ and $\beta$. They are not spatially isomorphic however, since $\alpha$ only has orbits of measure $0$, but Proposition \ref{Orderisom} ensures that $\beta$ has a conull orbit.
\end{proof}

\section{Spatial actions and continuous Radon models}

In this section, we are interested in a related question on Borel $G$-actions on infinite measured spaces:
given such an action, can it be Borel embedded $G$-equivariantly 
into a continuous $G$-action 
on a locally compact Polish space so that the pushforward measure is a Radon measure?

We will first give an abstract characterization of this, and
then show that the answer is always positive for locally compact groups, 
thus proving Theorem \ref{thmi: locally compact spatical into continuous Radon}.
After that, we discuss the related question of continuous realizations of Borel infinite measure-preserving actions. Finally, we
give a simple example showing our embedding theorem fails for the non locally compact group $G=\mathfrak S_\infty$.

\subsection{Characterization}

In this section, we will work in the space $\mathcal L^\infty(X)$ which we define as the $C^*$algebra of all bounded Borel functions $X\to \C$,
endowed with the norm of uniform convergence $\norm{f}_\infty=\sup_{x\in X} \abs{f(x)}$ (no essential supremum here!).
Observe that if $G$ acts on $X$, the notion of $G$-continuous element of $\mathcal L^\infty(X)$ still makes sense for the norm $\norm{\cdot}_\infty$ that we just defined.
We also denote by $\mathcal{L}^{1}(X,\lambda)$ the space of integrable complex-valued functions on $X$.
\begin{thm}{\label{thm: admits model is equivalent to functions separate points}}
    Let $G$ be a Polish group, suppose $\alpha : G\curvearrowright (X,\lambda)$ is a Borel action by measure-preserving bijections on a standard $\sigma$-finite space $(X,\lambda)$. The following are equivalent:
    \begin{enumerate}[(i)]
        \item \label{cond: spatial continuous in lc radon} There is a continuous $G$-action $\beta$ on a Polish locally compact space $Y$ and a Borel injective map $\Phi: X\to Y$
        such that $\Phi_*\lambda$ is Radon and $\Phi(\alpha(g,x))=\beta(g,\Phi(x))$ for all $x\in X$ and all $g\in G$; 
        \item \label{cond: G continuous countable separates} The algebra of $G$-continuous  functions in $\mathcal L^\infty(X)\cap \mathcal L^1(X,\lambda)$ contains a countable subset which separates points.
    \end{enumerate}
\end{thm}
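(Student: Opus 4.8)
The plan is to prove $\eqref{cond: spatial continuous in lc radon}\Rightarrow\eqref{cond: G continuous countable separates}$ first, since it is the easier direction, and then to establish the converse by a genuine (pointwise) version of the Gelfand construction used in the proof of Theorem \ref{thm: chara existence spatial model}. For $\eqref{cond: spatial continuous in lc radon}\Rightarrow\eqref{cond: G continuous countable separates}$ I would pull back compactly supported continuous functions along $\Phi$. Given the continuous action $\beta$ on the locally compact Polish space $Y$ with $\Phi_*\lambda$ Radon, Lemma \ref{lem: G continuous from continuous action on lc} tells us every $f\in C_0(Y)$ is $G$-continuous for the genuine supremum norm on $Y$; since $\Phi$ is equivariant and $\alpha(g,\cdot)$ is a bijection, the estimate $\norm{f\circ\Phi - (f\circ\Phi)\circ\alpha(g\inv,\cdot)}_\infty \leq \norm{f - f\circ\beta(g\inv,\cdot)}_\infty$ shows $f\circ\Phi$ is $G$-continuous in $\mathcal L^\infty(X)$. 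Restricting to $f\in C_c(Y)$ guarantees integrability, as $\int_X\abs{f\circ\Phi}\,d\lambda=\int_Y\abs f\,d\Phi_*\lambda\leq\norm f_\infty\,\Phi_*\lambda(\supp f)<+\infty$ because $\Phi_*\lambda$ is finite on compact sets. Finally $C_c(Y)$ separates the points of the locally compact Hausdorff space $Y$ and is $\norm{\cdot}_\infty$-separable; choosing a countable $\norm{\cdot}_\infty$-dense family $(f_n)\subseteq C_c(Y)$, a routine perturbation argument shows $(f_n)$ still separates points, so by injectivity of $\Phi$ the family $(f_n\circ\Phi)$ is the desired countable separating set of $G$-continuous functions in $\mathcal L^\infty(X)\cap\mathcal L^1(X,\lambda)$.

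For the converse, let $\mathcal D$ be the given countable separating family. I would first note that $G$ acts by isometries on $(\mathcal L^\infty(X),\norm{\cdot}_\infty)$ via $g\cdot f=f\circ\alpha(g\inv,\cdot)$, a genuine action since $\alpha$ is spatial, so the $G$-continuous functions form a $G$-invariant $C^*$-subalgebra $\mathcal G\subseteq\mathcal L^\infty(X)$ by the same argument as in Corollary \ref{cor: curly G is a Cstar algebra}. Mimicking Step 1 of the proof of Theorem \ref{thm: chara existence spatial model}, I would let $\mathcal A$ be the \emph{non-unital} $C^*$-subalgebra generated by the orbit $\alpha(G)\mathcal D$; using a countable dense $\Gamma\subseteq G$ together with $G$-continuity, $\mathcal A$ is separable, $G$-invariant, contained in $\mathcal G$, and still separates points because $\mathcal D\subseteq\mathcal A$. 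It is crucial not to adjoin the constant $1$: keeping $\mathcal A$ non-unital makes $Y\coloneqq\spec\mathcal A$ a genuinely (possibly non-compact) locally compact Polish space by Proposition \ref{prop: separable implies second-countable spectrum}, which is what permits an infinite Radon measure. Since the generators and their products are bounded and integrable, the $*$-algebra they span lies in $\mathcal L^\infty(X)\cap\mathcal L^1(X,\lambda)$, so the integrable elements $\mathcal I\coloneqq\mathcal A\cap\mathcal L^1(X,\lambda)$ form a $\norm{\cdot}_\infty$-dense ideal of $\mathcal A$.

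I would then run the Gelfand--Riesz machinery of Step 2 of that proof. Gelfand--Naimark (Theorem \ref{thm: Gelfand}) identifies $\mathcal A$ with $C_0(Y)$ via $a\mapsto\widehat a$, $\widehat a(\chi)=\chi(a)$; Proposition \ref{prop:FaitGeorges} yields $C_c(Y)\subseteq\widehat{\mathcal I}$, and the positive functional $\widehat a\mapsto\int_X a\,d\lambda$ on $\widehat{\mathcal I}$ (well defined precisely because $\mathcal A$ consists of genuine functions) produces, via Riesz--Markov--Kakutani (Theorem \ref{thm: Riesz}), a Radon measure $\eta$ on $Y$ with $\int_Y\widehat a\,d\eta=\int_X a\,d\lambda$ for $a\in\mathcal I$. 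The new ingredient compared with Theorem \ref{thm: chara existence spatial model} is the honest point map: send $x\in X$ to the evaluation character $\mathrm{ev}_x\colon a\mapsto a(x)$. As each $a\in\mathcal A$ is a bounded Borel function, $\mathrm{ev}_x$ is a $*$-homomorphism, and $\Phi(x)\coloneqq\mathrm{ev}_x$ is a point of $Y$ as soon as $\mathrm{ev}_x\neq 0$; it is Borel because $x\mapsto a(x)$ is Borel and $Y$ is second countable, injective because $\mathcal D$ separates points, and equivariant for the dual action $\beta(g,\chi)(a)=\chi(\alpha(g\inv,a))$, whose continuity follows from the $G$-continuity of elements of $\mathcal A$ exactly as in Step 4. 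Since $\widehat a\circ\Phi=a$, for $a\in\mathcal I$ we get $\int_Y\widehat a\,d\Phi_*\lambda=\int_X a\,d\lambda=\int_Y\widehat a\,d\eta$, and the uniqueness in Riesz--Markov--Kakutani (via the remark after Theorem \ref{thm: Riesz}) forces $\Phi_*\lambda=\eta$, so $\Phi_*\lambda$ is Radon.

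The step I expect to be the main obstacle, and the only place where the genuine (rather than up-to-null) nature of the statement really bites, is ensuring $\mathrm{ev}_x\neq 0$ for \emph{every} $x$, i.e.\ controlling the set $Z=\{x\in X:\forall a\in\mathcal A,\ a(x)=0\}$ of common zeros. Because $\mathcal D\subseteq\mathcal A$ separates points, $Z$ has at most one element, and because $\mathcal A$ is $G$-invariant, $Z$ is $G$-invariant, so if $Z=\{x_0\}$ then $x_0$ is a fixed point. Here I would use the standing hypothesis that $\lambda$ is atomless: then $\lambda(\{x_0\})=0$, so I may enlarge the target to $Y'\coloneqq Y\sqcup\{p\}$ with $p$ an isolated $G$-fixed point, extend $\beta$ so as to fix $p$, set $\Phi(x_0)=p$, and observe that $\Phi_*\lambda(\{p\})=\lambda(\{x_0\})=0$, so $\Phi_*\lambda$ stays Radon on the locally compact Polish space $Y'$ while $\Phi\colon X\to Y'$ is now everywhere defined, Borel, injective and equivariant. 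This completes $\eqref{cond: G continuous countable separates}\Rightarrow\eqref{cond: spatial continuous in lc radon}$.
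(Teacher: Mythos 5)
Your proposal follows the paper's own proof almost step for step. For \eqref{cond: spatial continuous in lc radon}$\Rightarrow$\eqref{cond: G continuous countable separates} the paper builds an explicit countable separating family of compactly supported tent functions $x\mapsto\max(1-d(y_n,x),0)$ from a proper compatible metric on $Y$, whereas you extract one from separability of $C_c(Y)$ plus a perturbation argument; both work, and your integrability estimate via Radonness of $\Phi_*\lambda$ is the same point the paper uses implicitly. For the converse, your construction is exactly the paper's: the separable, $G$-invariant, non-unital $C^*$-algebra $\mathcal A$ generated by $\alpha(G)\mathcal D$, Gelfand--Naimark, Proposition~\ref{prop:FaitGeorges} and Riesz--Markov--Kakutani producing the Radon measure $\eta$ on $Y=\spec(\mathcal A)$, then the evaluation map $x\mapsto\mathrm{ev}_x$, the dual action $\beta$, and the identity $\widehat a\circ\Phi=a$ combined with uniqueness in Theorem~\ref{thm: Riesz} to get $\Phi_*\lambda=\eta$. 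The one place where you genuinely go beyond the paper is the common-zero issue: the paper defines $\Phi(x)=\mathrm{ev}_x$ and deduces injectivity from the fact that $\mathcal A$ separates points, but it never verifies that $\mathrm{ev}_x$ is a \emph{nonzero} homomorphism, i.e.\ an actual element of $\spec(\mathcal A)$. As you note, separation of points only bounds the common zero set $Z$ of $\mathcal A$ by one point, and $Z$ can be nonempty (a fixed point at which every element of $\mathcal D$ vanishes), in which case the paper's map is undefined there. Your repair --- $Z$ is $G$-invariant since $\mathcal A$ is, hence consists of a single $\alpha$-fixed point $x_0$; adjoin an isolated $\beta$-fixed point $p$ to $Y$ and set $\Phi(x_0)=p$ --- is correct, keeps $Y\sqcup\{p\}$ locally compact Polish, preserves Borelness, injectivity and equivariance, and keeps the pushforward Radon (indeed atomlessness is not even essential here, since $\sigma$-finiteness already gives $\lambda(\{x_0\})<\infty$). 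So your proof is correct, takes the same route as the paper, and is in this one respect more careful than the paper's own write-up.
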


\begin{proof}
    Let us begin by the easier implication \eqref{cond: spatial continuous in lc radon}$\Rightarrow$\eqref{cond: G continuous countable separates}, and assume we have $\alpha:G\times X\to X$ Borel preserving a $\sigma$-finite measure $\lambda$, $\beta: G\times Y\to Y$ continuous and $\Phi: X\to Y$ injective such that for all $x\in X$, $\Phi(\alpha(g,x))=\beta(g,\Phi(x)).$

    By compactness, every compactly supported continuous function on $Y$ is $G$-continuous with respect to the action $\beta$. 
    Since $Y$ is locally compact second-countable, it admits a compatible proper metric $d$. Let $\{y_n\colon n\in\N \}$ be a countable dense subset of $Y$, and define for each $n\in\N$ a continuous compactly supported function $f_{n}$ by
    $$
    f_{n}(x)= \max \left( 1-d(y_n,x),0 \right).
    $$
    The density of $(y_n)$ implies that this family of $G$-continuous functions separates the points of $Y$. 
    It then follows from the injectivity of $\Phi$ that $(f_{n}\circ\Phi)_{n\in\N}$ separates the points of $X$, and by the equivariance of $\Phi$ it consists of $G$-continuous functions with respect to $\alpha$ as wanted.\\

    We now prove the converse implication 
    \eqref{cond: G continuous countable separates}$\Rightarrow$\eqref{cond: spatial continuous in lc radon}.
    Let us fix a countable set of functions $\mathcal{D} \subseteq L^\infty(X)\cap \mathcal L^1(X,\lambda)$ that separates points. We then consider $\Gamma$ a countable dense subgroup of $G$ and give a similar argument to the one in Step 1 of the proof of Theorem \ref{thm: chara existence spatial model}. By $G$-continuity the $\norm{\cdot}_{\infty}$-closure of $\mathcal{E}_\Gamma \coloneqq \left\{ f \circ \alpha(\Gamma, \cdot) \mid f \in \mathcal{D}\right\}$ is equal to $\mathcal{E}_G \coloneqq \left\{ f \circ \alpha(G, \cdot) \mid f \in \mathcal{D} \right\}$. We denote by $\mathcal{A}$ the $C^*$-algebra generated by $\mathcal{E}_G$. The countable set of finite $\Q[i]$-linear combinations of finite products of elements of $\mathcal{E}_\Gamma \cup (\mathcal{E}_\Gamma)^*$ is dense in $\mathcal{A}$. By construction $\mathcal{A}$ is then separable, as the $\norm{\cdot}_{\infty}$-closure of a countable set (of integrable functions). Moreover every function in $\mathcal{A}$ is $G$-continuous and $\mathcal{A}$ separates the points of $X$.

     \paragraph{Step 1. Building the space and the measure.}

    By theorem \ref{thm: Gelfand}, we have the following isometric $*$-isomorphism:
    \[
    \begin{array}{ccccc}
    \rho & : & \mathcal{A} & \longrightarrow & C_{0}(\spec(\mathcal A))\\
    & & a & \longmapsto & \widehat{a}
    \end{array}
    \]
    with $\widehat{a}$ being the evaluation on $a$. By separability of $\mathcal{A}$ and Proposition \ref{prop: separable implies second-countable spectrum}, the space $Y \coloneqq \spec(\mathcal{A})$ is locally compact Polish. 
    
     This time, we consider 
    \[
    \mathcal{I} = \mathcal{A} \cap \mathcal{L}^{1}(X,\lambda).
    \]
    It is immediate to check that $\mathcal I$ is an ideal of $\mathcal{A}$, and it contains $\mathcal{D}$ by construction. Since $\mathcal I$ consists of integrable functions, $\lambda$ defines a positive linear functional
    \[
    \begin{array}{ccccc}
    \Psi_\lambda & : & \mathcal{I} & \longrightarrow & \C\\
    & & f & \longmapsto & \int_X fd\lambda.
    \end{array}
    \]
    We then use $\rho$ to get a positive linear functional $\Psi_\lambda\circ\rho\inv : \rho(\mathcal{I}) \to \C$. The ideal $\mathcal{I}$ is $\norm{\cdot}_{\infty}$-dense in $\mathcal{A}$, so by Proposition \ref{prop:FaitGeorges} $C_c(Y) \subseteq \rho(\mathcal{I})$, where $C_c(Y)$ denotes the space of compactly supported continuous functions on $Y$. By Theorem \ref{thm: Riesz}, restricting $\Psi_{\lambda}\circ\rho\inv$ to $C_c(Y)$ gives a unique Radon measure $\eta$ on $Y$ such that for all $f \in C_c(Y)$, 
    \begin{equation}{\label{eq: equality of integrals for the measure}}
    \int_Y f d\eta = \int_X \rho\inv(f) d\lambda.
    \end{equation}

    \paragraph{Step 2. Borel embedding of $X$ into $Y$.}

    Consider the following map
    \[
    \begin{array}{ccccc}
    \Phi & : & X & \longrightarrow &  Y \\
    & & x & \longmapsto & \mathrm{ev}_x 
    \end{array}
    \]
    where $\mathrm{ev}_x(a) = a(x)$.
    Since the algebra $\mathcal{A}$ separates the points of $X$, this map is injective. Let us prove that it is also Borel. Sets of the form
    \[
    \left\{ \mathrm{ev}_x \colon \abs{\mathrm{ev}_x(a) - \mathrm{ev}_{x_0}(a) } < \varepsilon \right\},
    \]
    where $x_0\in X$, $a\in\mathcal{A}$ and $\varepsilon>0$ form a subbasis for the topology of pointwise convergence on $\mathrm{Im}(\Phi)$. The preimage by $\Phi$ of such a set is 
    \[
    \left\{ x \in X \colon \abs{a(x) - a(x_0)} < \varepsilon \right\}
    \]
    which is Borel since $a$ is a Borel map. Therefore $\Phi$ is Borel and injective.

    In order to chek that $\Phi$ is measure-preserving, i.e.~that $\Phi_\ast \lambda = \eta$, observe that for any $a \in \mathcal{A}$ and $x \in X$ we have
    \[
    \rho(a) \circ \Phi (x) = \rho(a)(\mathrm{ev}_x) = \mathrm{ev}_x(a) = a(x).
    \]
    It follows that for $a \in \mathcal{I}$ and $f = \rho(a)$ we have
    \[
        \int_X a d \lambda = \int_X \rho(a) \circ \Phi \, d \lambda = \int_Y \rho(a) d\Phi_\ast \lambda = \int_Y f d \Phi_\ast \lambda.
    \]
    On the other hand, if we further assume $f \in C_c(Y)$, Equation \eqref{eq: equality of integrals for the measure} yields the following:
    \[
    \int_X a d \lambda = \int_Y \rho(a) d \eta  = \int_Y f d \eta.
    \]
    We conclude that for all $f \in C_c(Y)$, $\int_Y f d \Phi_\ast \lambda = \int_Y f d \eta$, which by uniqueness 
    in Theorem \ref{thm: Riesz} yields \(\Phi_*\lambda=\eta\) as wanted.

    We now define the $G$-action $\widetilde{\alpha}$ on $\mathcal{L}^{\infty}(X)$, which is the precomposition by the inverse: $\widetilde{\alpha}(g, a)(x) = a(\alpha(g\inv,x))$. 
    This allow us to define the desired $G$-action $\beta$ on on $Y=\spec(\mathcal A)$ by 
    \[
    \beta(g,y)(a) \coloneqq y(\widetilde{\alpha}(g\inv,a)).
    \]
    We easily check that it is a left action on $Y$:
    for any character $y\in Y$ and any two elements $g$ and $h$ in $G$ we have
    \begin{align*}
    \beta(g,\beta(h,y))(a) & = \beta(h,y) ( \widetilde{\alpha}(g\inv, a) )\\
    & = y ( \widetilde{\alpha}(h\inv, \widetilde{\alpha}(g\inv, a)) )\\
    & = y ( \widetilde{\alpha}((gh)\inv,a) ) \\
    & = \beta(gh,y)(a).
    \end{align*}
    Moreover, for $y = \mathrm{ev}_x$ we have $\beta(g,y)(a) = \mathrm{ev}_x(\widetilde{\alpha}(g\inv ,a)) = a(\alpha(g,x)) = \mathrm{ev}_{\alpha(g,x)}(a)$, that is to say that, $\Phi$ is $G$-equivariant: $\Phi(\alpha(g,x)) = \beta(g,\Phi(x))$.

    In order to conclude, we have to check that $\beta$ is continuous and preserves $\eta$. The fact that it is measure-preserving is a direct consequence of the uniqueness in Theorem \ref{thm: Riesz}. Indeed, $\mathcal{I}$ is $G$-invariant and for any function $f$ in $\mathcal{I}$,
    \begin{align*}
    \int_X \widetilde{\alpha}(g,f)(x)d\lambda(x)  = \int_X f( \alpha(g\inv,x))d\lambda(x)
     = \int_X f(x) d\lambda(x).
    \end{align*}
    This means that $G$ preserves the integral of elements of $\mathcal{I}$, and by (\ref{eq: equality of integrals for the measure}) this implies that it preserves the integral of elements of $C_c(Y)$ (with regards to $\eta$). By uniqueness, the pushforward of $\eta$ by the action of any element of $G$ is equal to $\eta$. The $G$-action $\beta$ is therefore measure-preserving.

    Finally, the continuity of $\beta$ is obtained in the exact same way as in Step 4 of the proof of Theorem \ref{thm: chara existence spatial model}. 
\end{proof}
\begin{rem}
It is tempting to try to add the following third condition to the above theorem:
\begin{itemize}
\item[\emph{(iii)}] \emph{There is a continuous $G$-action on a Polish space $Y$ and a Borel injective map $\Phi: X\to Y$
        such that $\Phi_*\lambda$ is locally finite and $\Phi(\alpha(g,x))=\beta(g,\Phi(x))$ for all $x\in X$ and all $g\in G$.}
\end{itemize}
Clearly \textit{(i)} implies \textit{(iii)}, but we don't know if the converse is true, although we suspect it is not.
As we will see in section \ref{sec: non radonable actions}, the only examples of actions not satisfying \textit{(i)} that we have actually fail \textit{(iii)}.
\end{rem}

\subsection{The case of locally compact groups}{\label{sec: spatial Radon model lcsc}}

As in the previous section, we use the space $\mathcal L^\infty(X)$ of all bounded Borel functions $X\to \C$,
endowed with the norm of uniform convergence $\norm{f}_\infty=\sup_{x\in X} \abs{f(x)}$. 
We also use convolution as defined in Section \ref{sec: convolution}, noting that 
the proof of Lemma \ref{lem:convoGcont} yields the following statement for everywhere defined functions.
\begin{lem}
\label{lem:convoGcont curly L}
Consider a locally compact Polish group $G$, and a spatial measure-preserving action of $G$ on a standard $\sigma$-finite space $(X,\lambda)$.
Let $\delta : G \rightarrow \mathbb{R}_+$ be continuous of integral $1$ and compactly supported. Then, for any $f\in \mathcal L^{\infty}(X) \cap \mathcal L^{1}(X,\lambda)$, the function $\delta * f$ is in $\mathcal L^{\infty}(X) \cap \mathcal L^{1}(X,\lambda)$ and is $G$-continuous.\qed
\end{lem}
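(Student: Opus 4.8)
The plan is to re-run the proof of Lemma \ref{lem:convoGcont} almost verbatim, the only genuine change being that for a \emph{spatial} action every identity used there holds for \emph{all} $x\in X$ rather than for $\lambda$-almost every $x$; this is exactly what upgrades essential-supremum control to genuine supremum control. Throughout I write $\alpha$ for the given spatial action and $K=\supp\delta$.

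First I would settle measurability and pointwise boundedness. The map $(g,x)\mapsto \delta(g)f(\alpha(g\inv,x))$ is Borel on $G\times X$, being assembled from the Borel action map $\alpha$, the Borel function $f$, the continuous $\delta$, and the continuous inversion $g\mapsto g\inv$. It is dominated, uniformly in $x$, by the $m$-integrable function $g\mapsto\delta(g)\norm f_\infty$, so Fubini--Tonelli shows that $x\mapsto\delta*f(x)$ is a well-defined Borel function and that for \emph{every} $x\in X$
\[
\abs{\delta*f(x)}\leq\int_G\delta(g)\abs{f(\alpha(g\inv,x))}\,dm(g)\leq\norm f_\infty\int_G\delta\,dm=\norm f_\infty .
\]
Hence $\delta*f\in\mathcal L^\infty(X)$, with a genuine (not merely essential) bound on its sup norm. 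The estimate $\norm{\delta*f}_1\leq\norm f_1$ is then obtained by the same Fubini computation as in Lemma \ref{lem:convoGcont}, using only that each $\alpha(g\inv,\cdot)$ preserves $\lambda$ and that $\int_G\delta\,dm=1$, so that $\delta*f\in\mathcal L^1(X,\lambda)$.

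The remaining point, $G$-continuity for the genuine norm $\norm\cdot_\infty$, is where spatiality does its work. Because the action is spatial, the identity $\alpha(g\inv,\alpha(h,x))=\alpha(g\inv h,x)$ holds for \emph{all} $x\in X$ and all $g,h\in G$, so after the left-invariant substitution $g\mapsto hg$ I obtain, for every $x$,
\[
\delta*f(\alpha(h,x))=\int_G\delta(hg)f(\alpha(g\inv,x))\,dm(g).
\]
Subtracting this from $\delta*f(x)$ yields the pointwise estimate
\[
\abs{\delta*f(x)-\delta*f(\alpha(h,x))}\leq\int_G\abs{\delta(g)-\delta(hg)}\,\abs{f(\alpha(g\inv,x))}\,dm(g),
\]
valid for \emph{every} $x\in X$. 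Fixing $\varepsilon>0$, I would use the $G$-continuity of left translation on $C_0(G)$ (Lemma \ref{lem: G continuous from continuous action on lc}) to find a symmetric neighborhood $\mathcal N$ of $e_G$ with $\abs{\delta(g)-\delta(hg)}<\varepsilon$ for all $h\in\mathcal N$ and $g\in G$; since moreover the integrand vanishes off $K\cup h\inv K$, a set of measure at most $2m(K)$, this gives $\abs{\delta*f(x)-\delta*f(\alpha(h,x))}<2\varepsilon\,m(K)\,\norm f_\infty$ uniformly in $x$. As this controls the genuine supremum over $x$, we conclude $\norm{\delta*f-(\delta*f)\circ\alpha(h,\cdot)}_\infty\to 0$ as $h\to e_G$, i.e.\ $\delta*f$ is $G$-continuous.

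The only subtle step — and the sole place where spatiality (rather than a mere boolean action) is used — is the everywhere-valid action identity producing the pointwise formula for $\delta*f(\alpha(h,x))$. In the boolean setting this equality holds only off a $(g,h)$-dependent null set, which is precisely why Lemma \ref{lem:convoGcont} can assert continuity only for the essential sup norm, whereas here every estimate is a genuine pointwise bound and we thereby obtain continuity for the uniform norm on $\mathcal L^\infty(X)$.
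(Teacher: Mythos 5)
Your proposal is correct and is essentially the paper's own argument: the paper proves this lemma by simply observing that the proof of Lemma~\ref{lem:convoGcont} goes through verbatim for everywhere-defined functions, which is exactly what you carry out, correctly pinpointing that spatiality makes the identity $\alpha(g\inv,\alpha(h,x))=\alpha(g\inv h,x)$ hold for every $x$ and thus upgrades the essential-sup estimates to genuine sup-norm estimates.
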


We can now prove Theorem \ref{thmi: locally compact spatical into continuous Radon} after first recalling its statement.

\begin{thm}\label{thm: lc polish embed into radon}
    Let $G$ be a locally compact Polish group, and denote by $\alpha$ a spatial measure-preserving $G$-action on a standard $\sigma$-finite space $(X,\lambda)$. Then there exists a locally compact Polish space $Y$ endowed with a continuous action $\beta$ and a Borel injection $\theta : X \to Y$ such that the pushforward measure $\theta_*\lambda$ is Radon and for all $x \in X$ and all $g \in G$,
    \[
    \theta(\alpha(g,x)) = \beta(g,\theta(x)).
    \]
\end{thm}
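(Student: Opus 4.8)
The plan is to deduce the statement directly from the characterization Theorem~\ref{thm: admits model is equivalent to functions separate points}: since $\alpha$ is in particular a Borel action by measure-preserving bijections, it suffices to verify its condition~\eqref{cond: G continuous countable separates}, i.e.\ to exhibit a \emph{countable} family of $G$-continuous functions in $\mathcal L^\infty(X)\cap\mathcal L^1(X,\lambda)$ that separates the points of $X$. The supply of $G$-continuous functions will come from convolution, and the whole point will be to arrange separation of \emph{all} points (not just almost all), which is precisely where the spatiality of $\alpha$ enters.

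First I would fix the raw material. Using that $X$ is standard Borel and $\lambda$ is $\sigma$-finite, I would choose a countable family $(A_n)$ of Borel sets of finite measure that separates the points of $X$ (take a countable separating family generating the Borel structure and intersect it with an exhaustion of $X$ by finite measure sets). Each indicator $\chi_{A_n}$ then lies in $\mathcal L^\infty(X)\cap\mathcal L^1(X,\lambda)$. On the group side I would fix a countable family $(\phi_k)$ in $C_c(G)$ that is dense in $\LL^1(G,m)$. For each pair $(k,n)$ the convolution $\phi_k*\chi_{A_n}$ is an everywhere-defined bounded Borel function, and by Lemma~\ref{lem:convoGcont curly L} it is $G$-continuous and lies in $\mathcal L^\infty(X)\cap\mathcal L^1(X,\lambda)$. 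Here one uses that the proof of Lemma~\ref{lem:convoGcont} only needs the kernel to be continuous and compactly supported, nonnegativity serving solely for the sharp $\LL^1$ bound; alternatively $C_c(G;\R)$ is spanned by nonnegative continuous compactly supported functions of integral one, so $\phi_k*\chi_{A_n}$ is a finite linear combination of such admissible convolutions. This produces a countable family of admissible functions, and it remains to prove it separates points.

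The key step is the separation argument, which I expect to be the crux. Writing $(A_n)_x\coloneqq\{g\in G\colon \alpha(g\inv,x)\in A_n\}$, a Fubini computation gives $\phi_k*\chi_{A_n}(x)=\int_G\phi_k(g)\chi_{(A_n)_x}(g)\,dm(g)$, so for fixed $x\neq y$ the family fails to separate $x$ and $y$ exactly when $\int_G\phi_k\,(\chi_{(A_n)_x}-\chi_{(A_n)_y})\,dm=0$ for all $k,n$. Since $(\phi_k)$ is dense in $\LL^1(G,m)$ and $\chi_{(A_n)_x}-\chi_{(A_n)_y}\in\LL^\infty(G,m)$, this would force $m\big((A_n)_x\,\Delta\,(A_n)_y\big)=0$ for every $n$. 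But $(A_n)_x\,\Delta\,(A_n)_y=E_n$, where $E_n\coloneqq\{g\in G\colon A_n\text{ contains exactly one of }\alpha(g\inv,x),\ \alpha(g\inv,y)\}$. This is where spatiality is used: because each $\alpha(g\inv,\cdot)$ is a genuine bijection of $X$, we have $\alpha(g\inv,x)\neq\alpha(g\inv,y)$ for \emph{every} $g\in G$, and since $(A_n)$ separates the points of $X$, some $A_n$ separates these two images. Hence $\bigcup_n E_n=G$, and as the Haar measure of $G$ is nonzero, countable additivity forces $m(E_n)>0$ for at least one $n$, contradicting $m(E_n)=0$ for all $n$. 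Therefore $(\phi_k*\chi_{A_n})_{k,n}$ separates points.

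With condition~\eqref{cond: G continuous countable separates} established, Theorem~\ref{thm: admits model is equivalent to functions separate points} yields a continuous $G$-action $\beta$ on a locally compact Polish space $Y$ together with a Borel injection $\theta\colon X\to Y$ such that $\theta_*\lambda$ is Radon and $\theta(\alpha(g,x))=\beta(g,\theta(x))$ for all $g,x$, which is precisely the desired conclusion. The only genuinely delicate point is the one highlighted above: choosing the $A_n$, producing $G$-continuous functions by convolution, and the density of $(\phi_k)$ are all routine, but the passage from ``$\alpha(g\inv,\cdot)$ is a bijection for each $g$'' to ``a single $A_n$ works on a positive-measure set of $g$'s'' through the countable cover $\bigcup_n E_n=G$ is exactly what makes everywhere-separation succeed, and it is precisely this that would fail for a merely boolean action.
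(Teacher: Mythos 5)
Your proof is correct, and it reaches the conclusion by a genuinely different route than the paper in the crucial separation step. The paper first invokes item \eqref{item:universalcompactGspace} of Theorem~\ref{thm: bk spatial model} (Becker--Kechris/Varadarajan) to replace $(X,\alpha)$ by a compact metric model on which the action is continuous; it then convolves indicators of metric balls $B(x_n,2\varepsilon)$ (truncated to finite-measure pieces to ensure integrability) against kernels $\delta_\varepsilon$ supported in neighborhoods $\mathcal{N}_\varepsilon$ on which the action moves \emph{every} point by less than $\varepsilon$ --- a uniformity available only thanks to compactness --- so that the resulting $G$-continuous functions equal $1$ on $B(x_n,\varepsilon)$ and $0$ off $B(x_n,3\varepsilon)$, and separation is read off topologically via dominated convergence. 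You bypass any topological model: you start from an abstract countable separating family of finite-measure Borel sets, convolve against a countable $\LL^1(G,m)$-dense family in $C_c(G)$, and derive separation from $\LL^1$--$\LL^\infty$ duality together with the countable covering $G=\bigcup_n E_n$ and the fact that Haar measure is nonzero. Both proofs share the same skeleton --- verify condition \eqref{cond: G continuous countable separates} of Theorem~\ref{thm: admits model is equivalent to functions separate points} by manufacturing integrable $G$-continuous functions through convolution (Lemma~\ref{lem:convoGcont curly L}) --- but your argument does not need the compact model theorem at all, which makes it lighter on prerequisites, while the paper's argument produces the separating functions in a very explicit geometric form. One small correction to your closing remark: under the paper's Definition~\ref{def: boolean mp or ns action}, even a \emph{boolean} action consists of genuine bijections $\alpha(g,\cdot)$, so the covering argument $\bigcup_n E_n=G$ would not by itself break down for boolean actions; what breaks down there is the everywhere-defined $G$-continuity of the convolutions (the proof behind Lemma~\ref{lem:convoGcont curly L} uses the action law $\alpha(g\inv,\alpha(h,x))=\alpha(g\inv h,x)$ at every point, not just almost every point), and of course the applicability of Theorem~\ref{thm: admits model is equivalent to functions separate points} itself, which is stated for spatial actions.
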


\begin{proof}
    By  Item \eqref{item:universalcompactGspace} of Theorem \ref{thm: bk spatial model} we may as well assume that $X$ is compact and $\alpha$ is continuous (this is also a consequence of an earlier and easier result of Varadarajan, see \cite[Thm.~3.2]{varadarajanGroupsAutomorphismsBorel1963}). We also denote by $d$ a compatible metric on $X$
    and fix a sequence $(x_n)$ enumerating a dense subset of $X$.
    By Theorem \ref{thm: admits model is equivalent to functions separate points}, it is sufficient to prove that there exists a countable set $\mathcal D$ of $G$-continuous functions in $\mathcal L^\infty(X)\cap \mathcal L^1(X,\lambda)$ that separates points. We will construct this set $\mathcal D$  using convolution with regard to the Haar measure $m$ on $G$.\\

    We begin by choosing for every $\varepsilon>0$ an open neighborhood $\mathcal{N}_\varepsilon$ of $e_G$ in $G$ such that for all $g\in\mathcal{N}_\varepsilon$ and all $x\in X$ we have $d(\alpha(g\inv,x),x) < \varepsilon$. Such a neighborhood exists by continuity of $\alpha$ and compactness of $X$. We then fix a continuous function
    $\delta_\varepsilon : G \rightarrow \R_+$
    of integral $1$ with a compact support included in $\mathcal{N}_\varepsilon$.

    Let us fix $n \in \N$ and $\varepsilon > 0$, our first aim is to define countably many integrable functions separating the points of the open ball $B(x_n,\varepsilon)$ from the points in $X \setminus B(x_n,3\varepsilon)$. 
    By the choice of $\delta_\varepsilon$, for any $x \in B(x_n,\varepsilon)$ and any $y \in X \setminus B(x_n,3\varepsilon)$, we have
    \begin{equation}{\label{eq: convolution separating dense sequence of points}}
    \left\{
    \begin{array}{l}
        \displaystyle{\delta_\varepsilon * \mathds{1}_{B(x_n,2\varepsilon)}(x) 
        = \int_G \delta_\varepsilon(g)\mathds{1}_{B(x_n,2\varepsilon)}(\alpha(g\inv,x))dm(g) 
        = \int_G \delta_\varepsilon(g) dm(g) = 1 }\vspace{0.1cm}\\
        \displaystyle{\delta_\varepsilon * \mathds{1}_{B(x_n,2\varepsilon)}(y) =
        \int_G \delta_\varepsilon(g)\mathds{1}_{B(x_n,2\epsilon)}(\alpha(g\inv,y))dm(g) 
        = 0}
    \end{array}
    \right.
    \end{equation}
    However the above function $\delta_\varepsilon * \mathds{1}_{B(x_n,2\varepsilon)}$ could very well fail to be integrable. We therefore set $X = \sqcup_{k \in \N} X_k$, with $\lambda(X_k) = 1$ for any $k$, and define
    \[
    A_{n,k,\varepsilon} = X_k \cap B(x_n,2\varepsilon),
    \]
    so that $\mathds{1}_{A_{n,k,\varepsilon}} \in \mathcal{L}^{1}(X,\lambda)$, for any positive integer $k$, and hence $\delta_\varepsilon*\mathds{1}_{A_{n,k,\varepsilon}} \in \mathcal{L}^{1}(X,\lambda)$ for all $\varepsilon>0$ by Lemma \ref{lem:convoGcont curly L}.

    In order to obtain a countable set of functions, we enumerate $\Q_{>0}$ as $\Q_{>0}=\{\varepsilon_i\colon i\in\N\}$. We can finally define our countable set of  functions:
    \[
    \mathcal{D} \coloneqq  \left\{ \delta_{\varepsilon_i} * \left( \sum_{k=0}^N \mathds{1}_{A_{n,k,\varepsilon_i}} \right) \colon i,n,N \in \N \right\}.
    \]
    By Lemma \ref{lem:convoGcont curly L} we have $\mathcal{D} \subseteq \mathcal{L}^{\infty}(X) \cap \mathcal{L}^{1}(X,\lambda)$ and $\mathcal D$ only contains $G$-continuous functions.\\

    We now prove that $\mathcal{D}$ separates the points of $X$. To this end, let $x$ and $y$ be in $X$ such that for all $f \in \mathcal{D}$ we have $f(x) = f(y)$.
    Fix $i$ and $n$ in $\N$. We have $B(x_n,2\varepsilon_i) = \bigsqcup_{k \in \N} A_{n,k,\varepsilon_i}$, hence we have the pointwise convergence 
    \[
    \sum_{k=0}^N \mathds{1}_{A_{n,k,\varepsilon_i}} \underset{N \rightarrow + \infty}{\longrightarrow} \mathds{1}_{B(x_n,2\varepsilon_i)}.
    \]
    Therefore by dominated convergence (using $\delta_{\varepsilon_i}$ as a dominating function) we have:
    \begin{align*}
        \delta_{\varepsilon_i} * \left( \sum_{k = 0}^{N} \mathds{1}_{A_{n,k,\varepsilon_i}} \right)(x)  &= \int_G \delta_{\varepsilon_i}(g) \left( \sum_{k = 0}^{N} \mathds{1}_{A_{n,k,\varepsilon_i}} \right)(\alpha(g\inv,x))dm(g) \\
         &\underset{N \to \infty}{\longrightarrow} \int_G \delta_{\varepsilon_i}(g) \mathds{1}_{B(x_n,2\varepsilon_i)}(\alpha(g\inv,x))dm(g)
         = \delta_{\varepsilon_i} * \mathds{1}_{B(x_n,2\varepsilon_i)}(x). 
    \end{align*}
    The same holds for $y$, and thus we have
    \[
    \delta_{\varepsilon_i} * \mathds{1}_{B(x_n,2\varepsilon_i)}(x) = \delta_{\varepsilon_i} * \mathds{1}_{B(x_n,2\varepsilon_i)}(y).
    \]
    By \eqref{eq: convolution separating dense sequence of points}, the function $\delta_{\varepsilon_i} * \mathds{1}_{B(x_n,2\varepsilon_i)}$ takes the value $1$ on $B(x_n,\varepsilon_i)$ and the value $0$ on $X \setminus B(x_n,3\varepsilon_i)$, so the previous equality ensures that for any $i,n \in \N$, we can never have simultaneously $d(x_n,x)<\varepsilon_i$ and $d(x_n,y)>3\varepsilon_i$. By density of $(x_n)$ and the fact that $(\varepsilon_i)$ takes arbitrarily small values, we  have $x=y$, which concludes the proof.
\end{proof}

\subsection{Local finiteness on \texorpdfstring{$X$}{X} itself}\label{sec: loc compact Radon vs loca fin real}

We now present a nice consequence of Theorem \ref{thm: lc polish embed into radon}
which was pointed out to us by Nachi Avraham-Re'em, allowing us to answer Question \ref{qu: loc fin Polish real}
positively when the acting group $G$ is locally compact Polish.

\begin{cor}\label{cor: loc fin model for lc groups}
	Let $G$ be a locally compact Polish group and $\alpha:G\times X\to X$ be a 
	spatial measure-preserving $G$-action on a standard $\sigma$-finite space $(X,\lambda)$.
	Then there is a Polish topology $\tau_X$ on $X$
    inducing its standard Borel structure
    such that $\alpha$ is a continuous action with respect to $\tau_X$
	and $\lambda$ is locally finite. 
\end{cor}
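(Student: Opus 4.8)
The plan is to transport the Radon model produced by Theorem~\ref{thm: lc polish embed into radon} back onto $X$, and then to repair the single defect of the resulting topology using a refined form of the Becker--Kechris theorem. First I would apply Theorem~\ref{thm: lc polish embed into radon} to the spatial action $\alpha$, obtaining a locally compact Polish space $Y$ equipped with a continuous $G$-action $\beta$, together with a Borel $G$-equivariant injection $\theta:X\to Y$ such that $\theta_*\lambda$ is Radon. Pulling back the topology of $Y$ along $\theta$ gives the initial (second-countable, Hausdorff) topology $\tau_0$ on $X$, whose open sets are exactly the $\theta\inv(V)$ for $V\subseteq Y$ open; since $\theta$ is Borel, these are Borel subsets of $X$. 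With respect to $\tau_0$ the map $\theta$ is a homeomorphism onto its image with the subspace topology, so the equivariance relation $\theta\circ\alpha(g,\cdot)=\beta(g,\cdot)\circ\theta$ together with the continuity of $\beta$ and the universal property of the initial topology shows that $\alpha$ is continuous for $\tau_0$. Moreover $\tau_0$ is locally finite: given $x\in X$, since $\theta_*\lambda$ is Radon, hence locally finite by Proposition~\ref{prop: Radon equivalent to locally finite on locally compact}, there is an open $V\ni\theta(x)$ with $\theta_*\lambda(V)<+\infty$, whence $\lambda(\theta\inv(V))\leq \theta_*\lambda(V)<+\infty$.

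The one missing property is that $\tau_0$ need not be Polish, because the Borel image $\theta(X)$ is in general not $G_\delta$ in $Y$. To repair this I would invoke the Becker--Kechris topological realization theorem in its refined form: given a Borel $G$-action on a standard Borel space and a countable family of Borel subsets, there is a Polish topology inducing the same Borel structure for which the action is continuous and every prescribed set is open (see \cite[Sec.~5.1]{beckerDescriptiveSetTheory1996}). Applying this to the Borel action $\alpha$ and to a countable basis $\{U_n\}$ of $\tau_0$ yields a Polish topology $\tau_X$ on $X$ inducing its standard Borel structure, for which $\alpha$ is continuous and each $U_n$ is $\tau_X$-open; in particular $\tau_X\supseteq\tau_0$.

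It then remains only to observe that enlarging a topology cannot destroy local finiteness. Indeed, if $x\in X$ and $U\in\tau_0$ is a $\tau_0$-neighbourhood of $x$ with $\lambda(U)<+\infty$, then $U$ is still open for the finer topology $\tau_X$, hence it is a $\tau_X$-neighbourhood of $x$ of finite measure. Thus $\lambda$ is locally finite for $\tau_X$, and $\tau_X$ is a Polish topology inducing the standard Borel structure of $X$, making $\alpha$ continuous and $\lambda$ locally finite, as desired.

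I expect the main obstacle to be precisely the non-Polishness of the pulled-back topology $\tau_0$, and correspondingly the crux of the proof is the use of the refined Becker--Kechris theorem permitting one to adjoin a prescribed countable family of Borel sets to the topology while keeping the action continuous. The remaining step, that refining a topology preserves local finiteness, is what makes this refinement harmless for the measure, and is the point that ties the two ingredients together.
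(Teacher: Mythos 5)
Your first step (applying Theorem~\ref{thm: lc polish embed into radon}) and your last step (the observation that passing to a finer topology cannot destroy local finiteness) are both correct and match the paper's argument. The gap is in the middle: the ``refined form of the Becker--Kechris theorem'' you invoke does not exist in the generality you state. It is false that for an arbitrary Borel $G$-action and an \emph{arbitrary} countable family of Borel sets one can find a Polish topology, compatible with the Borel structure, making the action continuous and all the prescribed sets open. Indeed, if the action is continuous for some topology $\tau$, then every orbit map $g\mapsto \alpha(g,x)$ is continuous from $G$ to $(X,\tau)$, so the preimage of any $\tau$-open set under an orbit map must be open in $G$. Concretely, for $G=\R$ acting on $X=\R$ by translation, the set $\Q$ can never be made open in any topology for which the action is even separately continuous, since the orbit map of $0$ is $g\mapsto g$ and $\Q$ is not open in $\R$. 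The actual theorem (the one the paper uses, \cite[Thm.~5.1.5]{beckerDescriptiveSetTheory1996}) requires the prescribed Borel sets to be $G$-\emph{invariant}, and your sets $U_n=\theta\inv(V_n)$, being preimages of basic open sets of $Y$, are certainly not invariant. (They do happen to satisfy the necessary orbit-map condition, because $\{g:\beta(g,\theta(x))\in V_n\}$ is open by continuity of $\beta$, but you neither note this nor cite a theorem that turns this condition into a conclusion.)

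The repair is to perform the topological refinement \emph{upstairs} on $Y$ rather than downstairs on $X$, which is exactly what the paper does. The image $\theta(X)$ is a $G$-invariant Borel subset of $Y$ (by equivariance of $\theta$), so \cite[Thm.~5.1.5]{beckerDescriptiveSetTheory1996} applies to the continuous action $\beta$ on $Y$ and the single invariant set $\theta(X)$: it yields a finer Polish topology $\tau'_Y$ on $Y$, with the same Borel sets, for which $\beta$ is still continuous and $\theta(X)$ is open. The subspace topology that $\tau'_Y$ induces on $\theta(X)\cong X$ is then Polish (an open subset of a Polish space is Polish), refines your pulled-back topology $\tau_0$ (which is the subspace topology from the original topology of $Y$), induces the correct Borel structure, and makes $\alpha$ continuous as a restriction of $\beta$. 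At that point your final paragraph applies verbatim: local finiteness for $\tau_0$ transfers to the finer topology. So your overall architecture is sound, but the single citation it rests on must be replaced by the invariant-set version applied on $Y$.
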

\begin{proof}
	Identifying $X$ to its image $\Phi(X)$ via the map $\Phi$ provided by Theorem \ref{thm: lc polish embed into radon},
	we can assume that $X$ is an $\alpha$-invariant Borel subset of a locally compact Polish space $(Y,\tau_Y)$ 
	on which $\lambda$ extends to a Radon measure $\eta$,
	that $\alpha$ extends to a continuous $\eta$-preserving action $\beta$,
    and that the standard Borel structure on $X$ is induced by the Borel $\sigma$-algebra of $\tau_Y$. 
	By \cite[Thm.~5.1.5]{beckerDescriptiveSetTheory1996}, there is a Polish topology $\tau'_Y$ on $Y$ which 
	refines  $\tau_Y$ such that $X$ is $\tau'_Y$-open, $\beta$ is $\tau'_Y$-continuous
    and $\tau'_Y$ generates the same Borel $\sigma$-algebra as $\tau_Y$. 
    
    Let us denote by $\tau_X$ the Polish topology induced by $\tau'_Y$ on $X$ and check that $\tau_X$ is as desired.
    First, since $X$ is $\tau'_Y$-open and $\tau'_Y$ is Polish, we have that $\tau_X$ is Polish as well by \cite[Thm.~3.11]{kechrisClassicalDescriptiveSet1995}.
    Moreover $\tau_Y$ and $\tau'_Y$ both induce the Borel $\sigma$-algebra of $Y$, and since $\tau_Y$
    induces the standard Borel space structure of $X$,
    so does $\tau_X$.
    Since $\beta$ is $\tau'_Y$-continuous and $\alpha$ is the restriction of $\beta$ to $X$, 
    $\alpha$ is $\tau_X$-continuous.
	Finally since $\eta$ is locally finite, its restriction $\lambda$ to $X$ is locally finite for the topology induced by 
    $\tau_Y$
	on $X$, so $\lambda$ is locally finite for the finer Polish topology $\tau_X$ induced by $\tau'_Y$  as well.
\end{proof}
In the above proof, we had to give up the local compactness of the ambient space $Y$, 
and it it natural to aks whether this can be circumvented.
We now observe that even for countable discrete groups, local compactness of the standard Borel 
space onto which they act may be impossible to obtain.

\begin{prop}\label{prop: no loc compact real}
	There is a countable discrete group $\Gamma$ and a Borel measure-preserving $\Gamma$-action on a standard $\sigma$-finite 
	space $(X,\lambda)$ such that $X$ cannot be endowed with a locally compact Polish topology so that the 
	action becomes continuous. 
\end{prop}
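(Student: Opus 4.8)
The plan is to produce a single explicit counterexample and argue by contradiction. The key preliminary observation is that local compactness is \emph{not} invariant under Borel isomorphism, so the obstruction we are looking for cannot be measure-theoretic: indeed, since a countable discrete group is locally compact Polish, Corollary~\ref{cor: loc fin model for lc groups} already provides, for any such $\Gamma$, a continuous Polish realization in which $\lambda$ is locally finite. Hence what must fail is local compactness of the ambient space itself, and I would look for a Borel $\Gamma$-action that is not Borel-isomorphic to any continuous action on a locally compact Polish space. The natural source is the non-realizability phenomenon of Section~\ref{sec: non radonable actions}: I would take (a variant of) the $\mathfrak S_\infty$-action constructed there and restrict it to a countable dense subgroup, for instance the group $\Gamma=\mathfrak S_{\mathrm{fin}}$ of finitely supported permutations, which is a countable locally finite group.

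Next I would set up the contradiction. Suppose $X$ carries a locally compact Polish topology $\tau$ inducing its standard Borel structure and making the $\Gamma$-action continuous. Being locally compact and second-countable, $(X,\tau)$ is $\sigma$-compact, so I fix an increasing exhaustion $X=\bigcup_n K_n$ by compact sets. Since every $\alpha(\gamma)$ is a $\tau$-homeomorphism, each $\alpha(\gamma)(K_n)$ is compact, hence contained in some $K_m$; this produces a Borel ``norm'' $c(x)=\min\{n\colon x\in K_n\}$ with relatively compact sublevel sets that are moved into one another with controlled growth under the action. The heart of the argument is then to show that the dynamics of the chosen action is incompatible with such a compact exhaustion: the finite subgroups $S_n\leq\Gamma$ should act so as to mix the putative levels $K_n$ at every scale, so that no increasing sequence of compact sets can be simultaneously almost-invariant under all of them. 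This is the discrete-group counterpart of the statement, used in Section~\ref{sec: non radonable actions}, that $\alpha(V,x)$ has infinite measure for every neighbourhood $V$ of the identity.

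The main obstacle is precisely this last step. Because local compactness is a purely topological, non-Borel-invariant property, the contradiction has to be extracted from the interplay between the compact exhaustion $(K_n)$ and the combinatorics of the $\Gamma$-action, and not from the measure alone; one must moreover be careful, since by the Becker--Kechris theorem (Theorem~\ref{thm: bk spatial model}) the action \emph{does} admit a continuous Polish realization, and many infinite measure-preserving systems, including conservative ergodic ones, are perfectly realizable on locally compact spaces. The delicate point is therefore to isolate a feature of the action that provably blocks \emph{every} compatible compact exhaustion while surviving the passage to a coarser Borel structure, and to verify that restricting to the countable subgroup $\Gamma$ retains this feature. Making this mechanism precise is where I expect the real work to lie.
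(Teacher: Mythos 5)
There is a genuine gap, and it occurs exactly where you place the ``real work'': no mechanism is ever produced, and the candidate example you propose cannot supply one. Proposition~\ref{prop: no loc compact real} imposes \emph{no condition on the measure}: it only asks that no locally compact Polish topology on $X$ makes the action continuous. But the $\Sinf$-action of Section~\ref{sec: non radonable actions} (hence also its restriction to the countable group of finitely supported permutations) is by permutation of coordinates on $\{0,1\}^{\N}$, and this action is already continuous for the usual \emph{compact} Polish product topology; the obstruction proved there (Proposition~\ref{prop: Sinfty action non Radonable}) is about the impossibility of making the action continuous \emph{with a locally finite measure}, which is irrelevant here. Worse, the mechanism you invoke as the ``discrete-group counterpart'' of Lemma~\ref{lem:infinite measure neighborhoods} is vacuous for a countable discrete group $\Gamma$: the singleton $\{e_\Gamma\}$ is a neighbourhood of the identity, and $\alpha(\{e_\Gamma\},x)=\{x\}$ has measure zero, so nothing forces neighbourhoods to have large orbits. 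That lemma lives entirely off the non-local-compactness of $\Sinf$ (every identity neighbourhood contains an open subgroup $H_N$ with infinite-measure orbits), and this feature does not ``survive the passage'' to a countable subgroup --- it disappears. So your plan reduces to: pick an action, suppose a compact exhaustion $X=\bigcup_n K_n$ exists, and derive a contradiction from unspecified ``mixing of levels''; since the example you chose demonstrably admits a compact realization, no such contradiction can be derived, and no alternative example or argument is given.

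For contrast, the paper's proof isolates a purely topological obstruction of a quite different kind. It takes $X=\R\setminus\Q$ with Lebesgue measure and lets $\Gamma$ be the group generated by the involutions $I_{q,r}$ that swap $(q-r,q)$ and $(q,q+r)$ by translation and fix everything else. If $\tau$ were a locally compact Polish topology making the action continuous, then the support of each $I_{q,r}$ (the complement of its fixed-point set) would be $\tau$-open; these supports are exactly the sets $(q-r,q+r)\cap(\R\setminus\Q)$, which form a basis of the usual topology of the irrationals, so $\tau$ refines that topology. Then $\sigma$-compactness of the locally compact Polish space $(X,\tau)$ would make $\R\setminus\Q$ a countable union of sets compact in its usual topology, contradicting Baire (compact subsets of the irrationals have empty interior). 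The lesson is that the obstruction must be engineered into the \emph{supports} of the group elements, so that continuity alone pins down the topology from below --- your instinct that the obstruction ``has to be extracted from the interplay between the compact exhaustion and the combinatorics of the action'' is right, but the permutation action does not have this feature (for instance, the supports of all finite permutations fail even to separate a sequence from its bitwise complement), and no amount of work on that example will close the gap.
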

\begin{proof}
	Consider, for every pair of rationals $(q,r)$ with $r>0$, the Borel involutive bijection $I_{q,r}$ of the standard Borel space $\R\setminus \Q$ given by: for all $x\in\R\setminus\Q$, 
	\[
	I_{q,r}(x)=\left\{\begin{array}{cl}
		x+r & \text{if }x\in (q-r,q)\\
		x-r & \text{if }x\in(q,q+r)\\
		x &\text{otherwise}.
	\end{array}\right.
	\]
	Let $\Gamma$ be the group generated by all involutions $I_{q,r}$, which is naturally acting on $X=\R\setminus\Q$ in 
	a Borel manner, we claim this is the action we seek. First, this action preserves the measure $\lambda$ induced by 
	the Lebesgue measure on $\R\setminus\Q$ since it has the same orbits as the measure-preserving 
    $\Q$-action by translation on $\R\setminus\Q$.
	
	Now suppose by contradiction that $\tau$ is a locally compact Polish topology such that the $\Gamma$-action on 
	$X$ is continuous. 
	For every pair $(q,r)$ of rationals with $r>0$, notice that the support of $I_{q,r}$ (which is by definition the set of points not fixed by $I_{q,r}$) is equal to 
	$(q-r,q+r)\cap \R\setminus \Q$, and has to be $\tau$-open by continuity of the $\Gamma$-action. 
	This shows that $\tau$ refines the topology induced by $\R$ on $\R\setminus\Q$.
	Since $\tau$ is locally compact Polish, we can write $X=\bigcup_n K_n$ where each 
	$K_n$ is compact, and since $\tau$ refines the Polish topology of $\R\setminus\Q$, we deduce 
	that $\R\setminus\Q$ is $\sigma$-compact, a contradiction since every compact subset thereof has empty interior.
\end{proof}

\begin{rem}
	The key property of the action of $\Gamma$ in the above proof is that the supports generate some 
	non locally compact Polish topology. 
	Actions whose supports generate a Hausdorff topology have to be totally non free, meaning
	that the map taking a point to its stabilizer subgroup is injective. As observed by Vershik (see \cite[Sec.~2.3]{vershikTotallyNonfreeActions2012}), any two totally non-free actions are abstractly isomorphic if and only if their stabilizer maps have the same range. In particular, if we remove from the action 
	of the theorem an additional orbit $\Q+x$, we obtain for every $x\in\R$ a Borel action $\alpha_x$ on $\R\setminus(\Q\cup \Q+x)$, and $\alpha_x$ is  abstractly 
	isomorphic to $\alpha_y$ if and only if they are equal (which is of course equivalent to $x\in\Q+y$).

	This argument cannot work in the setup of free actions : all the orbits of free actions are the same,
	and a Hilbert hotel argument 
	shows that every free Borel action of a countable group is Borel isomorphic to any of its restrictions to the
	the complement of countably many orbits.
    We refer the reader to \cite[Sec.~3.3]{frischRealizationsCountableBorel2023} for some positive results on 
    Borel free actions of countable groups.
\end{rem}

\subsection{A counterexample when \texorpdfstring{$G=\mathfrak S_\infty$}{G}}\label{sec: non radonable actions}

Let us take $G = \mathfrak{S}_\infty$ for the whole section. 
Our aim is to show that Theorem \ref{thm intro: spatial conjugacy to nice action} is false for such a group, in particular Theorem \ref{thm: lc polish embed into radon} that we just proved
does not hold for such a group as well. 

The action $\alpha$ that we consider is the $\mathfrak S_\infty$-action on $X = \{ 0,1 \}^{\mathbb{N}}$ by permutation of the coordinates. Let us also fix a sequence $(p_n)$ of weights, with $p_n \in \left] 0 , 1 \right[$ for all $n$ in $\N$. We also require that $p_n \neq p_m$ for $n \neq m$, and that $p_n \rightarrow \frac{1}{2}$. Consider the measures $\mu_n$ defined by :
\[
\mu_n \coloneqq (p_n \delta_1 + (1-p_n) \delta_0)^{\otimes \N},
\]
and define the measure $\lambda$ on $X$ by
\[
\lambda \coloneqq \sum_{n \in \N} \mu_n .
\]

Let $X_\infty$ be the subspace of $X$ consisting of sequences that take infinitely many times the value $0$, and infinitely many times the value $1$. First note that $\Sinf$ acts transitively on $X_\infty$. We have $\lambda(X\setminus X_\infty) = 0$, as for any $n$ in $\N$ we have $\mu_n(X \setminus X_\infty)  =0$, and thus, up to a null set, we can restrict to $X_\infty$.

Each $\mu_n$ is a probability measure on $X$, so $\lambda$ is an infinite measure. We now verify that $\lambda$ is $\sigma$-finite.
Indeed, the strong law of large numbers ensures us that if we define a family $(X_n)$ of Borel sets by
\[
X_n := \left\{     x \in X\colon \dfrac{ \abs{ \left\{  k \in\{0,\dots,m-1\}\colon x_k =1 \right\}} }{m} \underset{m \rightarrow + \infty}{\longrightarrow} p_n   \right\},
\]
that is to say that $X_n$ is the set of the sequences of $X$ with a proportion $p_n$ of $1$, then for all $n$ in $\N$ we have 
$\mu_n(X_n) = 1$.
We thus have
\[
\displaystyle{\lambda\left(X \setminus  \bigsqcup_{n \in \mathbb{N}} X_n    \right) = 0 }.
\]
The condition $p_n \neq p_m$, for $n \neq m$, ensures us that the $X_n$ are disjoint sets, and therefore $\lambda$ is $\sigma$-finite.

The following lemma provides an obstruction to having a continuous locally finite model.

\begin{lem}{\label{lem:infinite measure neighborhoods}}
    Let $\mathcal{N}$ be a neighborhood of the identity in $\Sinf$, let $x\in X_\infty$. Then we have $\lambda\left(\alpha(\mathcal{N},x) \right) = + \infty$.
\end{lem}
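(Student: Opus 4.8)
The plan is to reduce to a basic neighborhood of the identity, describe the image set $\alpha(\mathcal N,x)$ explicitly, and then exploit the fact that the weights were chosen so that $p_n\to\frac12$ in order to make the relevant series diverge. Since $\Sinf$ carries the topology of pointwise convergence, every neighborhood $\mathcal N$ of the identity contains a basic one of the form
\(
\mathcal N_F=\{\sigma\in\Sinf\colon \sigma(i)=i\text{ for all }i\in F\}
\)
for some finite $F\subseteq\N$. As $\mathcal N_F\subseteq\mathcal N$ gives $\alpha(\mathcal N_F,x)\subseteq\alpha(\mathcal N,x)$, monotonicity of $\lambda$ reduces the statement to the case $\mathcal N=\mathcal N_F$.

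First I would describe $\alpha(\mathcal N_F,x)$. For $\sigma\in\mathcal N_F$ the sequence $\alpha(\sigma,x)$ agrees with $x$ on the coordinates indexed by $F$ and is obtained from $x$ by permuting the coordinates outside $F$. Conversely, given any $y\in X$ with $y_i=x_i$ for all $i\in F$ and whose restriction to $\N\setminus F$ still takes both values infinitely often, a Hilbert-hotel argument produces a bijection of $\N\setminus F$ matching the $1$-positions (resp.\ the $0$-positions) of $x$ with those of $y$; extending it by the identity on $F$ yields an element of $\mathcal N_F$ sending $x$ to $y$. Here one uses that $x\in X_\infty$, so that removing the finitely many coordinates of $F$ still leaves infinitely many $0$s and infinitely many $1$s. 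Hence $\alpha(\mathcal N_F,x)\supseteq S_x$, where $S_x$ denotes the set of $y\in X$ such that $y_i=x_i$ for all $i\in F$ and whose tail takes each value infinitely often.

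I would then bound the measure from below. For every $n$, the coordinates are i.i.d.\ Bernoulli$(p_n)$ under $\mu_n$, so by the second Borel--Cantelli lemma the tail condition defining $S_x$ has full $\mu_n$-measure; as it is moreover independent of the coordinates in $F$, we obtain
\[
\mu_n(S_x)=\prod_{i\in F}\mu_n(\{y\colon y_i=x_i\})=p_n^{\,a}(1-p_n)^{\,b},
\]
where $a=\abs{\{i\in F\colon x_i=1\}}$ and $b=\abs{\{i\in F\colon x_i=0\}}$, with $a+b=\abs F$. Therefore $\lambda(\alpha(\mathcal N_F,x))\geq\sum_{n\in\N}p_n^{\,a}(1-p_n)^{\,b}$.

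The crucial and final observation — which is exactly why the weights were chosen with $p_n\to\frac12$ — is that $p_n^{\,a}(1-p_n)^{\,b}\to(\frac12)^{\abs F}>0$, so the general term of this series does not tend to $0$ and the sum diverges. This gives $\lambda(\alpha(\mathcal N_F,x))=+\infty$, and hence $\lambda(\alpha(\mathcal N,x))=+\infty$, as desired. The only genuinely non-routine step is the surjectivity onto $S_x$ in the second paragraph (the Hilbert-hotel matching of $0$- and $1$-positions); everything after that is a direct computation, with the role of the hypothesis $p_n\to\frac12$ being to prevent the measures $\mu_n(S_x)$ from being summable.
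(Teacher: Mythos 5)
Your proposal is correct and follows essentially the same route as the paper: reduce to a basic pointwise stabilizer, identify the image of $x$ as (at least) the cylinder of sequences agreeing with $x$ on the fixed finite set intersected with a full-measure tail event, compute $\mu_n$ of that set as $p_n^{\,a}(1-p_n)^{\,b}\to(1/2)^{\abs F}$, and conclude divergence of the series. The only difference is cosmetic: you spell out the Hilbert-hotel matching that the paper leaves implicit (it invokes transitivity of $\Sinf$ on $X_\infty$ stated earlier), and you work with a general finite set $F$ rather than an initial segment $\{0,\dots,N\}$.
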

	
\begin{proof}
For every $n$ let $H_n$ denote the open subgroup of $\mathfrak S_\infty$ given by 
\[
H_n=\{\sigma\in\Sinf\colon \forall i\in \{0,\cdots n\},\, \sigma(i)=i\}.
\]
Then the sequence $(H_n)$ is a neighborhood basis of the identity in $\Sinf$, so we can find some $H_N$ contained in our neighborhood $\mathcal{N}$. For any $x_0$ in $X_\infty$, we have 

\[
\alpha(H_N,x)=
\left\lbrace  y \in X_\infty \colon \forall i \in \{ 0 , \ldots , N \},\, y_i = x_i   \right\rbrace
\]
so since $p_n\to \frac 12$ we have \[
\mu_n(\alpha(H_N,x))   \underset{n \rightarrow + \infty}{\longrightarrow} \left(\frac{1}{2} \right)^{N+1}.
\]
Therefore,
\[
\lambda (\alpha(H_N,x)) = \sum_{n \in \mathbb{N}}\mu_n (\alpha(H_N,x)) = + \infty,
\]
which concludes the proof.
\end{proof}

\begin{rem}
    By continuity of the action, for any $x$ in $X_\infty$, for any neighborhood $\mathcal{V}$ of $x$ there exists $N$ in $\N$ such that $\alpha(H_N,x) \subseteq \mathcal{V}$. So by Lemma \ref{lem:infinite measure neighborhoods} and the density of $X_\infty$, all nonempty open subsets
    of $X$ have infinite measure.
\end{rem}

We have all the tools to show that Theorem \ref{thm intro: spatial conjugacy to nice action} can fail badly for non locally compact Polish groups. 

\begin{prop}{\label{prop: Sinfty action non Radonable}}
    The spatial $\Sinf$-action $\alpha$ on $(X,\lambda)$ defined above cannot be spatially isomorphic to a continuous action on a Polish space $Y$ endowed with a locally finite measure $\eta$.
\end{prop}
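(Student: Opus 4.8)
The plan is to argue by contradiction. Suppose $\alpha$ were spatially isomorphic (in the sense of Definition~\ref{def: spatial iso}) to a continuous measure-preserving $\Sinf$-action $\beta$ on a Polish space $Y$ equipped with a locally finite measure $\eta$. Then we would have an $\alpha(\Sinf)$-invariant conull Borel subset $X_0\subseteq X$ and a measure-preserving Borel injection $\Phi:X_0\to Y$ satisfying $\Phi(\alpha(\sigma,x))=\beta(\sigma,\Phi(x))$ for all $x\in X_0$ and all $\sigma\in\Sinf$. The idea is to transport a single finite-measure neighborhood coming from local finiteness of $\eta$ back through $\Phi$, so as to contradict the content of Lemma~\ref{lem:infinite measure neighborhoods}, namely that every neighborhood-orbit $\alpha(\mathcal N,x)$ of a point of $X_\infty$ has infinite $\lambda$-measure.

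First I would pick a point $x\in X_0\cap X_\infty$, which exists since both sets are conull, and set $y\coloneqq\Phi(x)\in Y$. Because $\eta$ is locally finite, $y$ admits an open neighborhood $V$ with $\eta(V)<+\infty$. By continuity of $\beta$, the orbit map $\sigma\mapsto\beta(\sigma,y)$ is continuous and sends $e_G$ to $y\in V$, so there is a neighborhood $\mathcal N$ of the identity in $\Sinf$ with $\beta(\mathcal N,y)\subseteq V$. Using that the open subgroups $(H_n)$ appearing in the proof of Lemma~\ref{lem:infinite measure neighborhoods} form a neighborhood basis of the identity, I would fix $N$ with $H_N\subseteq\mathcal N$, giving $\beta(H_N,y)\subseteq V$.

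The key step is to identify $\beta(H_N,y)$ with the $\Phi$-image of a genuine Borel subset of $X$. Since $X_0$ is $\alpha$-invariant and $x\in X_0$, we have $\alpha(H_N,x)\subseteq X_0$, and the clopen description $\alpha(H_N,x)=\{z\in X_\infty:\forall i\leq N,\ z_i=x_i\}$ shows it is Borel. By equivariance $\Phi(\alpha(H_N,x))=\beta(H_N,y)$, and since $\Phi$ is a measure-preserving Borel injection this image is Borel with $\eta(\beta(H_N,y))=\lambda(\alpha(H_N,x))$. Combining the inclusion $\beta(H_N,y)\subseteq V$ with the computation $\lambda(\alpha(H_N,x))=+\infty$ already carried out inside Lemma~\ref{lem:infinite measure neighborhoods} yields $+\infty=\lambda(\alpha(H_N,x))=\eta(\beta(H_N,y))\leq\eta(V)<+\infty$, the desired contradiction.

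The only delicate points, rather than genuine obstacles, are bookkeeping: checking that $\alpha(H_N,x)$ lands in the invariant conull set $X_0$ so that $\Phi$ may be applied to it, and invoking the Lusin--Souslin theorem to know that the injective Borel image $\Phi(\alpha(H_N,x))$ is again Borel, hence $\eta$-measurable, so that the inequality $\eta(\beta(H_N,y))\leq\eta(V)$ is legitimate. The heart of the argument is simply the incompatibility between local finiteness of $\eta$ near $y$ and the infinite mass of arbitrarily small neighborhood-orbits of $x$, transported across the spatial isomorphism; this is exactly the obstruction flagged in the introduction right after Theorem~\ref{thm intro: spatial conjugacy to nice action}.
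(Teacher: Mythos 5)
Your proof is correct and follows essentially the same route as the paper's: assume a spatial isomorphism $\Phi$ exists, use local finiteness of $\eta$ and continuity of $\beta$ to trap $\beta(\mathcal N,\Phi(x))$ in a finite-measure open set, then transport the infinite mass of $\alpha(\mathcal N,x)$ (Lemma~\ref{lem:infinite measure neighborhoods}) through $\Phi$ to reach a contradiction. Your version is in fact slightly more careful than the paper's on two minor points: you pick $x\in X_0\cap X_\infty$ (the lemma requires $x\in X_\infty$, which the paper glosses over) and you explicitly invoke Lusin--Souslin to justify measurability of the image.
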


\begin{proof}
    Suppose that we have a continuous action $\beta$ on $(Y,\eta)$ with $\eta$ locally finite and $Y$ Polish, and that $\Phi: X_0\to Y$ is a spatial isomorphism between $\alpha$ and $\beta$. 
    Let us take some  $x\in X_0$, then there is an open set $U$ containing $y$ such that $\eta(U) < \infty$. 
    By continuity of $\beta$, there exists a neighborhood of the identity $\mathcal{N}\subseteq G$  satisfying $\beta(\mathcal{N},\Phi(x)) \subseteq U$. However by Lemma \ref{lem:infinite measure neighborhoods}, 
    $\lambda(\alpha(\mathcal N, x))=+\infty$. 
    In particular, $\lambda(\alpha(\mathcal N,x)\cap X_0)=+\infty$. 
    By the equivariance condition satisfied by $\Phi$, the set $\beta(\mathcal N,\Phi(x))$ contains the infinite measure set $\Phi(\alpha(\mathcal N,x)\cap X_0)$, so it has infinite measure, contradicting the fact that it is contained in the finite measure set $U$.
\end{proof}

We now define a second $\Sinf$-action $\beta$ as the action on $\bigsqcup_n(\{0,1\}^\N,\mu_n)$ by permutation of the coordinates, but this time in different distinct copies of $\{0,1\}^\N$. This will yield an interesting infinite measure-preserving example of non spatial boolean isomorphism (see Section \ref{sec: two realizations} for Tsankov's example in the finite measure case). 

\begin{prop}\label{prop: inf mp pas spati iso}
     The two $\Sinf$-actions $\alpha$ on $(\{ 0,1 \}^{\mathbb{N}},\lambda)$ and $\beta$ on $\bigsqcup_n(\{0,1\}^\N,\mu_n)$ are booleanly isomorphic, but not spatially isomorphic.
\end{prop}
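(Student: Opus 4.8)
The plan is to establish the boolean isomorphism through the obvious ``forget the copy'' map, and to rule out a spatial isomorphism by contrasting the genuine orbit structures of the two actions.

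For the boolean isomorphism, I would use that the sets $X_n$ are pairwise disjoint in $X=\{0,1\}^\N$, that $\bigsqcup_n X_n$ is $\lambda$-conull, and that $\mu_m(X_n)=0$ for $m\neq n$, so that $\lambda_{\restriction X_n}=(\mu_n)_{\restriction X_n}$. Consider then the map
\[
\Theta:\bigsqcup_n(\{0,1\}^\N,\mu_n)\longrightarrow (\{0,1\}^\N,\lambda)
\]
which forgets the copy index, sending the point sitting in the $n$-th copy to the corresponding $x\in\{0,1\}^\N$. On the $\beta$-conull set $D=\bigsqcup_n X_n$ (with $X_n$ taken inside the $n$-th copy), the map $\Theta$ is a Borel injection, since the $X_n$ are disjoint in $\{0,1\}^\N$, onto the $\lambda$-conull set $\bigsqcup_n X_n$; and it is measure-preserving because on the $n$-th copy it is the identity and $\lambda_{\restriction X_n}=(\mu_n)_{\restriction X_n}$. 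Writing $\sigma\cdot x$ for the common coordinate-permutation action, both actions satisfy $\Theta(\beta(\sigma,x))=\sigma\cdot x=\alpha(\sigma,\Theta(x))$ whenever both sides are defined, so taking for each $\sigma$ the conull set $D\cap\beta(\sigma^{-1},D)$ exhibits $\Theta$ as a boolean isomorphism. Since boolean isomorphism is an equivalence relation, $\alpha$ and $\beta$ are booleanly isomorphic.

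For the failure of spatial isomorphism, the decisive point is the level of genuine orbits. On the one hand, $\Sinf$ acts transitively on $X_\infty$: any two sequences with infinitely many $0$'s and infinitely many $1$'s are matched by a bijection of $\N$, so $X_\infty$ is a single $\alpha$-orbit, and it is $\lambda$-conull with $\lambda(X_\infty)=\sum_n\mu_n(X_\infty)=+\infty$. On the other hand, $\beta$ never mixes the copies, so every $\beta$-orbit is contained in a single copy $(\{0,1\}^\N,\mu_n)$, which has total measure $\mu_n(\{0,1\}^\N)=1$. Suppose towards a contradiction that $\Phi:X_0\to\bigsqcup_n(\{0,1\}^\N,\mu_n)$ is a spatial isomorphism, with $X_0$ a conull $\alpha$-invariant Borel set. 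Then $X_0\cap X_\infty$ is a nonempty $\alpha$-invariant subset of the single orbit $X_\infty$, forcing $X_\infty\subseteq X_0$. By equivariance $\Phi(X_\infty)$ is a single $\beta$-orbit, hence contained in one copy of total measure $1$; but by injectivity and measure-preservation its measure equals $\lambda(X_\infty)=+\infty$, a contradiction.

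The main thing to get right is the contrast between the two notions of orbit. Modulo null sets both actions split along the pieces $X_n$, so the boolean isomorphism is unobstructed; but spatially the infinite-support permutations in $\Sinf$ fuse all of $X_\infty$ into one orbit of infinite measure for $\alpha$, which $\beta$ cannot reproduce since its orbits are confined to individual probability-measure copies. Establishing the transitivity of $\Sinf$ on $X_\infty$ and the confinement of $\beta$-orbits to single copies are the only steps that need genuine attention; the rest is routine verification.
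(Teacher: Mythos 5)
Your proof is correct and follows essentially the same route as the paper: the boolean isomorphism comes from identifying the conull sets $\bigsqcup_n X_n$ on both sides (you merely write the map in the opposite direction, from $\beta$'s space to $\alpha$'s space, where the equivariance is exact off a null set), and the spatial obstruction is the same orbit contrast, namely that $X_\infty$ is a single conull $\alpha$-orbit of infinite measure while every $\beta$-orbit is confined to a single probability-measure copy. Your write-up just spells out the measure-theoretic contradiction that the paper's proof leaves terse.
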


\begin{proof}
    As $X_n$ denotes the $\mu_n$-conull set of sequences with a proportion $p_n$ of coordinates equal to $1$, it is possible to send $X_n$ to a copy of itself in the $n$-th copy of $\{ 0,1 \}^{\mathbb{N}}$ by defining
    \[
    \Phi: \bigsqcup_{n \in \N}{X_n}\subseteq\{0,1\}^\N\longrightarrow\bigsqcup_{n \in \N}{X_n}\subseteq\bigsqcup_{n \in \N}\{0,1\}^\N.
    \]
    For a fixed permutation $\sigma$, $\lambda$-almost any sequence $x$ in $X_\sigma = \bigsqcup_n{X_n}$ satisfies $x \in X_k \Rightarrow \alpha(\sigma,x) \in X_k$, and therefore $\Phi(\alpha(\sigma,x)) = \beta(\sigma,\Phi(x)) \in \Phi(X_k)$.
    The actions $\alpha$ and $\beta$ are not spatially isomorphic however, as $\alpha$ only has one orbit in $\{ 0,1 \}^{\mathbb{N}}$, but $\beta$ cannot send an element of $\Phi(X_n)$ to $\Phi(X_m)$ whenever $n \neq m$.
\end{proof}

\begin{rem} More generally, by Proposition \ref{prop: Sinfty action non Radonable}, $\alpha$ cannot admit a continuous Radon model, so any boolean isomorphism between $\alpha$ and another continuous action on a space endowed with a Radon measure cannot be a spatial isomorphism.
\end{rem}

\begin{rem}
    Similar ideas work for instance when $G=\Aut(\Q,<)$, identifying $\N$ to $\Q$ and replacing $X_\infty$ by the set of all sequences $(x_q)\in \{0,1\}^\Q$ such that the set $\{q\in \Q\colon x_q=1\}$ is both dense and codense in $\Q$ (meaning that for all rationals $q_1<q_2$ there are $q$ and $q'$ in the interval $(q_1,q_2)$ such that $x_q=1$ and $x_{q'}=0$). Indeed, this new set has $\mu_n$ measure $1$, and a back-and-forth argument shows that it consists of a single $\Aut(\Q,<)$-orbit. It would be nice to identify more precisely the class of non-archimedean groups for which there exists a spatial infinite measure-preserving action which cannot be spatially isomorphic to a continuous action on a locally finite Polish measured space. 
\end{rem}

The example we built in Proposition~\ref{prop: inf mp pas spati iso} is clearly non ergodic, hence the following 
natural question.

\begin{question}
    Does $\Sinf$ admit two \emph{ergodic} infinite measure-preserving actions which are booleanly isomorphic, but not spatially isomorphic?
\end{question}

\section{Poisson point processes and Lévy groups}

\subsection{Construction of the Poisson point process}

Let $X$ be a Polish space, we can endow its space of closed subsets $\mathcal F(X)$ with the  lower Vietoris topology (also called lower semi-finite topology in \cite{michaelTopologiesSpacesSubsets1951}), which is the second-countable $T_0$ topology generated by declaring, for every $U\subseteq X$ open,
that the set 
\[
\mathcal V_U=\{F\in\mathcal F(X)\colon F\cap U\neq\emptyset\}
\]
is open. The associated Borel $\sigma$-algebra is called the Effros $\sigma$-algebra, and it turns $\mathcal F(X)$ into a standard Borel space, see for instance \cite[Cor.~9.3]{chenNotesQuasiPolishSpaces2021}. 

\begin{lem}\label{lem: counting is Borel}
    Let $U\subseteq X$.
    The function which takes $F\in\mathcal F(X)$ to $\abs{U\cap F}$ is lower semi-continuous if we endow $\mathcal F(X)$ with the lower Vietoris topology, in particular it is Borel. 
\end{lem}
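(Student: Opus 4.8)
The plan is to prove the stronger assertion that (for open $U$, which is the relevant case, since the basic lower Vietoris opens $\mathcal V_U$ are indexed by open sets) the counting function $N_U\colon F\mapsto \abs{U\cap F}$, with values in $\N\cup\{\infty\}$, is lower semi-continuous; Borel measurability then comes for free. Recall that an $\N\cup\{\infty\}$-valued function is lower semi-continuous exactly when each superlevel set $\{F\in\mathcal F(X)\colon N_U(F)\geq n\}$ is open, for $n\in\N$. So the entire proof reduces to showing that these superlevel sets lie in the lower Vietoris topology.

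First I would fix $n$ and a closed set $F$ with $N_U(F)\geq n$, and choose $n$ pairwise distinct points $x_1,\dots,x_n\in U\cap F$. Since $X$ is Polish, hence metrizable and in particular Hausdorff, these finitely many points can be separated by pairwise disjoint open sets; intersecting each of these with the open set $U$, I obtain pairwise disjoint open sets $V_1,\dots,V_n$ with $x_i\in V_i\subseteq U$. This is precisely where openness of $U$ enters: it guarantees that the separating sets may be taken open and contained in $U$, so that the basic lower Vietoris opens $\mathcal V_{V_i}=\{F'\colon F'\cap V_i\neq\emptyset\}$ are genuinely available as neighborhoods.

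Then I would set $W\coloneqq \bigcap_{i=1}^n \mathcal V_{V_i}$, which is open in the lower Vietoris topology and contains $F$, because $x_i\in F\cap V_i$ for each $i$. The key verification is that $W$ is contained in the superlevel set: given any $F'\in W$, for each $i$ pick a point $y_i\in F'\cap V_i$; since the $V_i$ are pairwise disjoint the points $y_1,\dots,y_n$ are distinct, and since $V_i\subseteq U$ they all lie in $U\cap F'$, whence $N_U(F')\geq n$. Thus every element of the superlevel set admits a neighborhood contained in it, so the superlevel set is open.

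Having shown each superlevel set to be open, lower semi-continuity follows, and as lower semi-continuous functions are Borel, so is $N_U$, which concludes the argument. The only genuinely delicate point is the separation step producing the disjoint $V_i\subseteq U$, and this is routine given metrizability of $X$; everything else is a direct unwinding of the definition of the lower Vietoris topology.
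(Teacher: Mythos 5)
Your proof is correct and is essentially the paper's own argument: the paper likewise shows that the superlevel set $\{F\colon \abs{U\cap F}\geq k\}$ is open by finding pairwise disjoint open subsets $U_1,\dots,U_k$ of $U$ each meeting $F_0$, so that $\bigcap_{i=1}^k\mathcal V_{U_i}$ is a neighborhood of $F_0$ inside the superlevel set; you have merely filled in the Hausdorff-separation detail that the paper leaves implicit. Your observation that $U$ should be taken open (despite the statement saying only $U\subseteq X$) is also the correct reading, since this is how the lemma is applied and the claim genuinely fails for non-open $U$.
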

\begin{proof}
Suppose $\abs{U\cap F_0}\geq k$, then we find $U_1$,...,$U_k$ disjoint opens subsets of $U$ such that $F_0\cap U_i$ is non empty, and this will be true of any $F\in \bigcap_{i=1}^k\mathcal V_{U_i}$, hence the result.
\end{proof}
\begin{rem}In particular, the set of all $F\in\mathcal F(X)$ such that $F\cap U$ is finite is Borel. 
\end{rem}

Let us now recall that the Poisson law of intensity $0 < \lambda < \infty$ is the probability measure $\Poisson\lambda$ on $\N$ given by
$$
\Poisson\lambda(\{k\})=e^{-\lambda}\frac{\lambda^k}{k!}
$$
and that its expected value is equal to $\lambda$. We also define the degenerate Poisson law of intensity $0$ as the Dirac measure at $0$.

We now define on any Polish space endowed with an atomless locally finite Borel measure $\lambda$ \emph{the} \textbf{Poisson point process} of intensity $\lambda$  as the probability measure whose existence and uniqueness are granted by the following elementary version of  \cite[Thm.~3]{avraham-reemPoissonianActionsPolish2024}. 

\begin{thm}\label{thm: PPP}
    Let $\lambda$ be a locally finite atomless Borel measure on a Polish space $X$. There is a unique probability measure $\PPP X \lambda$ on $\mathcal F(X)$ such that for all $U\subseteq X$ open of finite measure, if the random variable $F$ has law $\PPP X \lambda$, then the random variable
    $\abs{F\cap U}$ follows the Poisson law of parameter $\lambda(U)$. 
\end{thm}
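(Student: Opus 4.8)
The plan is to establish uniqueness and existence separately, in both cases reducing everything to the behaviour of $F$ on open sets of finite measure.

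\emph{Uniqueness.} First I would observe that the family of \emph{void events} $\{\mathcal V_U^c : U\subseteq X\text{ open of finite measure}\}$, together with $\mathcal F(X)$ itself, forms a $\pi$-system: since $\mathcal V_U^c\cap\mathcal V_V^c=\mathcal V_{U\cup V}^c$ and the union of two finite-measure open sets is again open of finite measure, it is stable under finite intersections. Next I would check that this $\pi$-system generates the Effros $\sigma$-algebra. Indeed $\lambda$ is $\sigma$-finite by Lemma \ref{lem: locally finite Polish implies sigma finite}, so by Lindelöf's lemma $X$ is covered by countably many finite-measure open sets $W_1,W_2,\dots$; for an arbitrary open $U$ the sets $U\cap(W_1\cup\cdots\cup W_n)$ are open of finite measure and increase to $U$, whence $\mathcal V_U^c=\bigcap_n\mathcal V_{U\cap(W_1\cup\cdots\cup W_n)}^c$. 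This shows that all the $\mathcal V_U$, which generate the Effros $\sigma$-algebra, lie in the $\sigma$-algebra generated by our $\pi$-system. Finally, any law satisfying the defining property assigns $\PPP X\lambda(\mathcal V_U^c)=\mathbb P(\abs{F\cap U}=0)=e^{-\lambda(U)}$, a prescribed value, so Dynkin's $\pi$-$\lambda$ theorem forces two such measures to coincide.

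\emph{Existence.} I would build the measure as a pushforward. Decompose $X=\bigsqcup_{n\in\N}X_n$ into Borel pieces of finite measure, which is possible by $\sigma$-finiteness. On an auxiliary probability space, for each $n$ draw an integer $K_n\sim\Poisson{\lambda(X_n)}$ and, conditionally on $K_n$, points $Y^n_1,\dots,Y^n_{K_n}$ independent with common law $\lambda_{\restriction X_n}/\lambda(X_n)$, all these choices being independent over $n$. Let $\Xi$ be the closure of $\{Y^n_i : n\in\N,\ 1\le i\le K_n\}$ and set $\PPP X\lambda\coloneqq\Xi_*\mathbb P$, once I have verified that $\Xi$ takes values in $\mathcal F(X)$ and is measurable. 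For measurability, since the $\mathcal V_U$ generate the Effros $\sigma$-algebra it suffices to see that $\{\Xi\cap U\neq\emptyset\}$ is measurable for open $U$; and because $U$ is open one has $\overline S\cap U\neq\emptyset\iff S\cap U\neq\emptyset$, so this event equals $\bigcup_n\bigcup_{i\le K_n}\{Y^n_i\in U\}$, a countable union of measurable sets. For almost sure closedness (equivalently local finiteness), I would use the cover $(W_m)$ above: the expected number of points in $W_m$ is $\sum_n\lambda(W_m\cap X_n)=\lambda(W_m)<\infty$, so almost surely each $W_m$ contains finitely many points; intersecting over the countably many $m$, almost surely every point of $X$ has a neighbourhood meeting the configuration finitely, so it has no accumulation point and is closed. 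Atomlessness of $\lambda$ guarantees that almost surely all the $Y^n_i$ are pairwise distinct, so the configuration is simple and $\abs{\Xi\cap U}$ genuinely counts points.

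It then remains to compute the law of $\abs{\Xi\cap U}$ for $U$ open of finite measure. Writing $N_n\coloneqq\abs{\{i\le K_n : Y^n_i\in U\}}$, the thinning (colouring) property of the Poisson distribution gives $N_n\sim\Poisson{\lambda(U\cap X_n)}$, and the $N_n$ are independent by the product structure; since almost surely $\abs{\Xi\cap U}=\sum_n N_n$ and $\sum_n\lambda(U\cap X_n)=\lambda(U)<\infty$, superposition of independent Poisson variables yields $\abs{\Xi\cap U}\sim\Poisson{\lambda(U)}$, as required. I expect the main obstacle to be not the Poissonian bookkeeping, which is classical, but rather the interface with the Effros structure: checking cleanly that $\Xi$ lands in $\mathcal F(X)$ almost surely and is Borel, and ensuring that set-theoretic cardinality agrees with the underlying point count. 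This is precisely where atomlessness and the identity $\overline S\cap U\neq\emptyset\iff S\cap U\neq\emptyset$ for open $U$ enter, together with Lemma \ref{lem: counting is Borel} which makes $F\mapsto\abs{F\cap U}$ Borel on $\mathcal F(X)$.
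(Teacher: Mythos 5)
Your proof is correct, but the existence part takes a genuinely different route from the paper. For uniqueness, both arguments run a Dynkin/monotone-class argument over the $\pi$-system of void events; the paper uses $\mathcal C_U=\mathcal V_U^c$ for \emph{all} open $U$ and computes $\PPP X\lambda(\mathcal C_U)=0$ when $\lambda(U)=\infty$ via an exhaustion, whereas you restrict to finite-measure open sets and instead verify that these void events already generate the Effros $\sigma$-algebra (via $\mathcal V_U^c=\bigcap_n\mathcal V_{U\cap(W_1\cup\cdots\cup W_n)}^c$) — a mild but clean variation. For existence the difference is substantive: the paper first treats finite $\lambda$ by pushing forward $\Poisson{\lambda(X)}\otimes\widetilde\lambda^{\otimes\N}$ under $(k,(x_n))\mapsto\{x_n\colon n<k\}$ (proving the thinning identity by an explicit binomial computation), and then handles infinite $\lambda$ by forming the projective limit of the processes on an increasing exhaustion by finite-measure open sets $V_n$, invoking Kolmogorov's consistency theorem before taking the union map into $\mathcal F(X)$. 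You instead build the process in one shot as a countable superposition over a Borel partition $X=\bigsqcup_n X_n$ into finite-measure pieces, which unifies the finite and infinite cases and avoids projective limits entirely, at the cost of citing the classical thinning and countable-superposition properties of Poisson laws rather than deriving them (the paper's binomial computation is precisely the thinning fact). Both proofs use local finiteness at the same essential point — your cover $(W_m)$ guaranteeing almost sure local finiteness, hence closedness, of the configuration plays exactly the role of the paper's exhaustion $(V_n)$ — and both use atomlessness to ensure the configuration is simple so that set cardinality matches the point count. Your handling of the Effros-measurability of $\Xi$ via $\overline S\cap U\neq\emptyset\iff S\cap U\neq\emptyset$ for open $U$ is also sound, and parallels the paper's continuity argument for its union map $\Psi$.
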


\begin{rem}
    The theorem does not yield what is usually called a Poisson point process per se (see \cite[Def.~1.1]{avraham-reemPoissonianActionsPolish2024}).
    In order to have one, we would need to further check that for all $A\subseteq X$ Borel,
    the map $F\in\mathcal F(X)\mapsto \abs{A\cap F}$ is $\PPP X \lambda$-Lebesgue measurable, 
    and that it follows a Poisson law of parameter $\lambda(A)$. 
    Since we don't need this stronger property, we do not prove it and refer the reader to the proof of
    \cite[Thm.~3]{avraham-reemPoissonianActionsPolish2024}.
\end{rem}

\begin{proof}
    In order to see the uniqueness, we first remark that closed subsets of the form 
    $\mathcal C_U=\{F\in\mathcal F(X)\colon F\cap U=\emptyset\}=\mathcal F(X)\setminus\mathcal V_U$
    form a $\pi$-system (because $\mathcal C_{U_1}\cap\mathcal C_{U_2}=\mathcal C_{U_1\cup U_2}$) which generates the $\sigma$-algebra. 
    We can then compute the probability of $\mathcal C_U$ as follows.
    \begin{itemize}
        \item If $\lambda(U)$ is finite, then by construction  $\mathcal C_U$ 
        has probability $e^{- \lambda(U)}$.
        \item Otherwise, by local finiteness we may write $U =\bigcup_n U_n$ 
        where $U_n\subseteq U_{n+1}$ and each $U_n$ has finite measure. 
        We thus have $\lim_n \lambda(U_n)=\lambda(U)=+\infty$, moreover
        $\mathcal C_U$ is contained in each $\mathcal C_{U_n}$, 
        which has probability $e^{- \lambda(U_n)}\to 0$. We conclude that 
        $\mathcal C_U$ has probability $0$. 
    \end{itemize}
    By the monotone class theorem, this proves the uniqueness of the probability measure. \\

    For the existence, let us first prove it when $\lambda$ is finite.
    In this case, we may as well restrict ourselves to defining the measure on the subset $\mathcal P_f(X)$ of finite subsets of $X$, which is Borel by Lemma \ref{lem: counting is Borel}.

    The map $\Phi:\N\times X^\N\to \mathcal P_f(X)$ which takes $(k,(x_n)_{n\geq 0})$ to $\{x_n\colon n<k\}$ is easily seen to be continuous if we put on $\mathcal P_f(X)$ the topology induced by the lower Vietoris topology on $\mathcal F(X)$, in particular it is Borel. 
    Renormalize $\lambda$ to a probability measure $\widetilde \lambda= \frac\lambda{\lambda(X)}$.
    Endow $\N\times X^\N$ with the probability measure $\mu=\Poisson{\lambda(X)} \otimes \widetilde \lambda^{\otimes\N}$. We claim that the pushforward measure $\Phi_*\mu$ is the desired Poisson Point Process.

    In order to see this, first note that since $\lambda$ is atomless, $\mu$-almost all $(k,(x_n))$ satisfy that $x_n\neq x_m$ whenever $n\neq m$. 
    It follows that given $U\subseteq X$ open and $l\in\N$, if for $k\geqslant l$ we denote by $\mathcal P_l(\{0,\dots,k-1\})$ the set  of subsets of $\{0,\dots,k-1\}$ of cardinality $l$, the event $\abs{F\cap U}=l$ has probability
    $$
    \Phi_*\mu(\abs{F\cap U}=l) =  e^{-\lambda(X)}\sum_{k\geq l}\left (\frac{\lambda(X)^k}{k!}\sum_{K\in \mathcal P_l(\{0,\dots,k-1\})}\left(\frac{\lambda(U)}{\lambda(X)}\right)^l\left(\frac{\lambda(X\setminus U)}{\lambda(X)}\right)^{k-l}\right)
    $$
    Letting $x=\lambda(X)$ and $u=\lambda(U)$, we rewrite this as 
    \begin{align*}
    e^{- x} \sum_{k\geqslant l} \frac{1}{k!} {\binom{k}{l}} u^l (x-u)^{k-l} & = e^{- x} \sum_{k\geqslant l} \frac{ u^l(x-u)^{k-l}}{l!(k-l)!} \\
    & = e^{- x} \frac{ u^l}{l!} e^{x-u}\\
    & = e^{- u} \frac{ u^l}{l!}. 
    \end{align*}
    Since $u=\lambda(U)$, our probability measure $\Phi_*\mu$ has the required distribution and hence the existence is proven when $\lambda$ is a finite measure.\\

    Let us now deal with the case where $\lambda$ is infinite. By local finiteness write $X=\bigcup_n V_n$ where each $V_n$ is open of positive finite measure and satisfies $V_n\subseteq V_{n+1}$.
    Each $V_n$ is Polish for the induced topology, and if we denote by $\lambda_n$ the restriction of $\lambda$ to $V_n$, what we have done so far grants us a unique Borel probability measure $\PPP{V_n}  \lambda$ on $\mathcal P_f(V_n)$ such that for all $U\subseteq V_n$ open, if $F$ has law $\PPP{V_n}\lambda$ then the random variable 
    $\abs{F\cap U}$ has law $\Poisson{\lambda(U)}$.

    For $n\in\N$, consider the continuous hence Borel map $\pi_n :$ $\mathcal P_f(V_{n+1})\to \mathcal P_f(V_n)$
    which maps $F$ to $F\cap V_n$. Form the projective limit
    \[
    Y=\varprojlim \mathcal P_f(V_n)=\left\{(F_n)\in \prod_{n \in \N}\mathcal P_f(V_n)\colon \forall n\in \N, \, F_n=F_{n+1}\cap V_n\right\}, 
    \]
    which is a Borel subset of the standard Borel space $\prod_n \mathcal P_f(V_n)$ by \cite[Thm.~V.2.5]{ParthasarathyProbabilityMeasuresonMetricSpaces1967}.

    For every $n\in\N$, it is a straightforward consequence of uniqueness that $\pi_{n*}\PPP{V_{n+1}}\lambda=\PPP{V_n}\lambda$, so by Kolmogorov's consistency theorem (see e.g.~\cite[Thm.~V.3.2]{ParthasarathyProbabilityMeasuresonMetricSpaces1967}) we have a unique Borel probability measure $\mu$ on $Y$ such that for all $n\in\N$, $p_{n*}\mu=\PPP{V_n}\lambda$, where $p_n:Y\to \mathcal P_f(V_n)$ is the projection on coordinate $n$.

    Let us finally consider the map $\Psi: Y\to \mathcal F(X)$ which takes $(F_n)$ to the increasing union $\bigcup_n F_n$. First note that $\Psi$ is well defined since $\bigcup_n F_n$ has finite intersection in each $V_n$, and the $V_n$'s exhaust $X$ so $\bigcup_n F_n$ is a discrete subset of $X$ without accumulation points, hence closed (and countable).
    Next, $\Psi$ is a Borel (actually continuous) map since $\bigcup_n F_n \cap U\neq \emptyset$ if and only if there exists $n$ such that $F_n\cap (V_n\cap U)\neq \emptyset$.
    We claim that $\PPP{X}\lambda \coloneqq \Psi_*\mu$ is the desired Poisson Point Process.
    Indeed, if $U$ is an arbitrary open subset of $X$ of finite measure, we have that
    $\abs{F\cap U}=k$ if and only if there exists $N$ such that for all $n\geq N$, $\abs{F\cap (U\cap V_n)}=k$.
    For a fixed $N$, the event  that for all $n\geq N$, $\abs{F\cap (U\cap V_n)}=k$ is the intersection of a decreasing sequence of events which
    has probability
    $$ \mathbb{P}_{\lambda}^X \left( \forall n \geqslant N,\, \abs{F \cap (U \cap V_n)} = k \right) = \lim_{n\to+\infty} e^{- \lambda(U\cap V_n)}\frac{ \lambda(U\cap V_n)^k}{k!}=e^{-\lambda(U)}\frac{\lambda(U)^k}{k!}.$$
    It follows that the event $\abs{F\cap U}=k$ also has probability $e^{-\lambda(U)}\frac{\lambda(U)^k}{k!}$ as wanted.
\end{proof}

\begin{rem}
The proof of uniqueness is actually a version of Rényi’s theorem (see for instance \cite[Thm.~6.10]{lastLecturesPoissonProcess2017}), namely it shows that $\PPP X \lambda$ is  uniquely defined by 
the fact that for every finite measure open subset $U\subseteq X$, we have 
\[
    \PPP X \lambda(F\cap U=\emptyset)=e^{- \lambda(U)}.
\]
\end{rem}

\begin{rem}Since the measure is locally finite, $\PPP X  \lambda$-almot every  $F\in\mathcal{F}(X)$ is locally finite, that is to say that every point of $F$ admits a neighborhood whose intersection with $F$ is a singleton.
\end{rem}

\subsection{Application to actions of Lévy groups}

\begin{defi}
    A sequence $(X_n,d_n,m_n)$ of metric spaces with probability measure is a \textbf{Lévy family} if it satisfies the following condition: if $(A_n)$ is a sequence of measurable subsets of $(X_n)$ such that $\liminf m_n(A_n) > 0$, then for any $\varepsilon > 0$, $\lim_n m_n( (A_n)_{\varepsilon} ) = 1$, where $(A_n)_{\varepsilon}\coloneqq \{x\in X_n\colon d_n(x,A_n)<\varepsilon\}$.
    
    A Polish group $G$ is a \textbf{Lévy group} if there is a sequence of compact subgroups $(K_n)$ with $K_n \subseteq K_{n+1}$, such that $\bigcup_n K_n$ is dense in $G$, and such that the $(K_n,m_n)$, where $m_n$ is the normalized Haar measure, is a Lévy family when equipped with a right-invariant compatible metric $d$ on $G$ (the choice of such a $d$ does not matter).
\end{defi}

We refer the reader to \cite{pestovDynamicsInfinitedimensionalGroups2006} for more about Lévy groups.
Glasner, Tsirelson and Weiss proved the following fundamental result.

\begin{thm}[{\cite[Thm. 1.1]{glasnerAutomorphismGroupGaussian2005}}]{\label{LevyProba}}
    Every measure-preserving spatial action of a Lévy group $G$ on a possibly atomic standard probability space $(X,\mu)$ is trivial, \textit{i.e.} the set of fixed points $\{ x \in X \colon \forall g \in G ,  g \cdot x =  x \}$ is conull.
\end{thm}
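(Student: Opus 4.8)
The plan is to reduce to a continuous action on a compact metric space and then run a concentration-of-measure argument on the Lévy family $(K_n,m_n)$. First I would invoke item \eqref{item:universalcompactGspace} of Theorem~\ref{thm: bk spatial model}: the Borel spatial action $\alpha$ embeds $G$-equivariantly through a Borel injection $\Phi\colon X\to K$ into a continuous action $\beta$ on a compact metrizable $G$-space $(K,\rho)$, and $\nu\coloneqq\Phi_*\mu$ is a $\beta$-invariant Borel probability measure (invariance follows from equivariance of $\Phi$ and $\alpha$-invariance of $\mu$). Since $\Phi$ is injective and equivariant, $x$ is fixed by $g$ under $\alpha$ if and only if $\Phi(x)$ is fixed by $g$ under $\beta$, so it suffices to prove that the fixed point set of $\beta$ is $\nu$-conull. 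The reason for passing to a compact space is the following uniform equicontinuity, obtained from the tube lemma applied to the open set $\{(w,y)\colon\rho(\beta(w,y),y)<\varepsilon\}\supseteq\{e_G\}\times K$: for every $\varepsilon>0$ there is a neighborhood $\{w\colon d(w,e_G)<\delta\}$ of $e_G$ such that $\rho(\beta(w,y),y)<\varepsilon$ for \emph{all} $y\in K$, where $d$ is the right-invariant metric witnessing the Lévy family. Right-invariance then yields, for every $1$-Lipschitz $\phi\colon(K,\rho)\to\R$, the uniform bound $\abs{\phi(\beta(k,y))-\phi(\beta(k',y))}<\varepsilon$ whenever $d(k,k')<\delta$, with $\delta$ independent of $n$, of $y$, and of $\phi$.

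Fix a $1$-Lipschitz $\phi$. For each $y$ the function $k\mapsto\phi(\beta(k,y))$ on $K_n$ has the common modulus of continuity above, so applying the Lévy property to the sub- and super-level sets at its median shows that it concentrates tightly around its mean $E_n\phi(y)\coloneqq\int_{K_n}\phi(\beta(k,y))\,dm_n(k)$, with a rate of concentration that is uniform in $y$, since it depends only on the common modulus and on the Lévy family. Consequently the integrated orbit-variance
\[
Q_n\coloneqq\int_K\int_{K_n}\bigl\lvert\phi(\beta(k,y))-E_n\phi(y)\bigr\rvert^2\,dm_n(k)\,d\nu(y)
\]
tends to $0$. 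This is where invariance enters: because each $\beta(k,\cdot)$ preserves $\nu$, Fubini gives $\int_K\int_{K_n}\phi(\beta(k,y))^2\,dm_n\,d\nu=\norm{\phi}_2^2$, and since $E_n$ is precisely the orthogonal projection of $\LL^2(K,\nu)$ onto the $K_n$-invariant vectors (averaging the Koopman representation of the compact group $K_n$ over its Haar measure $m_n$), expanding the square yields the identity $Q_n=\norm{\phi}_2^2-\norm{E_n\phi}_2^2=\norm{\phi-E_n\phi}_2^2$.

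Therefore $\norm{\phi-E_n\phi}_2\to0$. As $K_n\subseteq K_{n+1}$, the invariant subspaces decrease and $E_n$ converges strongly to the projection $E_\infty$ onto the functions fixed by $\overline{\bigcup_nK_n}=G$; passing to the limit gives $\phi=E_\infty\phi$, that is, \emph{every} $1$-Lipschitz $\phi$ is $G$-invariant. Since Lipschitz functions are $\norm\cdot_2$-dense in $\LL^2(K,\nu)$ and separate points, the whole Koopman representation is trivial, so for every $g$ and every Borel $A$ we have $\beta(g)\inv(A)=A$ up to a $\nu$-null set. Taking a countable separating algebra of Borel sets together with a countable dense subgroup $\{g_i\}\subseteq G$ produces a conull set on which every $g_i$ fixes every point, and continuity of $\beta$ together with density of $\{g_i\}$ upgrades this to all $g\in G$, so the fixed point set is conull. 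The main obstacle is the second paragraph: converting the abstract Lévy concentration into the clean Hilbert-space identity $Q_n=\norm{\phi-E_n\phi}_2^2$ while keeping the concentration rate \emph{uniform in the base point} $y$, which is exactly what the reduction to a compact space — and hence the existence of a common modulus of continuity — is designed to provide.
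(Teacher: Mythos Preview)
The paper does not prove Theorem~\ref{LevyProba}; it is quoted from \cite[Thm.~1.1]{glasnerAutomorphismGroupGaussian2005} and used as a black box in the proof of Theorem~\ref{thm: Lévy infinite}. Your proposal is a correct reconstruction of (a variant of) the original Glasner--Tsirelson--Weiss argument: reduce to a continuous action on a compact metric space via Theorem~\ref{thm: bk spatial model}\eqref{item:universalcompactGspace}, use compactness to obtain a uniform modulus of continuity for the orbit maps $k\mapsto\phi(\beta(k,y))$, apply the concentration function of the L\'evy family $(K_n,m_n)$ uniformly in $y$, and conclude via the Koopman identity $Q_n=\norm{\phi-E_n\phi}_2^2$. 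The two technical points you flag --- the uniformity of the concentration rate in the base point $y$ (which is exactly why the reduction to a compact space is needed) and the passage from ``the boolean action is trivial'' to ``the spatial fixed-point set is conull'' via a countable separating family and a countable dense subgroup --- are both handled correctly.
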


We obtain an analogous result for continuous actions of Lévy groups on Polish spaces with a infinite atomless $\sigma$-finite measure $\lambda$, under the assumption that the measure is locally finite.
This result was first obtained by Avraham-Re'em and Roy, 
see \cite[Thm.~5]{avraham-reemPoissonianActionsPolish2024}, and was stated in our introduction as Theorem \ref{thmi: Lévy infinite}.

\begin{thm}{\label{thm: Lévy infinite}}
    Every continuous measure-preserving spatial action of a Lévy group $G$ on a Polish space $X$ endowed with a $\sigma$-finite atomless locally finite measure $\lambda$ is trivial, \textit{i.e.} the set of fixed points $\{ x \in X \mid \forall g \in G , g \cdot x =  x \}$ is conull.
\end{thm}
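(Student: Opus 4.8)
The plan is to follow the two-step strategy announced in the introduction: first transport the action to the space of closed subsets equipped with the Poisson point process, where the measure is a probability measure and the Glasner--Tsirelson--Weiss theorem (Theorem~\ref{LevyProba}) applies, and then descend the resulting triviality back to $X$.

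First I would equip $\mathcal F(X)$ with the lower Vietoris topology and induce from $\alpha$ the action $\hat\alpha(g,F)=\alpha(g,F)=\{\alpha(g,x)\colon x\in F\}$; since each $\alpha(g,\cdot)$ is a homeomorphism of $X$, this is a well-defined action by homeomorphisms of $\mathcal F(X)$, and a short check (writing the $\hat\alpha$-preimage of a subbasic open set $\mathcal V_U$ in terms of the open set $\{(g,x)\colon \alpha(g,x)\in U\}\subseteq G\times X$) shows that $\hat\alpha$ is jointly continuous, hence in particular Borel for the Effros structure. The crucial point is that $\hat\alpha$ preserves $\PPP X\lambda$: for fixed $g$ and any open $U$ of finite measure, if $F$ has law $\hat\alpha(g,\cdot)_*\PPP X\lambda$, writing $F=\alpha(g,F_0)$ with $F_0$ of law $\PPP X\lambda$ gives $\abs{F\cap U}=\abs{F_0\cap \alpha(g\inv,U)}$, and since $\alpha(g\inv,U)$ is again open of measure $\lambda(U)$, this count follows the Poisson law of parameter $\lambda(U)$. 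By the uniqueness clause of Theorem~\ref{thm: PPP} we conclude $\hat\alpha(g,\cdot)_*\PPP X\lambda=\PPP X\lambda$. Thus $\hat\alpha$ is a measure-preserving spatial $G$-action on the \emph{probability} space $(\mathcal F(X),\PPP X\lambda)$, and Theorem~\ref{LevyProba} yields that $\PPP X\lambda$-almost every $F$ is $G$-invariant, i.e.\ $\alpha(g,F)=F$ for all $g\in G$.

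It then remains to deduce that $\lambda$-almost every point of $X$ is fixed. Writing $\mathrm{Fix}=\{x\colon \forall g\in G,\ \alpha(g,x)=x\}=\bigcap_{g\in G}\{x\colon \alpha(g,x)=x\}$, this set is closed (each $\alpha(g,\cdot)$ is continuous), so its complement is open, and I argue by contradiction assuming $\lambda(X\setminus\mathrm{Fix})>0$. Since $\lambda$ is $\sigma$-finite on a second-countable space, $\lambda(X\setminus\supp\lambda)=0$, so I may pick $x\in (X\setminus\mathrm{Fix})\cap\supp\lambda$ and $g$ with $\alpha(g,x)\neq x$. Choosing disjoint finite-measure open sets $U\ni x$ and $V\ni \alpha(g,x)$, then an open $W$ with $x\in W\subseteq U$ and $\alpha(g,W)\subseteq V$ by continuity, I obtain two disjoint open sets $W$ and $W'\coloneqq\alpha(g,W)$ with $0<\lambda(W)=\lambda(W')<\infty$ (positivity because $x\in\supp\lambda$). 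For any $G$-invariant $F$ we have $\alpha(g,F\cap W)=F\cap W'$ and hence $\abs{F\cap W}=\abs{F\cap W'}$; so for $\PPP X\lambda$-almost every $F$ the count $\abs{F\cap(W\sqcup W')}=\abs{F\cap W}+\abs{F\cap W'}=2\abs{F\cap W}$ is even. This contradicts Theorem~\ref{thm: PPP}, which forces $\abs{F\cap(W\sqcup W')}$ to follow the Poisson law of parameter $\lambda(W)+\lambda(W')=2\lambda(W)>0$ and hence to take the odd value $1$ with positive probability. This contradiction shows $\lambda(X\setminus\mathrm{Fix})=0$, as desired.

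The conceptual heart of the argument is the measure-preservation of $\hat\alpha$ obtained through the uniqueness clause of Theorem~\ref{thm: PPP}, but the step I expect to require the most care is the final descent: one must produce disjoint open witnesses $W,W'$ of the non-triviality that have \emph{positive} and \emph{finite} measure (using $\supp\lambda$ together with local finiteness), and phrase the contradiction using only the marginal Poisson law on the single open set $W\sqcup W'$, since that is all Theorem~\ref{thm: PPP} provides. The parity observation is precisely what lets us avoid invoking any independence or Borel-set version of the Poisson point process.
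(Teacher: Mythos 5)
Your proposal is correct and follows essentially the same route as the paper: induce the action on $(\mathcal F(X),\PPP X\lambda)$ via the lower Vietoris topology, verify measure-preservation through the uniqueness clause of Theorem~\ref{thm: PPP}, and invoke the Glasner--Tsirelson--Weiss theorem. The only difference is cosmetic and lies in the final descent: the paper finds (via a Lindelöf covering of the open set of non-fixed points) an open $U$ with $0<\lambda(U)<\infty$ and $U\cap g\cdot U=\emptyset$, and contradicts triviality by computing $\PPP X\lambda(\mathcal C_U\cap g\cdot\mathcal C_U)=\PPP X\lambda(\mathcal C_U)^2<\PPP X\lambda(\mathcal C_U)$, whereas you locate the witnesses using $\supp\lambda$ and contradict the Poisson law of $\abs{F\cap(W\sqcup W')}$ by a parity argument --- both conclusions are equally valid and elementary.
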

\begin{proof}
    Suppose by contradiction that we have an action as above on $(X,\lambda)$, but that the action is not trivial. Consider the standard Borel space $\mathcal{F}(X)$ endowed with the Poisson Point Process on $(X,\lambda)$ of intensity $\lambda$, that we denoted by $\PPP X\lambda$. Then $(\mathcal F(X), \PPP X\lambda)$
    is a standard probability space, and we now show that $G$ acts spatially on it. 
    
    Since every element of $G$ defines a homeomorphism of $X$, it sends any closed subset of $X$ to a closed subset of $X$, and in particular $G$ acts on $\mathcal{F}(X)$.
    Since $g\cdot \mathcal V_U=\mathcal V_{g\inv U}$ and
    $G$ acts by homeomorphism on $X$, we have that $G$ acts by homeomorphisms on $\mathcal F(X)$ for the lower Vietoris topology.
    
    Let us show that this action is  continuous for the lower Vietoris topology. Given $U\subseteq X$ open, we need to show that the  set of couples $(g,F)$ such that $g\cdot F\cap U\neq \emptyset$ is open.
    So take $(g_0,F_0)$ such that $g_0\cdot F_0\cap U\neq\emptyset$, then $F_0\cap g_0\inv\cdot U\neq \emptyset$, then let $x_0\in F_0\cap g_0\inv \cdot U$.
    In particular $g_0\cdot x\in U$, so
    by continuity of the action of $G$ on $X$, there is a neighborhood $V$ of $g_0$ and $W$ of $x_0$ such that $V\cdot W\subseteq U$. Consider the neighborhood $V\times \mathcal V_{W}$ of $(g_0,F_0)$, then any element $(g,F)$ in the neighborhood $V\times \mathcal V_{W}$ of $(g_0,F_0)$ satisfies $F\cap W\neq \emptyset$, and hence $g\cdot F\cap g\cdot W\neq \emptyset$. Since $g\in v$ we have $g\cdot W\subseteq U$, so we conclude that $gF\cap U\neq \emptyset$ as wanted. In particular, the $G$-action on $\mathcal F(X)$ is Borel.
    
    This action preserves $\PPP X \lambda$ by uniqueness. Indeed consider the random variable $F$ of law $\PPP X\lambda$, for any finite measure open subset $U$ of $X$ and any $g$ in $G$ we have 
    \begin{align*}
    \PP_\lambda^X( \abs{F \cap U} = k) & = e^{- \lambda(U)}\frac{ \lambda(U)^k}{k!}\\
    & = e^{- \lambda(g\inv \cdot U)}\frac{ \lambda(g\inv \cdot U)^k}{k!}\\
    & = \PP_\lambda^X(\abs{F \cap g\inv \cdot U} = k)\\
    & = \PP_\lambda^X(\abs{g \cdot F \cap U} = k).
    \end{align*} 
    Being a spatial measure-preserving action on a standard probability space, the $G$-action on $\mathcal F(X)$ is trivial by Theorem \ref{LevyProba}. We will now see why this cannot be.
    
    We first claim that we can find an open subset $U$ of $X$ with $0 < \lambda(U) < \infty $ and $g\in G$ which satisfies $ U \cap g \cdot U = \emptyset$.
    Indeed, since $G$ acts continuously non-trivially on $(X,\lambda)$ the set of non fixed points $X_0\coloneqq\{ x \in X \mid \exists  g \in G , g \cdot x \neq  x \}$ has positive measure.  
    By continuity of the $G$-action on $X$, for every $x\in X_0$ we find $g_x\in G$ and an open subset $U_x\subseteq X$ such that $ U_x\cap g_x \cdot U_x=\emptyset$.
    By Lindelöf's theorem, the cover $(U_x)_{x\in X_0}$ admits a countable subcover $(U_{x_i})_{i\in\N}$. Since $\lambda(X_0)>0$, we conclude that some $U_{x_i}$ satisfies $\lambda(U_{x_i})>0$, and thus taking $g= g_{x_i}$ and $U=U_{x_i}$, we have $\lambda(U)>0$ and $U\cap g \cdot  U=\emptyset$ as desired.
    
    Now remember that if $\mathcal C_{U}=\{F\in\mathcal F(X)\colon F\cap U=\emptyset\}$, we have $\PPP X \lambda (\mathcal{C}_U) = e^{-\lambda(U)}<1$. We then have
    \[
    \PPP X \lambda (\mathcal{C}_U \cap g \cdot \mathcal{C}_U) = \PPP X \lambda (\mathcal{C}_U \cap \mathcal{C}_{g\inv \cdot U}) = \PPP X \lambda (\mathcal{C}_{U \sqcup g\inv \cdot U}) = e^{-\lambda(U \sqcup g\inv \cdot U)} = \PPP X \lambda (\mathcal{C}_U)^2,
    \]
    so $\PPP X \lambda (\mathcal{C}_U \cap g \cdot \mathcal{C}_U)<\PPP X \lambda(\mathcal{C}_U)$,
    yielding the non-triviality of the action and thus
    the desired contradiction.
\end{proof}

\begin{rem}
The hypothesis that $\lambda$ has no atoms is not important: 
if it does, then $G$ acts by permutation on these atoms, but the
associated Bernoulli shift must be trivial by the Glasner-Tsirelson-Weiss theorem,
so $G$ acts trivially on the atomic part of $\lambda$.
\end{rem}

\section{The natural action of \texorpdfstring{$\Aut(X,\lambda)$}{Aut(X,lambda)} admits no spatial model}
\label{sec: whirly}

Whirly actions were introduced in the probability measure-preserving context by Glasner, Tsirelson and Weiss, who showed such actions cannot have spatial models.
In this last section we record the natural extension of this definition to the infinite measure-preserving setup, and show that whirly actions admit no spatial models. As an example, we prove that the natural boolean action of $\Aut(X,\lambda)$ is whirly and hence does not admit a spatial model.

\begin{defi}[{\cite[Sec.~3]{glasnerAutomorphismGroupGaussian2005}}]
	A boolean measure-preserving action $\alpha$ of a Polish group $G$ on $(X,\lambda)$ is \textbf{whirly} if for all sets $A$ and $B$ of positive measure, for every neighborhood $\mathcal{N}$ of $e_G$, there exists a element $g$ in $\mathcal{N}$ such that $\lambda(A \cap \alpha(g,B)) > 0$.    
\end{defi}

\begin{prop}{\label{prop: Aut whirly}}
	Let $G = \Aut(X,\lambda)$. The natural boolean action of $G$ on $(X,\lambda)$ is whirly.
\end{prop}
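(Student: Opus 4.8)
The plan is to produce, for given positive-measure sets $A,B$ and an arbitrary neighborhood $\mathcal N$ of the identity, a single measure-preserving involution $T\in\mathcal N$ that drags a small sliver of $B$ into $A$. Since the weak topology on $\Aut(X,\lambda)$ is generated by the maps $S\mapsto \lambda(S(C)\bigtriangleup C)$ for $C$ of finite measure, I may first shrink $\mathcal N$ to a basic neighborhood of the form $\{S\in\Aut(X,\lambda)\colon \lambda(S(C_i)\bigtriangleup C_i)<\varepsilon \text{ for } i=1,\dots,k\}$, determined by finitely many finite-measure sets $C_1,\dots,C_k$ and some $\varepsilon>0$.

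Next I would locate disjoint Borel subsets $A_0\subseteq A$ and $B_0\subseteq B$ of equal, arbitrarily small positive measure $\delta$. This is possible because $\lambda$ is atomless: if $\lambda(A\setminus B)>0$ one picks $A_0\subseteq A\setminus B$ and $B_0\subseteq B$, which are automatically disjoint; otherwise $A\subseteq B$ up to a null set and one splits $A\cap B$ into two disjoint pieces of positive measure. Since $(A_0,\lambda_{\restriction A_0})$ and $(B_0,\lambda_{\restriction B_0})$ are atomless standard spaces of the same total mass $\delta$, there is a measure-preserving Borel bijection $\phi\colon A_0\to B_0$; I then let $T$ be $\phi$ on $A_0$, $\phi^{-1}$ on $B_0$, and the identity elsewhere, so that $T\in\Aut(X,\lambda)$.

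Two verifications finish the argument. On one hand $T$ agrees with the identity off $A_0\sqcup B_0$, so for every Borel set $C$ a point outside $A_0\sqcup B_0$ lies in $T(C)$ exactly when it lies in $C$; hence $T(C)\bigtriangleup C\subseteq A_0\sqcup B_0$ and $\lambda(T(C_i)\bigtriangleup C_i)\le 2\delta$ for every $i$. Choosing $\delta<\varepsilon/2$ therefore places $T$ inside $\mathcal N$. On the other hand $T(B)\supseteq T(B_0)=A_0\subseteq A$, so $A\cap\alpha(T,B)\supseteq A_0$ has measure at least $\delta>0$, which is precisely the whirliness condition. Here I use that the natural boolean action $\alpha$, associated to the identity homomorphism $\Aut(X,\lambda)\to\Aut(X,\lambda)$, sends $T$ to a representative of $T$, so that evaluating $\lambda(A\cap\alpha(T,B))$ reduces to evaluating $\lambda(A\cap T(B))$.

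There is no serious obstacle in this argument; it is the infinite-measure analogue of the Glasner--Tsirelson--Weiss computation. The only mild points of care are reducing an arbitrary neighborhood of the identity to a basic one controlled by finitely many finite-measure sets, and keeping the perturbation $T$ supported on a set of measure $2\delta$ small enough to respect all the constraints $\lambda(S(C_i)\bigtriangleup C_i)<\varepsilon$ simultaneously. Everything else is the standard atomless swap construction.
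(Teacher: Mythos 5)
Your proof is correct and follows essentially the same route as the paper's: both arguments swap two disjoint, equal-measure small pieces $A_0\subseteq A$ and $B_0\subseteq B$ via a measure-preserving involution supported on $A_0\sqcup B_0$, and observe that such an involution is weakly close to the identity while moving part of $B$ into $A$. Your write-up simply makes explicit two points the paper leaves to the reader, namely the reduction to a basic weak neighborhood determined by finitely many finite-measure sets and the existence of the disjoint pieces $A_0,B_0$ when $A$ and $B$ overlap.
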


\begin{proof}
	Let $A$ and $B$ be two Borel subsets of $X$ of positive measure, and fix $\varepsilon > 0$. We consider $A_0 \subseteq A$ and $B_0 \subseteq B$ two disjoint Borel subsets, with $\lambda(A_0) = \lambda(B_0) < \varepsilon/2$. We define a element $S$ of $\Aut(X,\lambda)$ as follows:
	\[
	\left\{
	\begin{array}{l}
		S_{\restriction X \setminus (A_0 \cup B_0)} = \mbox{id}_{X \setminus (A_0 \cup B_0)}  \\
		S(A_0) = B_0 \\
		S(B_0) = A_0,
	\end{array}
	\right.
	\]
	and see that for $\varepsilon$ small, $S$ is close to the identity on $X$, for the weak topology. This concludes the proof, as $\lambda(A \cap SB) > 0$.
\end{proof}

Let us now explain the link between whirly actions and $G$-continuity. We have the following infinite measure version of \cite[Prop. 3.3]{glasnerAutomorphismGroupGaussian2005}. The proof is actually the same, but we provide it for the reader's convenience.

\begin{prop}{\label{prop: whirly no spatial model}}
	Consider a boolean measure preserving action $\alpha$ of a Polish group $G$ on $(X,\lambda)$. If the action is whirly, then all $G$-continuous functions are constant. Moreover, such an action cannot admit a spatial model.
\end{prop}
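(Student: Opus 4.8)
The plan is to prove the two assertions in turn, deriving the non-existence of a spatial model from the statement about $G$-continuous functions. For the first assertion I would argue by contraposition: suppose $f\in\LL^\infty(X,\lambda)$ is $G$-continuous but not essentially constant. Then its essential range contains two distinct points $a\neq b$; set $D=\abs{a-b}>0$ and let $A=\{x:\abs{f(x)-a}<D/4\}$ and $B=\{x:\abs{f(x)-b}<D/4\}$, both of positive measure by the definition of the essential range. By $G$-continuity there is a neighborhood $\mathcal N$ of $e_G$ with $\norm{f-f\circ\alpha(g\inv,\cdot)}_\infty<D/4$ for all $g\in\mathcal N$. Since the action is whirly, I can pick $g\in\mathcal N$ with $\lambda(A\cap\alpha(g,B))>0$. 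On this positive-measure set one has both $\abs{f(x)-a}<D/4$ and $\abs{f(\alpha(g\inv,x))-b}<D/4$, so the triangle inequality gives $\abs{f(x)-f(\alpha(g\inv,x))}>D/2$, contradicting $\norm{f-f\circ\alpha(g\inv,\cdot)}_\infty<D/4$. Hence every $G$-continuous function is constant.

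For the ``moreover'' part, suppose towards a contradiction that $\alpha$ admits a spatial model, i.e.\ is booleanly isomorphic to a spatial action $\beta$ on a standard $\sigma$-finite space $(Y,\eta)$, noting that $\eta$ is again nontrivial and atomless. Applying item \eqref{item:universalcompactGspace} of Theorem \ref{thm: bk spatial model}, I obtain a compact Polish $G$-space $(K,\gamma)$ with continuous action together with a $G$-equivariant Borel injection $\Phi:Y\to K$. For $f\in C(K)=C_0(K)$, Lemma \ref{lem: G continuous from continuous action on lc} yields $\norm{f-f\circ\gamma(g\inv,\cdot)}_\infty\to 0$ as $g\to e_G$; combined with equivariance ($\Phi\circ\beta(g\inv,\cdot)=\gamma(g\inv,\cdot)\circ\Phi$) and the fact that the $\eta$-essential supremum of $\abs{f\circ\Phi-f\circ\gamma(g\inv,\cdot)\circ\Phi}$ is bounded by the genuine supremum over $K$, this shows that each $f\circ\Phi$ is a $G$-continuous element of $\LL^\infty(Y,\eta)$.

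It remains to produce a non-constant such function. I would fix a countable $\norm{\cdot}_\infty$-dense subset $\{f_n\}$ of the separable algebra $C(K)$, which still separates the points of $K$. If every $f_n\circ\Phi$ were $\eta$-essentially constant, say equal to $c_n$ on a conull set, then on the conull intersection $Y_0$ the map $\Phi$ would take values in $\bigcap_n f_n^{-1}(c_n)$, a set with at most one point since $\{f_n\}$ separates points; this contradicts the injectivity of $\Phi$ together with $\eta$ being atomless. Therefore some $f_n\circ\Phi$ is $G$-continuous but not essentially constant. Transporting it through the boolean isomorphism, which preserves the essential-supremum norm and intertwines the actions, yields a non-constant $G$-continuous function for $\alpha$, contradicting the first assertion.

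The main obstacle is precisely this second part: a spatial model is a priori only a Borel object, so the crux is to manufacture genuinely $G$-continuous functions out of it. This is why I route the argument through the universal compact model furnished by Becker–Kechris, where $G$-continuity for the supremum norm comes for free by compactness, and where a point-separating family of essentially constant functions can be ruled out using injectivity of the embedding.
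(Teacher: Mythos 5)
Your proof is correct, and both halves follow the paper's overall strategy: the first part is the same contradiction argument (you work with the essential range of a complex-valued $f$ directly, where the paper argues with real-valued functions and then splits into real and imaginary parts — a cosmetic difference), and the second part routes, exactly as the paper does, through item \eqref{item:universalcompactGspace} of Theorem \ref{thm: bk spatial model} and the observation that continuous functions on a compact $G$-space are $G$-continuous (Lemma \ref{lem: G continuous from continuous action on lc}). The one genuine divergence is how you certify that some $G$-continuous function is not \emph{essentially} constant. The paper passes to $K=\supp\eta$ inside the compact model: there, any non-constant function of $C(K)$ is automatically non-essentially-constant, since nonempty open subsets of the support have positive measure (this implicitly uses that the support is $G$-invariant, which holds because the action preserves $\eta$). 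You instead keep the Borel injection $\Phi:Y\to K$, take a countable dense (hence point-separating) family $\{f_n\}\subseteq C(K)$, and rule out that all $f_n\circ\Phi$ are essentially constant using injectivity of $\Phi$ plus atomlessness of $\eta$. Your variant buys a small robustness: it never mentions the support, so there is nothing to check about its invariance or about positive measure of its open subsets; the paper's variant is shorter and produces an explicit non-constant function rather than a proof by contradiction that one exists. Both are fine; your transport of $G$-continuity and non-constancy through the boolean isomorphism at the end is exactly what Proposition \ref{prop: chara boolean iso of action} formalizes.
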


\begin{proof}
	We first explain why the existence of $G$-continuous non-constant functions taking values in $\R$ contradicts the fact that the action is whirly. .
	
	Let $f$ be a $G$-continuous non-constant function taking values in $\R$. There exists $a<b$ in $\R$ such that $A := f\inv(]-\infty,a[)$ and $B := f\inv(]b,+\infty[)$ have positive measure. By $G$-continuity, there exists a neighborhood $\mathcal{N}$ of $e_G$ such that any $g$ in $\mathcal{N}$ satisfies $\norm{f\circ\alpha(g_n\inv, \cdot ) - f}_{\infty} < b-a$. This implies that $\lambda(A \cap \alpha(g,B)) = 0$, contradicting the fact that the action is whirly.
	
	Now for a complex valued function, one simply has to consider the real and imaginary parts, as they are $G$-continuous whenever $f$ is. Assuming that $f$ is non-constant implies that at least one or the other is non-constant, thus implying that the action cannot be whirly.

	For the second part of the statement, suppose the action has a spatial model on $(Y,\eta)$. By The Becker-Kechris theorem (Theorem \ref{thm: bk spatial model}), it is possible to assume that the $G$-action on $Y$ is continuous, with $Y$ compact Polish. The support of $\eta$ is closed, thus compact, call it $K$. Since $G$ acts continuously on $Y$, any function in $C_c(K)$ is $G$-continuous by Lemma \ref{lem: G continuous from continuous action on lc}, and thus any non-constant function in $C_c(K)$ gives a non-constant $G$-continuous function, a contradiction. 
\end{proof}

Combining Propositions \ref{prop: Aut whirly} and \ref{prop: whirly no spatial model}, we obtain the following.

\begin{prop}{\label{prop: Aut no spatial model}}
	Let $(X,\lambda)$ be a standard $\sigma$-finite space. The natural boolean action of $\Aut(X,\lambda)$ on $(X,\lambda)$ cannot admit a spatial model.
\end{prop}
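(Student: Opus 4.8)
The plan is to simply combine the two immediately preceding propositions, which together supply every ingredient needed. The statement is a corollary of Proposition \ref{prop: Aut whirly} (the natural boolean $\Aut(X,\lambda)$-action is whirly) and Proposition \ref{prop: whirly no spatial model} (a whirly boolean action admits no spatial model), so there is essentially no new work to do beyond chaining these facts.

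Concretely, I would first invoke Proposition \ref{prop: Aut whirly} to record that, setting $G=\Aut(X,\lambda)$, the tautological boolean $G$-action on $(X,\lambda)$ is whirly. I would then apply Proposition \ref{prop: whirly no spatial model}, whose conclusion is precisely that a whirly boolean measure-preserving action of a Polish group cannot be booleanly isomorphic to a spatial action. Since $\Aut(X,\lambda)$ is a Polish group (Proposition \ref{prop: Aut Polish}) and $(X,\lambda)$ is a standard $\sigma$-finite space, the hypotheses of both propositions are met, and the conclusion follows immediately.

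There is no genuine obstacle here: the real content lies upstream, in verifying whirliness via the explicit transposition-type element $S$ close to $\id_X$ in the weak topology (Proposition \ref{prop: Aut whirly}), and in the argument that a whirly action forces all $G$-continuous functions to be constant, obstructing any continuous—hence via Becker-Kechris any—spatial model (Proposition \ref{prop: whirly no spatial model}). Once those are in hand, the present statement is a one-line deduction, so I would keep the proof to a single sentence citing both results.

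\begin{proof}
	By Proposition \ref{prop: Aut whirly}, the natural boolean action of $\Aut(X,\lambda)$ on $(X,\lambda)$ is whirly, so it cannot admit a spatial model by Proposition \ref{prop: whirly no spatial model}.
\end{proof}
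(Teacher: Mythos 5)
Your proof is correct and is exactly the paper's argument: the paper introduces this proposition with the phrase ``Combining Propositions \ref{prop: Aut whirly} and \ref{prop: whirly no spatial model}, we obtain the following,'' which is precisely your one-line deduction. Nothing is missing, since both cited propositions apply verbatim to the Polish group $\Aut(X,\lambda)$ acting on the standard $\sigma$-finite space $(X,\lambda)$.
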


\begin{rem}\label{rem: spat mod for Aut}
    Theorem \ref{thm: Lévy infinite} applies to $\Aut(X,\lambda)$ when $\lambda$ is locally finite. Indeed, Giordano and Pestov have proved in \cite[Thm. 4.2]{giordanoSomeExtremelyAmenableGroups2007} that it is a Lévy group when endowed with its weak topology. However, Proposition \ref{prop: Aut no spatial model} does not follow from the above result, as there are no topological requirements on the spatial model.
\end{rem}

\begin{rem}
	The previous discussion holds in the non-singular setup as well, considering the tautological boolean non-singular  action on $(X,\lambda)$ of the group  $\Aut(X,[\lambda])$ of all \textit{non-singular} bijections (identified up to equality on a conull set). The action is whirly in the exact same sense as above, and the proof of Proposition \ref{prop: whirly no spatial model} adapts in a straightforward manner to non-singular whirly actions.
\end{rem}

\bibliographystyle{alphaurl}
\bibliography{BibliSigma} 

\begin{thebibliography}{FKSV23}

\bibitem[ARR24]{avraham-reemPoissonianActionsPolish2024}
Nachi Avraham-Re'em and Emmanuel Roy.
\newblock Poissonian actions of polish groups, 2024.
\newblock \href {https://arxiv.org/abs/2403.19567} {\path{arXiv:2403.19567}}.

\bibitem[BK96]{beckerDescriptiveSetTheory1996}
Howard Becker and Alexander~S. Kechris.
\newblock {\em The {{Descriptive Set Theory}} of {{Polish Group Actions}}}.
\newblock London {{Mathematical Society Lecture Note Series}}. {Cambridge University Press}, {Cambridge}, 1996.
\newblock \href {https://doi.org/10.1017/CBO9780511735264} {\path{doi:10.1017/CBO9780511735264}}.

\bibitem[Bou04]{BourbakiIntegration}
Nicolas Bourbaki.
\newblock {\em Elements of mathematics. {Integration} {I}: {Chapters} 1--6. {Translated} from the 1959, 1965 and 1967 {French} originals by {Sterling} {K}. {Berberian}}.
\newblock Berlin: Springer, 2004.

\bibitem[Che21]{chenNotesQuasiPolishSpaces2021}
Ruiyuan Chen.
\newblock Notes on quasi-{{Polish}} spaces, 2021.
\newblock \href {https://arxiv.org/abs/1809.07440} {\path{arXiv:1809.07440}}, \href {https://doi.org/10.48550/arXiv.1809.07440} {\path{doi:10.48550/arXiv.1809.07440}}.

\bibitem[Coh13]{cohnMeasureTheorySecond2013}
Donald~L. Cohn.
\newblock {\em Measure {{Theory}}: {{Second Edition}}}.
\newblock Birkh\"auser {{Advanced Texts Basler Lehrb\"ucher}}. {Springer}, {New York, NY}, 2013.
\newblock \href {https://doi.org/10.1007/978-1-4614-6956-8} {\path{doi:10.1007/978-1-4614-6956-8}}.

\bibitem[Con00]{conwayCourseOperatorTheory2000}
John~B. Conway.
\newblock {\em A {{Course}} in {{Operator Theory}}}.
\newblock {American Mathematical Soc.}, 2000.

\bibitem[FKSV23]{frischRealizationsCountableBorel2023}
Joshua Frisch, Alexander Kechris, Forte Shinko, and Zolt{\'a}n Vidny{\'a}nszky.
\newblock Realizations of countable {{Borel}} equivalence relations, 2023.
\newblock \href {https://arxiv.org/abs/2109.12486} {\path{arXiv:2109.12486}}, \href {https://doi.org/10.48550/arXiv.2109.12486} {\path{doi:10.48550/arXiv.2109.12486}}.

\bibitem[Fre03]{fremlinMeasureTheoryVol2-2003}
D.~H. Fremlin.
\newblock {\em Measure theory. {Vol}. 2. {Broad} foundations}.
\newblock Colchester: Torres Fremlin, corrected second printing of the 2001 original edition, 2003.

\bibitem[GP07]{giordanoSomeExtremelyAmenableGroups2007}
Thierry Giordano and Vladimir Pestov.
\newblock Some extremely amenable groups related to operator algebras and ergodic theory.
\newblock {\em Journal of the Institute of Mathematics of Jussieu}, 6(2):279–315, 2007.
\newblock \href {https://doi.org/10.1017/S1474748006000090} {\path{doi:10.1017/S1474748006000090}}.

\bibitem[GTW05]{glasnerAutomorphismGroupGaussian2005}
E.~Glasner, B.~Tsirelson, and B.~Weiss.
\newblock The automorphism group of the {{Gaussian}} measure cannot act pointwise.
\newblock {\em Israel Journal of Mathematics}, 148(1):305--329, 2005.
\newblock \href {https://doi.org/10.1007/BF02775441} {\path{doi:10.1007/BF02775441}}.

\bibitem[GW05]{glasnerSpatialNonspatialActions2005}
E.~Glasner and B.~Weiss.
\newblock Spatial and non-spatial actions of {{Polish}} groups.
\newblock {\em Ergodic Theory and Dynamical Systems}, 25(05):1521, 2005.
\newblock \href {https://doi.org/10.1017/S0143385705000052} {\path{doi:10.1017/S0143385705000052}}.

\bibitem[Hop71]{hopfErgodicTheoryGeodesic1971}
Eberhard Hopf.
\newblock Ergodic theory and the geodesic flow on surfaces of constant negative curvature.
\newblock {\em Bull. Amer. Math. Soc.}, 77(6):863--878, 1971.
\newblock \href {https://doi.org/10.1090/S0002-9904-1971-12799-4} {\path{doi:10.1090/S0002-9904-1971-12799-4}}.

\bibitem[Kec95]{kechrisClassicalDescriptiveSet1995}
Alexander~S. Kechris.
\newblock {\em Classical Descriptive Set Theory}, volume 156 of {\em Graduate {{Texts}} in {{Mathematics}}}.
\newblock {Springer-Verlag, New York}, 1995.
\newblock \href {https://doi.org/10.1007/978-1-4612-4190-4} {\path{doi:10.1007/978-1-4612-4190-4}}.

\bibitem[KS11]{kwiatkowskaSpatialModelsBoolean2011}
Aleksandra Kwiatkowska and S{\l}awomir Solecki.
\newblock {Spatial models of Boolean actions and groups of isometries}.
\newblock {\em Ergodic Theory and Dynamical Systems}, 31(2):405--421, 2011.
\newblock \href {https://doi.org/10.1017/S0143385709001138} {\path{doi:10.1017/S0143385709001138}}.

\bibitem[LM22]{lemaitrePolishTopologiesGroups2022}
Fran{\c c}ois Le~Ma{\^i}tre.
\newblock Polish topologies on groups of non-singular transformations.
\newblock {\em Journal of Logic and Analysis}, 14, 2022.
\newblock \href {https://doi.org/10.4115/jla.2022.14.4} {\path{doi:10.4115/jla.2022.14.4}}.

\bibitem[LP17]{lastLecturesPoissonProcess2017}
G{\"u}nter Last and Mathew Penrose.
\newblock {\em Lectures on the {{Poisson Process}}}.
\newblock Institute of {{Mathematical Statistics Textbooks}}. {Cambridge University Press}, {Cambridge}, 2017.
\newblock \href {https://doi.org/10.1017/9781316104477} {\path{doi:10.1017/9781316104477}}.

\bibitem[Mac62]{mackeyPointRealizationsTransformation1962}
George~W. Mackey.
\newblock Point realizations of transformation groups.
\newblock {\em Illinois Journal of Mathematics}, 6(2):327--335, 1962.
\newblock \href {https://doi.org/10.1215/ijm/1255632330} {\path{doi:10.1215/ijm/1255632330}}.

\bibitem[Mic51]{michaelTopologiesSpacesSubsets1951}
Ernest Michael.
\newblock Topologies on spaces of subsets.
\newblock {\em Transactions of the American Mathematical Society}, 71(1):152--182, 1951.
\newblock \href {https://doi.org/10.1090/S0002-9947-1951-0042109-4} {\path{doi:10.1090/S0002-9947-1951-0042109-4}}.

\bibitem[Mur90]{murphyAlgebrasOperatorTheory1990}
Gerard~J. Murphy.
\newblock {\em C*-Algebras and Operator Theory}.
\newblock {Academic Press, Inc.}, {Boston, MA etc.}, 1990.
\newblock \href {https://doi.org/10.1016/C2009-0-22289-6} {\path{doi:10.1016/C2009-0-22289-6}}.

\bibitem[Nes20]{nessonovNonsingularAction2020}
Nikolay Nessonov.
\newblock A nonsingular action of the full symmetric group admits an equivalent invariant measure.
\newblock {\em Zh. Mat. Fiz. Anal. Geom.}, 16:46--54, 2020.
\newblock \href {https://doi.org/10.15407/mag16.01.046} {\path{doi:10.15407/mag16.01.046}}.

\bibitem[Par05]{ParthasarathyProbabilityMeasuresonMetricSpaces1967}
K.~R. Parthasarathy.
\newblock {\em Probability measures on metric spaces.}
\newblock Providence, RI: AMS Chelsea Publishing, reprint of the 1967 original edition, 2005.

\bibitem[Pes06]{pestovDynamicsInfinitedimensionalGroups2006}
Vladimir Pestov.
\newblock {\em Dynamics of Infinite-Dimensional Groups}, volume~40 of {\em University {{Lecture Series}}}.
\newblock {American Mathematical Society, Providence, RI}, 2006.
\newblock \href {https://doi.org/10.1090/ulect/040} {\path{doi:10.1090/ulect/040}}.

\bibitem[Ros09]{rosendalAutomaticContinuityGroup2009}
Christian Rosendal.
\newblock Automatic {{Continuity}} of {{Group Homomorphisms}}.
\newblock {\em The Bulletin of Symbolic Logic}, 15(2):184--214, 2009.
\newblock URL: \url{https://www.jstor.org/stable/25478252}, \href {https://arxiv.org/abs/25478252} {\path{arXiv:25478252}}.

\bibitem[Rud87]{rudinRealComplexAnalysis1987}
Walter Rudin.
\newblock {\em Real and Complex Analysis}.
\newblock {McGraw-Hill}, {New York, NY}, 3rd ed. edition, 1987.

\bibitem[Var63]{varadarajanGroupsAutomorphismsBorel1963}
V.~S. Varadarajan.
\newblock Groups of {{Automorphisms}} of {{Borel Spaces}}.
\newblock {\em Transactions of the American Mathematical Society}, 109(2):191--220, 1963.
\newblock \href {https://arxiv.org/abs/1993903} {\path{arXiv:1993903}}, \href {https://doi.org/10.2307/1993903} {\path{doi:10.2307/1993903}}.

\bibitem[Ver12]{vershikTotallyNonfreeActions2012}
A.~M. Vershik.
\newblock Totally nonfree actions and the infinite symmetric group.
\newblock {\em Moscow Mathematical Journal}, 12(1):193--212, 216, 2012.
\newblock \href {https://doi.org/10.17323/1609-4514-2012-12-1-193-212} {\path{doi:10.17323/1609-4514-2012-12-1-193-212}}.

\end{thebibliography}
\listoffixmes

\end{document}